\newtheorem*{introthm}{Theorem}
\newtheorem{theorem}{Theorem}[section]
\newtheorem{lemma}[theorem]{Lemma}
\newtheorem{proposition}[theorem]{Proposition}
\newtheorem{corollary}[theorem]{Corollary}
\theoremstyle{definition}
\newtheorem{definition}[theorem]{Definition}
\newtheorem{example}[theorem]{Example}
\newtheorem{construction}[theorem]{Construction}
\newtheorem{remark}[theorem]{Remark}
\theoremstyle{remark}
\numberwithin{equation}{theorem}
\def\Chi{{\mathbb X}}
\def\div{{\rm div}}
\def\quot{/\!\!/}
\def\mal{\! \cdot \!}
\def\rq#1{\widehat{#1}}
\def\t#1{\widetilde{#1}}
\def\b#1{\overline{#1}}
\def\bangle#1{\langle #1 \rangle}
\def\CC{{\mathbb C}}
\def\KK{{\mathbb K}}
\def\TT{{\mathbb T}}
\def\ZZ{{\mathbb Z}}
\def\RR{{\mathbb R}}
\def\NN{{\mathbb N}}
\def\QQ{{\mathbb Q}}
\def\PP{{\mathbb P}}
\def\GG{{\mathbb G}}
\def\WDiv{\operatorname{WDiv}}
\def\PDiv{\operatorname{PDiv}}
\def\id{{\rm id}}
\def\Lie{{\rm Lie}}
\def\CAut{\operatorname{CAut}}
\def\Aut{\operatorname{Aut}}
\def\Bir{\operatorname{Bir}}
\def\Cl{\operatorname{Cl}}
\def\LND{\operatorname{LND}}
\def\PASG{\operatorname{1-PASG}}
\def\SO{{\rm SO}}
\def\GL{{\rm GL}}
\def\Spec{{\rm Spec}}
\def\LND{{\rm LND}}
\def\cone{{\rm cone}}
\def\SL{{\rm SL}}
\def\Sp{{\rm Sp}}
\def\Aut{{\rm Aut}}
\def\rk{{\rm rk}}
\def\GL{{\rm GL}}
\begin{document}
\title[The automorphism group of a variety with torus action]%
{The automorphism group of a variety \\ with torus action of complexity one}
\author[I.~Arzhantsev]{Ivan Arzhantsev} 
\address{Department of Higher Algebra, 
Faculty of Mechanics and Mathematics, 
Moscow State University, 
Leninskie Gory 1, Moscow 119991, Russia}
\email{arjantse@mccme.ru}
\author[J.~Hausen]{J\"urgen Hausen} 
\address{Mathematisches Institut, Universit\"at T\"ubingen,
Auf der Morgenstelle 10, 72076 T\"ubingen, Germany}
\email{juergen.hausen@uni-tuebingen.de}
\author[E.~Herppich]{Elaine Herppich} 
\address{Mathematisches Institut, Universit\"at T\"ubingen,
Auf der Morgenstelle 10, 72076 T\"ubingen, Germany}
\email{elaine.herppich@uni-tuebingen.de}
\author[A.~Liendo]{Alvaro Liendo} 
\address{Mathematisches Institut, Universit\"at Bern,
Sidlerstrasse 5, 3012 Bern, Switzerland}
\email{alvaro.liendo@math.unibe.ch}

\begin{abstract}
We consider a normal complete rational variety 
with a torus action of complexity one.
In the main results, we determine the roots of 
the automorphism group and give an explicit 
description of the root system of its semisimple 
part.
The results are applied to the study of almost
homogeneous varieties.
For example, we describe all almost homogeneous 
(possibly singular) del Pezzo $\KK^*$-surfaces of 
Picard number one and all almost homogeneous 
(possibly singular) Fano threefolds of 
Picard number one having a reductive 
automorphism group with two-dimensional 
maximal torus.
\end{abstract}

\subjclass[2000]{14J50, 14M25, 14J45, 13A02, 13N15}

\maketitle

\section{Introduction}

For any complete rational algebraic variety $X$, 
the unit component $\Aut(X)^0$ of its 
automorphism group is linear algebraic and 
it is a natural desire to understand the 
structure of this group.
Essential insight is provided by the roots,
i.e.~the eigenvalues of the adjoint
representation of a maximal torus on the 
Lie algebra.
Recall that if a linear algebraic group
is reductive, then its set of roots forms 
a so-called root system and, up to coverings, 
determines the group.
In the general case, the group is generated 
by its maximal torus and the additive 
one-parameter subgroups corresponding to the 
roots.
Seminal work on the structure of automorphism 
groups has been done by Demazure~\cite{Dem} 
for the case of smooth complete toric 
varieties $X$.
Here, the acting torus $T$ of $X$ is as well a 
maximal torus of $\Aut(X)^0$ and Demazure
described the roots of $\Aut(X)^0$ with 
respect to $T$ in terms of the defining fan 
of $X$; 
see~\cite{Co, BrGu, Mas, Nill} for further 
development in this direction.
Cox~\cite{Co} presented an approach to the 
automorphism group of a toric variety via 
the homogeneous coordinate ring and thereby 
generalized Demazure's results to the simplicial 
case; see~\cite{Br} for an application of
homogeneous coordinates to the study of 
automorphism groups in the more general case 
of spherical varieties.

In the present paper, we go beyond the toric case
in the sense that we consider normal complete 
rational varieties $X$ coming with an effective torus 
action $T \times X \to X$ of complexity one, 
i.e.~the dimension of~$T$ is one less than that 
of $X$; the simplest nontrivial examples are 
$\KK^*$-surfaces, see~\cite{OrWa1,OrWa2}.
Our approach is based on the Cox ring~$\mathcal{R}(X)$
and the starting point is the explicit description of 
$\mathcal{R}(X)$ in the complexity one case provided
by~\cite{HaHe,HaSu}; see also Section~\ref{sec:factgrad} 
for details.
Generators and relations of $\mathcal{R}(X)$ 
as well as the grading by the divisor class group $\Cl(X)$
can be encoded in a sequence $A = a_0,\ldots, a_r$ 
of pairwise linearly independent vectors in $\KK^2$ 
and an integral matrix
\begin{eqnarray*}
P
& = & 
\left[
\begin{array}{ccccc}
-l_0 & l_1 &   \ldots & 0 & 0  
\\
\vdots & \vdots   & \ddots & \vdots & \vdots
\\
-l_0 & 0 &\ldots  & l_{r} & 0
\\
d_0 & d_1 &\ldots  & d_{r} & d'
\end{array}
\right]
\end{eqnarray*}
of size $(n+m) \times (r+s)$, where $l_i$
are nonnegative integral vectors of length $n_i$,
the $d_i$ are $s \times n_i$ blocks,
$d'$ is an $s \times m$ block
and the columns of $P$ are pairwise different primitive 
vectors generating the column space $\QQ^{r+s}$ as a 
convex cone.
Conversely, the data $A,P$ always define 
a Cox ring $\mathcal{R}(X) = R(A,P)$ 
of a complexity one $T$-variety $X$.
The dimension of $X$ equals $s+1$ and the acting torus 
$T$ has $\ZZ^s$ as its character lattice.
The matrix $P$ determines the grading and the 
exponents occuring in the relations,
whereas $A$ is responsible for continuous 
aspects, i.e. coefficients in the relations.

The crucial concept for the investigation 
of the automorphism group $\Aut(X)$ are 
the {\em Demazure $P$-roots}, which we introduce 
in Definition~\ref{def:Pdemroot}.
Roughly speaking, these are finitely many integral 
linear forms $u$ on $\ZZ^{r+s}$ satisfying a couple 
of linear inequalities on the columns of $P$.
In particular, given $P$, the Demazure $P$-roots
can be easily determined.
In contrast with the toric case, the 
Demazure $P$-roots are divided into two types. 
Firstly, there are ``vertical'' ones corresponding 
to root subgroups whose orbits are contained 
in the closures of generic torus orbits. 
Such Demazure $P$-roots are defined by 
free generators of the Cox ring and their 
description is analogous to the toric case.
Secondly, there are ``horizontal'' Demazure $P$-roots
corresponding to root subgroups whose orbits 
are transversal to generic torus orbits. 
Dealing with this type heavily involves the 
relations among generators of the Cox ring.
Our first main result expresses the roots of $\Aut(X)^0$
and, moreover, the approach shows 
how to obtain the corresponding root subgroups,
see Theorem~\ref{thm:autroots} and 
Corollary~\ref{cor:autXgen} for the 
precise formulation:

\begin{introthm}
Let $X$ be a nontoric normal complete rational 
variety with an effective torus action 
$T \times X \to X$ of complexity one.
Then $\Aut(X)$ is a linear algebraic group 
having $T$ as a maximal torus and the 
roots of $\Aut(X)$ with respect to $T$ 
are precisely the $\ZZ^s$-parts of the Demazure
$P$-roots.
\end{introthm}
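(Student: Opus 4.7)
The plan is to realize $\Aut(X)^0$ through its action on the Cox ring $\mathcal{R}(X) = R(A,P)$, with root subgroups arising from homogeneous locally nilpotent derivations (HLNDs), and then to match these derivations with Demazure $P$-roots.

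First I would record that $\Aut(X)$ is linear algebraic, which is classical for $X$ complete and rational. To show that $T$ is a maximal torus of $\Aut(X)^0$, suppose a torus $T' \subseteq \Aut(X)^0$ properly contains $T$. Then $T'$ acts effectively on $X$ with $\dim T' > \dim X - 1$, forcing $\dim T' = \dim X$ and making $X$ toric, contrary to hypothesis. Hence $T$ is already maximal.

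The core step is to lift root subgroups to the Cox ring. Since $X$ is recovered from $\Spec R(A,P)$ as a good quotient by the characteristic quasitorus $H = \Spec \KK[\Cl(X)]$, the action of $\Aut(X)^0$ lifts, up to $H$, to an action by $\Cl(X)$-graded automorphisms of $R(A,P)$. In particular, an additive one-parameter subgroup $U_\alpha \subseteq \Aut(X)^0$ normalized by $T$ lifts to a one-parameter group of graded automorphisms of $R(A,P)$, whose infinitesimal generator is an HLND $\partial$. Its weight is naturally a linear form on $\ZZ^{r+s}$, and the projection onto the $\ZZ^s$ summand recovers the root $\alpha$ of $T$. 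Conversely, every HLND exponentiates to such a root subgroup. Thus computing the roots of $\Aut(X)^0$ with respect to $T$ reduces to classifying the weights of HLNDs on $R(A,P)$.

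The main obstacle is this classification, and I would split it into two cases. \emph{Vertical} HLNDs preserve a distinguished toric subring coming from one generic $T$-orbit closure and act there as in the classical Demazure–Cox description; their admissible weights reproduce the vertical Demazure $P$-roots via linear inequalities on the columns of $P$. \emph{Horizontal} HLNDs, by contrast, must interact nontrivially with the trinomial relations of $R(A,P)$ built from the data $A$ and the exponent vectors $l_0, \ldots, l_r$, and this is the delicate part: the compatibility of a derivation with such a trinomial forces it to shift one monomial into another, which severely constrains its action on generators. A case analysis of these compatibilities shows that homogeneity together with local nilpotence hold if and only if the weight satisfies precisely the inequalities defining the horizontal Demazure $P$-roots in Definition~\ref{def:Pdemroot}. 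Combining both types and projecting their weights to $\ZZ^s$ yields the claimed description of the roots of $\Aut(X)^0$ with respect to $T$.
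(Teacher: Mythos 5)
Your plan follows the paper's own route: lift root subgroups through the characteristic space to $K_0$-homogeneous locally nilpotent derivations of the Cox ring $R(A,P)$, split these into vertical and horizontal type, and match their admissible weights with the Demazure $P$-roots. The framework steps (linearity of $\Aut(X)$, maximality of $T$ from nontoricity, the correspondence between one-parameter additive subgroups normalized by $T$ and homogeneous locally nilpotent derivations) all agree with the paper, which carries them out via the exact sequences relating $\Aut(X)$, $\Aut(\rq{X},H_X)$ and $\Aut(\b{X},H_X)$ for a general Mori dream space.

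The genuine gap is the horizontal case, which you dispose of with ``a case analysis of these compatibilities shows\dots''. That analysis is the technical heart of the paper, and it does not reduce to observing that a derivation must shift one monomial of a trinomial into another; in fact the derivation \emph{annihilates} each trinomial $g_I$ (Lemma~\ref{lem:lift}), and the real constraints come from elsewhere. Two inputs are indispensable. First, a derivation coming from a root subgroup is $K$-homogeneous of $K$-degree zero, and one must show that this forces it to be \emph{primitive} for the fine $K_0$-grading, i.e.\ that its $K_0$-degree lies outside the weight cone (Lemma~\ref{lemmaprim}); without primitivity the structure theory is unavailable. Second, one needs the dimension formula $\dim(R_w)=s_w+1$ for the homogeneous components (Proposition~\ref{prop:pan}), in particular that $R_{\mu+\deg(\delta)}$ is one-dimensional: this is what forces at most one variable per block $T_{i1},\ldots,T_{in_i}$ to be moved, forbids two moved variables both having exponent at least $2$, and yields the normal form $\delta=h\,\delta_{C,\beta}$ with $h\in\ker(\delta_{C,\beta})$ and $\beta$ in the row space of $A$ (Theorem~\ref{thm:basiclnd}); only from this normal form do the equalities and inequalities of Definition~\ref{def:Pdemroot} drop out. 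A smaller imprecision: the weight of a homogeneous derivation lives in $K_0$, not in $M=\ZZ^{r+s}$; a defining linear form $u\in M$ with $\deg_{K_0}(\delta)=Q_0(P^*(u))$ appears only after imposing $K$-degree zero and identifying $\ker(K_0\to K)$ with the $\ZZ^s$-summand as in Remark~\ref{rem:downgrade}.
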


The basic idea of the proof is to relate the
group $\Aut(X)$ to the group of graded automorphisms 
of the Cox ring. 
This is done in Section~\ref{sec:autmds} 
more generally for arbitrary Mori dream spaces, 
i.e.~normal complete varieties with a finitely 
generated Cox ring $\mathcal{R}(X)$.
In this setting, the grading by the divisor class 
group $\Cl(X)$ defines an action of the characteristic 
quasitorus $H_X = \Spec \, \KK[\Cl(X)]$ on the total
coordinate space $\b{X} = \Spec \, \mathcal{R}(X)$
and $X$ is the quotient of an open subset 
$\rq{X} \subseteq \b{X}$ by the action of $H_X$.
The group of $\Cl(X)$-graded automorphisms of 
$\mathcal{R}(X)$ is isomorphic to the 
group $\Aut(\b{X},H_X)$ of $H_X$-equivariant 
automorphisms of $\b{X}$. 
Moreover, the group $\Bir_2(X)$ of birational 
automorphisms of $X$ defined on an open subset
of $X$ having complement of codimension at 
least two plays a role.
Theorem~\ref{thm:weakaut} brings all groups 
together:

\begin{introthm} 
Let $X$ be a (not necessarily rational) 
Mori dream space.
Then there is a commutative diagram
of morphisms of linear algebraic 
groups where the rows are exact 
sequences and the upwards inclusions 
are of finite index:
$$
\xymatrix{
1 
\ar[r]
&
H_X 
\ar[r]
&
{\Aut(\b{X},H_X)}
\ar[r]
&
{\Bir_2(X)} 
\ar[r]
&
1
\\ 
1 
\ar[r]
&
H_X 
\ar[r]
\ar@{=}[u]
&
{\Aut(\rq{X},H_X)}
\ar[r]
\ar@{^{(}->}[u]
&
{\Aut(X)} 
\ar[r]
\ar@{^{(}->}[u]
&
1
}
$$
\end{introthm}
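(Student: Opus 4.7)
The plan is to realize every group in the diagram as automorphisms of structures coming from the Cox ring, and to transfer properties between the normal affine variety $\b{X} = \Spec\,\mathcal{R}(X)$ and the open subset $\rq{X}\subseteq\b{X}$, whose complement has codimension at least two. The group $\Aut(\b{X},H_X)$ coincides with the group of $\Cl(X)$-(re-)graded $\KK$-algebra automorphisms of the finitely generated Cox ring $\mathcal{R}(X)$ and is therefore a linear algebraic group acting algebraically on $\b{X}$. The embeddings $H_X\hookrightarrow\Aut(\b{X},H_X)$ and $H_X\hookrightarrow\Aut(\rq{X},H_X)$ come from the $H_X$-action itself.

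For the right-hand map of the top row, I would note that for $\vphi\in\Aut(\b{X},H_X)$ the open set $U:=\vphi^{-1}(\rq{X})\cap\rq{X}\subseteq\rq{X}$ has complement of codimension at least two, so $\vphi|_U$ descends via the geometric quotient $\rq{X}\to X$ to an element of $\Bir_2(X)$; the bottom map $\Aut(\rq{X},H_X)\to\Aut(X)$ is obtained analogously, now landing in the genuine automorphism group since $\vphi$ is defined on all of $\rq{X}$. To compute the kernels, suppose $\vphi^*\colon\mathcal{R}(X)\to\mathcal{R}(X)$ induces the identity birational map on $X$. Then for any two nonzero homogeneous elements $f,g\in\mathcal{R}(X)_{[D]}$ the ratio $f/g$ is a rational function on $X$ fixed by $\vphi$, so $\vphi^*f/f=\vphi^*g/g$, and this common scalar $\lambda([D])$ depends only on $[D]$. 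Multiplicativity across graded pieces turns $\lambda$ into a character of $\Cl(X)$, i.e.~an element of $H_X=\Spec\,\KK[\Cl(X)]$, as desired.

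Surjectivity of $\Aut(\b{X},H_X)\to\Bir_2(X)$ comes from the decomposition $\mathcal{R}(X)=\bigoplus_{[D]}\Gamma(X,\Of(D))$: a $\psi\in\Bir_2(X)$, being an isomorphism outside codimension two on the normal variety $X$, induces an isomorphism on divisor classes and pullback isomorphisms on section spaces, each of which extends across the codimension-two locus by normality and assembles into a graded isomorphism of $\mathcal{R}(X)$ lifting $\psi$. The surjectivity of $\Aut(\rq{X},H_X)\to\Aut(X)$ is analogous, using that any $\sigma\in\Aut(X)$ lifts canonically along the $H_X$-quotient to a regular $H_X$-equivariant automorphism of $\rq{X}$. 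The inclusion $\Aut(\rq{X},H_X)\hookrightarrow\Aut(\b{X},H_X)$ is obtained by extending regular morphisms across $\b{X}\setminus\rq{X}$ by normality of $\b{X}$; the inclusion $\Aut(X)\hookrightarrow\Bir_2(X)$ is tautological; and commutativity of the diagram is immediate from the constructions.

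The main obstacle is the finite-index claim on the vertical arrows. Here I would use the GIT chamber decomposition of $\Cl(X)_{\QQ}$ arising from the $H_X$-action on the affine variety $\b{X}$: for a Mori dream space this decomposition is finite, and $\rq{X}$ is the semistable locus corresponding to the chamber of ample classes of $X$. Every $\vphi\in\Aut(\b{X},H_X)$ acts on $\Cl(X)_{\QQ}$ through its effect on the grading, and this action permutes the finite set of chambers; since $H_X\subseteq\Aut(\b{X},H_X)$ acts trivially on $\Cl(X)_{\QQ}$, the induced action of $\Aut(\b{X},H_X)/H_X\cong\Bir_2(X)$ has only finitely many orbits on chambers, so the stabilizer of the distinguished chamber has finite index. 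That stabilizer is precisely $\Aut(\rq{X},H_X)/H_X\cong\Aut(X)$, which yields both finite-index statements and completes the plan.
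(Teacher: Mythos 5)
Your overall strategy matches the paper's: identify $\Aut(\b{X},H_X)$ with the graded automorphisms of $\mathcal{R}(X)$, pass to $\Bir_2(X)$ and $\Aut(X)$ through the quotient using that $\rq{X}$ is big in $\b{X}$, and compute the kernels via degree-zero fractions. Your finite-index argument, however, is a genuinely different route: the paper invokes Bia\l ynicki-Birula's finiteness of open subsets admitting good quotients with complete quotient space to see that the translates $g\mal\rq{X}$ form a finite set, whereas you use finiteness of the GIT chamber decomposition of $\Cl(X)_\QQ$ and identify $\Aut(\rq{X},H_X)$ with the stabilizer of the ample chamber. That works and is arguably more transparent, given the standard facts that $\rq{X}$ is the semistable locus of the ample chamber and that the chamber determines the semistable locus.

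The genuine gap is in the surjectivity of $\Aut(\b{X},H_X)\to\Bir_2(X)$ (and likewise of $\Aut(\rq{X},H_X)\to\Aut(X)$). The degree-wise pullbacks $\Gamma(X,\Of(D))\to\Gamma(X,\Of(\psi^*D))$ do \emph{not} automatically assemble into a ring automorphism of $\mathcal{R}(X)$: the multiplication of the Cox ring depends on a chosen character $\chi\colon\PDiv(X)\to\KK(X)^*$ (the Cox sheaf is $\mathcal{S}/\mathcal{I}$ with $\mathcal{I}$ generated by the sections $1-\chi(E)$), and the naive pullback sends $1-\chi(E)$ to $1-\alpha(E)\chi(\psi^*E)$ with a scalar $\alpha(E)=\psi^*(\chi(E))/\chi(\psi^*E)$ that need not equal $1$; when $\alpha(E)\ne 1$ this element is a unit modulo $\mathcal{I}$, so the ideal is not preserved and nothing descends. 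The paper repairs this by extending the character $\alpha\colon\PDiv(X)\to\KK^*$ to all of $\WDiv(X)$ --- which requires extracting $n_i$-th roots on generators of the torsion part of $\Cl(X)$ --- and twisting the pullback on $\mathcal{S}_D$ by $\alpha(D)$. This correction is the technical heart of the surjectivity step and is missing from your sketch; the same issue affects your phrase ``lifts canonically'' for $\Aut(X)\to\Aut(\rq{X},H_X)$. A smaller imprecision: $p_X\colon\rq{X}\to X$ is only a good quotient, so to descend $\varphi$ restricted to $\varphi^{-1}(\rq{X})\cap\rq{X}$ to a big open subset of $X$ you should first pass to the (big, saturated) locus where the quotient is geometric and intersect its finitely many translates, as the paper does when constructing $U$; otherwise the domain need not be saturated and its image need not be open or big in $X$.
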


This means in particular that the unit component 
of $\Aut(X)$ coincides with that of $\Bir_2(X)$ 
which in turn is determined by $\Aut(\b{X},H_X)$,
the group of graded automorphisms of the Cox ring.
Coming back to rational varieties $X$ with torus 
action of complexity one, the task then is a 
detailed study of the graded automorphism group 
of the rings $\mathcal{R}(X) = R(A,P)$.
This is done in a purely algebraic way.
The basic concepts are provided in 
Section~\ref{sec:factgrad}.
The key result is the description of the ``primitive 
homogeneous locally nilpotent derivations'' on 
$R(A,P)$ given in Theorem~\ref{thm:basiclnd}.
The proof of the first main theorem 
in Section~\ref{sec:demroots}
then relates the Demazure $P$-roots via these 
derivations to the roots of the automorphism 
group $\Aut(X)$.

In Section~\ref{sec:delpezzo} we apply our 
results to the study of almost homogeneous 
rational $\KK^*$-surfaces $X$ of 
Picard number one; where almost homogeneous
means that $\Aut(X)$ has an open orbit in $X$.
It turns out that these surfaces are always
(possibly singular) del Pezzo surfaces and,
up to isomorphism, there are countably many 
of them, see Corollary~\ref{thm:alhomdelp}.
Finally in the case that $X$ is log 
terminal with only one singularity, we give
classifications for fixed Gorenstein index.

In Section ~\ref{sec:semi}, we investigate 
the semisimple part 
$\Aut(X)^{\rm ss} \subseteq \Aut(X)$
of the automorphism group;
recall that the semisimple part of a 
linear algebraic group is a maximal 
connected semisimple subgroup.
In the case of a toric variety,
by Demazure's results, the semisimple part 
of the automorphism group has a root system 
composed of systems $A_i$. 
Here comes a summarizing version of our 
second main result which settles the 
complexity one case; see 
Theorem~\ref{thm:sesipart} for the
detailed description.

\begin{introthm}
Let $X$ be a nontoric normal complete rational 
variety with an effective torus action 
$T \times X \to X$ of complexity one.
The root system $\Phi$ of the semisimple part
splits as 
$\Phi = \Phi^{\rm vert} \oplus \Phi^{\rm hor}$
with
$$ 
\Phi^{\rm vert} \ = \ \bigoplus_{\Cl(X)} A_{m_D-1},
\qquad\qquad
\Phi^{\rm hor} \  \in \ \{\emptyset, A_1,A_2,A_3,A_1\oplus A_1,B_2\},
$$
where $m_D$ is the number of invariant prime
divisors in $X$ with infinite $T$-isotropy that 
represent a given class $D \in \Cl(X)$.
The number $m_D$ as well as the possibilities 
for $\Phi^{\rm hor}$ can be read off from the 
defining matrix $P$.
\end{introthm}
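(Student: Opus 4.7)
The plan is to combine the first main theorem (Theorem~\ref{thm:autroots}) with the classical fact that the root system of the semisimple part $\Aut(X)^{\rm ss}$ consists precisely of those roots $\alpha$ of $\Aut(X)^0$ with respect to a maximal torus for which $-\alpha$ is also a root; equivalently, those roots whose root subgroup $U_\alpha$ does not lie in the unipotent radical of $\Aut(X)^0$. Since the roots of $\Aut(X)$ with respect to $T$ are the $\ZZ^s$-parts of the Demazure $P$-roots, the task reduces to analyzing which Demazure $P$-roots come in pairs $\{\alpha,-\alpha\}$ and then identifying the resulting root system. The decomposition $\Phi = \Phi^{\rm vert}\oplus\Phi^{\rm hor}$ will mirror the vertical/horizontal split of the Demazure $P$-roots; orthogonality of the two components will follow from the fact that vertical and horizontal root subgroups of $\Aut(\b X,H_X)$ act nontrivially on disjoint sets of distinguished generators of $R(A,P)$ and hence commute.

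For $\Phi^{\rm vert}$ the analysis is essentially toric in flavour. A vertical Demazure $P$-root corresponds, via Theorem~\ref{thm:basiclnd}, to a primitive homogeneous LND that differentiates with respect to a single generator associated to a column of $P$ in one of the vertical blocks and multiplies by a suitable monomial; its $\ZZ^s$-part is the difference of $\Cl(X)$-degrees of two such column generators. The pair $(\alpha,-\alpha)$ therefore occurs exactly when the two generators represent the same divisor class $D\in\Cl(X)$. Grouping the generators by class then yields, for every $D$ represented $m_D\geq 1$ times by a prime $T$-invariant divisor with infinite $T$-isotropy, the subsystem $A_{m_D-1}$ arising from the $\GL_{m_D}$-action permuting equally-graded variables. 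Summing over $\Cl(X)$ gives $\Phi^{\rm vert}=\bigoplus_{D\in\Cl(X)} A_{m_D-1}$, and the integers $m_D$ are immediately read off by counting columns of $P$ of identical $\Cl(X)$-weight among the vertical blocks.

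For $\Phi^{\rm hor}$ we again apply Theorem~\ref{thm:basiclnd}, but now to primitive homogeneous LNDs transversal to the $T$-action; these exist only under rigid numerical conditions on the exponent vectors $l_i$ and the blocks $d_i$ of $P$, and admitting a pair $\{\alpha,-\alpha\}$ of horizontal roots is even more restrictive. The plan is to enumerate the admissible shapes of $P$ that can support at least one such pair, compute the inner products between the resulting horizontal roots, and check that each admissible configuration produces exactly one of $\emptyset,\, A_1,\, A_2,\, A_3,\, A_1\oplus A_1,\, B_2$. This last case analysis is the main obstacle: one must rule out every other irreducible root system — most notably $G_2$ and the higher-rank classical types $B_n,C_n,D_n$ — by verifying that no admissible configuration of $(l_i,d_i)$ simultaneously supports the required family of transversal LNDs with the corresponding angles. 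It is precisely here that the contrast with the toric situation is sharpest, since the trinomial relations encoded by the $l_i$ force the rank and shape of $\Phi^{\rm hor}$ to remain small, and the possibilities for $\Phi^{\rm hor}$ are then read off from $P$ by inspecting which columns can be moved by a transversal LND.
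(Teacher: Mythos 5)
Your overall strategy coincides with the paper's: declare a $P$-root semisimple when its negative is also a $P$-root, show that each such pair is either purely vertical or purely horizontal (whence $\Phi=\Phi^{\rm vert}\oplus\Phi^{\rm hor}$), read off $\Phi^{\rm vert}$ from coincidences of $\Cl(X)$-degrees among the generators $S_k$, and pin down $\Phi^{\rm hor}$ by a case analysis on $P$. But there is one genuine gap, right at the start: the ``classical fact'' that the root system of $\Aut(X)^{\rm ss}$ consists precisely of those roots $\alpha$ of $\Aut(X)^0$ for which $-\alpha$ is also a root. Only the inclusion ``$\subseteq$'' is automatic. The reverse inclusion is false for general connected linear algebraic groups: take $G=\KK^*\ltimes(\GG_a\times\GG_a)$ with $t\cdot(x,y)=(tx,t^{-1}y)$; its roots are $\{1,-1\}$, each the negative of the other, yet $G$ is solvable and its semisimple part is trivial. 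The point is that two opposite root subgroups may commute and then both lie in the unipotent radical, so occurring in a pair $\{\alpha,-\alpha\}$ is necessary but not a priori sufficient for contributing to $\Phi$.

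The paper closes exactly this gap by \emph{realizing} the candidate set $\Phi_P^{\rm ss}=\Phi_P^{\rm vert}\oplus\Phi_P^{\rm hor}$ as the root system of an explicit semisimple subgroup of $\Aut(X)$: it constructs on $\b{X}=\Spec\,R(A,P)$ representations of $\SL_{m_p}$ permuting the equally graded $S_k$, and, case by case in the horizontal normal forms, of $\SL_2$, $\SL_3$, $\SL_4$, $\SO_3$, $\SO_4$, $\SO_5$, $\SO_6$ on the $T_{ij}$-coordinates, each commuting with $H_X$ and preserving the trinomial relation; these descend to semisimple subgroups $G^{\rm vert}, G^{\rm hor}\subseteq\Aut(X)$ with root system $\Phi_P^{\rm ss}$, which therefore must be a semisimple part. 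You would need either this construction or an argument that each pair $U_\alpha,U_{-\alpha}$ generates a subgroup locally isomorphic to $\SL_2$. Beyond that, your horizontal case analysis (that a semisimple horizontal pair forces $r=2$ and one of two normal forms of the relation, and the exclusion of $G_2$ and all higher-rank types) is announced as a plan rather than carried out --- this is where the bulk of the paper's work lies, via reduction of $P$ to normal forms by admissible row and column operations. A minor slip: a vertical Demazure $P$-root carries a single distinguished index $k_0$; the pairing condition is that two distinct indices $k_0^{\pm}$ carry variables of equal $\Cl(X)$-degree, not that a single root is a ``difference of degrees of two column generators.''
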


Examples and applications of this result
are discussed in Section~\ref{sec:appsem}.
The main results concern varieties of dimension three 
which are almost homogeneous 
under the action of a reductive group and
additionally admit an effective action of a 
two-dimensional torus. 
In Proposition~\ref{prop:dim3alhomred}, we
explicitly describe the Cox rings of these
varieties.
Moreover, in Proposition~\ref{prop:3dimautred},
we list all those having Picard number one 
and a reductive automorphism group; it turns 
out that any such variety is a Fano variety.

\tableofcontents

\section{The automorphism group of a Mori dream space}
\label{sec:autmds}

Let $X$ be a normal complete variety defined over 
an algebraically closed field $\KK$ of 
characteristic zero with finitely generated 
divisor class group $\Cl(X)$ and Cox 
sheaf~$\mathcal{R}$; we recall the definition 
below.
If $X$ is a Mori dream space, i.e.~the Cox ring 
$\mathcal{R}(X) = \Gamma(X,\mathcal{R})$ is 
finitely generated as a $\KK$-algebra, 
then we obtain the following picture
$$ 
\xymatrix{
{\Spec_X \mathcal{R}}
\ar@{}[r]|{\quad=}
& 
{\rq{X}}
\ar@{}[r]|\subseteq
\ar[d]_{\quot H_X}
&
{\b{X}}
\ar@{}[r]|{=\quad}
&
{\Spec \, \mathcal{R}(X)}
\\
& 
X
&
&
}
$$
where the {\em total coordinate space\/} $\b{X}$
comes with an action of the 
{\em characteristic quasitorus\/} 
$H_X := \Spec \, \KK[\Cl(X)]$,
the {\em characteristic space\/} $\rq{X}$, 
i.e. the relative spectrum of the Cox sheaf,
occurs as an open $H_X$-invariant 
subset of $\b{X}$ and the map 
$p_X \colon \rq{X} \to X$ 
is a good quotient for the action of~$H_X$.

We study automorphisms of $X$ in terms of 
automorphisms of $\b{X}$ and $\rq{X}$.
By an {\em $H_X$-equivariant automorphism\/} of $\b{X}$ 
we mean a pair $(\varphi,\t{\varphi})$, where 
$\varphi \colon \b{X} \to \b{X}$ is an 
automorphism of varieties
and $\t{\varphi} \colon H_X \to H_X$ is 
an automorphism of linear algebraic groups
satisfying
$$
\varphi(t \mal x) \ = \ \t{\varphi}(t) \mal \varphi(x)
\quad
\text{for all }
x \in \b{X}, \ t \in H_X.
$$
We denote the group of $H_X$-equivariant automorphisms
of $\b{X}$ by $\Aut(\b{X},H_X)$.
Analogously, one defines the group $\Aut(\rq{X},H_X)$
of $H_X$-equivariant automorphisms of $\rq{X}$.
A {\em weak automorphism\/} of $X$ is a birational 
map $\varphi \colon X \to X$  which defines an 
isomorphism of big open subsets, i.e., there are
open subsets $U_1,U_2 \subseteq X$ with complement 
$X \setminus U_i$ of codimension at least two in $X$ 
such that $\varphi_{\vert U_1} \colon U_1 \to U_2$ is a 
regular isomorphism.
We denote the group of weak automorphisms of $X$ 
by $\Bir_2(X)$.

\begin{theorem} 
\label{thm:weakaut}
Let $X$ be a Mori dream space.
Then there is a commutative diagram
of morphisms of linear algebraic 
groups where the rows are exact 
sequences and the upwards inclusions 
are of finite index:
$$
\xymatrix{
1 
\ar[r]
&
H_X 
\ar[r]
&
{\Aut(\b{X},H_X)}
\ar[r]
&
{\Bir_2(X)} 
\ar[r]
&
1
\\ 
1 
\ar[r]
&
H_X 
\ar[r]
\ar@{=}[u]
&
{\Aut(\rq{X},H_X)}
\ar[r]
\ar@{^{(}->}[u]
&
{\Aut(X)} 
\ar[r]
\ar@{^{(}->}[u]
&
1
}
$$
Moreover, there is a big open subset $U \subseteq X$ 
with $\Aut(U) = \Bir_2(X)$ 
and the groups $\Aut(\b{X},H_X)$, $\Bir_2(X)$,
$\Aut(\rq{X},H_X)$, $\Aut(X)$ act morphically on 
$\b{X}$, $U$, $\rq{X}$, $X$, respectively.
\end{theorem}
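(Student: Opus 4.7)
\emph{Linear-algebraic setup and the surjection to $\Bir_2(X)$.} Since $X$ is a Mori dream space, $\mathcal{R}(X)$ is a finitely generated $\Cl(X)$-graded $\KK$-algebra, and an element of $G := \Aut(\bar X, H_X)$ is precisely a $\KK$-algebra automorphism of $\mathcal{R}(X)$ that permutes homogeneous pieces up to a character-group automorphism $\tilde\varphi$. Fix finitely many homogeneous generators of $\mathcal R(X)$; since $\tilde\varphi$ must permute the finite set of their degrees, only finitely many twists $\tilde\varphi$ occur, and the finite-dimensional $G$-stable span $V \subseteq \mathcal R(X)$ of the corresponding graded pieces yields a closed embedding $G \hookrightarrow \GL(V)$. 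This makes $G$ a linear algebraic group acting morphically on $\bar X = \Spec\mathcal R(X)$, with $H_X\subseteq G$ as the subgroup where $\tilde\varphi=\id$. To define $G\to\Bir_2(X)$, I would use the (standard) fact that $\bar X\setminus\hat X$ has codimension at least two: for $(\varphi,\tilde\varphi)\in G$, the open subset $\hat V:=\hat X\cap\varphi^{-1}(\hat X)$ is $H_X$-invariant with codimension-$\geq 2$ complement, and by $H_X$-equivariance $\varphi$ descends to an isomorphism $p_X(\hat V)\to p_X(\varphi(\hat V))$ of big open subsets of $X$, i.e.\ a weak automorphism $\psi_\varphi$. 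The kernel consists of those $\varphi$ preserving every generic $H_X$-orbit, which by equivariance must be left translations by an element of $H_X$.

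\emph{Surjectivity and the bottom row.} Conversely, a weak automorphism $\psi\colon U_1\to U_2$ acts by pullback on Weil divisors and hence on $\Cl(U_i)=\Cl(X)$, providing $\tilde\varphi$; the induced isomorphisms $\mathcal{O}_{U_2}(D)\to\mathcal{O}_{U_1}(\psi^*D)$ assemble into a graded (up to $\tilde\varphi$) isomorphism of Cox sheaves on the $U_i$, which extends by normality and codimension two to a ring automorphism $\varphi$ of $\mathcal R(X)$. For the bottom row, the subgroup $\Aut(\hat X,H_X)\subseteq G$ coincides with the stabilizer of $\hat X$: any $H_X$-equivariant automorphism of $\hat X$ extends uniquely to $\bar X=\Spec\Gamma(\hat X,\mathcal{O}_{\hat X})$. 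Under $G\to\Bir_2(X)$ this stabilizer maps precisely onto $\Aut(X)$, with kernel still $H_X$, yielding the bottom exact sequence.

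\emph{Finite index and the big open $U$.} The closed set $\bar X\setminus\hat X$ has finitely many irreducible components, so the connected group $G^0$ must fix each one setwise; hence $G^0\subseteq\Aut(\hat X,H_X)$ and $[G:\Aut(\hat X,H_X)]$ is finite. Quotienting by $H_X$ gives the finite index of $\Aut(X)$ in $\Bir_2(X)$. Define $\hat U:=\bar X\setminus G\cdot(\bar X\setminus\hat X)$; the orbit $G\cdot(\bar X\setminus\hat X)=\bigcup_{\bar g\in G/\Aut(\hat X,H_X)}g(\bar X\setminus\hat X)$ is a finite union of closed subsets of codimension $\geq 2$, hence itself closed of codimension $\geq 2$, so $\hat U$ is a $G$-invariant big open contained in $\hat X$. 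Setting $U:=p_X(\hat U)$ then gives a big open in $X$ preserved by every element of $\Bir_2(X)$; the equality $\Aut(U)=\Bir_2(X)$ follows because any automorphism of a big open of $X$ is by definition a weak automorphism. The morphic actions on $\bar X$, $\hat X$, $U$, $X$ all descend from the morphic $G$-action on $\bar X$.

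\emph{Expected main obstacle.} The most delicate step is the surjectivity of $G\to\Bir_2(X)$: one must promote the sheaf-level pullback by a weak automorphism $\psi$ to an everywhere-defined graded ring automorphism of $\mathcal R(X)$, which requires both a careful codimension-two extension argument on the Cox sheaf and a precise matching of the $\Cl(X)$-grading with the character-group twist $\tilde\varphi$. By contrast, the finite-index statements and the construction of $U$ are comparatively formal once $G$ is realized as a linear algebraic group acting morphically on $\bar X$.
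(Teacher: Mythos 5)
Your overall strategy matches the paper's: linearize $\Aut(\b{X},H_X)$ on a finite-dimensional span of graded pieces, descend to $X$ along the quotient, and lift weak automorphisms through the sheaf of divisorial algebras. But there are genuine gaps. First, your finite-index argument is circular and its key step is false: a connected group $G^0$ acting on $\b{X}$ has no reason to preserve the closed set $\b{X}\setminus\rq{X}$ merely because that set has finitely many irreducible components --- a connected group can move a closed subset through an infinite, positive-dimensional family of translates (think of $\GG_a$ translating a point in the plane), so ``permutes finitely many components, hence fixes each'' does not apply. The correct logical order is the reverse: one first shows that the collection of translates $g\cdot\rq{X}$ is finite, and for this the paper invokes Bia\l ynicki-Birula's theorem that $\b{X}$ admits only finitely many open subsets with a good quotient having complete quotient space; finite index of $\Aut(\rq{X},H_X)$ and the inclusion $G^0\subseteq\Aut(\rq{X},H_X)$ then follow. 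Without this input your description of $G\cdot(\b{X}\setminus\rq{X})$ as a finite union is also unjustified.

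Second, the descent of $\varphi\in\Aut(\b{X},H_X)$ to a weak automorphism of $X$ is not automatic on $\rq{V}:=\rq{X}\cap\varphi^{-1}(\rq{X})$: this set is $H_X$-invariant but need not be saturated with respect to the good quotient $p_X$, so $p_X(\rq{V})$ need not be open and the equivariant map $\varphi$ need not induce a morphism between the images --- a good quotient identifies orbits with intersecting closures, and equivariant maps only descend on saturated subsets. The paper circumvents this by passing to the locus $\rq{W}$ over which the quotient is geometric (there invariant open subsets are automatically saturated) and intersecting its finitely many translates; your $\rq{U}$ would need the same treatment. Finally, in the surjectivity step you correctly flag the delicate point but do not carry it out: the naive pullback on the sheaf of divisorial algebras does not preserve the ideal $\mathcal{I}$ defining the Cox sheaf, and one must introduce the correction character $\alpha\colon\WDiv(X)\to\KK^*$, extended from $\PDiv(X)$ to all of $\WDiv(X)$ using $n_i$-th roots on generators of the torsion classes, to make the lift descend to $\mathcal{R}$. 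This construction is where the substance of the surjectivity proof lies, and it is absent from your argument.
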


Our proof uses some ingredients from algebra
which we develop first.
Let $K$ be a finitely generated abelian group 
and consider a finitely generated 
integral $\KK$-algebra 
\begin{eqnarray*}
R 
& = & 
\bigoplus_{w \in K} R_w.
\end{eqnarray*} 
The {\em weight monoid\/} of $R$ is the submonoid 
$S \subseteq K$ consisting of the elements $w \in K$ 
with $R_w \ne 0$.
The {\em weight cone\/} of $R$ is the convex cone
$\omega \subseteq K_\QQ$ in the rational vector space 
$K_\QQ = K \otimes_\ZZ \QQ$ generated by the weight 
monoid $S \subseteq K$.
We say that the $K$-grading of $R$ is {\em pointed\/} if 
the weight cone $\omega \subseteq K_\QQ$ contains no line
and $R_0 = \KK$ holds.
By an {\em automorphism\/} of the $K$-graded algebra 
$R$ we mean a pair $(\psi,F)$,
where $\psi \colon R \to R$ is an isomorphism of 
$\KK$-algebras and $F \colon K \to K$ is an 
isomorphism such that $\psi(R_w) = R_{F(w)}$
holds for all $w \in K$.
We denote the group of such automorphisms 
of $R$ by $\Aut(R,K)$.

\goodbreak

\begin{proposition}
\label{prop:pointaut}
Let $K$ be a finitely generated abelian group 
and $R = \oplus_{w \in K} R_w$ a finitely generated 
integral $\KK$-algebra with $R^*=\KK^*$.
Suppose that the grading is pointed.
Then $\Aut(R,K)$ is a linear algebraic 
group over $\KK$ and $R$ is a rational 
$\Aut(R,K)$-module.
\end{proposition}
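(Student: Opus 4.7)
My approach is to split $\Aut(R,K)$ via its action on the grading group $K$ into a finite ``combinatorial'' quotient and a degree-preserving kernel, and to realize the kernel as a closed subvariety of a finite-dimensional affine space of generators. A harmless reduction at the outset is to assume that the weight monoid $S := \{w\in K : R_w\ne 0\}$ generates $K$; this can always be arranged by passing to the subgroup $\langle S\rangle$, with the hypotheses $R^*=\KK^*$ and pointedness ensuring that nothing essential is lost.

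Let $\omega\subseteq K_\QQ$ denote the weight cone. Finite generation of $R$ combined with pointedness forces $S$ to be a finitely generated submonoid with finite Hilbert basis $B$, $\omega$ to contain no line, and each component $R_w$ to be finite-dimensional. For any $(\psi,F)\in\Aut(R,K)$ the condition $\psi(R_w)=R_{F(w)}$ forces $F(S)=S$, and hence $F$ permutes $B$. Since $B$ generates $K$ up to torsion and the torsion of $K$ is finite, the subgroup $\Gamma := \{F\in\Aut(K) : F(S)=S\}$ is finite; the map $(\psi,F)\mapsto F$ then lands in a finite group $\Gamma'\subseteq\Gamma$.

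Let $N\subseteq\Aut(R,K)$ denote the kernel of this homomorphism, i.e.\ the group of degree-preserving automorphisms. Fix homogeneous generators $f_1,\ldots,f_k$ of $R$ with $\deg f_i = w_i$. The evaluation map
\[
N \;\longrightarrow\; V := R_{w_1}\oplus\cdots\oplus R_{w_k}, \qquad \psi \mapsto \bigl(\psi(f_1),\ldots,\psi(f_k)\bigr)
\]
is injective, and its image is the locally closed subvariety of $V$ cut out by the polynomial conditions expressing that the image tuple satisfies all defining relations of $R$, intersected with the open condition of bijectivity of the induced algebra endomorphism. Hence $N$ is a linear algebraic group. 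Choosing a set-theoretic lift of each element of $\Gamma'$ exhibits $\Aut(R,K)$, via the short exact sequence $1 \to N \to \Aut(R,K) \to \Gamma' \to 1$, as a finite disjoint union of translates of $N$ on which multiplication and inversion are algebraic; this gives it the claimed structure of a linear algebraic group.

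For rationality of the module, every $\Gamma'$-orbit on $S$ is finite, so each finite $\Gamma'$-stable subset $T\subseteq S$ yields a finite-dimensional $\Aut(R,K)$-invariant subspace $R_T := \bigoplus_{w\in T}R_w$. The action on $R_T$ is given by matrix coefficients that are polynomial in the coordinates on $V$ together with the discrete data of $\Gamma'$, so $R_T$ is a rational $\Aut(R,K)$-module; since $R=\bigcup R_T$, the full ring is a rational $\Aut(R,K)$-module. The main obstacle in executing this plan is the finiteness of $\Gamma$: pointedness and finite generation immediately give finiteness of the Hilbert basis $B$, but extending the combinatorial action on $B$ to an essentially unique automorphism of $K$ crucially requires the grading to be honest (that is, $S$ to generate a full-rank sublattice of $K$), which is precisely the purpose of the initial reduction and of the hypotheses $R^*=\KK^*$ and pointedness. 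Once this finiteness is in hand, everything downstream is a routine linearisation.
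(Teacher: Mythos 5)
Your proof is correct, but it organizes the linearisation differently from the paper, and the comparison is instructive. The paper does not split off the finite quotient: it takes the finite group $\Gamma\subseteq\Aut(K)$ stabilizing the weight cone (finiteness via the extremal rays of the pointed polyhedral cone, where you instead use the Hilbert basis of the weight monoid --- essentially equivalent), saturates the generator degrees to the finite $\Gamma$-invariant set $S^0=\Gamma\cdot\{w_1,\dots,w_r\}$, and embeds all of $\Aut(R,K)$ at once into $\GL(V^0)$ with $V^0=\oplus_{w\in S^0}R_w$; the image is the \emph{closed} subgroup cut out by the conditions of permuting the summands $R_w$ and preserving the relations, so the group structure, the algebraicity of inversion, and (via the surjection $\Sym V^0\to R$ of $\GL(V^0)$-modules) the rationality of $R$ all come for free. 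Your route through $1\to N\to\Aut(R,K)\to\Gamma'\to 1$ works but leaves you with two verifications that the paper's trick bypasses: first, that bijectivity of the induced endomorphism is an open condition on the tuple $(g_1,\dots,g_k)$ --- this is true, since a degree-preserving endomorphism is surjective iff the induced linear maps on the $R_{w_i}$ are invertible (nonvanishing determinants), and a surjective graded endomorphism with finite-dimensional pieces is injective, but it deserves a sentence; and second, that multiplication and in particular inversion are morphisms on your locally closed image in $V$ and on the finite union of cosets (inversion can be handled by Cramer's rule on the $\psi|_{R_{w_i}}$, or by replacing your embedding into $V$ by one into $\prod_i\GL(R_{w_i})$). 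Both proofs share the same implicit reduction to the case where the weights span, and your exhaustion of $R$ by finite-dimensional $\Gamma'$-stable pieces is a perfectly good substitute for the paper's $\Sym V^0$ argument for rationality.
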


\begin{proof}
The idea is to represent the automorphism group 
$\Aut(R,K)$ as a closed subgroup of 
the linear automophism group of a suitable 
finite dimensional vector subspace 
$V^0 \subseteq R$.
In the subsequent construction of $V^0$,
we may assume that the weight cone $\omega$
generates $K_\QQ$ as a vector space.

Consider the subgroup $\Gamma \subseteq \Aut(K)$ 
of $\ZZ$-module automorphisms $K \to K$ such that 
the induced linear isomorphism $K_\QQ \to K_\QQ$ 
leaves the weight cone $\omega \subseteq K_\QQ$
invariant. 
By finite generation of $R$, the cone $\omega$
is polyhedral and thus $\Gamma$ is finite.
Let $f_1, \ldots, f_r \in R$ be homogeneous generators
and denote by $w_i := \deg(f_i) \in K$ their degrees.
Define a finite $\Gamma$-invariant subset 
and a vector subspace
$$ 
S^0 \ := \ \Gamma \cdot \{w_1, \ldots, w_r\} \subseteq \ K,
\qquad
V^0
\ := \ 
\bigoplus_{w \in S^0} R_w
\ \subseteq \
R.
$$

For every automorphism $(\psi,F)$ of the graded 
algebra $R$, we have $F(S^0) = S^0$ and thus 
$\psi(V^0) = V^0$. 
Moreover, $(\psi,F)$ is uniquely determined by its
restriction on $V^0$.
Consequently, we may regard the automorphism 
group $H := \Aut(R,K)$ as a subgroup of 
the general linear group $\GL(V^0)$.
Note that every $g \in H$ 
\begin{enumerate}
\item
permutes the components $R_w$ of 
the decomposition $V^0 = \bigoplus_{w \in S^0} R_w$,
\item
satisfies
$\sum_\nu a_\nu g(f_1)^{\nu_1} \cdots g(f_r)^{\nu_r} = 0$
for any relation
$\sum_\nu a_\nu f_1^{\nu_1} \cdots f_r^{\nu_r} =0$.
\end{enumerate}
Obviously, these are algebraic conditions.
Moreover, every $g \in \GL(V^0)$
satisfying the above conditions can be extended 
uniquely to an element of $\Aut(R,K)$
via 
$$
g \left( \sum_\nu a_\nu f_1^{\nu_1} \cdots f_r^{\nu_r} \right)
\ := \ 
\sum_\nu a_\nu g(f_1)^{\nu_1} \cdots g(f_r)^{\nu_r}.
$$
Thus, we saw that $H \subseteq \GL(V^0)$ is 
precisely the closed subgroup defined by the 
above conditions (i) and (ii).
In particular $H = \Aut(R,K)$ is linear algebraic.
Moreover, the symmetric algebra $SV^0$ is a rational 
$\GL(V^0)$-module,
hence $SV^0$ is a rational $H$-module for the algebraic 
subgroup $H$ of $\GL(V^0)$, and so is its factor module $R$. 
\end{proof}

\begin{corollary}
\label{cor:autalg}
Let $K$ be a finitely generated abelian group 
and $R = \oplus_{w \in K} R_w$ a finitely generated 
integral $\KK$-algebra with $R^*=\KK^*$.
Consider the corresponding action of 
$H := \Spec \, \KK[K]$ on $\b{X} := \Spec \, R$.
Then we have a canonical isomorphism
$$ 
\Aut(\b{X},H) \ \to \ \Aut(R,K),
\qquad
(\varphi,\t{\varphi})
\ \mapsto \ 
(\varphi^*,\t{\varphi}^*),
$$
where $\varphi^*$ is the pullback of regular functions 
and $\t{\varphi}^*$ the pullback of characters.
If the $K$-grading 
is pointed,
then $\Aut(\b{X},H)$ is a linear algebraic group 
acting morphically on $\b{X}$.
\end{corollary}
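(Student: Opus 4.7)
The plan is to combine the standard anti-equivalence between affine $\KK$-varieties and their finitely generated integral coordinate rings with the dictionary between $H$-actions on $\b{X} = \Spec R$ and $K$-gradings on $R$, and then to invoke Proposition~\ref{prop:pointaut} for the linear algebraic structure.

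First, I recall that the $H$-action corresponding to the given $K$-grading is characterized by $f(t \mal x) = \chi^w(t) f(x)$ for $f \in R_w$, $t \in H$ and $x \in \b{X}$, where $\chi^w \in \KK[K]$ is the character associated with $w \in K$. Given an $H$-equivariant automorphism $(\varphi,\t{\varphi})$ of $\b{X}$, I set $\psi := \varphi^*$ and $F := \t{\varphi}^*$, the latter regarded as an automorphism of the character group $K$.

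The key computation translates equivariance into the grading condition. For $f \in R_w$, $x \in \b{X}$ and $t \in H$, combining $\varphi(t \mal x) = \t{\varphi}(t) \mal \varphi(x)$ with $f \in R_w$ yields
$$
\psi(f)(t \mal x) \ = \ f(\t{\varphi}(t) \mal \varphi(x)) \ = \ \chi^{F(w)}(t) \psi(f)(x),
$$
so $\psi(f) \in R_{F(w)}$ and hence $\psi(R_w) = R_{F(w)}$; in other words $(\psi,F) \in \Aut(R,K)$, and the assignment is well-defined. Injectivity is immediate since $\varphi$ is determined by $\varphi^*$ and $\t{\varphi}$ by $\t{\varphi}^*$. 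For surjectivity, I would reverse the computation: given $(\psi,F) \in \Aut(R,K)$, the pair $(\varphi,\t{\varphi})$ produced by the anti-equivalences for affine varieties and for diagonalisable groups satisfies the equivariance identity because $f \circ \varphi(t \mal \cdot)$ and $f \circ (\t{\varphi}(t) \mal \varphi(\cdot))$ agree on $\b{X}$ for every homogeneous $f$.

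Finally, under the pointedness assumption, Proposition~\ref{prop:pointaut} yields that $\Aut(R,K)$ is linear algebraic and that $R$ is a rational $\Aut(R,K)$-module. The latter is equivalent to the induced action on $\b{X} = \Spec R$ being a morphism of varieties, so transporting these structures through the isomorphism produces the second assertion. The main obstacle is essentially cosmetic: keeping track of the contravariance of pullback when identifying the group structures and using Proposition~\ref{prop:pointaut} to cash in rationality of $R$ as morphicity of the action on $\b{X}$.
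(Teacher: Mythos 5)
Your proposal is correct and follows the same route the paper intends: the corollary is deduced from the standard anti-equivalence dictionary between $H$-actions on $\Spec R$ and $K$-gradings on $R$, with the linear algebraic structure and the morphicity of the action on $\b{X}$ obtained by transporting the conclusions of Proposition~\ref{prop:pointaut} (in particular, rationality of $R$ as an $\Aut(R,K)$-module) through the isomorphism. The equivariance-to-grading computation you spell out is exactly the content the paper leaves implicit.
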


We will also need details of the construction 
of the Cox sheaf $\mathcal{R}$ on $X$ 
which we briefly recall now.
Denote by $c \colon \WDiv(X) \to \Cl(X)$
the map sending the Weil divisors to their classes,
let $\PDiv(X) = \ker(c)$ denote the group of 
principal divisors and choose  a character, 
i.e.~a group homomorphism 
$\chi \colon \PDiv(X) \to \KK(X)^*$ with
$$
\div(\chi(E))
\ = \ 
E,
\qquad
\text{for all } 
E \in \PDiv(X).
$$
This can be done by prescribing $\chi$ suitably
on a $\ZZ$-basis of $\PDiv(X)$.
Consider the associated sheaf of divisorial 
algebras 
$$ 
\mathcal{S}
\ := \ 
\bigoplus_{\WDiv(X)} \mathcal{S}_D,
\qquad\qquad
\mathcal{S}_D
\ := \ 
\mathcal{O}_X(D).
$$
Denote by $\mathcal{I}$ the sheaf of ideals 
of $\mathcal{S}$ locally generated by the 
sections $1 - \chi(E)$, where~$1$ is homogeneous
of degree zero, $E$ runs through $\PDiv(X)$ and 
$\chi(E)$ is homogeneous of degree $-E$.
The {\em Cox sheaf\/} associated to 
$K$ and $\chi$ is the quotient sheaf 
$\mathcal{R} := \mathcal{S}/\mathcal{I}$
together with the $\Cl(X)$-grading
$$ 
\mathcal{R}
\ = \
\bigoplus_{[D] \in \Cl(X)}  \mathcal{R}_{[D]},
\qquad\qquad
\mathcal{R}_{[D]} 
\ := \ 
\pi \left( \bigoplus_{D' \in c^{-1}([D])} \mathcal{S}_{D'} \right).
$$
where $\pi \colon \mathcal{S} \to \mathcal{R}$ 
denotes the projection.
The Cox sheaf $\mathcal{R}$ is a quasicoherent sheaf 
of $\Cl(X)$-graded $\mathcal{O}_X$-algebras.
The {\em Cox ring\/} is the ring $\mathcal{R}(X)$
of global sections  of the Cox sheaf.

\begin{proof}[Proof of Theorem~\ref{thm:weakaut}]
We set $G_{\b{X}} := \Aut(\b{X},H_X)$ for short.
According to Corollary~\ref{cor:autalg}, 
the group $G_{\b{X}}$ is linear algebraic 
and acts morphically on $\b{X}$.
Looking at the representations of $H_X$ and 
$G_{\b{X}}$ on 
$\Gamma(\b{X},\mathcal{O}) = \mathcal{R}(X)$ 
defined by the respective actions, we see that 
the canonical inclusion $H_X \to G_{\b{X}}$ 
is a morphism of linear algebraic groups.

Next we construct the subset $U \subseteq X$ from
the last part of the statement.
Consider the translates $g \mal \rq{X}$, where 
$g \in G_{\b{X}}$.
Each of them admits a good quotient with a complete 
quotient space:
$$
p_{X,g} \colon 
g \mal \rq{X} \ \to \ (g \mal \rq{X}) \quot H_X.
$$
By~\cite{BB1}, there are only finitely 
many open subsets of $\b{X}$ with such a good quotient.
In particular, the number of translates 
$g \mal \rq{X}$ is finite.

Let $W \subseteq X$ denote the maximal open 
subset such that the restricted quotient 
$\rq{W} \to W$, where $\rq{W} := p_X^{-1}(W)$,
is geometric, i.e.~has the $H_X$-orbits as its
fibers.
Then, for any $g \in  G_{\b{X}}$, the translate
$g \mal \rq{W} \subseteq g \mal \rq{X}$ 
is the (unique) maximal open subset which is 
saturated with respect to the quotient map $p_{X,g}$ 
and defines a geometric quotient. 
Consider
$$ 
\rq{U} 
\ := \ 
\bigcap_{g \in G_{\b{X}}} g \mal \rq{W}
\ \subseteq  \ 
\rq{X}.
$$
By the preceding considerations $\rq{U}$ 
is open, and by construction it is 
$G_{\b{X}}$-invariant and saturated 
with respect to $p_X$.
By~\cite[Prop.~6.1.6]{ArDeHaLa} the set $\rq{W}$ 
is big in $\b{X}$.
Consequently, also $\rq{U}$ is big in 
$\b{X}$.
Thus, the (open) set $U := p_X(\rq{U})$ is 
big in $X$.
By the universal property of the geometric 
quotient, there is a unique morphical
action of $G_{\b{X}}$ on $U$ making 
$p_X \colon \rq{U} \to U$ equivariant.
Thus, we have homomorphism of groups
$$ 
G_{\b{X}} 
\ \to \
\Aut(U)
\ \subseteq \ 
\Bir_2(X).
$$

We show that $\pi \colon G_{\b{X}} \to \Bir_2(X)$ 
is surjective. 
Consider a weak automorphism $\varphi \colon X \to X$. 
The pullback  defines an automorphism 
of the group of Weil divisors
$$
\varphi^* \colon \WDiv(X) \to \WDiv(X),
\qquad
D \ \mapsto \ \varphi^*D.
$$
As in the construction of the Cox sheaf, 
consider the sheaf of divisorial 
algebras $\mathcal{S} = \oplus \mathcal{S}_D$
associated to $\WDiv(X)$ and fix a character
$\chi \colon \PDiv(X) \to \KK(X)^*$ with
$\div(\chi(E))=E$ for any $E\in\PDiv(X)$. 
Then we obtain a homomorphism
$$
\alpha \colon \PDiv(X) \ \to \ \KK^*,
\qquad 
E \mapsto 
\frac{\varphi^*(\chi(E))}{\chi(\varphi^*(E))}.
$$
We extend this to a homomorphism 
$\alpha \colon \WDiv(X) \to \KK^*$ as follows.
Write $\Cl(X)$ as a direct sum of a free part
and cyclic groups $\Gamma_1,\ldots,\Gamma_s$ 
of order $n_i$.
Take $D_1, \ldots, D_r \in \WDiv(X)$ such that 
the classes of $D_1,\ldots, D_s$ are generators 
for $\Gamma_1,\ldots,\Gamma_s$ and the remaining 
ones define a basis of the free part. Set
$$ 
\alpha(D_i) \ := \ \sqrt[n_i]{\alpha(n_iD_i)}
\text{ for } 1 \le i \le s,
\qquad
\alpha(D_i) \ := \ 1 
\text{ for } s+1 \le i \le r.
$$ 
Then one directly checks that this extends 
$\alpha$ to a homomorphism $\WDiv(X) \to \KK^*$.
Using $\alpha(E)$ as a ``correction term'', 
we define an automorphism of the graded sheaf
$\mathcal{S}$ of divisorial algebras:
for any open set $V \subseteq X$ we set
$$
\varphi^* \colon 
\Gamma (V,\mathcal{S}_D)
\ \to \ 
\Gamma (\varphi^{-1}(V), \mathcal{S}_{\varphi^*(D)}), 
\qquad 
f  
\ \mapsto \ 
\alpha(D) f \circ \varphi.
$$
By construction $\varphi^*$ sends the ideal 
$\mathcal{I}$ arising from the character 
$\chi$ to itself.
Consequently, $\varphi^*$ descends to an 
automorphism $(\psi,F)$ 
of the (graded) Cox sheaf 
$\mathcal{R}$; note that
$F$ is the pullback of divisor classes
via $\varphi$.
The degree zero part of $\psi$ equals
the usual pullback of regular functions on 
$X$ via $\varphi$.
Thus, the element in $\Aut(\b{X},H_X)$
defined by 
$\Spec \, \psi \colon \rq{U} \to \rq{U}$ 
maps to $\varphi$.

Clearly, $H_X$ lies in the 
kernel of $\pi \colon G_{\b{X}} \to \Bir_2(X)$.
For the reverse inclusion, consider 
an element $g \in \ker(\pi)$. 
Then $g$ is a pair $(\varphi,\t{\varphi})$
and, by the construction of $\pi$, we have 
a commutative diagram 
$$ 
\xymatrix{
{\rq{U}}
\ar[r]^{\varphi}
\ar[d]_{p_X}
&
{\rq{U}}
\ar[d]^{p_X}
\\
U 
\ar[r]_{\id}
&
U
}
$$
In particular, $\varphi$ stabilizes
all $H_X$-invariant divisors.
It follows that the pullback $\varphi^*$
on $\Gamma(\rq{U},\mathcal{O})= \mathcal{R}(X)$
stabilizes the homogeneous components.
Thus, for any homogeneous $f$ of degree $w$, 
we have $\varphi^*(f) = \lambda(w)f$ with 
a homomorphism $\lambda \colon K \to \KK^*$.
Consequently $\varphi(x) = h \mal x$ holds
with an element $h \in H_X$.
The statements concerning the upper 
sequence are verified.

Now, consider the lower sequence. Since $\rq{X}$ 
is big in $\b{X}$, every automorphism of $\rq{X}$ 
extends to an automorphism of $\b{X}$.
We conclude that $\Aut(\rq{X},H_X)$ 
is the (closed) subgroup of  $G_{\b{X}}$ 
leaving the complement 
$\b{X} \setminus \rq{X}$ invariant.
As seen before, the collection of translates 
$G_{\b{X}} \cdot \rq{X}$ is finite and thus  
the subgroup $\Aut(\rq{X},H_X)$ of $G_{\b{X}}$
is of finite index.
Moreover, lifting $\varphi \in \Aut(X)$ as
before gives an element of $\Aut(\b{X},H_X)$ 
leaving $\rq{X}$ invariant.
Thus, $\Aut(\rq{X},H_X) \to \Aut(X)$ is surjective
with kernel $H_X$.
By the universal property of the qood quotient
$\rq{X} \to X$, the action of $\Aut(X)$ 
on $X$ is morphical.
\end{proof}

\begin{corollary}
\label{cor:autmds}
The automorphism group $\Aut(X)$ of a Mori 
dream space $X$ is linear algebraic and acts
morphically on $X$.
\end{corollary}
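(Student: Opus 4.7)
The plan is to read off everything directly from Theorem~\ref{thm:weakaut} and its supporting algebraic fact, Corollary~\ref{cor:autalg}. The Cox ring $\mathcal{R}(X)$ is a finitely generated integral $\KK$-algebra with $\mathcal{R}(X)^* = \KK^*$ and a pointed $\Cl(X)$-grading, so Corollary~\ref{cor:autalg} gives that $G_{\b{X}} := \Aut(\b{X},H_X)$ is a linear algebraic group acting morphically on $\b{X}$. Hence any closed subgroup of $G_{\b{X}}$ is again linear algebraic, and in particular $\Aut(\rq{X},H_X)$ is linear algebraic, being the closed subgroup of $G_{\b{X}}$ stabilizing the complement $\b{X} \setminus \rq{X}$ (as noted in the proof of Theorem~\ref{thm:weakaut}).

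Next, I would invoke the lower exact sequence of Theorem~\ref{thm:weakaut},
\[
1 \ \to \ H_X \ \to \ \Aut(\rq{X},H_X) \ \to \ \Aut(X) \ \to \ 1,
\]
which exhibits $\Aut(X)$ as the quotient of the linear algebraic group $\Aut(\rq{X},H_X)$ by the closed normal subgroup $H_X$. Quotients of linear algebraic groups by closed normal subgroups are again linear algebraic, so $\Aut(X)$ is linear algebraic.

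For the morphical action on $X$, the last sentence of Theorem~\ref{thm:weakaut} already asserts this directly: it states that $\Aut(X)$ acts morphically on $X$, established there via the universal property of the good quotient $p_X \colon \rq{X} \to X$ applied to the morphical action of $\Aut(\rq{X},H_X)$ on $\rq{X}$. There is no real obstacle here; the corollary is a straightforward repackaging. The only step that needs any care is verifying that the quotient $\Aut(\rq{X},H_X)/H_X$ is identified as a linear algebraic group with $\Aut(X)$ in a way compatible with the action, and this compatibility is precisely what the commutative diagram of Theorem~\ref{thm:weakaut} records.
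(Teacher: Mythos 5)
Your proposal is correct and follows exactly the route the paper intends: the corollary is an immediate consequence of Theorem~\ref{thm:weakaut}, whose statement already exhibits $\Aut(X)$ in an exact sequence of linear algebraic groups (as the quotient of $\Aut(\rq{X},H_X)$ by $H_X$) and asserts the morphical action on $X$. Spelling out that $\Aut(\rq{X},H_X)$ is a closed subgroup of $\Aut(\b{X},H_X)$ via Corollary~\ref{cor:autalg} is precisely what the paper's proof of the theorem does, so there is nothing to add.
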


\begin{corollary}
If two Mori dream spaces are isomorphic in 
codimension one, then the unit components 
of their automorphism groups are isomorphic 
to each other.
\end{corollary}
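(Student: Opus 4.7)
The plan is to reduce the statement to Theorem~\ref{thm:weakaut} by showing that two Mori dream spaces which agree in codimension one have, canonically, the same Cox data, and hence isomorphic groups $\Bir_2$. The finite index inclusion $\Aut(X) \hookrightarrow \Bir_2(X)$ then immediately transfers the isomorphism to the unit components.

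More precisely, let $X$ and $X'$ be Mori dream spaces, and let $\varphi \colon X \dashrightarrow X'$ be a birational map restricting to an isomorphism $U \to U'$ between big open subsets $U \subseteq X$ and $U' \subseteq X'$. First, I would argue that the identifications
\begin{equation*}
\Cl(X) \ = \ \Cl(U) \ \xrightarrow{\ \varphi^*\ } \ \Cl(U') \ = \ \Cl(X'),
\qquad
\mathcal{R}(X) \ = \ \mathcal{R}(U) \ \xrightarrow{\ \varphi^*\ } \ \mathcal{R}(U') \ = \ \mathcal{R}(X')
\end{equation*}
hold compatibly, since class groups and Cox sheaves are insensitive to removing a closed subset of codimension at least two (after fixing compatible characters on the groups of principal divisors on $U$ and $U'$, which can be done via the identification $\varphi^*$). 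This yields an identification $\b{X} = \b{X'}$ of total coordinate spaces that is equivariant with respect to the common characteristic quasitorus $H := H_X = H_{X'}$.

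Consequently, one obtains an isomorphism of linear algebraic groups $\Aut(\b{X},H_X) \cong \Aut(\b{X'},H_{X'})$. Passing to the quotients by $H$ in the upper exact sequence of Theorem~\ref{thm:weakaut}, this descends to an isomorphism $\Bir_2(X) \cong \Bir_2(X')$. Taking unit components yields $\Bir_2(X)^0 \cong \Bir_2(X')^0$. Finally, the finite index inclusions $\Aut(X) \hookrightarrow \Bir_2(X)$ and $\Aut(X') \hookrightarrow \Bir_2(X')$ from Theorem~\ref{thm:weakaut} force $\Aut(X)^0 = \Bir_2(X)^0$ and $\Aut(X')^0 = \Bir_2(X')^0$, giving the desired isomorphism $\Aut(X)^0 \cong \Aut(X')^0$.

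The only subtle step is the first one: compatibility of the identification of Cox rings with the chosen characters in the construction of the Cox sheaf recalled before Theorem~\ref{thm:weakaut}. The argument mirrors the construction of the map $\pi\colon G_{\b{X}}\to\Bir_2(X)$ in the proof of that theorem: one uses $\varphi$ to transport a character $\chi\colon \PDiv(X)\to \KK(X)^*$ to a character on $\PDiv(X')$ and incorporates a ``correction term'' $\alpha\colon \WDiv(X)\to \KK^*$ to obtain a graded isomorphism of divisorial algebras on $U$ and $U'$ which descends to the Cox sheaves. Everything else is a formal consequence of Theorem~\ref{thm:weakaut}.
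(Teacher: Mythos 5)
Your proposal is correct and follows essentially the route the paper intends: the corollary is stated without proof as an immediate consequence of Theorem~\ref{thm:weakaut}, the point being exactly that the finite index of $\Aut(X)\hookrightarrow\Bir_2(X)$ forces $\Aut(X)^0=\Bir_2(X)^0$, while $\Bir_2(X)$ is determined by the $\Cl(X)$-graded Cox ring, which is unchanged under modifications in codimension at least two. Your added care about transporting the character $\chi$ on $\PDiv(X)$ via $\varphi^*$ with a correction term mirrors the paper's own argument for the surjectivity of $G_{\b{X}}\to\Bir_2(X)$ and is the right way to make the identification of Cox sheaves precise.
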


Let $\CAut(\b{X},H_X)$ denote the centralizer 
of $H_X$ in the automorphism group $\Aut(\b{X})$.
Then $\CAut(\b{X},H_X)$ consists of all automorphisms
$\varphi \colon \b{X} \to \b{X}$ satisfying
$$
\varphi(t \mal x) \ = \ t \mal \varphi(x)
\quad
\text{for all }
x \in \b{X}, \ t \in H_X.
$$
In particular, we have 
$\CAut(\b{X},H_X) \subseteq \Aut(\b{X},H_X)$.
The group $\CAut(\b{X},H_X)$ 
may be used to detect the unit component 
$\Aut(X)^0$ of the 
automorphism group of $X$.

\begin{corollary}
Let $X$ be a Mori dream space.
Then there is an exact sequence
of linear algebraic groups
$$
\xymatrix{
1 
\ar[r]
&
H_X 
\ar[r]
&
{\CAut(\b{X},H_X)^0}
\ar[r]
&
{\Aut(X)^0} 
\ar[r]
&
1.
}
$$
\end{corollary}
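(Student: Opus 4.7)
The plan is to obtain the sequence by restricting the lower row of Theorem~\ref{thm:weakaut} to unit components. The starting observation is that $\CAut(\b{X},H_X)$ is precisely the kernel of the morphism of linear algebraic groups
$$
\Aut(\b{X},H_X) \ \longrightarrow \ \Aut(H_X),
\qquad (\varphi,\t{\varphi})\mapsto\t{\varphi}.
$$
Since $H_X = \Spec\,\KK[\Cl(X)]$ is a quasitorus, $\Aut(H_X)$ identifies with the discrete group $\Aut(\Cl(X))$ of automorphisms of the finitely generated abelian group $\Cl(X)$. The image of the connected group $\Aut(\b{X},H_X)^0$ is therefore trivial, so $\Aut(\b{X},H_X)^0 \subseteq \CAut(\b{X},H_X)$ and consequently $\CAut(\b{X},H_X)^0 = \Aut(\b{X},H_X)^0$. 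Combined with the finite-index inclusion $\Aut(\rq{X},H_X) \hookrightarrow \Aut(\b{X},H_X)$ from Theorem~\ref{thm:weakaut}, this gives
$$
\CAut(\b{X},H_X)^0 \ = \ \Aut(\rq{X},H_X)^0.
$$

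Next, I would restrict the exact sequence $1 \to H_X \to \Aut(\rq{X},H_X) \to \Aut(X) \to 1$ of Theorem~\ref{thm:weakaut} to this unit component. The resulting morphism $\CAut(\b{X},H_X)^0 \to \Aut(X)^0$ is surjective by a standard dimension count: its kernel $H_X \cap \CAut(\b{X},H_X)^0$ contains the connected group $H_X^0$, hence has dimension $\dim H_X$; so the image is a closed, connected subgroup of $\Aut(X)^0$ of full dimension, hence all of $\Aut(X)^0$. The kernel of this restricted map is $H_X \cap \CAut(\b{X},H_X)^0$.

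The main obstacle is to check that this kernel is the full $H_X$, i.e.~that $H_X$ sits inside $\CAut(\b{X},H_X)^0$. Equivalently, the preimage $P$ of $\Aut(X)^0$ under $\CAut(\b{X},H_X) \to \Aut(X)$, which is an extension
$$
1 \ \longrightarrow \ H_X \ \longrightarrow \ P \ \longrightarrow \ \Aut(X)^0 \ \longrightarrow \ 1,
$$
must be connected. To see this I would lift the morphical action of the connected linear algebraic group $\Aut(X)^0$ on $X$ to a morphical action of a connected linear algebraic group $\t{G}$ on $\rq{X}$ commuting with $H_X$; this is the standard Cox-sheaf lift (same mechanism as the pullback-of-divisors construction used in the proof of Theorem~\ref{thm:weakaut}, applied to $\varphi\in\Aut(X)^0$ and patched into an algebraic family). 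Then $\t{G} \subseteq P^0$, while $P = H_X \cdot \t{G}$ because $P/H_X = \Aut(X)^0 = \t G/(\t G\cap H_X)$. Using $H_X^0\subseteq P^0$ and the fact that the finite quotient $H_X/H_X^0$ is already realised inside $\t{G}\cdot H_X^0$ by the lift, we obtain $P = H_X^0\cdot \t{G} = P^0$, so $P$ is connected and $H_X\subseteq \CAut(\b{X},H_X)^0$.

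Granting this, the sequence $1 \to H_X \to \CAut(\b{X},H_X)^0 \to \Aut(X)^0 \to 1$ is exact in the category of linear algebraic groups. The technically delicate point, and where the Mori dream space hypothesis is really used, is the connectedness of the lifted group $\t{G}$ in the fourth step; everything else is formal manipulation of Theorem~\ref{thm:weakaut}.
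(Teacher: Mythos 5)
Your overall route is the same as the paper's: restrict the lower row of Theorem~\ref{thm:weakaut} to unit components, observe that $\CAut(\b{X},H_X)^0=\Aut(\b{X},H_X)^0=\Aut(\rq{X},H_X)^0$ (the paper gets the inclusion $\CAut(\b{X},H_X)^0\subseteq\Aut(\rq{X},H_X)$ by citing \cite[Cor.~2.3]{Sw}, you get it from the finite-index statement of the theorem — both are fine), and obtain surjectivity onto $\Aut(X)^0$ by the dimension count. Up to that point your argument is correct and is essentially what the paper compresses into ``exactness follows by dimension reasons''; the paper additionally notes that any $\varphi\in\Aut(X)^0$ acts trivially on $\Cl(X)$ and hence lifts into the centralizer, which is an alternative way to see surjectivity.

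The genuine gap is in your final step, the one you yourself flag as the crux: the claim that $P=H_X^0\mal\t{G}$, i.e.\ that ``the finite quotient $H_X/H_X^0$ is already realised inside $\t{G}\mal H_X^0$ by the lift''. No reason is given why the lift of $\Aut(X)^0$ should sweep out the non-identity components of $H_X$, and in fact it need not: if $\Cl(X)$ has torsion and $\Aut(X)$ is finite (e.g.\ a Mori dream Enriques surface), then $\t{G}$ is trivial, $\dim\CAut(\b{X},H_X)^0=\dim H_X$ forces $\CAut(\b{X},H_X)^0=H_X^0$, and the torsion part of $H_X$ lies outside the unit component. So the inclusion $H_X\subseteq\CAut(\b{X},H_X)^0$ — which is exactly what exactness at the middle term requires — cannot be established by your argument (nor by the paper's ``dimension reasons'', which only control the identity components and hence only give the kernel up to the finite group $H_X/(H_X\cap\CAut(\b{X},H_X)^0)$). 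You have correctly isolated the delicate point, but the proposed resolution does not close it; the step goes through without difficulty precisely when $\Cl(X)$ is free, so that $H_X$ is a torus and $H_X=H_X^0\subseteq\CAut(\b{X},H_X)^0$ automatically.
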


\begin{proof}
According to~\cite[Cor.~2.3]{Sw}, 
the group $\CAut(\b{X},H_X)^0$ leaves $\rq{X}$ 
invariant.
Thus, we have 
$\CAut(\b{X},H_X)^0 \subseteq \Aut(\rq{X},H_X)$
and the sequence is well defined.
Moreover, for any $\varphi \in \Aut(X)^0$,
the pullback $\varphi^* \colon \Cl(X) \to \Cl(X)$ 
is the identity.
Consequently, $\varphi$ lifts to an element of 
$\CAut(\b{X},H_X)$.
Exactness of the sequence thus follows by dimension
reasons.
\end{proof}

\begin{corollary}
\label{cor:1pasglift}
Let $X$ be a Mori dream space.
Then, for any closed subgroup 
$F \subseteq \Aut(X)^0$, there is a closed 
subgroup $F' \subseteq \CAut(\b{X},H_X)^0$
such that the induced map $F' \to F$ is 
an epimorphism with finite kernel.
\end{corollary}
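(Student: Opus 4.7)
Plan: The previous corollary provides the central extension
$$
1 \longrightarrow H_X \longrightarrow \CAut(\b{X},H_X)^0 \xrightarrow{\pi} \Aut(X)^0 \longrightarrow 1,
$$
in which $H_X$ is central by the very definition of $\CAut(\b{X},H_X)$. Setting $G := \pi^{-1}(F) \subseteq \CAut(\b{X},H_X)^0$ produces a closed subgroup fitting into the central extension
$$
1 \longrightarrow H_X \longrightarrow G \xrightarrow{\pi} F \longrightarrow 1.
$$
The task reduces to exhibiting a closed subgroup $F' \subseteq G$ of dimension equal to $\dim F$ whose image under $\pi$ is all of $F$; the kernel $F' \cap H_X$ will then automatically be zero-dimensional and hence finite.

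First I would reduce to the case $F$ connected. The identity component $G^0$ surjects onto $F^0$ by dimension count (since $\dim G^0 = \dim F + \dim H_X$), and finitely many lifts in $G$ of coset representatives of $F/F^0$ can be adjoined to $F'$ at the end, enlarging $F' \cap H_X$ only by a finite factor. So assume $G$ and $F$ connected, with $H_X$ replaced by $H_X \cap G^0$, still central and diagonalizable. Now invoke the structure theory of linear algebraic groups: in a Levi decomposition $G = R_u(G) \rtimes L$, the central reductive subgroup $H_X \cap G^0$ lies inside $L$, and in the reductive group $L$ one has $L = L^{\mathrm{der}} \cdot Z(L)^0$ with finite intersection. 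A character-lattice duality then yields a subtorus $T' \subseteq Z(L)^0$ satisfying $T' \cdot H_X^0 = Z(L)^0$ and $T' \cap H_X^0$ finite. Take $F' := R_u(G) \cdot L^{\mathrm{der}} \cdot T'$; then $F' \cdot H_X = G$, so $\pi(F') = F$, and $F' \cap H_X$ is finite because any such element is semisimple (so by uniqueness of Jordan decomposition it lies in $L^{\mathrm{der}} \cdot T'$, not in $R_u(G)$) and $L^{\mathrm{der}} \cap Z(L)^0$ is finite.

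The hard part is the construction of the almost-complementary subtorus $T' \subseteq Z(L)^0$ — achieved by splitting off, modulo torsion, a complement of the sublattice $X(Z(L)^0/H_X^0) \subseteq X(Z(L)^0)$ — and the verification that the assembled $F' = R_u(G) \cdot L^{\mathrm{der}} \cdot T'$ really intersects $H_X$ in a finite subgroup. Once these structural ingredients are in place, both the connected case and the adjoining of finitely many components to lift $F/F^0$ proceed cleanly and yield the required closed subgroup $F' \subseteq \CAut(\b{X},H_X)^0$.
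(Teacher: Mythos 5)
The paper states this corollary without any proof of its own, as an immediate consequence of the preceding exact sequence $1 \to H_X \to \CAut(\b{X},H_X)^0 \to \Aut(X)^0 \to 1$, so there is nothing explicit to compare against; your route through that central extension is clearly the intended one, and your treatment of the connected case --- Levi decomposition, $L = L^{\mathrm{der}}\cdot Z(L)^0$, and an almost-complementary subtorus $T'$ of $H_X^0$ inside $Z(L)^0$ obtained from the cocharacter (or character) lattices --- is sound and is in fact all the paper ever uses (in the proof of Theorem~\ref{thm:autroots} the corollary is applied only to one-parameter additive subgroups, which are connected).

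Two of your steps do not hold up as written, though. First, the justification ``any such element is semisimple, so by uniqueness of Jordan decomposition it lies in $L^{\mathrm{der}}\cdot T'$'' is not a valid inference: a semisimple element of $R_u(G)\rtimes M$ need not lie in $M$ (in $\GG_a\rtimes\GG_m$ the map $x\mapsto 2x+1$ is semisimple but lies in no conjugate-free copy of $\GG_m$ you have fixed). What rescues the conclusion is that elements of $F'\cap H_X$ are \emph{central}: a central semisimple element of the connected group $F'$ lies in every maximal torus, hence in some Levi subgroup of $F'$, and since all Levi subgroups are conjugate by elements of $R_u(G)$ and the element is central, it lies in $L^{\mathrm{der}}T'$ itself. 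Second, and more seriously, the reduction to connected $F$ by ``adjoining finitely many lifts of coset representatives'' does not proceed cleanly: if $g$ is such a lift, the group generated by $F'_0$ and $g$ contains $gF'_0g^{-1}$, which maps onto $F^0$ but need not equal $F'_0$; the connected group these two generate then has dimension strictly larger than $\dim F$ and hence infinite intersection with $H_X$. Concretely, $g$ may be arranged to normalize $L$ and $Z(L)^0$, but the induced automorphism of $Z(L)^0$ only fixes $H_X^0$, not your chosen complement $T'$, and already for $Z(L)^0=\GG_m^2$ with $H_X^0$ the diagonal there are such automorphisms moving every complement. The repair requires choosing the data equivariantly for the finite group $F/F^0$: take lifts normalizing a fixed maximal torus $T\supseteq H_X^0$ and build $T'$ from a complement of the cocharacter lattice of $H_X^0$ that is stable under the resulting finite group action (obtained by averaging a projection). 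This equivariance is a genuinely missing ingredient rather than a routine afterthought.
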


\begin{corollary}
Let $X$ be a Mori dream space such that
the group $\CAut(\b{X},H_X)$ is connected,
e.g. a toric variety.
Then there is an exact sequence of linear 
algebraic groups
$$
\xymatrix{
1 
\ar[r]
&
H_X 
\ar[r]
&
{\CAut(\b{X},H_X)}
\ar[r]
&
{\Aut(X)^0} 
\ar[r]
&
1.
}
$$
\end{corollary}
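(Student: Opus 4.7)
The plan is to deduce the statement directly from the preceding corollary. That corollary already supplies the exact sequence
$$
1 \longrightarrow H_X \longrightarrow \CAut(\b{X},H_X)^0 \longrightarrow \Aut(X)^0 \longrightarrow 1,
$$
and the hypothesis that $\CAut(\b{X},H_X)$ is connected means exactly that its unit component equals the whole group. Replacing $\CAut(\b{X},H_X)^0$ by $\CAut(\b{X},H_X)$ in the sequence above yields the claim. No further work is required for the general assertion.

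The only substantive step is to verify the parenthetical example, namely that $\CAut(\b{X},H_X)$ is in fact connected when $X$ is toric. In that case the total coordinate space is an affine space $\b{X} = \KK^r$, the Cox ring is $\mathcal{R}(X) = \KK[T_1,\ldots,T_r]$ and $H_X$ acts diagonally through the weights $w_i := \deg(T_i) \in \Cl(X)$. An automorphism $\varphi$ of $\b{X}$ lies in $\CAut(\b{X},H_X)$ iff it commutes pointwise with $H_X$, which is equivalent to $\varphi^*$ preserving every $\Cl(X)$-homogeneous component of $\mathcal{R}(X)$. Writing
$$
\varphi^*(T_i) \ = \ \sum_{w_j = w_i} a_{ij}\, T_j \ + \ q_i,
$$
with $q_i$ a polynomial of degree $w_i$ involving only monomials of total degree at least two, the linear parts $(a_{ij})$ range over the connected group $\prod_w \GL(V_w)$, where $V_w$ is spanned by the $T_i$ with $w_i = w$, and the tails $(q_i)$ range in an affine space. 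Invertibility is an open condition, so $\CAut(\b{X},H_X)$ is an open subvariety of a connected variety and is therefore itself connected; this also matches Cox's structural description of the automorphism group in~\cite{Co}.

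The ``main obstacle'' is therefore almost rhetorical: the corollary is a tautological specialization of its predecessor, and the toric verification is a short explicit computation once one has set up the $\Cl(X)$-grading on $\KK[T_1,\ldots,T_r]$ and recognised that commuting pointwise with $H_X$ is the same as preserving each graded piece.
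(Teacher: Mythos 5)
Your proposal is correct and matches the paper's (implicit) argument: the corollary is stated without proof precisely because it is the immediate specialization of the preceding corollary once connectedness forces $\CAut(\b{X},H_X)^0 = \CAut(\b{X},H_X)$. Your verification of the toric example — that graded automorphisms of $\KK[T_1,\ldots,T_r]$ commuting with $H_X$ form an open subset of the affine space of degree-preserving endomorphisms, hence a connected group — is the standard argument from Cox's paper and is exactly what the parenthetical remark is alluding to.
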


\begin{example}
Consider the nondegenerate quadric $X$ in the 
projective space $\PP_{n+1}$, where $n \ge 4$ 
is even.
Then the Cox ring of $X$ is the $\ZZ$-graded ring
$$ 
\mathcal{R}(X)
\ = \ 
\KK[T_0,\ldots,T_{n+1}] \, / \, \bangle{T_0^2 + \ldots + T_{n+1}^2} ,
\qquad
\deg(T_0) \ = \ \ldots \ = \ \deg(T_{n+1}) \ = \ 1.
$$
The characteristic quasitorus is $H_X = \KK^*$.
Moreover, for the equivariant automorphisms
and the centralizer of $H_X$ we obtain
$$
\Aut(\b{X},H_X) 
\ = \ 
\CAut(\b{X},H_X)
\ = \ 
\KK^*E_{n+2} \mal {\rm O}_{n+2}.
$$
Thus, $\CAut(\b{X},H_X)$ has two connected components.
Note that for $n=4$, the quadric $X$ comes 
with a torus action of complexity one. 
\end{example}

\section{Rings with a factorial grading of complexity one}
\label{sec:factgrad}

Here we recall 
the necessary constructions and results on factorially 
graded rings of complexity one and Cox rings 
of varieties with a torus action of complexity one
from~\cite{HaHe}.
The main result of this section is 
Proposition~\ref{prop:pan}
which describes the dimension of the homogeneous 
components in terms of the (common) degree of 
the relations.
As before, we work over an algebraically closed field
$\KK$ of characteristic zero.

Let $K$ be an abelian group and 
$R = \oplus_K R_w$ a $K$-graded algebra. 
The grading is called {\em effective\/} 
if the weight monoid $S$ of $R$ generates $K$ 
as a group.
Moreover, we say that the grading is of 
{\em complexity one}, if it is effective 
and $\dim(K_\QQ)$ equals $\dim(R)-1$.
By a {\em $K$-prime\/} element of $R$ we 
mean a homogeneous nonzero nonunit $f \in R$
such that $f \mid gh$ with homogeneous $g,h \in R$ 
implies $f \mid g$ or $f \mid h$.
We say that $R$ is {\em factorially $K$-graded\/} 
if every nonzero homogeneous nonunit of $R$ 
is a product of $K$-primes.

\begin{construction}
\label{constr:RAP}
See~\cite[Section~1]{HaHe}.
Fix $r \in \ZZ_{\ge 1}$, a sequence 
$n_0, \ldots, n_r \in \ZZ_{\ge 1}$, 
set $n := n_0 + \ldots + n_r$ and 
let $m \in \ZZ_{\ge 0}$.
The input data are 
\begin{itemize}
\item 
a matrix $A := [a_0, \ldots, a_r]$ 
with pairwise linearly independent 
column vectors $a_0, \ldots, a_r \in \KK^2$,
\item 
an integral $r \times (n+m)$ block matrix 
$P_0 = (L_0,0)$, where $L_0$ is a $r \times n$ 
matrix build from tuples 
$l_i := (l_{i1}, \ldots, l_{in_i}) \in \ZZ_{\ge 1}^{n_i}$ 
as follows
\begin{eqnarray*}
L_0
& = & 
\left[
\begin{array}{cccc}
-l_0 & l_1 &   \ldots & 0 
\\
\vdots & \vdots   & \ddots & \vdots
\\
-l_0 & 0 &\ldots  & l_{r} 
\end{array}
\right].
\end{eqnarray*}
\end{itemize}
Consider the polynomial ring 
$\KK[T_{ij},S_k]$ in the variables 
$T_{ij}$, where 
$0 \le i \le r$, $1 \le j \le n_i$ 
and $S_k$, where $1 \le k \le m$.
For every $0 \le i \le r$, define a monomial
\begin{eqnarray*}
T_i^{l_i} 
&  := &
T_{i1}^{l_{i1}} \cdots T_{in_i}^{l_{in_i}}.
\end{eqnarray*}
Denote by $\mathfrak{I}$ the set of 
all triples $I = (i_1,i_2,i_3)$ with 
$0 \le i_1 < i_2 < i_3 \le r$ 
and define for any $I \in \mathfrak{I}$ 
a trinomial 
\begin{eqnarray*}
g_I
& := & 
\det
\left[
\begin{array}{ccc}
T_{i_1}^{l_{i_1}} & T_{i_2}^{l_{i_2}} & T_{i_3}^{l_{i_3}}
\\
a_{i_1} & a_{i_2} & a_{i_3}
\end{array}
\right].
\end{eqnarray*}
Let $P^*_0$ denote the transpose of $P_0$.
We introduce a grading on $\KK[T_{ij},S_k]$ 
by the factor group $K_0 := \ZZ^{n+m}/\rm{im}(P^*_0)$.
Let $Q_0 \colon \ZZ^{n+m} \to K_0$ be the 
projection and set  
$$ 
\deg(T_{ij}) 
\ := \ 
w_{ij}
\ := \ 
Q_0(e_{ij}),
\qquad
\deg(T_{k}) 
\ := \ 
w_{k}
\ := \ 
Q_0(e_{k}),
$$
where $e_{ij} \in \ZZ^{n+m}$, for 
$0 \le i \le r$, $1 \le j \le n_i$,
and $e_{k} \in \ZZ^{n+m}$, for 
$1 \le k \le m$, are the canonical basis 
vectors.
Note that all the $g_{I}$ are $K_0$-homogeneous
of degree
$$
\mu 
\ := \ 
l_{01}w_{01} + \ldots + l_{0n_0}w_{0n_0}
\ = \ 
\ldots
\ = \ 
l_{r1}w_{r1} + \ldots + l_{rn_r}w_{rn_r}
\ \in \ 
K_0.
$$
In particular, the trinomials $g_I$ generate a $K_0$-homogeneous 
ideal and thus we obtain a $K_0$-graded factor algebra 
\begin{eqnarray*}
R(A,P_0)
& := &
\KK[T_{ij},S_k] 
\ / \
\bangle{g_{I}; \; I \in \mathfrak{I}}.
\end{eqnarray*}
\end{construction}

\begin{theorem}
See~\cite[Theorems~1.1 and~1.3]{HaHe}.
With the notation of Construction~\ref{constr:RAP},
the following statements hold.
\begin{enumerate}
\item
The $K_0$-grading of ring $R(A,P_0)$ is effective, pointed, 
factorial and of complexity one.
\item
The variables $T_{ij}$ and $S_k$ define a system of 
pairwise nonassociated $K_0$-prime generators of 
$R(A,P_0)$.
\item
Every finitely generated normal $\KK$-algebra with an
effective, pointed, factorial grading of complexity 
one is isomorphic to some $R(A,P_0)$.
\end{enumerate}
\end{theorem}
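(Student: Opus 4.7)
The plan is to handle (i)--(iii) separately, writing $R := R(A,P_0)$ throughout. For (i), effectivity is immediate: $K_0 = \ZZ^{n+m}/\mathrm{im}(P_0^*)$ is by construction generated by the images $Q_0(e_{ij}), Q_0(e_k)$, which are precisely the weights of the $T_{ij}, S_k$. Complexity one follows from a dimension count: the trinomials $g_I$ satisfy Pl\"ucker-type linear syzygies (any four indices $i_0 < i_1 < i_2 < i_3$ yield a linear relation among the four subordinate $g_I$'s), and the ideal $\bangle{g_I ; \; I \in \mathfrak{I}}$ is in fact generated by just $r-1$ of the $g_I$, which one checks form a regular sequence. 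Hence $\dim R = n+m-(r-1)$ while $\dim_\QQ K_0\otimes_\ZZ \QQ = n+m-r$, giving complexity one. Pointedness splits in two: the weight cone contains no line because the sign pattern of $L_0$ (each row has a negative first block and exactly one positive block further right) permits an explicit strictly positive linear form on $\ZZ^{n+m}$ vanishing on $\mathrm{im}(P_0^*)$; and $R_0 = \KK$ follows because a nonconstant monomial of degree zero would correspond to a nonzero element of $\mathrm{im}(P_0^*) \cap \ZZ_{\ge 0}^{n+m}$, which the same sign pattern excludes.

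Factoriality is the main technical hurdle. The plan is to prove directly that each $T_{ij}$ and each $S_k$ is $K_0$-prime in $R$, and then to invoke the general fact that in a $K$-graded integral domain a system of $K$-prime generators forces every homogeneous nonunit to factor into $K$-primes. Primeness of $S_k$ is essentially free, since $R/\bangle{S_k}$ has the same shape as $R$ with $m$ replaced by $m-1$. For $T_{ij}$, I would show $R/\bangle{T_{ij}}$ is a domain by observing that setting $T_{ij} = 0$ kills one monomial in every $g_I$ involving the index $i$, leaving a mix of binomials and unchanged trinomials whose common vanishing locus is irreducible by induction on $r$ (the base case $r = 2$ is a single binomial in the remaining variables and is handled directly). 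Statement (ii) then follows: non-association is a consequence of pointedness ($R_0 = \KK$, so units are scalars) together with the distinctness of the generator weights, and primeness is exactly what was just established.

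For (iii), let $R = \oplus_K R_w$ be a finitely generated normal $\KK$-algebra with effective, pointed, factorial grading of complexity one. The strategy is to reconstruct the data $A, P_0$ from the geometry of $X := \Spec \, R$ with its $T := \Spec \, \KK[K]$-action. Complexity one makes the invariant quotient $X \quot T$ one-dimensional, and normality together with pointedness force it to be a normal rational curve, i.e.\ $\PP^1$. Factoriality yields a finite set of $K$-prime generators $f_1, \ldots, f_N$ of $R$; each prime divisor $V(f_\ell)$ is $T$-invariant and either maps dominantly to $\PP^1$ (giving a horizontal generator $S_k$) or collapses to a single point (giving a vertical generator $T_{ij}$). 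Grouping the vertical generators by their image point produces the families $T_{i1}, \ldots, T_{in_i}$ indexed by the finitely many special points $p_i \in \PP^1$; affine representatives $a_i \in \KK^2$ of these points form the matrix $A$, and the exponents $l_{ij}$ are the orders of vanishing of the $T_{ij}$ on the closure of a generic $T$-orbit over $p_i$. To finish, one shows that the ideal of relations is generated precisely by the trinomials $g_I$: any homogeneous relation of degree $\mu$ is a $\KK$-linear combination of monomials $T_i^{l_i}$, and such a combination vanishes in $R$ exactly when the corresponding points $a_i \in \KK^2$ are linearly dependent, yielding the $2 \times 3$ minor relations.

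The main obstacles I expect are, first, the factoriality step in (i): showing that $R/\bangle{T_{ij}}$ is a domain requires careful tracking of how the full trinomial system degenerates, and the induction on $r$ must be robust against arbitrary positive exponents $l_{ij}$. Second, in (iii), the completeness of the trinomial relations: it is not obvious a priori that the $g_I$ generate all syzygies among the $K$-prime generators, and for this one must use that the quotient curve has been identified with $\PP^1 = (\KK^2 \setminus \{0\})/\KK^*$, so that every degree-$\mu$ algebraic relation among generators mirrors a linear dependence among the $a_i$'s in $\KK^2$.
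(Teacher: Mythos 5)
First, be aware that the paper offers no proof of this statement: it is imported wholesale from Hausen--Herppich via the citation \cite[Theorems~1.1 and~1.3]{HaHe}, so there is nothing in the text to compare your argument with. Judged on its own, your treatment of effectivity, the dimension count and pointedness in (i) is fine, but the factoriality argument contains a step that genuinely fails. You propose to establish $K_0$-primality of $T_{ij}$ by showing that $R/\bangle{T_{ij}}$ is a domain. This is false in general: for $r=2$, $n_0=n_1=n_2=1$, $l_{01}=l_{11}=l_{21}=2$, $m=0$ one gets $R(A,P_0)\cong\KK[x,y,z]/\bangle{x^2+y^2+z^2}$, and $R/\bangle{x}\cong\KK[y,z]/\bangle{(y+\sqrt{-1}\,z)(y-\sqrt{-1}\,z)}$ is not a domain --- yet the theorem (correctly) asserts that $x$ is $K_0$-prime, precisely because $y\pm\sqrt{-1}\,z$ is not $K_0$-homogeneous: one checks $\deg_{K_0}(y)\ne\deg_{K_0}(z)$ here. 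So $K_0$-primality is strictly weaker than primality, your ``binomials with irreducible vanishing locus'' induction breaks already at $r=2$, and primality of the generators has to be argued entirely within the category of homogeneous elements.

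Second, even granting that all $T_{ij}$ and $S_k$ are $K_0$-prime, the ``general fact'' you invoke --- that a system of $K$-prime generators forces every homogeneous nonunit to be a product of $K$-primes --- is not a theorem. What is true is the graded version of Nagata's criterion: if the generators are $K$-prime \emph{and} the localization of $R$ by their product is factorially $K$-graded, then $R$ is. The second hypothesis is where the real content sits: the degree-zero part of that localization is the function field of the quotient curve, and one needs it to be rational (cf.\ Proposition~\ref{prop:degzerofield}) to conclude that the localization is factorial; if the quotient curve had positive genus, the ring would fail to be factorially graded no matter how prime its generators were. This is also the unproved assertion in your sketch of (iii), where you claim normality and pointedness ``force'' the quotient to be $\PP^1$; rationality of $Q(R)_0$ is exactly the nontrivial consequence of the factoriality hypothesis and must be proved, not assumed. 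The remaining architecture of (iii) --- sorting invariant prime divisors into horizontal and vertical ones, reading off the $l_{ij}$ from isotropy, recovering $A$ from the critical values, and using $\dim R_\mu=2$ to identify the relations with the $2\times 3$ minors --- is sound as a plan, but it rests on these two missing pillars.
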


Note that in the case $r=1$, there are no relations 
and the theorem thus treats the effective, pointed
gradings of complexity one of the polynomial ring.

\begin{example}[The $E_6$-singular cubic I]
\label{ex:E6sing1}
Let $r = 2$, $n_0 = 2$, $n_1 = n_2 = 1$, $m=0$ 
and consider the data 
$$ 
A 
\ = \ 
\left[
\begin{array}{ccc} 
0 & -1 & 1
\\
1 & -1 & 0 
\end{array}
\right],
\qquad\qquad
P_0 
\ = \ 
L_0
\ = \ 
\left[
\begin{array}{rrrr} 
-1 & -3 & 3 & 0
\\
-1 & -3 & 0 & 2 
\end{array}
\right].
$$
Then we have exactly one triple in $\mathfrak{I}$, namely $I = (1,2,3)$, 
and, as a ring, $R(A,P_0)$ is given by 
\begin{eqnarray*}
R(A,P_0)
& = & 
\KK[T_{01},T_{02},T_{11},T_{21}] 
\ / \ 
\bangle{T_{01}T_{02}^3 + T_{11}^3 + T_{21}^2}.
\end{eqnarray*} 
The grading group 
$K_0 = \ZZ^4/{\rm im}(P_0^*)$ is isomorphic to $\ZZ^2$ 
and the grading can be given explicitly via
$$ 
\deg(T_{01}) 
\ = \ 
\left(
\begin{array}{r}
-3
\\
3
\end{array}
\right),
\qquad
\deg(T_{02}) 
\ = \ 
\left(
\begin{array}{r}
1
\\
1
\end{array}
\right),
$$
$$
\deg(T_{11}) 
\ = \ 
\left(
\begin{array}{r}
0
\\
2
\end{array}
\right),
\qquad
\deg(T_{21}) 
\ = \ 
\left(
\begin{array}{r}
0
\\
3
\end{array}
\right).
$$
\end{example}

Recall that for any integral ring 
$R = \oplus_K R_w$ graded by an 
abelian group $K$, one has the subfield 
of degree zero fractions inside the field of
fractions:
$$
Q(R)_0
\ = \ 
\left\{
\frac{f}{g}; 
\; 
f,g \in R \text{ homogeneous}, 
g \ne 0,
\deg(f) = \deg(g)
\right\}
\ \subseteq \ 
Q(R).
$$

\begin{proposition}
\label{prop:degzerofield}
Take any $i,j$ with $i \ne j$ and $0 \le i,j \le r$.
Then the field of degree zero fractions of the 
ring $R(A,P_0)$ is the rational function field
\begin{eqnarray*}
Q(R(A,P_0))_0
& = & 
\KK \left(\frac{T_j^{l_j}}{T_i^{l_i}}\right).
\end{eqnarray*}
\end{proposition}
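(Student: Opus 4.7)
My plan is to establish the two inclusions $\KK(T_j^{l_j}/T_i^{l_i}) \subseteq Q(R(A,P_0))_0$ and conversely. The first is immediate: by Construction~\ref{constr:RAP} both $T_i^{l_i}$ and $T_j^{l_j}$ are $K_0$-homogeneous of the common degree $\mu$, so $f := T_j^{l_j}/T_i^{l_i}$ has degree zero.

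For the reverse inclusion, I would localize at $T_i^{l_i}$ and show $(R(A,P_0)[1/T_i^{l_i}])_0 \subseteq \KK[f]$. This rests on two ingredients. First, a lattice-theoretic analysis: since $K_0 = \ZZ^{n+m}/\operatorname{im}(P_0^*)$ with $P_0 = (L_0, 0)$ having zero entries in the $S_k$-columns, an explicit inspection of the row structure of $P_0$ together with the non-negativity of the exponents shows that every monomial in $\KK[T_{ij}, S_k]$ of $K_0$-degree $N\mu$ is forced to be of the pure form
$$
\prod_{i'=0}^r (T_{i'}^{l_{i'}})^{N_{i'}}, \qquad N_{i'} \in \ZZ_{\ge 0}, \quad \sum_{i'} N_{i'} = N,
$$
and in particular involves no $S_k$-factor. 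Second, in the localization each trinomial $g_{(i,j,i')}$ (for $i' \ne i, j$) can be solved for $T_{i'}^{l_{i'}}$ and, after dividing by $T_i^{l_i}$, yields $T_{i'}^{l_{i'}} = c_{i'}(f) \cdot T_i^{l_i}$ with $c_{i'}(f) \in \KK[f]$ of degree at most one and nonzero leading coefficient (by pairwise linear independence of $a_i, a_j, a_{i'}$), while $c_i = 1$ and $c_j = f$. Combining these, any $h \in R(A,P_0)_{N\mu}$ divided by $(T_i^{l_i})^N$ becomes a polynomial in $f$, yielding the claimed inclusion and, after taking fraction fields, $\KK(f) = \operatorname{Frac}((R(A,P_0)[1/T_i^{l_i}])_0)$.

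To conclude $Q(R(A,P_0))_0 \subseteq \KK(f)$, I would use that
$\mu = \sum_{i,j'} (l_{ij'}/(r+1)) w_{ij'}$ is a strict positive combination of all weights $w_{ij'}$, so $\mu$ lies in the relative interior of the weight cone; consequently, for any weight $w$ of $R(A,P_0)$ the shift $N\mu - w$ is again a weight for $N$ large enough, which lets one represent every element of $Q(R(A,P_0))_0$ as a ratio of two degree-zero elements of $R(A,P_0)[1/T_i^{l_i}]$. I expect the main obstacle to be the monomial classification in the first ingredient, which amounts to showing that a non-negative integral vector $(a,b) \in \ZZ_{\ge 0}^{n+m}$ satisfying $(a,b) - N \cdot e_i^{(l)} \in \operatorname{im}(P_0^*)$ must have the form above, where $e_i^{(l)}$ denotes the vector with entries $l_{ij}$ in the $i$-th block and zeros elsewhere; this is a careful but elementary inspection of the block structure of $P_0$ combined with the non-negativity constraints.
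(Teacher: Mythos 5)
Your argument is essentially correct, but it takes a genuinely different route from the paper. The paper's proof is geometric and very short: it localizes at the product of all variables, observes that $P_0$ defines a quotient map $\pi\colon\TT^n\to\TT^r$ with kernel $H_0=\Spec\,\KK[K_0]$, and notes that the image of $\b{X}_F$ under $\pi$ is a curve cut out by \emph{affine linear} equations in the coordinates of $\TT^r$ (these are exactly your relations $T_{i'}^{l_{i'}}=c_{i'}(f)\,T_i^{l_i}$ read on the quotient), hence rational; since $Q(R(A,P_0))_0$ is the function field of this curve, the claim follows. Your proof replaces this by a direct computation in the graded ring: the lattice analysis showing that every monomial of degree $N\mu$ is a product of the $T_{i'}^{l_{i'}}$, the elimination of the $T_{i'}^{l_{i'}}$ via the trinomials, and the shifting trick to reach degree $N\mu$. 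What the paper's approach buys is brevity and the conceptual point that the quotient curve is rational; what yours buys is an explicit, purely algebraic description of $(R(A,P_0)[1/T_i^{l_i}])_{N\mu}$ as $\KK[f]\cdot (T_i^{l_i})^N$, which is close in spirit to the use the paper later makes of this proposition in Proposition~\ref{prop:pan}.

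One step does need repair. You assert that $\mu$ lies in the relative interior of the weight cone and hence that $N\mu-w$ is a weight for every weight $w$ and $N\gg 0$. This is false whenever $m>0$: the grading splits as $K_0=(\ZZ^n/\mathrm{im}(L_0^*))\oplus\ZZ^m$ with $\deg(S_k)=e_k$ in the second summand, so $\mu$ has vanishing $\ZZ^m$-component and lies on the boundary of the weight cone, and $N\mu-\deg(S_k)$ is never a weight. The fix is the one-line reduction the paper makes (``it suffices to treat the case $m=0$''): a $K_0$-homogeneous element is of the form $g'S^\nu$ with $g'$ free of the $S_k$, so the $S^\nu$ cancels from any degree-zero fraction and $Q(R(A,P_0))_0$ coincides with the corresponding field for $m=0$, where your relative-interior argument is valid. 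With that reduction inserted, your proof goes through.
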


\begin{proof}
It suffices to treat the case $m=0$.
Let $F = \prod T_{ij}$ be the product
of all variables. 
Then $\TT^n = \KK^n_F$ is the $n$-torus 
and $P_0$ defines an epimorphism
having the quasitorus $H_0 := \Spec \, \KK[K_0]$
as its kernel
$$ 
\pi \colon \TT^n \ \to \ \TT^r,
\qquad
(t_{ij}) \ \mapsto \ 
\left( 
\frac{t_1^{l_1}}{t_0^{l_0}},
\ldots,
\frac{t_r^{l_r}}{t_0^{l_0}}
\right).
$$
Set $\b{X} := \Spec \, R(A,P_0)$. Then
$\pi(\b{X}_F) = \b{X}_F / H_0$ is a curve defined by 
affine linear equations in the coordinates 
of $\TT^r$ and thus rational.
The assertion follows.
\end{proof}

The following observation shows that the 
common degree $\mu = \deg(g_I)$ of the 
relations generalizes the ``remarkable weight'' 
introduced by Panyushev~\cite{Pa} in the 
factorial case.
Recall that the weight monoid $S_0 \subseteq K_0$ 
consists of all $w \in K_0$ admitting 
a nonzero homogeneous element.

\begin{proposition} 
\label{prop:pan}
Consider the $K_0$-graded ring $R := R(A,P_0)$ 
and the degree $\mu = \deg(g_I)$ of the relations
as defined in~\ref{constr:RAP}.
For $w \in S_0$ let $s_w \in \ZZ_{\ge 0}$ be the 
unique number with 
$w - s_w\mu \in S_0$ and $w-(s_w+1) \mu \not\in S_0$.
Then we have\ 
$$
\dim(R_w) \ = \ s_w+1
\text{ for all } 
w \in S_0.
$$
The element $\mu \in K_0$ is uniquely determined by this 
property.
We have $\dim(R_{\mu})=2$ and any two 
nonproportional elements in $R_{\mu}$ are coprime.
Moreover, any $w \in S_0$ with $w - \mu \not\in S_0$ 
satisfies $\dim(R_w) = 1$.
\end{proposition}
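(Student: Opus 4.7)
The plan is to establish three numerical assertions in sequence: (a) $\dim R_\mu = 2$, (b) the key lemma $\dim R_w = 1$ whenever $w - \mu \notin S_0$, and (c) the general formula $\dim R_w = s_w + 1$. Uniqueness of $\mu$ and the coprimality claim then follow from (a) and (b).

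For (a), the approach is monomial counting. A direct analysis of $\mathrm{im}(P_0^*)$, using nonnegativity of monomial exponents, identifies $T_0^{l_0}, \ldots, T_r^{l_r}$ as the only monomials of $K_0$-degree $\mu$ in $\KK[T_{ij}, S_k]$, so this graded piece is $(r+1)$-dimensional. The degree-$\mu$ part of the relation ideal equals the $\KK$-span of the trinomials $g_I$; expanding determinants, this span is readily identified with the kernel of the surjective linear map $\KK^{r+1} \to \KK^2$ sending $e_i \mapsto a_i$, hence has dimension $r-1$ by pairwise linear independence of the $a_i$. Subtracting gives $\dim R_\mu = 2$, with basis $\{f_0, f_1\} := \{T_0^{l_0}, T_1^{l_1}\}$; each $T_i^{l_i}$ is then the specific linear combination of $f_0, f_1$ prescribed by $a_i$.

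For (b) and the upper bound in (c), the main tool is divisor analysis on $\b{X} := \Spec R$ via the rational quotient $q\colon x \mapsto [f_0(x):f_1(x)]$ from $\b{X}$ to $\PP^1$. By Proposition~\ref{prop:degzerofield}, every element of $Q(R)_0 = \KK(t)$ with $t = f_1/f_0$ is $H_0$-invariant, so its principal divisor is pulled back from $\PP^1$; the fibers over the special points $[a_i]$ are $\sum_j l_{ij}[V(T_{ij})]$, while generic fibers are irreducible divisors $V(\ell_c)$ with $\ell_c := f_1 - c f_0 \in R_\mu$, and every fiber has $K_0$-degree $\mu$. For (b), take two linearly independent elements of $R_w$ and strip their $K_0$-GCD $d$ to obtain coprime $\tilde f, \tilde g \in R_{w - \deg d}$; applying the divisor analysis to $\tilde f / \tilde g \in \KK(t)$ together with coprimality (which also forces $v_{S_k}(\tilde f) = v_{S_k}(\tilde g) = 0$) yields $w - \deg d = N\mu$ for some $N \geq 0$. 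Either $N = 0$ makes $\tilde f, \tilde g$ proportional, or $N \geq 1$ gives $w - \mu = (N-1)\mu + \deg d \in S_0$ as a sum of elements of $S_0$; both outcomes contradict the hypotheses.

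For (c), fix a nonzero $g_0 \in R_{w - s_w\mu}$, unique up to scalar by (b). The elements $f_0^i f_1^{s_w - i} g_0$ for $i = 0, \ldots, s_w$ lie in $R_w$ and are linearly independent via the distinct monomials $t^{s_w - i}$ obtained by dividing by $f_0^{s_w} g_0$. For the upper bound, any $f \in R_w$ gives $h := f/(f_0^{s_w} g_0) \in \KK(t)$; a pole of $h$ at a finite $t_0$ or at a special point other than $t = \infty$ would force the corresponding fiber divisor of $q$ to divide $g_0$, producing a nonzero element of $R_{w - (s_w+1)\mu}$, while a pole at $t = \infty$ of order exceeding $s_w$ would force $T_0^{l_0}$-divisibility of $g_0$ beyond the $f_0^{s_w}$ allowance, yielding the same contradiction with $w - (s_w+1)\mu \notin S_0$. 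Hence $h$ is a polynomial in $t$ of degree at most $s_w$, placing $f$ in the span of the $s_w + 1$ explicit elements. Uniqueness of $\mu$ follows from pointedness of the weight cone: a competing $\mu'$ yields $\mu - \mu', \mu' - \mu \in S_0 \subseteq \omega$, forcing $\mu = \mu'$. For coprimality, if two nonproportional $h_1, h_2 \in R_\mu$ shared a $K_0$-prime $\pi$, then $\mathrm{span}(h_1, h_2) = R_\mu$ would imply that $\pi$ divides both $f_0 = \prod_j T_{0j}^{l_{0j}}$ and $f_1 = \prod_j T_{1j}^{l_{1j}}$, contradicting unique factorization since the $T_{0j}$ and $T_{1k}$ are pairwise non-associated $K_0$-primes. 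The hard part will be the divisor analysis in (b), which requires carefully matching the fiber divisors of $q$ with the corresponding $K_0$-prime factorizations in the factorially graded ring $R$.
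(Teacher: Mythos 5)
Your proposal is correct and takes essentially the same route as the paper's proof: both hinge on Proposition~\ref{prop:degzerofield}, on $\dim(R_\mu)=2$, and on writing a ratio of homogeneous elements of equal degree as a ratio of coprime homogeneous forms in $p_0=T_0^{l_0}$ and $p_1=T_1^{l_1}$ whose numerator must then divide the element itself. Your divisor-theoretic packaging of that divisibility step, the explicit monomial count giving $\dim(R_\mu)=2$, and the arguments for uniqueness of $\mu$ and for coprimality in $R_\mu$ merely flesh out what the paper asserts tersely or leaves to the reader.
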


\begin{proof}
According to Proposition~\ref{prop:degzerofield},
the field $Q(R)_0$ of degree zero fractions is 
the field of rational functions in $p_1/p_0$, 
where $p_0 := T_0^{l_0}$ and $p_1 := T_1^{l_1}$ 
are coprime and of degree $\mu$.
Moreover, by the structure of the relations $g_I$, 
we have $\dim(R_\mu) = 2$. 

Now, consider $w \in S_0$. If we have $\dim(R_w) = 1$,
then  $\dim(R_\mu) = 2$ implies $s_w = 0$ and the 
assertion follows in this case.
Suppose that we have $\dim(R_w) > 1$. 
Then we find two nonproportional elements 
$f_0,f_1 \in R_w$
and two coprime homogeneous polynomials $F_0$, $F_1$
of a common degree $s > 0$ such that
$$ 
\frac{f_1}{f_0}
\ = \ 
\frac{F_1(p_0,p_1)}{F_0(p_0,p_1)}.
$$
Observe that $F_1(p_0,p_1)$ must divide $f_1$.
This implies $w - s \mu \in S_0$.
Repeating the procedure with $w-s \mu $ and so on, we 
finally arrive at a weight $\t{w} = w - s_w \mu$ 
with $\dim(R_{\t{w}}) = 1$.
Moreover, by the procedure, any element of $R_w$ 
is of the form $hF(p_0,p_1)$ with $0 \ne h \in R_{\t{w}}$
and a homogeneous polynomial $F$ of degree $s_w$.
The assertion follows.
\end{proof}

\begin{corollary}
\label{cor:gencompdim}
Assume that we have $r \ge 2$ and 
that $l_{i1}+\ldots+l_{in_i} \ge 2$ holds 
for all $i$.
\begin{enumerate}
\item 
The $K_0$-homogeneous components 
$R(A,P)_{w_{ij}}$ and $R(A,P)_{w_{k}}$ 
of the generators $T_{ij}$ and $S_k$ are all of 
dimension one.
\item
Consider
$w  = w_{i_1j_1} + \ldots + w_{i_tj_t} \in K_0$,
where $1 \le t \le r$.
If $l_{i_kj_k} = 1$ holds for $1 \le k \le t-1$, 
then $R(A,P_0)_w$ is of dimension one.
\end{enumerate}
\end{corollary}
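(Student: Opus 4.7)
By Proposition~\ref{prop:pan}, a weight $w \in S_0$ gives $\dim R(A, P_0)_w = 1$ exactly when $w - \mu \notin S_0$, i.e., when no non-negative integer vector $v \in \ZZ_{\ge 0}^{n+m}$ projects to $w - \mu$ under $Q_0$. Since any two lifts of $w - \mu$ to $\ZZ^{n+m}$ differ by an element of the image of $P_0^*$, which is $\ZZ$-spanned by the rows of $P_0$, my strategy is to fix a convenient lift $v_0$ of $w - \mu$, write each candidate as $v = v_0 + \sum_{i'=1}^{r} \beta_{i'} \rho_{i'}$ with $\rho_{i'}$ the $i'$-th row of $P_0$ and $\beta_{i'} \in \ZZ$, and derive a contradiction by combining the non-negativity constraints across the various blocks of coordinates of $v$.

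For (i) with $w = w_{ij}$, lift $\mu$ via $\mu = \sum_{j'} l_{ij'} w_{ij'}$, so that $v_0 = e_{ij} - \sum_{j'} l_{ij'} e_{ij'}$ is supported on block $i$. The coordinates at blocks $i' \ne i$ with $i' \ge 1$ force $\beta_{i'} \ge 0$; the coordinates at block $0$ force $\sum_{i'} \beta_{i'} \le 0$ (or $\le -1$ when $i = 0$); and the coordinates within block $i$ force $\beta_i \ge 1$ when $i \ge 1$ — these are incompatible. The degenerate case $n_i = 1$ uses $\sum_j l_{ij} \ge 2$ to force $l_{i1} \ge 2$ and tighten the relevant inequality. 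The case $w = w_k$ is strictly easier, as $v_0 = e_k - \sum_{j'} l_{0j'} e_{0j'}$ immediately forces $\sum_{i'} \beta_{i'} \le -1$ from block $0$, contradicting $\beta_{i'} \ge 0$.

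For (ii), I interpret the $i_k$ as pairwise distinct (implicit in the statement, since otherwise $w = \mu$ could arise as such a sum, contradicting $\dim R_w = 1$). Because $t \le r$, pick $i_0 \in \{0, \ldots, r\} \setminus \{i_1, \ldots, i_t\}$ and lift $\mu$ via the relation at $i_0$, so that $v_0$ has $+1$ at each $(i_k, j_k)$, $-l_{i_0, j'}$ at $(i_0, j')$, and $0$ elsewhere. The block-$i_0$ coordinates force $\beta_{i_0} \ge 1$; unused blocks at indices $i' \ge 1$ give $\beta_{i'} \ge 0$; and at each used block $i_k$, the hypothesis $l_{i_k, j_k} = 1$ combined with $\sum_j l_{i_k, j} \ge 2$ forces $n_{i_k} \ge 2$, whence the coordinate $(i_k, j')$ with $j' \ne j_k$ yields $\beta_{i_k} \ge 0$ — and for $k = t$ the same conclusion follows by a direct check using $\sum_j l_{i_t, j} \ge 2$. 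Finally, the block-$0$ coordinates yield $\sum_{i'} \beta_{i'} \le 0$, either directly (with $\sum \le -1$) when one may choose $i_0 = 0$, or via a case split on $n_0$ when $0 \in \{i_k\}$, where $\sum_j l_{0j} \ge 2$ again closes the argument. This contradicts $\beta_{i_0} \ge 1$.

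The main obstacle is the careful interplay between the two hypotheses in (ii): $l_{i_k, j_k} = 1$ combined with $\sum_j l_{i_k, j} \ge 2$ forces $n_{i_k} \ge 2$, precisely providing the extra coordinate in block $i_k$ whose non-negativity rules out $\beta_{i_k} = -1$. Without this, the naive bound $\beta_{i_k} \ge -1/l_{i_k, j_k} = -1$ obtained from the single coordinate $(i_k, j_k)$ would leave enough slack to evade the contradiction.
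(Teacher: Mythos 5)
Your proof is correct, and after the shared first step it diverges from the paper's argument in a meaningful way. Both proofs invoke Proposition~\ref{prop:pan} to reduce everything to showing $w_{ij}-\mu$, $w_k-\mu$ and $w-\mu$ do not lie in the weight monoid $S_0$; but where you then analyze integral lifts directly --- writing every preimage of the shifted weight as $v_0+\sum\beta_{i'}\rho_{i'}$ with the rows $\rho_{i'}$ of $P_0$ and playing the nonnegativity constraints of the blocks against each other, using integrality of the $\beta_{i'}$ at several points --- the paper instead exhibits a single rational linear form $l_{0a_0}^{-1}e_{0a_0}^*+\ldots+l_{ra_r}^{-1}e_{ra_r}^*$ that vanishes on $\ker(Q_0)_\QQ$, is nonnegative on all generator degrees, and is negative on the chosen representative of $w-\mu$; the hypotheses $l_{i_kj_k}=1$ and $\sum_j l_{ij}\ge 2$ enter there only through the choice of indices $a_i$ with $a_{i_k}\ne j_k$ (or $l_{i_ta_{i_t}}\ge 2$). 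The paper's separation argument is shorter and proves slightly more, namely that the shifted weight lies outside the whole weight cone, not merely outside $S_0$; your lattice-point argument is more elementary, needs no convexity, and makes visible exactly where each hypothesis is used (in particular your observation that $l_{i_kj_k}=1$ together with $\sum_j l_{i_kj}\ge 2$ forces $n_{i_k}\ge 2$ and supplies the extra coordinate ruling out $\beta_{i_k}=-1$ is precisely the point where a naive estimate would fail). Your reading of part~(ii) as requiring the $i_k$ to be pairwise distinct agrees with the paper's implicit assumption and with the only way the statement is later applied.
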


\begin{proof}
According to Proposition~\ref{prop:pan}, we have to 
show that the shifts of the weights 
$w_{ij},w_k$ and $w$ by $- \mu$ do not belong to the 
weight monoid $S_0$.
For $w_k$ this is clear. 
For $w_{ij}$,  the assumption gives
$$ 
w_{ij} - \mu 
\ = \ 
- (l_{ij} -1)w_{ij} - \sum_{b \ne j} l_{ib} w_{ib}
\ \not\in \
S_0.
$$
Let us consider the weight $w$ of~(ii).
Since $t \le r$ holds, there is an index
$0 \le i_0 \le r$ with $i_0 \ne i_k$ for
$k = 1, \ldots, t$. 
We have $w - \mu = Q_0(e)$ for
$$ 
e
\ := \ 
e_{i_1j_1} + \ldots + e_{i_tj_t}
-
(l_{i_01}e_{i_01} + \ldots + l_{i_0n_{i_0}}e_{i_0n_{i_0}})
\ \in \ 
\ZZ^{n+m}.
$$
By the assumptions, we find
$1 \le a_i \le n_i$, where $0 \le i \le r$,
such that $a_{i_k} \ne j_k$ holds for 
$1 \le k \le t-1$
and $a_{i_t} \ne j_t$ or $l_{i_ta_{i_t}} \ge 2$.
Then the linear form
$$
l_{0a_0}^{-1}e_{0a_0}^* + \ldots + l_{ra_r}^{-1}e_{ra_r}^*
$$
vanishes along the kernel 
of $Q_0 \colon \QQ^{n+m} \to (K_0)_\QQ$ and  thus
induces a linear form on~$(K_0)_\QQ$ which 
separates $w - \mu = Q_0(e)$ 
from the weight cone.
\end{proof}

We turn to Cox rings of varieties with a 
complexity one torus action.
They are obtained by suitably downgrading 
the rings $R(A,P_0)$ as 
follows.

\begin{construction}
\label{constr:RAPdown}
Fix $r \in \ZZ_{\ge 1}$, a sequence 
$n_0, \ldots, n_r \in \ZZ_{\ge 1}$, set 
$n := n_0 + \ldots + n_r$, and fix  
integers $m \in \ZZ_{\ge 0}$ and $0 < s < n+m-r$.
The input data are 
\begin{itemize}
\item 
a matrix $A := [a_0, \ldots, a_r]$ 
with pairwise linearly independent 
column vectors $a_0, \ldots, a_r \in \KK^2$,
\item 
an {\em integral block matrix\/} $P$ of size 
$(r + s) \times (n + m)$ the columns 
of which are pairwise different primitive
vectors generating $\QQ^{r+s}$ as a cone: 
\begin{eqnarray*}
P
& = & 
\left( 
\begin{array}{cc}
L_0 & 0 
\\
d & d'  
\end{array}
\right),
\end{eqnarray*}
where $d$ is an $(s \times n)$-matrix, $d'$ an $(s \times m)$-matrix 
and $L_0$ an $(r \times n)$-matrix build from
tuples $l_i := (l_{i1}, \ldots, l_{in_i}) \in \ZZ_{\ge 1}^{n_i}$ 
as in~\ref{constr:RAP}.
\end{itemize}
Let $P^*$ denote the transpose of $P$,
consider the factor group 
$K := \ZZ^{n+m}/\rm{im}(P^*)$
and the projection $Q \colon \ZZ^{n+m} \to K$.
We define a $K$-grading on 
$\KK[T_{ij},S_k]$ by setting
$$ 
\deg(T_{ij}) 
 \ := \ 
Q(e_{ij}),
\qquad
\deg(S_{k}) 
 \ := \ 
Q(e_{k}).
$$
The trinomials $g_I$ of~\ref{constr:RAP} 
are $K$-homogeneous, all of the same degree.
In particular, we obtain a $K$-graded 
factor ring  
\begin{eqnarray*}
R(A,P)
& := &
\KK[T_{ij},S_k; \; 0 \le i \le r, \, 1 \le j \le n_i, 1 \le k \le m] 
\ / \
\bangle{g_I; \; I \in \mathfrak{I}}.
\end{eqnarray*}
\end{construction}

\begin{theorem}
See~\cite[Theorem~1.4]{HaHe}.
With the notation of Construction~\ref{constr:RAPdown},
the following statements hold.
\begin{enumerate}
\item
The $K$-grading of the ring $R(A,P)$ is factorial,
pointed and almost free, i.e.~$K$ 
is generated by any $n+m-1$ of the $\deg(T_{ij}), \deg(S_k)$.
\item
The variables $T_{ij}$ and $S_k$ define a system of 
pairwise nonassociated $K$-prime generators of 
$R(A,P)$.
\end{enumerate}
\end{theorem}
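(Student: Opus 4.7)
The ring $R(A,P)$ coincides as a $\KK$-algebra with $R(A,P_0)$; only the grading differs, the $K$-grading being the coarsening of the $K_0$-grading induced by the surjection $K_0 \to K$ coming from the inclusion $\mathrm{im}(P_0^*) \subseteq \mathrm{im}(P^*)$. The previous theorem therefore supplies at once that $R(A,P)$ is an integral domain, that the $K_0$-grading is effective, pointed and factorial, and that the $T_{ij}$, $S_k$ form a system of pairwise nonassociated $K_0$-prime generators. The plan is to transfer each required property of the $K$-grading from this known structure, repeatedly exploiting the two standing hypotheses on $P$: the columns are pairwise distinct primitive vectors and they generate $\QQ^{r+s}$ as a convex cone.

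For pointedness I would argue in two halves. A $K$-homogeneous monomial $T^\alpha S^\beta$ of degree zero writes $(\alpha,\beta) = P^* u \in \ZZ^{n+m}_{\ge 0}$, so $\langle u, v_\iota\rangle \ge 0$ for every column $v_\iota$ of $P$; since these columns span $\QQ^{r+s}$ as a cone, their dual cone is trivial, forcing $u=0$ and hence $R_0 = \KK$. For line-freeness of the weight cone $\omega \subseteq K_\QQ$, I would dualize: $\omega^\vee$ is identified with $\ker(P_\QQ) \cap \QQ^{n+m}_{\ge 0}$, and the cone-spanning property of the columns places $0$ in the relative interior of the cone they generate, producing a strictly positive vector in $\ker(P_\QQ)$; hence $\omega^\vee$ is full-dimensional in $\ker(P_\QQ)$ and $\omega$ is pointed. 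Almost freeness is then a direct consequence of primitivity: for each column $v_\iota$ one picks $u \in \ZZ^{r+s}$ with $\langle u, v_\iota\rangle = 1$; the vector $e_\iota - P^* u$ has a vanishing $\iota$-th entry, so $\deg(T_\iota)$, respectively $\deg(S_\iota)$, already lies in the subgroup generated by the remaining $n+m-1$ weights.

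For $K$-primality of the generators, I would promote their $K_0$-primality along the coarsening. Suppose $T_{ij}$ divides a product $gh$ of $K$-homogeneous elements; decomposing $g$ and $h$ into their $K_0$-homogeneous components $g_\alpha$ and $h_\beta$, one can isolate the extremal pieces by choosing a linear form on $K_0$ that separates $\deg(T_{ij})$ from the cone of the other generator weights, and then conclude via the $K_0$-primality of $T_{ij}$ that $T_{ij}$ divides $g$ or $h$; the same argument applies to each $S_k$. Pairwise nonassociatedness is immediate since $R(A,P)^* = \KK^*$ and distinct variables are never scalar multiples. Factoriality of the $K$-grading then follows by combining $K$-primality of the generators with the existing $K_0$-factorial decomposition: any nonzero $K$-homogeneous element has a $K_0$-factorization into primes of the finer grading, and these primes remain $K$-prime by the same coarsening argument, yielding existence; uniqueness descends from the $K_0$-uniqueness together with the fact that no two distinct generator weights are identified in $K$.

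The main obstacle I expect is precisely the promotion of $K_0$-primality to $K$-primality, since coarsening a factorial grading does not in general preserve factoriality. The essential input is the specific shape of the coarsening $K_0 \to K$: it arises from enlarging $P_0$ by the rows $(d,d')$ in a way consistent with the convex cone spanned by the columns of $P$, and this structure is what keeps distinct generator weights separated and the principal ideals generated by the variables sufficiently independent in the coarser grading.
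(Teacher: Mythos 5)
First, a caveat on the comparison: the paper does not prove this statement at all --- it is imported verbatim from \cite[Theorem~1.4]{HaHe}, so there is no in-paper argument to measure yours against. Judged on its own, your proposal gets the easier parts right. The pointedness argument ($P^*u\ge 0$ together with the columns of $P$ generating $\QQ^{r+s}$ as a cone forces $u=0$; dually, a strictly positive vector in $\ker(P_\QQ)$ makes $\omega^\vee$ full-dimensional) and the almost-freeness argument via primitivity of the columns are both correct. The extremal-component device for promoting $K_0$-primality of the variables to $K$-primality is also essentially sound, although your description of the linear form is off: what you need is not a form separating $\deg(T_{ij})$ from the other generator weights, but a form on the kernel $\ZZ^s$ of the downgrading $K_0\to K$ (Remark~\ref{rem:downgrade}) that is generic for the finite supports of $g$ and $h$, so that the top $K_0$-component of $gh$ is exactly the product of the top components; the freeness of that kernel is what makes this possible, and an induction on the number of components then finishes it.

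The genuine gap is in the factoriality step. You assert that ``any nonzero $K$-homogeneous element has a $K_0$-factorization into primes of the finer grading,'' but a $K$-homogeneous element is in general only a \emph{sum} of $K_0$-homogeneous pieces (e.g.\ $T_{01}+T_{21}$ in Example~\ref{ex:E6sing2} is $K$-homogeneous of degree $3$ but not $K_0$-homogeneous), and $K_0$-factoriality says nothing about such elements. What must be shown is that every $K$-\emph{irreducible} homogeneous nonunit is $K$-prime, and establishing this only for the generators $T_{ij},S_k$ does not suffice. Your fallback, that ``no two distinct generator weights are identified in $K$,'' is moreover false: in the same example $\deg_K(T_{01})=\deg_K(T_{21})=3$, and coincidences of $K$-degrees are the rule rather than the exception (cf.\ Theorem~\ref{thm:sesipart}(iii)). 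Closing this gap requires a different tool --- in \cite{HaHe} it is the geometric characterization of factorial gradings (every $H$-invariant Weil divisor on $\Spec R(A,P)$ is the divisor of a homogeneous function, verified via the explicit description of the invariant prime divisors), not a transfer of factorizations along the coarsening $K_0\to K$. Coarsening a factorial grading does not preserve factoriality in general, and your proposal does not isolate what, beyond $K$-primality of the generators, makes it work here.
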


\begin{remark}
\label{rem:downgrade}
As rings $R(A,P_0)$ and $R(A,P)$ coincide but 
the $K_0$-grading is finer than the $K$-grading.
The downgrading map $K_0 \to K$ fits into the
following commutative diagram built from exact 
sequences
$$ 
\xymatrix@R=15pt{
&
&
&
0
\ar[d]
&
\\
&
0
\ar[d]
&
&
{\ZZ^s}
\ar[d]
&
\\
0
\ar[r]
&
{\ZZ^r}
\ar[r]^{P_0^*}
\ar[d]
&
{\ZZ^{n+m}}
\ar[r]^{Q_0}
\ar@{=}[d]
&
K_0
\ar[r]
\ar[d]
&
0
\\
0
\ar[r]
&
{\ZZ^{r+s}}
\ar[r]_{P^*}
\ar[d]
&
{\ZZ^{n+m}}
\ar[r]_{Q}
&
K
\ar[r]
\ar[d]
&
0
\\
&
{\ZZ^s}
\ar[d]
&
&
0
&
\\
&
0
&
&
&
}
$$
The snake lemma~\cite[Sec.~III.9]{La} allows us to identify the 
direct factor $\ZZ^s$ of $\ZZ^{r+s}$ with the kernel of 
the downgrading map $K_0 \to K$.
Note that for the quasitori $T$, $H_0$ and $H$ associated 
to abelian groups $\ZZ^s$, $K_0$ and $K$ we have $T = H_0/H$.
\end{remark}

\begin{construction}
\label{constrRAP2TVar}
Consider a ring $R(A,P)$ with its $K$-grading 
and the finer $K_0$-grading. 
Then the quasitori $H = \Spec \, \KK[K]$ and 
$H_0 := \Spec \, \KK[K_0]$ act on 
$\b{X} := \Spec \, R(A,P)$.
Let $\rq{X} \subseteq \b{X}$ be a 
big $H_0$-invariant open subset 
with a good quotient 
$$
p \colon \rq{X} \ \to \ X = \rq{X} \quot H
$$
such that $X$ is complete 
and for some open set $U \subseteq X$, 
the inverse image $p^{-1}(U) \subseteq \b{X}$ 
is big and $H$ acts freely on $U$.
Then $X$ is a Mori dream space of dimension 
$s+1$ with divisor class group 
$\Cl(X) \cong K$ and Cox ring 
$\mathcal{R}(X) \cong R(A,P)$.
Moreover, $X$ comes with an induced 
effective action of the $s$-dimensional 
torus $T := H_0/H$.
\end{construction}

\begin{remark}
Let $X$ be a $T$-variety arising from data
$A$ and $P$ via Construction~\ref{constrRAP2TVar}.
Then every $T_{ij} \in R(A,P)$ defines an
invariant prime divisor $D_{ij} = \b{T \mal x_{ij}}$ 
in $X$ such that the isotropy group $T_{x_{ij}}$
is cyclic of order $l_{ij}$ 
and the class of $d_{ij} \in \ZZ^s$ 
in $(\ZZ/l_{ij}\ZZ)^s$ 
is the weight vector of the tangent presentation 
of $T_{x_{ij}}$ at $x_{x_{ij}}$.
Moreover, each $S_k$ defines an invariant 
prime divisor $E_k \subseteq X$ such that 
the one-parameter subgroup $\KK^* \to T$
corresponding to $d_k' \subseteq \ZZ^s$ 
acts trivially on $E_k$.
\end{remark}

\begin{theorem}
\label{thm:allCompl1var}
Let $X$ be an $n$-dimensional complete normal 
rational variety with an effective action 
of an $(n-1)$-dimensional torus $S$.
Then $X$ is equivariantly isomorphic to 
a $T$-variety arising from data $(A,P)$ 
as in~\ref{constrRAP2TVar}.
\end{theorem}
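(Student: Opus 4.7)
The plan is to read off the data $(A,P)$ from the Cox ring of $X$ equipped with the combined action of the characteristic quasitorus and the given torus $S$, and then to apply the structure theorems for factorially graded rings of complexity one recalled above.

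First I would use that any normal complete rational variety with an effective torus action of complexity one is a Mori dream space; this follows from the description of Cox rings in the complexity one setting in~\cite{HaSu,HaHe}. Thus $\mathcal{R}(X)$ is a finitely generated $\KK$-algebra graded by $K := \Cl(X)$, and we have the total coordinate space $\b{X} = \Spec\,\mathcal{R}(X)$ with its action of the characteristic quasitorus $H_X = \Spec\,\KK[K]$. Next I would lift the $S$-action from $X$ to $\b{X}$ in a manner commuting with $H_X$. This amounts to $S$-linearising the Cox sheaf; since divisor classes pull back along elements of $S$ to themselves, the obstruction is essentially trivial, and after replacing $S$ by a finite cover one gets a lift that fits into an exact sequence $1 \to H_X \to H_0 \to S \to 1$. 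Equivalently, the $K$-grading of $\mathcal{R}(X)$ refines to a grading by a group $K_0$ with kernel $\ZZ^s$ (the character lattice of $S$) of the projection $K_0 \to K$.

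The heart of the argument is then to check that the $K_0$-grading on $\mathcal{R}(X)$ is effective, pointed, factorial and of complexity one, so that Construction~\ref{constr:RAP} and the accompanying structure theorem apply. Effectiveness follows from effectiveness of the $K$-grading together with effectiveness of the $S$-action on $X$. Pointedness is inherited from the $K$-grading (the weight cone being the effective cone of $\b{X}$, which contains no line as $X$ is complete) combined with the fact that $R_0 = \KK$ since $X$ is rational and complete. Factoriality of the $K$-grading is preserved under refinement, so $\mathcal{R}(X)$ is $K_0$-factorial. For the complexity condition, the dimension count
$$
\dim(K_0)_\QQ \ = \ \rank K + s \ = \ \rank K + (n-1) \ = \ \dim \b{X} - 1 \ = \ \dim \mathcal{R}(X) - 1
$$
gives exactly what is needed. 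The cited theorem then yields a $K_0$-graded isomorphism $\mathcal{R}(X) \cong R(A,P_0)$ for some $A$ and $P_0$ as in~\ref{constr:RAP}.

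The remaining step is to produce the matrix $P$ of~\ref{constrRAPdown} from this data. The rows $d$ and $d'$ encode the $S$-weights of the $K_0$-homogeneous generators: the $T_{ij}$ correspond to invariant prime divisors on $X$ along which $S$ has finite generic isotropy (with the blocks $l_i$ recording the orders of these isotropies), and the $S_k$ correspond to invariant prime divisors fixed pointwise by a subtorus of $S$. Primitivity and pairwise distinctness of the columns of $P$ follow from the fact that the $T_{ij}$ and $S_k$ are pairwise nonassociated $K$-prime generators and from the description of $H_X$-isotropies on $\rq{X}$; the condition that the columns generate $\QQ^{r+s}$ as a cone reflects the completeness of $X$, using that the relevant ``ample'' chamber in the GIT-fan of $\b{X}$ with respect to $H_X$ must produce a complete quotient containing $X$. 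Finally, identifying $\b{X}$ with $\Spec\,R(A,P)$ and $X$ with the quotient $\rq{X} \quot H$ realises $X$ as a variety arising from $(A,P)$ via Construction~\ref{constrRAP2TVar}, with the induced torus $T = H_0/H$ acting equivariantly isomorphic to the original $S$.

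The main obstacle will be the $S$-equivariant lifting to $\b{X}$: one has to choose the extension $H_0$ of $H_X$ by $S$ carefully (possibly after a finite cover of $S$) so that the action is well-defined on $\mathcal{R}(X)$ and compatible with the $K$-grading. Once this lifting is in place, the algebraic structure theorem from~\cite{HaHe} does the heavy lifting, and the geometric conditions on $P$ drop out by unwinding Construction~\ref{constrRAP2TVar}.
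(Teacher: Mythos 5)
Your argument is correct in outline, but it takes a genuinely different route from the paper. The paper's proof is very short: it invokes \cite[Theorem~1.5]{HaHe} as a black box to get a $\Cl(X)$-graded isomorphism $\mathcal{R}(X) \cong R(A,P)$, realizes $X$ as $\rq{X} \quot H$, uses \cite[Cor.~2.3]{Sw} to see that $\rq{X}$ is invariant under the larger quasitorus $H_0$ (so that $T = H_0/H$ acts on $X$), and then matches the constructed $T$-action with the given $S$-action by observing that the two are conjugate in the linear algebraic group $\Aut(X)$ (both being maximal tori, since $X$ is nontoric). You instead reconstruct the content of that cited theorem: you lift the given $S$-action to the total coordinate space, refine the $\Cl(X)$-grading to a $K_0$-grading, verify the hypotheses of the structure theorem for factorially graded rings of complexity one (the theorem following Construction~\ref{constr:RAP}), and then read off $P$ from the $S$-weights of the generators. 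What your route buys is that the equivariance with respect to the \emph{original} $S$-action comes for free, with no appeal to conjugacy of maximal tori or to the finiteness result of \'Swi\c{e}cicka; what it costs is that you must redo the work of \cite{HaSu,HaHe} rather than cite it. Two spots in your sketch deserve more care: (a) the claim that factoriality of the grading is preserved under refinement is true but not immediate --- it follows from the characterization of factorially graded normal algebras with $R^* = \KK^*$ via principality of invariant Weil divisors, since $H_0$-invariant divisors are in particular $H_X$-invariant and a homogeneous equation for an $H_0$-invariant principal divisor is automatically $K_0$-homogeneous; and (b) if you pass to a finite cover $S' \to S$ to achieve the lift, you must choose $K_0$ so that $\ker(K_0 \to K)$ is exactly the character lattice of $S$, since otherwise the torus $T = H_0/H$ of Construction~\ref{constrRAP2TVar} would fail to act effectively on $X$; this is precisely the point where the careful bookkeeping of \cite{HaSu} enters.
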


\begin{proof}
We may assume that $X$ is not a toric variety.
According to~\cite[Theorem~1.5]{HaHe}, the 
$\Cl(X)$-graded Cox ring of $X$ is isomorphic 
to a $K$-graded ring $R(A,P)$.
Thus, in the notation of~\ref{constrRAP2TVar},
there is a big $H$-invariant open subset 
$\rq{X}$ of $\b{X}$ with $X \cong \rq{X} \quot H$.
Applying~\cite[Cor.~2.3]{Sw} to a subtorus 
$T_0 \subseteq H_0$ projecting onto $T = H_0/H$, 
we see that $\rq{X}$ is even invariant under $H_0$.
Thus, $T$ acts on $X$. Since the $T$-action is 
conjugate in $\Aut(X)$ to the given $S$-action 
on $X$, the assertion follows.
\end{proof}

\begin{example}[The $E_6$-singular cubic II]
\label{ex:E6sing2}
Let $r = 2$, $n_0 = 2$, $n_1 = n_2 = 1$, $m=0$,
$s=1$ and consider the data 
$$ 
A 
\ = \ 
\left[
\begin{array}{ccc} 
0 & -1 & 1
\\
1 & -1 & 0 
\end{array}
\right],
\qquad\qquad
P \ = \ 
\left[
\begin{array}{rrrr} 
-1 & -3 & 3 & 0
\\
-1 & -3 & 0 & 2 
\\
-1 & -2 & 1 & 1 
\end{array}
\right].
$$
Then, as remarked before, we have exactly one triple $I = (1,2,3)$ 
and, as a ring, $R(A,P)$ is given by 
\begin{eqnarray*}
R(A,P)
& = & 
\KK[T_{01},T_{02},T_{11},T_{21}] 
\ / \ 
\bangle{T_{01}T_{02}^3 + T_{11}^3 + T_{21}^2}.
\end{eqnarray*} 
The grading group 
$K = \ZZ^4/{\rm im}(P^*)$ is isomorphic to $\ZZ$ 
and the grading can be given explicitly via
$$ 
\deg(T_{01}) 
\ = \ 
3,
\quad
\deg(T_{02}) 
\ = \ 
1,
\quad
\deg(T_{11}) 
\ = \
2,
\quad
\deg(T_{21}) 
\ = \ 
3.
$$
As shown for example in~\cite{HaTschi},
see also \cite[Example~3.7]{Ha3L}, 
the ring $R(A,P)$ is the Cox ring of the 
$E_6$-singular cubic surface in the projective
space given by
$$  
X 
\ = \ 
V(z_1z_2^2 + z_2z_0^2 + z_3^3)
\ \subseteq \ 
\PP_3.
$$
\end{example}

\section{Primitive locally nilpotent derivations}

Here, we investigate the homogeneous locally nilpotent 
derivations of the $K_0$-graded algebra $R(A,P_0)$.
The description of the ``primitive'' ones given 
in Theorem~\ref{thm:basiclnd} is the central algebraic 
tool for our study of automorphism groups.
As before, $\KK$ is an algebraically closed field 
of characteristic zero.

Let us briefly recall the necessary background.
We consider derivations on an integral $\KK$-algebra 
$R$, that means $\KK$-linear maps $\delta \colon R \to R$ 
satisfying the Leibniz rule 
\begin{eqnarray*}
\delta(fg) 
& = & 
\delta(f)g \ + \ f\delta(g).
\end{eqnarray*}
Any such $\delta \colon R \to R$ extends uniquely 
to a derivation $\delta \colon Q(R) \to Q(R)$ of the 
quotient field.
Recall that a derivation $\delta \colon R \to R$ is 
said to be {\em locally nilpotent\/} if for every 
$f \in R$ there is an $n \in \NN$ with 
$\delta^n(f) = 0$.
Now suppose that $R$ is graded by 
a finitely generated abelian group:
\begin{eqnarray*}
R & = & \bigoplus_{w \in K} R_w.
\end{eqnarray*}
A derivation $\delta \colon R \to R$ is 
called {\em homogeneous\/} if for every $w \in K$ 
there is a $w' \in K$ with 
$\delta(R_w) \subseteq R_{w'}$.
Any homogeneous derivation $\delta \colon R \to R$
has a {\em degree\/} $\deg(\delta) \in K$ satisfying
$\delta(R_w) \subseteq R_{w+\deg(\delta)}$ for 
all $w \in K$.

\begin{definition}
Let $K$ be a finitely generated abelian group,
$R = \oplus_K R_w$ a $K$-graded $\KK$-algebra
and $Q(R)_0 \subseteq Q(R)$ the subfield of 
all fractions $f/g$ of homogeneous elements 
$f,g \in R$ with $\deg(f) = \deg(g)$.
\begin{enumerate}
\item
We call a homogeneous derivation 
$\delta \colon R \to R$ {\em primitive\/} 
if $\deg(\delta)$ does not lie in 
the weight cone $\omega \subseteq K_\QQ$ 
of $R$.
\item
We say that a homogeneous derivation 
$\delta \colon R \to R$ is of 
{\em vertical type\/} if $\delta(Q(R)_0) = 0$
holds and of {\em horizontal type\/}
otherwise.
\end{enumerate}
\end{definition}

\begin{construction}
\label{constr:phlnd}
Notation as in Construction~\ref{constr:RAP}.
We define derivations of the $K_0$-graded 
algebra $R(A,P_0)$ 
constructed there. The input data are
\begin{itemize}
\item
a sequence $C = (c_0, \ldots, c_r)$ with 
$1 \le c_i \le n_i$,
\item
a vector $\beta \in \KK^{r+1}$ 
lying in the row space of the
matrix $[a_0, \ldots, a_r]$. 
\end{itemize}
Note that for $0 \ne \beta$ as above either all entries 
differ from zero or there is a unique $i_0$ with 
$\beta_{i_0} = 0$. 
According to these cases, we put further conditions 
and define:
\begin{enumerate}
\item
if all entries $\beta_0, \ldots, \beta_r$ differ
from zero 
and there is at most one $i_1$ with $l_{i_1c_{i_1}} > 1$, 
then we set
\begin{eqnarray*}
\delta_{C,\beta}(T_{ij})
& := & 
\begin{cases}
\beta_i \prod_{k \ne i} \frac{\partial T_k^{l_k}}{\partial T_{k c_k}},
& \quad 
j = c_i,
\\
0, 
& \quad
j \ne c_i,
\end{cases}
\\
\delta_{C,\beta}(S_k)
& := & 
0 \quad \text{for } k = 1, \ldots, m,
\end{eqnarray*}
\item
if $\beta_{i_0} = 0$ is the unique zero entry of $\beta$
and there is at most one $i_1$ with $i_1 \ne i_0$ and 
$l_{i_1c_{i_1}} > 1$, 
then we set
\begin{eqnarray*}
\delta_{C,\beta}(T_{ij})
& := & 
\begin{cases}
\beta_i \prod_{k \ne i,i_0} \frac{\partial T_k^{l_k}}{\partial T_{k c_k}},
& \quad
j = c_i,
\\
0, 
& \quad
j \ne c_i,
\end{cases}
\\
\delta_{C,\beta}(S_k)
& := & 
0 \quad \text{for } k = 1, \ldots, m.
\end{eqnarray*}
\end{enumerate}
These assignments define $K_0$-homogeneous
primitive locally nilpotent derivations 
$\delta_{C,\beta} \colon R(A,P_0) \to R(A,P_0)$ 
of degree
\begin{eqnarray*}
\deg(\delta_{C,\beta}) 
& = & 
\begin{cases}
r\mu - \sum_k \deg(T_{kc_k}),
& 
\quad 
\text{in case (i)},
\\
(r-1)\mu - \sum_{k\ne i_0} \deg(T_{kc_k}),
& 
\quad
\text{in case (ii)}.
\end{cases}
\end{eqnarray*}
\end{construction}

\begin{proof}
The assignments~(i) and~(ii) on the variables
define a priori derivations of the polynomial ring
$\KK[T_{ij},S_k]$.
Recall from~\ref{constr:RAP} that $R(A,P_0)$ 
is the quotient of $\KK[T_{ij},S_k]$ by the ideal 
generated by 
all
\begin{eqnarray*}
g_I
& = & 
\det
\left[
\begin{array}{ccc}
T_{i_1}^{l_{i_1}} & T_{i_2}^{l_{i_2}} & T_{i_3}^{l_{i_3}}
\\
a_{i_1} & a_{i_2} & a_{i_3}
\end{array}
\right],
\end{eqnarray*}
where $I = (i_1,i_2,i_3)$.
Since the vector $\beta$ lies in the
row space of $[a_0, \ldots, a_r]$, 
we see that $\delta_{C,\beta}$ sends 
every trinomial $g_I$ to zero and thus 
descends to a well defined derivation
of $R(A,P_0)$.

We check that $\delta_{C,\beta}$ is homogeneous.
Obviously, every $\delta_{C,\beta}(T_{ij})$ is a 
$K_0$-homogeneous element of $\KK[T_{ij}]$.
Moreover, with the degree $\mu$ of the 
relations $g_I$, we have 
\begin{eqnarray*}
\deg(\delta_{C,\beta}(T_{ij})) - \deg(T_{ij}) 
& = & 
\begin{cases}
r\mu - \sum_k \deg(T_{kc_k}),
& 
\quad 
\text{in case (i)},
\\
(r-1)\mu - \sum_{k\ne i_0} \deg(T_{kc_k}),
& 
\quad
\text{in case (ii)}.
\end{cases}
\end{eqnarray*}
In particular, the left hand side does not depend 
on $(i,j)$. 
We conclude that $\delta_{C,\beta}$ is homogeneous
of degree $\deg(\delta_{C,\beta}(T_{ij})) - \deg(T_{ij})$. 

For primitivity, we have to show that the degree 
of $\delta_{C,\beta}$ does not lie in the weight 
cone of $R(A,P_0)$.
We exemplarily treat case~(i), where we may assume 
that $i_1 = 0$ holds.
As seen before, the degree of $\delta_{C,\beta}$ is 
represented by the vector
$$ 
v_{C,\beta}
\ := \ 
- e_{0c_0} 
+
\sum_{j \ne c_1} l_{1j} e_{1j}
+ 
\ldots 
+
\sum_{j \ne c_r} l_{rj} e_{rj}
\ \in \
\ZZ^{n+m}.
$$
Thus, we look for a linear form on $\QQ^{n+m}$ separating 
this vector from the orthant $\cone(e_{ij},e_k)$ and 
vanishing along the kernel of $\QQ^{n+m} \to (K_0)_\QQ$, 
i.e.~the linear subspace spanned by the columns of $P_0^*$.
For example, we may take
$$ 
l_{0c_0}^{-1}e_{0c_0}^* 
+ 
l_{1c_1}^{-1}e_{1c_1}^* 
+ \ldots + 
l_{rc_r}^{-1}e_{rc_r}^*.
$$

Finally, we show that $\delta_{C,\beta}$ is locally 
nilpotent.
If $l_{ic_i} = 1$ holds for all $i$, then 
$\delta_{C,\beta}(T_{ij})$ is a product 
of variables from the kernel of $\delta_{C,\beta}$
and thus $\delta_{C,\beta}^2$ annihilates 
all generators.
If $l_{0c_0} > 1$ holds, then we have
$$
\delta_{C,\beta}^2(T_{0c_0}) \ = \ 0,
\qquad
\qquad
\delta_{C,\beta}(T_{ic_i}) \ = \ T_{0c_0}^{l_{0c_0}-1}h_i,
$$
where $i \ge 1$ and $h_i$ lies in the kernel of 
$\delta_{C,\beta}$.
Putting all together, we obtain that $\delta_{C,\beta}^{l_{0c_0}+1}$
annihilates all generators $T_{ij}$ and thus 
$\delta_{C,\beta}$ is locally nilpotent.
\end{proof}

\begin{theorem}
\label{thm:basiclnd}
Let 
$\delta \colon R(A,P_0) 
\to R(A,P_0)$ 
be a nontrivial 
primitive $K_0$-homogeneous locally nilpotent 
derivation.
\begin{enumerate}
\item
If $\delta$ is of vertical type, then 
$\delta(T_{ij}) = 0$ holds for all $i,j$
and there is a $k_0$ such that $\delta(S_{k_0})$ 
does not depend on $S_{k_0}$ and $\delta(S_k) = 0$ 
holds for all $k \ne k_0$.
\item
If $\delta$ is of horizontal type, then
we have $\delta = h\delta_{C,\beta}$, where 
$\delta_{C,\beta}$ is as in \ref{constr:phlnd}
and $h$ is $K_0$-homogeneous with 
$h \in \ker(\delta_{C,\beta})$.
\end{enumerate}
\end{theorem}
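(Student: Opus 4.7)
The plan is to split into the vertical and horizontal cases and, in each, combine primitivity ($u := \deg(\delta) \notin \omega$) with the factorial $K_0$-grading and local nilpotency to determine $\delta$ on the generators $T_{ij}, S_k$. A preliminary observation: since $\delta$ shifts degrees by $u$, nonvanishing of $\delta(x)$ on a generator forces $u + \deg(x) \in S_0 \subseteq \omega$, and Proposition~\ref{prop:pan} then bounds the homogeneous component in which $\delta(x)$ lives.

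For (i), by Proposition~\ref{prop:degzerofield} we have $Q(R)_0 = \KK(T_j^{l_j}/T_i^{l_i})$ for any $i \ne j$, so vanishing of $\delta$ on $Q(R)_0$ yields $\delta(T_j^{l_j})\,T_i^{l_i} = \delta(T_i^{l_i})\,T_j^{l_j}$ in $R$. Since the variables are pairwise non-associated $K_0$-primes, the monomials $T_i^{l_i}$ and $T_j^{l_j}$ share no $K_0$-prime factor; factoriality combined with a standard local-nilpotency argument then forces $\delta(T_i^{l_i}) = 0$ for every $i$, and hence $\delta(T_{ij}) = 0$ by Leibniz. Thus $\delta$ acts only on the $S_k$. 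Primitivity of $u$, together with the explicit weight cone coming from the columns of $P_0^*$, rules out $\delta(S_k) \ne 0$ for two different indices $k$, leaving a unique $k_0$; and if $\delta(S_{k_0})$ involved $S_{k_0}$, the $S_{k_0}$-degree of iterates would not drop, contradicting local nilpotency.

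For (ii), pick $(i_0,j_0)$ with $\delta(T_{i_0 j_0}) \ne 0$. Apply $\delta$ to each trinomial $g_I$ and expand via Leibniz:
$$
\delta(T_i^{l_i}) \ = \ \sum_{j=1}^{n_i} l_{ij}\,T_{ij}^{l_{ij}-1}\,\delta(T_{ij}) \prod_{j' \ne j} T_{ij'}^{l_{ij'}}.
$$
The crucial reduction is to show that for each block $i$ there is at most one index $c_i$ with $\delta(T_{i c_i}) \ne 0$: otherwise two distinct summands of $\delta(g_I) = 0$ would introduce $K_0$-prime contributions that cannot match across the three terms of $g_I$ without violating either factoriality or primitivity. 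With the resulting sequence $C = (c_0, \ldots, c_r)$ (possibly missing one entry $i_0$, producing case~(ii) of~\ref{constr:phlnd}), factoring out the common monomial in $\delta(g_I) = 0$ yields a linear relation on scalars $\beta_i$ which, by the determinantal shape of $g_I$, must lie in the row span of $[a_0,\ldots,a_r]$. Writing $\delta = h \delta_{C,\beta}$, the restriction that at most one $l_{i_1 c_{i_1}} > 1$ arises as a local-nilpotency obstruction (two large exponents would yield an infinite $\delta$-chain), and local nilpotency of $\delta$ together with that of $\delta_{C,\beta}$ forces $h \in \ker(\delta_{C,\beta})$.

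The main obstacle is the single-index reduction in step~(ii), namely $\delta(T_{ij}) = 0$ for $j \ne c_i$. It requires a delicate interplay of factoriality (to control $K_0$-prime decompositions inside $\delta(g_I)$), primitivity (to forbid cancellations that would otherwise permit several $j$'s to contribute), and local nilpotency (which ultimately bounds the exponents $l_{i c_i}$). Once this reduction is in place, identifying $\beta$ from the determinantal structure of $g_I$ and extracting the common factor $h$ are essentially direct computations.
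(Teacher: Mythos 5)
Your vertical case follows the paper's argument: the identity $\delta(T_i^{l_i})T_s^{l_s}=T_i^{l_i}\delta(T_s^{l_s})$, the divisibility principles for locally nilpotent derivations (Lemma~\ref{lem:LND}), and the degree bookkeeping in the free summand spanned by the $\deg(S_k)$. (One small correction: the uniqueness of $k_0$ comes not from primitivity but from the fact that $S_{k_0}\nmid\delta(S_{k_0})$ forces the $e_{k_0}$-coordinate of $\deg(\delta)$ to be $-1$, while membership of $\deg(\delta(S_{k_0}))$ in the weight monoid forces all other coordinates to be $\ge 0$; two such indices are then incompatible.)

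The horizontal case, however, has a genuine gap exactly at the point you identify as the main obstacle. Your proposed mechanism for the single-index reduction --- expanding $\delta(g_I)=0$ by Leibniz and arguing that ``$K_0$-prime contributions cannot match across the three terms'' --- is not a proof: $\delta(g_I)=0$ is a single linear identity in the quotient ring among many summands, and nothing in factoriality or primitivity, as you have invoked them, rules out two summands within one block combining consistently with it. The argument the paper actually uses is different and is the essential idea you are missing: primitivity says $\deg(\delta)\notin\omega$, so by Proposition~\ref{prop:pan} the component $R(A,P_0)_{\mu+\deg(\delta)}$ is \emph{one-dimensional}; since every product $\tfrac{\partial T_i^{l_i}}{\partial T_{ij}}\,\delta(T_{ij})$ lies in that component, any two of them are proportional, and then Lemma~\ref{lem:LND}~(ii) and~(iv) exclude a second active variable in a block and force $l_{ij}=1$ for all but one active variable; the same one-dimensionality also produces the common factor $h$ and the scalars $\beta_i$. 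Two further consequences of bypassing this: (a) your derivation of the linear relation on the $\beta_i$ from $\delta(g_I)=0$ requires the \emph{lift} of $\delta$ to $\KK[T_{ij},S_k]$ to annihilate $g_I$ in the polynomial ring, not merely modulo the ideal --- this is Lemma~\ref{lem:lift} and again uses primitivity; and (b) for $r=1$ there are no trinomials $g_I$ at all, yet the theorem is stated for that case too, so an argument resting on $\delta(g_I)=0$ cannot cover it, whereas the dimension-count argument does.
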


In the proof of this theorem we will make frequently 
use of the following facts; the statements of the first 
Lemma occur in Freudenburg's book, 
see~\cite[Principles~1, 5 and~7, Corollary~1.20]{Fr}.

\begin{lemma} 
\label{lem:LND}
Let $R$ be an integral $\KK$-algebra, $\delta \colon R \to R$ 
a locally nilpotent derivation and let $f,g \in R$.
\begin{enumerate}
\item
If $fg \in \ker(\delta)$ holds,
then $f,g \in \ker(\delta)$ holds.
\item
If $\delta(f)=fg$ holds, then $\delta(f)=0$ holds.
\item
The derivation $f\delta$ is locally nilpotent 
if and only if $f\in\ker(\delta)$ holds.
\item
If $g \mid \delta(f)$ and $f \mid \delta(g)$,
then $\delta(f) = 0$ or $\delta(g)=0$.
\end{enumerate}
\end{lemma}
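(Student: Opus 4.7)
The plan is to deduce all four statements from a single tool, the $\delta$-degree function $\deg_\delta \colon R \setminus \{0\} \to \ZZ_{\ge 0}$ defined by $\deg_\delta(f) := \max\{n \ge 0 : \delta^n(f) \ne 0\}$, which is well defined by local nilpotence. The first step I would take is to prove its multiplicativity: for nonzero $f, g \in R$,
$$
\deg_\delta(fg) \ = \ \deg_\delta(f) \, + \, \deg_\delta(g).
$$
This comes from expanding $\delta^{m+n}(fg)$ with $m := \deg_\delta(f)$ and $n := \deg_\delta(g)$ via Leibniz: all summands $\binom{m+n}{k}\delta^k(f)\delta^{m+n-k}(g)$ vanish except for $k=m$, and the surviving term $\binom{m+n}{m}\delta^m(f)\delta^n(g)$ is nonzero because $R$ is integral and $\mathrm{char}(\KK)=0$. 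This multiplicativity is the one genuinely nonformal step of the argument, and I expect it to be the main obstacle; the subtle point is precisely that the binomial coefficient does not vanish, which is where the characteristic-zero and integrality hypotheses enter.

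With multiplicativity in hand, (i) becomes immediate: if $\delta(fg) = 0$ with $f, g \ne 0$, then $0 = \deg_\delta(fg) = \deg_\delta(f) + \deg_\delta(g)$ forces both degrees to vanish. For (ii), I would argue by contradiction: supposing $\delta(f) \ne 0$, one has $f \ne 0$ and $\deg_\delta(f) \ge 1$, hence $\deg_\delta(\delta(f)) = \deg_\delta(f) - 1$; but the hypothesis $\delta(f) = fg$ together with multiplicativity yields $\deg_\delta(\delta(f)) = \deg_\delta(f) + \deg_\delta(g) \ge \deg_\delta(f)$, an impossibility. Statement (iv) follows the same numerical pattern: under the assumption $\delta(f), \delta(g) \ne 0$, the divisibilities translate into $\deg_\delta(f) - 1 \ge \deg_\delta(g)$ and $\deg_\delta(g) - 1 \ge \deg_\delta(f)$, which are mutually contradictory.

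For (iii), the direction $(\Leftarrow)$ I would handle by a short induction showing $(f\delta)^n(h) = f^n \delta^n(h)$ whenever $f \in \ker(\delta)$, from which $f\delta$ inherits local nilpotence. For $(\Rightarrow)$, the idea is to feed the LND $f\delta$ back into (ii): the identity $(f\delta)(f) = f \cdot \delta(f)$ has exactly the form required by (ii), so $(f\delta)(f) = 0$, i.e.\ $f\delta(f) = 0$, and integrality of $R$ forces $f = 0$ or $\delta(f) = 0$. In either case $f \in \ker(\delta)$, completing the lemma.
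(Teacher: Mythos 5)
Your proof is correct. The paper does not actually prove this lemma --- it simply cites Freudenburg's book (Principles 1, 5, 7 and Corollary 1.20 of \cite{Fr}) --- and your argument via the multiplicativity of the $\delta$-degree $\deg_\delta$ is precisely the standard one underlying those cited facts, with all four parts (including the slightly clever reduction of (iii)$\Rightarrow$ to (ii) applied to $f\delta$) checking out; the only pedantic point is that (i) requires $fg \ne 0$ to be literally true, a hypothesis your argument already makes explicit and which the paper's statement tacitly assumes.
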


\begin{lemma}
\label{lem:lift}
Let 
$\delta \colon R(A,P_0)  \to R(A,P_0)$ 
be a primitive $K_0$-homogeneous derivation
induced by a $K_0$-homogeneous derivation
$\widehat{\delta} \colon \KK[T_{ij},S_k] \to \KK[T_{ij},S_k]$.
Then $\widehat{\delta}(g_I) = 0$ holds for all relations $g_I$.
\end{lemma}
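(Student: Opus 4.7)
The plan is to combine two simple observations: the hypothesis that $\widehat{\delta}$ descends to $\delta$ forces $\widehat{\delta}(g_I)$ to lie in the ideal $\mathfrak{a} := \langle g_J;\, J \in \mathfrak{I}\rangle$, and the hypothesis of primitivity forces the degree of this element outside the weight cone, where no nonzero homogeneous element exists at all.

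First I would unwind the meaning of ``$\widehat{\delta}$ induces $\delta$''. By the Leibniz rule this is equivalent to $\widehat{\delta}(\mathfrak{a}) \subseteq \mathfrak{a}$, which in turn, since $\mathfrak{a}$ is generated by the $g_J$, reduces to $\widehat{\delta}(g_I) \in \mathfrak{a}$ for every $I$. Now each $g_J$ is $K_0$-homogeneous of the common degree $\mu$ and $\widehat{\delta}$ is $K_0$-homogeneous of degree $\deg(\delta)$, so $\widehat{\delta}(g_I)$ is $K_0$-homogeneous of degree $\mu + \deg(\delta)$. Writing $\widehat{\delta}(g_I) = \sum_J h_J g_J$ and extracting the component of degree $\mu + \deg(\delta)$, one may assume each $h_J \in \KK[T_{ij},S_k]$ is $K_0$-homogeneous of degree $\deg(\delta)$.

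The decisive step is then a weight-cone count. Both the polynomial ring $\KK[T_{ij},S_k]$ and its quotient $R(A,P_0)$ have the same $K_0$-weight cone $\omega$, since both are generated as $\KK$-algebras by homogeneous elements of degrees $w_{ij},w_k$, so any nonzero homogeneous element in either ring has weight a non-negative integer combination of these. Primitivity means exactly $\deg(\delta) \notin \omega$, so the homogeneous component $\KK[T_{ij},S_k]_{\deg(\delta)}$ is zero; hence each $h_J$ vanishes and $\widehat{\delta}(g_I) = 0$ for every $I$. I do not expect any serious obstacle here; the only step meriting explicit care is confirming that the polynomial ring shares its weight cone with $R(A,P_0)$, which is immediate from the shared set of homogeneous generators.
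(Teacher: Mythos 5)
Your proposal is correct and follows essentially the same route as the paper: $\widehat{\delta}(g_I)$ lies in the ideal $\mathfrak{a}=\langle g_J\rangle$ and is homogeneous of degree $\mu+\deg(\widehat{\delta})$, while the degree-$(\mu+\deg(\widehat{\delta}))$ part of $\mathfrak{a}$ vanishes because the coefficients $h_J$ would have to be homogeneous of degree $\deg(\delta)$, which lies outside the (common) weight cone. Your explicit check that the polynomial ring and $R(A,P_0)$ share the same weight cone is a useful detail the paper leaves implicit, but the argument is the same.
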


\begin{proof}
Clearly, we have $\widehat{\delta}(\mathfrak{a}) \subseteq \mathfrak{a}$ 
for the ideal $\mathfrak{a} \subseteq \KK[T_{ij},S_k]$ 
generated by the $g_I$.
Recall that all $g_I$ are of the same degree $\mu$.
By primitivity, $\deg(\delta) = \deg(\widehat{\delta})$
is not in the weight cone.
Thus, $\KK[T_{ij}]_{\mu+\deg(\widehat{\delta})} \cap \mathfrak{a}
= \{ 0 \}$ holds.
This implies $\widehat{\delta}(g_I) = 0$.
\end{proof}

\begin{proof}[Proof of Theorem~\ref{thm:basiclnd}]
Suppose that $\delta$ is of vertical type.
Then $\delta(T_i^{l_i}/T_s^{l_s}) = 0$ holds
for any two $0 \le i < s \le r$.
By the Leibniz rule, this implies
\begin{eqnarray*}
\delta(T_i^{l_i})T_s^{l_s} 
& = & 
T_i^{l_i}\delta(T_s^{l_s}).
\end{eqnarray*}
We conclude that $T_i^{l_i}$ divides $\delta(T_i^{l_i})$
and $T_s^{l_s}$ divides $\delta(T_s^{l_s})$.
By Lemma~\ref{lem:LND}~(ii), this implies 
$\delta(T_i^{l_i}) = \delta(T_s^{l_s}) = 0$.
Using Lemma~\ref{lem:LND}~(i), we obtain 
$\delta(T_{ij}) =  0$ for all variables $T_{ij}$.
Since $\delta$ is nontrivial,
we should have $\delta(S_{k_0})\ne 0$
at least for one~$k_0$. 
Consider the basis $e_k = \deg(S_k)$
of $\ZZ^m$, where $k=1,\ldots, m$,
and write
$$
\deg(\delta)
\ = \ 
w' + \sum_{k=1}^m b_ke_k , 
\quad \text{where} \quad w'\in K_0
\quad \text{and} \quad b_k \in \ZZ. 
$$
Then 
$\deg(\delta(S_{k_0}))=w'+\sum_{k \ne k_0} b_ke_k + (b_{k_0}+1)e_{k_0}$.
By Lemma~\ref{lem:LND}, the variable 
$S_{k_0}$ does not divide $\delta(S_{k_0})$.
This and the condition $\delta(S_{k_0})\ne 0$ imply
$b_{k_0}=-1$ and $b_k \ge 0$ for $k \ne k_0$.
This proves that 
$\delta(S_k)=0$ for all $k \ne k_0$ and
$\delta(S_{k_0})$ is $K_0$-homogeneous and 
does not depend on $S_{k_0}$.

Now suppose that $\delta$ is of horizontal type.
Then there exists a variable $T_{ij}$ with $\delta(T_{ij})\ne 0$. 
Write
\begin{eqnarray*} 
\deg(\delta(T_{ij}))
& = & 
\deg(T_{ij}) 
\ + \ 
w' 
\ + \ 
\sum_{k=1}^m b_ke_k.
\end{eqnarray*}
Then all coefficients $b_k$ are nonnegative 
and consequently we obtain $\delta(S_k)=0$ for 
$k = 1, \ldots, m$. 

We show that for any 
$T_i^{l_i}$ 
there is at most one variable $T_{ij}$ with 
$\delta(T_{ij}) \ne 0$.
Assume that we find two different $j,k$ 
with $\delta(T_{ij}) \ne 0$ and 
$\delta(T_{ik}) \ne 0$.
Note that we have 
$$
\frac{\partial T_i^{l_i}}{\partial T_{ij}} \delta(T_{ij}),
\
\frac{\partial T_i^{l_i}}{\partial T_{ik}} \delta(T_{ik})
\quad \in \quad 
R(A,P_0)_{\mu + \deg(\delta)}.
$$
By Proposition~\ref{prop:pan}, the component
of degree $\mu + \deg(\delta)$ is of dimension 
one.
Thus, the above two terms differ by a nonzero 
scalar and we see that $T_{ik}^{l_{ik}}$
divides the second term.
Consequently, $T_{ik}$ must divide $\delta(T_{ik})$
which contradicts~\ref{lem:LND}~(ii).

A second step is to see that for any two variables
$T_{ij}$ and $T_{ks}$ with $\delta(T_{ij}) \ne 0$ 
and $\delta(T_{ks}) \ne 0$ we must have $l_{ij}=1$ 
or $l_{ks} = 1$. 
Otherwise, we see as before that 
$\partial T_i^{l_i}/\partial T_{ij} \, \delta(T_{ij})$
and 
$\partial T_k^{l_k}/\partial T_{ks} \, \delta(T_{ks})$
differ by a nonzero scalar.
Thus, we conclude
$\delta(T_{ij}) =  fT_{ks}$ and 
$\delta(T_{ks})=  hT_{ij}$, 
a contradiction to~\ref{lem:LND}~(iv).

Finally, we prove the assertion.
As already seen, for every $0 \le k \le r$
there is at most one $c_k$ with 
$\delta(T_{kc_k}) \ne 0$.
Let $\mathfrak{K} \subseteq \{0,\ldots,r\}$ denote 
the set of all $k$ admitting such a $c_k$.
From Proposition~\ref{prop:pan} we infer 
$R(A,P)_{\mu+\deg(\delta)} = \KK f$ with 
some nonzero element $f$.
We claim that
$$ 
f 
\ = \ 
h 
\prod_{k \in \mathfrak{K}} \frac{\partial T_k^{l_k}}{\partial T_{kc_k}},
\qquad\qquad
\delta(T_{ic_i})
\ = \ 
\beta_i h 
\prod_{k \in \mathfrak{K} \setminus \{i\}} 
\frac{\partial T_k^{l_k}}{\partial T_{kc_k}},
$$
hold with a homogeneous element $h \in R(A,P)$ and 
scalars $\beta_0, \ldots, \beta_r \in \KK$.
Indeed, similar to the previous arguments, the first 
equation follows from fact that all 
$\partial T_k^{l_k}/\partial T_{kc_k} \, \delta(T_{kc_k})$
are nonzero elements of the same degree as $f$ 
and hence each $\partial T_k^{l_k}/\partial T_{kc_k}$ must 
divide $f$. The second equation is clear then.

The vector $\beta := (\beta_0, \ldots, \beta_r)$ 
lies in the row space of the matrix $A$.
To see this, consider the lift of $\delta$ to 
$\KK[T_{ij},S_k]$ defined by the second equation and 
apply Lemma~\ref{lem:lift}.
Now let $C = (c_0, \ldots, c_r)$ be any sequence completing 
the $c_k$, where $k \in \mathfrak{K}$.
Then we have $\delta = h\delta_{C,\beta}$.
The fact that $h$ belongs to the kernel of 
$\delta_{C,\beta}$ follows from Lemma~\ref{lem:LND}.
\end{proof}

\begin{example}[The $E_6$-singular cubic III]
\label{ex:E6sing3}
Situation as in~\ref{ex:E6sing1}.
The locally nilpotent primitive homogeneous derivations 
of $R(A,P_0)$ of the form $\delta_{C,\beta}$ 
are the following
\begin{enumerate}
\item
$C=(1,1,1)$ and $\beta = (\beta_0, 0, -\beta_0)$.
Here we have $\deg(\delta_{C,\beta})=(3,0)$ and 
$$
\qquad
\delta_{C,\beta}(T_{01})=2\beta_0T_{21}, \quad
\delta_{C,\beta}(T_{21})=-\beta_0T_{02}^3, \quad
\delta_{C,\beta}(T_{02})=\delta_{C,\beta}(T_{11})=0.
$$
\item
$C=(1,1,1)$ and $\beta=(\beta_0,-\beta_0,0)$.
Here we have $\deg(\delta_{C,\beta})=(3,1)$ and
$$
\qquad
\delta_{C,\beta}(T_{01})=3\beta_0T_{11}^2, \quad
\delta_{C,\beta}(T_{11})=-\beta_0T_{02}^3, \quad
\delta_{C,\beta}(T_{02})=\delta_{C,\beta}(T_{21})=0.
$$
\end{enumerate}
The general  locally nilpotent primitive homogeneous 
derivation $\delta$ of $R(A,P_0)$ has the form 
$h\delta_{C,\beta}$ with
$h\in\ker(\delta_{C,\beta})$, and
$$
\deg(\delta)
\ = \ 
\deg(h) + \deg(\delta_{C,\beta}) 
\ \notin \ 
\omega.
$$
In the above case~(i), the only possibilities for $\deg(h)$ are
$\deg(h) = (k,k)$ or $\deg(h) = (k,k) + (0,2)$
and thus we have 
$$
\delta \ =  \ T_{02}^k\delta_{C,\beta} \quad \text{or} \quad
\delta \ = \ T_{02}^kT_{11}\delta_{C,\beta}.
$$
In the above case~(ii), the only possibility for $\deg(h)$ is
$\deg(h) = (k,k)$ and thus we obtain
$$
\delta \ = \ T_{02}^k\delta_{C,\beta}.
$$
\end{example}

\section{Demazure roots}
\label{sec:demroots}

Here we present and prove the main result,
Theorem~\ref{thm:autroots}.
It describes the root system of the automorphism 
group of a rational complete normal variety~$X$ 
coming with an effective torus action 
$T \times X \to X$ of complexity one
in terms of the defining matrix $P$ of 
the Cox ring $\mathcal{R}(X) = R(A,P)$,
see Construction~\ref{constrRAP2TVar}
and Theorem~\ref{thm:allCompl1var}.

\begin{definition}
Let $A, P$ be as in 
Construction~\ref{constr:RAPdown}.
We say that $R(A,P)$ is {\em minimally 
presented\/} if $r \ge 2$ holds and 
for every $0 \le i \le r$ we have 
$l_{i1} + \ldots + l_{in_i} \ge 2$
\end{definition}

The assumption that  $R(A,P)$ is 
minimally presented
means that the resulting variety is
nontoric and there occur no linear 
monomials in the defining relations $g_I$; 
the latter can always be achieved by 
omitting redundant generators.

\begin{definition}
\label{def:Pdemroot}
Let $P$ be a matrix as in
Construction~\ref{constr:RAPdown}.
Denote by $v_{ij}, v_k \in N = \ZZ^{r+s}$
the columns of $P$ and
by $M$ the dual lattice of $N$.
\begin{enumerate}
\item
A {\em vertical Demazure $P$-root\/} is
a tuple $(u,k_0)$ with a linear form $u \in M$
and an index $1 \le k_0 \le m$ satisfying
\begin{eqnarray*}
\bangle{u,v_{ij}}
& \ge  &
0
\qquad \text{ for all } i,j,
\\
\bangle{u,v_{k}}
& \ge  &
0
\qquad \text{ for all } k \ne k_0,
\\
\bangle{u,v_{k_0}}
& =  &
-1.
\end{eqnarray*}
\item
A {\em horizontal Demazure $P$-root\/} 
is a tuple $(u,i_0,i_1,C)$, where
$u \in M$ is a linear form,
$i_0 \ne i_1$ are indices
with $0 \le i_0, i_1 \le r$,
and $C = (c_0,\ldots,c_r)$ is a sequence
with $1 \le c_i \le n_i$ such that
\begin{eqnarray*}
l_{ic_i}
& = &
1 \qquad \text{ for all } i \ne i_0,i_1,
\\
\bangle{u,v_{ic_i}}
& = &
\begin{cases}
0,        &\qquad i \ne i_0,i_1, 
\\
-1,        &\qquad i=i_1, 
\end{cases}
\\
\bangle{u,v_{ij}}
& \ge &
\begin{cases}
l_{ij}, &\qquad i \ne i_0,i_1, \quad j \ne c_i,
\\
0,      &\qquad i = i_0,i_1, \quad j \ne c_i,
\\
0,       &\qquad i=i_0,\hphantom{i_1,} \quad j = c_i,
\end{cases}
\\
\bangle{u,v_k}
& \ge &
0 \qquad \text{ for all } k.
\end{eqnarray*}
\item
The {\em $\ZZ^s$-part\/} of a Demazure $P$-root
$\kappa = (u,k_0)$ 
or $\kappa = (u,i_0,i_1,C)$ is the 
tuple $\alpha_\kappa$ of the last $s$ coordinates 
of the linear form $u \in M = \ZZ^{r+s}$.
We call $\alpha_\kappa$ also a {\em $P$-root}. 
\end{enumerate}
\end{definition}

Note that in the minimally presented case, 
the $P$-roots are by their defining conditions 
always nonzero.

\begin{example}[The $E_6$-singular cubic IV]
\label{ex:e6demProots}
As earlier, let $r = 2$, $n_0 = 2$, $n_1 = n_2 = 1$, 
$m=0$, $s=1$ and consider the data 
$$ 
A 
\ = \ 
\left[
\begin{array}{ccc} 
0 & -1 & 1
\\
1 & -1 & 0 
\end{array}
\right],
\qquad\qquad
P \ = \ 
\left[
\begin{array}{rrrr} 
-1 & -3 & 3 & 0
\\
-1 & -3 & 0 & 2 
\\
-1 & -2 & 1 & 1 
\end{array}
\right].
$$ 
There are no vertical Demazure $P$-roots because of $m=0$.
There is a horizontal Demazure $P$-root 
$\kappa = (u,i_0,i_1,C)$ given by
$$
u \ = \ (-1,-2,3), 
\qquad 
i_0 \ = \ 1, 
\quad 
i_1 \ = \ 2, 
\qquad
C \ = \ (1,1,1).
$$
A direct computation shows that this is the only one.
The $\ZZ^s$-part of $\kappa$ is the third coordinate 
of the linear form $u$, i.e.~ it is $u_3 = 3 \in \ZZ = \ZZ^s$.
\end{example}

Note that the Demazure $P$-roots are certain 
Demazure roots~\cite[Section~3.1]{Dem} 
of the fan with the rays through 
the columns of $P$ as its maximal cones.
In particular, there are only finitely many Demazure 
$P$-roots. 
For computing them explicitly, the following 
presentation is helpful.

\begin{remark}
\label{rem:DemPolytope}
The Demazure $P$-roots are 
the lattice points of certain polytopes 
in~$M_\QQ$.
For an explicit description, we encode 
the defining conditions as 
a lattice vector $\zeta \in \ZZ^{n+m}$ and 
an affine subspace $\eta \subseteq M_{\QQ}$:
\begin{enumerate}
\item 
For any index $1 \le k_0 \le m$ define a lattice 
vector $\zeta = (\zeta_{ij},\zeta_k) \in \ZZ^{n+m}$ 
and an affine subspace $\eta \subseteq M_{\QQ}$ 
by 
$$
\zeta_{ij} := 0
\text{ for all } i,j,
\quad
\zeta_k := 0 \text{ for all } k \ne k_0,
\quad
\zeta_{k_0} := -1,
$$
$$
\eta 
\ := \ 
\{u' \in M_\QQ; \bangle{u',v_{k_0}} = -1\}
\ \subseteq \ 
M_\QQ.
$$
Then the vertical Demazure $P$-roots 
$\kappa = (u,k_0)$ are given by the lattice 
points $u$ of the polytope
$$ 
B(k_0)
\ := \ 
\{
u' \in \eta; \ P^*u' \ge \zeta
\}
\ \subseteq \ 
M_\QQ.
$$
\item
Given $i_0 \ne i_1$ with $0 \le i_0,i_1 \le r$ and 
$C = (c_0,\ldots,c_r)$ with $1 \le c_i \le n_i$ such that
$l_{ic_i} = 1$ holds for all $i \ne i_0,i_1$, set
$$ 
\qquad\qquad
\zeta_{ij} 
\ := \ 
\begin{cases}
l_{ij}, & i \ne i_0,i_1, \ j \ne c_i,
\\
-1,     & i = i_1,\hphantom{i_0,} \ j = c_{i_1},
\\
0       & \text{else}, 
\end{cases}
\qquad\qquad
\zeta_k
\ = \ 
0
\text{ for } 1 \le l \le m.
$$  
$$
\qquad\qquad
\eta 
\ := \ 
\{
u' \in M_\QQ;  
\bangle{u',v_{ic_i}} = 0 \text{ for } i \ne i_0,i_1, 
\ 
\bangle{u',v_{i_1c_{i_1}}} = -1,
\}.
$$
Then the horizontal Demazure $P$-roots 
$\kappa = (u,i_0,i_1,C)$ 
are given by 
the lattice points $u$ of the polytope
$$ 
B(i_0,i_1,C)
\ := \ 
\{
u' \in \eta; \ P^*u' \ge \zeta
\}
\ \subseteq \ 
M_\QQ.
$$
\end{enumerate}
\end{remark}

In order to state and prove the main result, 
let us briefly recall the necessary
concepts from the theory of linear 
algebraic groups $G$. 
One considers the adjoint representation 
of the torus $T$ on the Lie algebra $\text{Lie}(G)$,
i.e.~the tangent representation at $e_G$ of 
the $T$-action on $G$ given by conjugation 
$(t,g) \mapsto tgt^{-1}$.
There is a unique $T$-invariant splitting 
$\Lie(G) = \Lie(T) \oplus \mathfrak{n}$,
where $\mathfrak{n}$ is spanned by nilpotent 
vectors, and one has a bijection
$$ 
\PASG_T(G)
\ \to \
\{T \text{-eigenvectors of } \mathfrak{n}\},
\qquad\qquad
\lambda 
\ \mapsto \ 
\dot \lambda (0).
$$
Here $\PASG_T(G)$ denotes the set of one 
parameter additive subgroups 
$\lambda \colon \GG_a \to G$ 
normalized by $T$ and $\dot \lambda$
denotes the differential.
A root of $G$ with respect to $T$ is 
an eigenvalue of the 
$T$-representation on  $\mathfrak{n}$, 
that means a character 
$\chi \in \Chi(T)$ with 
$t \cdot v = \chi(t) v$ for 
some $T$-eigenvector $0 \ne v \in  \mathfrak{n}$.

\begin{theorem}
\label{thm:autroots}
Let $A,P$ be as in Construction~\ref{constr:RAPdown}
such that $R(A,P)$ is minimally presented and 
let $X$ be a (nontoric) variety with a complexity one 
torus action $T \times X \to X$ arising from
$A,P$ according to Construction~\ref{constrRAP2TVar}.
\begin{enumerate}
\item
The automorphism group $\Aut(X)$ is a linear 
algebraic group with maximal torus $T$.
\item
Under the canonical identification 
$\Chi(T) = \ZZ^s$, the roots of 
$\Aut(X)$ with respect to $T$ 
are precisely the $P$-roots.
\end{enumerate}
\end{theorem}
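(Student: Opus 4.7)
My plan is to combine the machinery of Sections~\ref{sec:autmds}--\ref{sec:demroots} in three stages. For part~(i), linearity of $\Aut(X)$ is immediate from Corollary~\ref{cor:autmds}, since a variety $X$ arising from Construction~\ref{constrRAP2TVar} is a Mori dream space. To see that $T$ is a maximal torus, I would use that $\Aut(X)^0$ acts morphically and effectively on $X$: any torus $T' \subseteq \Aut(X)^0$ properly containing $T$ has $\dim T' \ge \dim X$, and acting effectively this would force $X$ to be toric, contradicting the standing nontoricness hypothesis $r \ge 2$.

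For part~(ii), the plan is to parametrize the root subgroups of $\Aut(X)^0$ with respect to $T$ by $K_0$-homogeneous locally nilpotent derivations on $R(A,P)$. Starting from any $\lambda \colon \GG_a \to \Aut(X)^0$ normalized by $T$, I would first lift $\lambda$ via Corollary~\ref{cor:1pasglift} to a one-parameter subgroup $\lambda' \subseteq \CAut(\b{X},H_X)^0$ projecting to $\lambda$. Then, after possibly adjusting $\lambda'$ by an element of $H_X$, one can arrange $\lambda'$ to be normalized by a subtorus $T_0 \subseteq H_0$ projecting onto $T$; since $H_0 = T_0 \cdot H_X$ and $\lambda'$ centralizes $H_X$, the subgroup $\lambda'$ is then normalized by all of $H_0$. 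Differentiating gives a $K_0$-homogeneous locally nilpotent derivation $\delta \colon R(A,P) \to R(A,P)$, and commuting with $H_X$ forces the $K$-degree of $\delta$ to vanish, so by Remark~\ref{rem:downgrade} the $K_0$-degree lies in the subgroup $\ZZ^s \subseteq K_0$ identified with $\Chi(T)$. This $\ZZ^s$-part equals the root of $\lambda$ with respect to $T$, and the construction is reversible.

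Next, I would invoke Theorem~\ref{thm:basiclnd} after checking that every such nonzero $\delta$ is primitive. The $K$-grading of $R(A,P)$ is pointed with all generator weights nonzero in $K$, so any identity $\sum a_{ij} w_{ij} + \sum a_k w_k = 0$ in $K$ with nonnegative coefficients forces every coefficient to vanish; this shows $\ZZ^s \cap \omega = \{0\}$ in $K_0$, so any nonzero degree in $\ZZ^s$ lies outside the weight cone of $R(A,P_0)$. Theorem~\ref{thm:basiclnd} then classifies $\delta$ as either vertical (concentrated in a single $S_{k_0}$) or of the form $h \cdot \delta_{C,\beta}$ with $h$ a $K_0$-homogeneous element of $\ker(\delta_{C,\beta})$; Proposition~\ref{prop:pan} and Corollary~\ref{cor:gencompdim} force such an $h$ to be a monomial in the variables $T_{ij}$ with $j \ne c_i$, in $T_{i_0 c_{i_0}}$ in case~(ii) of Construction~\ref{constr:phlnd}, and in the $S_k$.

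Finally, I would translate $(k_0, \delta)$ and $(C,\beta,h)$ into Demazure $P$-roots. The condition $\deg(\delta) \in \ZZ^s$ means that the exponent vector $v \in \ZZ^{n+m}$ representing this degree lies in $\operatorname{im}(P^*)$, so $v = P^* u$ for a unique $u \in M = \ZZ^{r+s}$. A direct computation of the exponents of $\delta(T_{ij})$ and $\delta(S_{k_0})$ translates the nonnegativity of the monomial $h$ and the combinatorial constraints of Construction~\ref{constr:phlnd} into precisely the equalities and inequalities on $\bangle{u, v_{ij}}$ and $\bangle{u, v_k}$ of Definition~\ref{def:Pdemroot}; under the identification $\ZZ^s = \ker(K_0 \to K)$ from Remark~\ref{rem:downgrade}, the last $s$ coordinates of $u$ recover the root. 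The principal obstacle is the bookkeeping in this final matching: the two cases of Construction~\ref{constr:phlnd} must be assembled into the single horizontal form of Definition~\ref{def:Pdemroot} with the correct distinguished index $i_0$, and care is needed to avoid spurious duplications as the scalar vector $\beta$ varies.
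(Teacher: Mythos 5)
Your proposal is correct and follows essentially the same route as the paper: part (i) via Corollary~\ref{cor:autmds} and nontoricness, and part (ii) by lifting $T$-normalized one-parameter additive subgroups through Corollary~\ref{cor:1pasglift} to $H_0$-normalized ones, identifying these with $K_0$-homogeneous locally nilpotent derivations of $K$-degree zero, proving primitivity exactly as in Lemma~\ref{lemmaprim}, and then classifying via Theorem~\ref{thm:basiclnd} and matching with Definition~\ref{def:Pdemroot} as in Construction~\ref{constr:DEMLND} and Proposition~\ref{prop:DEM2deriv}. The only slight imprecision is that $h$ need not be a single monomial but only a sum of monomials of equal $K_0$-degree, so $\delta$ is a linear combination of the derivations $\delta_\kappa$ rather than literally one of them; this does not affect the resulting set of roots.
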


The rest of the section is devoted to the proof.
We will have to deal with the $K_0$- and $K$-degrees
of functions and derivations.
It might be helpful to recall the relations between 
the gradings from Remark~\ref{rem:downgrade}.
The following simple facts will be frequently used.

\begin{lemma}
\label{lemmaKd0}
In the setting of Constructions~\ref{constr:RAP} 
and~\ref{constr:RAPdown}, 
consider the polynomial ring $\KK[T_{ij},S_k]$ with 
the $K_0$-grading and the coarser $K$-grading.
\begin{enumerate}
\item
For a monomial $h = \prod T_{ij}^{e_{ij}} \prod S_k^{e_k}$ 
with exponent vector $e = (e_{ij},e_k)$, 
the $K_0$- and $K$-degrees are given as 
$$ 
\deg_{K_0}(h) \ = \ Q_0(e),
\qquad\qquad
\deg_{K}(h) \ = \ Q(e).
$$
\item
A monomial $h \in \KK[T_{ij}^{\pm 1},S_k^{\pm{1}}]$ 
is of $K$-degree 
zero if and only if there is an $u \in M$ with 
$$ 
\qquad
\qquad
h 
\ = \ 
h^u
\ := \ 
\prod T_{ij}^{P^*(u)_{ij}} \prod S_k^{P^*(u)_k}
\ = \ 
\prod T_{ij}^{\bangle{u,v_{ij}}} 
\prod S_k^{\bangle{u,v_k}}.
$$
\item
Let $\delta$ be a derivation on $\KK[T_{ij},S_k]$
sending the generators $T_{ij},S_k$ to monomials.
Then $\delta$ is $K$-homogeneous of $K$-degree zero
if and only if 
$$
\qquad\qquad
\deg_K(T_{ij}^{-1}\delta(T_{ij}))
\ = \ 
\deg_K(S_k^{-1}\delta(S_k))
\ = \ 
0
\text{ holds for all } i,j,k.
$$
If $0 \ne \delta$ is $K_0$-homogeneous, then 
$\deg_K(\delta) = 0$ holds if and only if
one of the $T_{ij}^{-1}\delta(T_{ij})$ and 
$S_k^{-1}\delta(S_k)$ is nontrivial of 
$K$-degree zero.
\end{enumerate}
\end{lemma}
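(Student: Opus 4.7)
The plan is to unwind the definitions of the $K_0$- and $K$-gradings from Constructions~\ref{constr:RAP} and~\ref{constr:RAPdown} and then invoke the short exact sequence $0 \to \ZZ^{r+s} \stackrel{P^*}{\to} \ZZ^{n+m} \stackrel{Q}{\to} K \to 0$ provided by Remark~\ref{rem:downgrade}; all three assertions reduce to linear algebra on exponent vectors, with no deeper input required.

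For (i), I would recall that by construction $\deg_{K_0}(T_{ij}) = Q_0(e_{ij})$ and $\deg_{K_0}(S_k) = Q_0(e_k)$, with the analogous formulas for $K$ via $Q$. Since the degree of a product in a graded algebra is the sum of the degrees of the factors, a monomial with exponent vector $e = (e_{ij},e_k)$ has $K_0$-degree $\sum_{ij} e_{ij} Q_0(e_{ij}) + \sum_k e_k Q_0(e_k)$, which equals $Q_0(e)$ by $\ZZ$-linearity of $Q_0$. The identical argument with $Q$ in place of $Q_0$ gives $\deg_K(h) = Q(e)$. Extending $Q$ linearly to arbitrary $e \in \ZZ^{n+m}$ allows the formula to cover Laurent monomials as well.

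For (ii), by the Laurent extension of (i), a Laurent monomial $h$ with exponent vector $e$ has $\deg_K(h) = 0$ if and only if $e \in \ker(Q)$. By exactness of the lower row of the diagram of Remark~\ref{rem:downgrade}, $\ker(Q) = \operatorname{im}(P^*)$, so there exists $u \in M = \ZZ^{r+s}$ with $e = P^*(u)$. The coordinates of $P^*(u)$ in the standard basis $e_{ij}, e_k$ of $\ZZ^{n+m}$ are precisely the evaluations $\langle u, v_{ij}\rangle$ and $\langle u, v_k\rangle$ against the columns of $P$, which yields $h = h^u$. The converse is immediate from $Q \circ P^* = 0$.

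For (iii), assume that $\delta$ sends each $T_{ij}, S_k$ to a (possibly zero) monomial. By definition, $\delta$ is $K$-homogeneous of degree $w$ exactly when $\delta(T_{ij}) \in R_{\deg_K(T_{ij})+w}$ and $\delta(S_k) \in R_{\deg_K(S_k)+w}$ for all $i,j,k$; when the right-hand side is nonzero, (i) gives that the Laurent monomials $T_{ij}^{-1}\delta(T_{ij})$ and $S_k^{-1}\delta(S_k)$ have $K$-degree $w$, while a vanishing $\delta(T_{ij})$ or $\delta(S_k)$ imposes no constraint. Setting $w=0$ yields the first equivalence. For the second assertion, assume $0 \ne \delta$ is $K_0$-homogeneous of $K_0$-degree $w_0$. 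Running the same calculation in the finer grading shows that every nonzero $T_{ij}^{-1}\delta(T_{ij})$ and $S_k^{-1}\delta(S_k)$ shares the common $K_0$-degree $w_0$, hence the common $K$-degree $\deg_K(\delta)$; since $\delta \ne 0$ at least one such Laurent monomial is nontrivial, so $\deg_K(\delta) = 0$ is equivalent to any (equivalently, some) nontrivial one of these Laurent monomials having $K$-degree zero. The only obstacle worth flagging is the bookkeeping of the vacuous cases where $\delta$ annihilates a generator, so that $T_{ij}^{-1}\delta(T_{ij})$ is formally $0$ and must be excluded from the quantifier; otherwise the argument is purely formal.
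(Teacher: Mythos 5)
Your proof is correct, and it is exactly the routine verification the authors intended: the paper states Lemma~\ref{lemmaKd0} without proof (introducing it as ``simple facts''), and the argument is precisely what you give — unwind the definitions $\deg_{K_0}(T_{ij})=Q_0(e_{ij})$, $\deg_K(T_{ij})=Q(e_{ij})$, use linearity of $Q_0$ and $Q$ on exponent vectors, and invoke the exactness $\ker(Q)=\operatorname{im}(P^*)$ from the diagram of Remark~\ref{rem:downgrade} together with the identification of the coordinates of $P^*(u)$ with the pairings $\bangle{u,v_{ij}}$, $\bangle{u,v_k}$. Your care with the vacuous cases $\delta(T_{ij})=0$ in part (iii) and with the fact that $K_0$-homogeneity forces all nontrivial Laurent monomials $T_{ij}^{-1}\delta(T_{ij})$, $S_k^{-1}\delta(S_k)$ to share a common degree is exactly the bookkeeping the statement implicitly requires.
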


As a first step towards the roots of the 
automorphism group $\Aut(X)$, we now 
associate 
$K_0$-homogeneous locally nilpotent 
derivations of $R(A,P)$ to the 
Demazure $P$-roots.

\begin{construction}
\label{constr:DEMLND}
Let $A$ and $P$ be as in
Construction~\ref{constr:RAPdown}.
For $u \in M$ and the lattice vector 
$\zeta \in \ZZ^{n+m}$  
of Remark~\ref{rem:DemPolytope}
consider the monomials 
$$ 
h^u 
\ = \ 
\prod_{i,j} T_{ij}^{\bangle{u,v_{ij}}} 
\prod_{k} S_k^{\bangle{u,v_k}},
\qquad\qquad
h^\zeta 
\ := \ 
\prod_{i,j} T_{ij}^{\zeta_{ij}} 
\prod_{k} S_k^{\zeta_k}.
$$
We associate to any Demazure 
$P$-root $\kappa$ a locally nilpotent 
derivation $\delta_\kappa$ of $R(A,P)$.
If $\kappa = (u,k_0)$ is vertical, 
then we define a $\delta_\kappa$ of vertical 
type by
$$
\delta_\kappa(T_{ij}) \ := \ 0  \text{ for all } i,j,
\qquad
\delta_\kappa(S_k)
\ := \  
\begin{cases}
S_{k_0}h^u, & k = k_0,
\\
0,          & k \ne k_0.
\end{cases}
$$
If $\kappa = (u,i_0,i_1,C)$ is horizontal, 
then there is a unique vector $\beta$ in the 
row space of~$A$ with $\beta_{i_0}=0$, $\beta_{i_1}=1$ 
and we define a $\delta_\kappa$ of horizonal type by 
\begin{eqnarray*}
\delta_{\kappa} & := & \frac{h^u}{h^\zeta} \delta_{C,\beta}.
\end{eqnarray*}
In all cases, the derivation $\delta_\kappa$ is 
$K_0$-homogeneous; its $K_0$-degree is the $\ZZ^s$-part 
of~$\kappa$ and the $K$-degree is zero:
$$ 
\deg_{K_0}(\delta_\kappa) 
\ = \ 
Q_0(P^*(u)), 
\qquad\qquad
\deg_{K}(\delta_\kappa) 
\ = \ 
0.
$$
\end{construction}

\begin{proof}
In the vertical case $\delta_{\kappa}(S_{k_0})$ does not depend
on $S_{k_0}$ and 
in the horizontal case the factors before $\delta_{C,\beta}$
in the definitions of $\delta_{\kappa}$ are contained 
in $\ker(\delta_{C,\beta})$.
Thus, the derivations $\delta_{\kappa}$ are locally nilpotent.
Clearly, the $\delta_\kappa$ are $K_0$-homogeneous.
By Lemma~\ref{lemmaKd0},
the monomial $h^u$ is of $K_0$-degree $Q_0(P^*(u))$.
In the vertical case, this implies directly that 
$\delta_\kappa$ is of $K_0$-degree $Q_0(P^*(u))$.
In the horizontal case, we use  Lemma~\ref{lemmaKd0}
and the degree computation
of Construction~\ref{constr:phlnd} to see that $h^\zeta$ 
and $\delta_{C,\beta}$ have the same $K_0$-degree.
Thus $\delta_\kappa$ is of $K_0$-degree $Q_0(P^*(u))$.
Since $P^*(u) \in \ker(Q)$ holds, we obtain that 
all $\delta_{\kappa}$ are of $K$-degree $Q(P^*(u)) = 0$.
\end{proof}

\begin{proposition}
\label{prop:DEM2deriv}
Consider a  minimally presented algebra $R(A,P)$ 
with its fine $K_0$-grading 
and the coarser $K$-grading and
let $\delta$ be a $K_0$-homogeneous locally nilpotent 
derivation of $K$-degree zero on~$R(A,P)$.
\begin{enumerate}
\item
If $\delta$ is of vertical type, then there is 
an index $1 \le k_0 \le m$ such that $\delta$ 
is a linear combination of derivations 
$\delta_{\kappa_t}$ with Demazure $P$-roots 
$\kappa_t = (u_t,k_0)$.
\item
If $\delta$ is of horizontal type, then
are indices $0 \le i_0,i_1 \le r$ and 
a sequence $C = (c_0,\ldots, c_r)$ such 
that $\delta$ is a linear combination  
of derivations $\delta_{\kappa_t}$ with Demazure 
$P$-roots $\kappa_t = (u_t,i_0,i_1,C)$.
\end{enumerate}
\end{proposition}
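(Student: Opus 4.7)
The plan is to first establish primitivity of $\delta$ in the $K_0$-grading so that Theorem~\ref{thm:basiclnd} applies, and then to match its structural description against linear combinations of the derivations $\delta_\kappa$. For primitivity, I treat two cases. If $\deg_{K_0}(\delta) = 0$, then since each component $R(A,P)_{w_{ij}}$ and $R(A,P)_{w_k}$ (with respect to the $K_0$-grading) is one-dimensional by Corollary~\ref{cor:gencompdim}~(i), $\delta$ preserves each such line, so $\delta$ acts as scalar multiplication on every generator; local nilpotency then forces these scalars to vanish, giving $\delta = 0$. Otherwise, the hypothesis $\deg_K(\delta) = 0$ places $\deg_{K_0}(\delta)$ in the kernel $\ZZ^s$ of the downgrading map $K_0 \to K$ from Remark~\ref{rem:downgrade}; pointedness of the $K$-weight cone $\omega$ together with $R_0 = \KK$ (which forces each $\deg_K(T_{ij})$ and $\deg_K(S_k)$ to be nonzero) implies that any nonnegative combination of these $K$-weights summing to zero is trivial, whence the intersection of the $K_0$-weight cone with $\ZZ^s$ is trivial. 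Thus $\deg_{K_0}(\delta)$ lies outside the weight cone and $\delta$ is primitive.

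In the vertical case, Theorem~\ref{thm:basiclnd}~(i) supplies an index $k_0$ with $\delta(T_{ij}) = 0$ for all $i,j$, $\delta(S_k) = 0$ for $k \neq k_0$, and $\delta(S_{k_0})$ a polynomial not involving $S_{k_0}$. Writing $\delta(S_{k_0}) = \sum_t c_t m_t$ as a sum of distinct monomials, the equality $\deg_K(\delta) = 0$ yields $\deg_K(m_t) = \deg_K(S_{k_0})$, so by Lemma~\ref{lemmaKd0}~(ii) each $m_t$ equals $S_{k_0} h^{u_t}$ for a unique $u_t \in M$. The non-negativity of the $T_{ij}$- and $S_k$-exponents (for $k \neq k_0$) in $m_t$ translates into the inequalities of Definition~\ref{def:Pdemroot}~(i), while the vanishing of the $S_{k_0}$-exponent of $m_t$ forces $\bangle{u_t, v_{k_0}} = -1$; hence each $(u_t, k_0)$ is a vertical Demazure $P$-root. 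Since $\delta_{\kappa_t}(S_{k_0}) = S_{k_0} h^{u_t}$ and $\delta_{\kappa_t}$ vanishes on the other generators, we conclude $\delta = \sum_t c_t \delta_{\kappa_t}$.

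In the horizontal case, Theorem~\ref{thm:basiclnd}~(ii) provides $\delta = h\,\delta_{C,\beta}$, where $h \in \ker(\delta_{C,\beta})$ is $K_0$-homogeneous and $\beta$ is the unique row-space vector with $\beta_{i_0} = 0$ and $\beta_{i_1} = 1$ for some $i_0, i_1$ determined by $\delta$. Since $\delta_{\kappa_t} = (h^{u_t}/h^\zeta)\,\delta_{C,\beta}$ by Construction~\ref{constr:DEMLND}, the target identity $\delta = \sum_t c_t \delta_{\kappa_t}$ is equivalent to $h \cdot h^\zeta = \sum_t c_t h^{u_t}$. The condition $\deg_K(\delta) = 0$ together with $\deg_K(\delta_{C,\beta}) = \deg_K(h^\zeta)$ (from the proof of Construction~\ref{constr:DEMLND}) gives $\deg_K(h \cdot h^\zeta) = 0$, and Lemma~\ref{lemmaKd0}~(ii) then writes each Laurent monomial of $h \cdot h^\zeta$ as $h^{u_t}$ for a unique $u_t \in M$. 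The non-negativity of the exponents of $h$ combined with the explicit shape of $\zeta$ immediately yield all the Demazure inequality conditions. The main obstacle is establishing the equality conditions $\bangle{u_t, v_{ic_i}} = 0$ for $i \neq i_0, i_1$ and $\bangle{u_t, v_{i_1 c_{i_1}}} = -1$, which amount to showing that every monomial of a suitable representative of $h$ has vanishing $T_{ic_i}$-exponent for $i \neq i_0$. I expect to produce such a representative by combining the explicit formula $\delta_{C,\beta}(T_{ic_i}) = \beta_i M_i$ with the dimension control provided by Proposition~\ref{prop:pan} and the kernel condition on $h$, so as to systematically eliminate the offending variables modulo the trinomial relations $g_I$.
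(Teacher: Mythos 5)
Your overall strategy is the paper's: establish $K_0$-primitivity, invoke Theorem~\ref{thm:basiclnd}, and convert the monomials of the resulting data into Demazure $P$-roots via Lemma~\ref{lemmaKd0}~(ii). The primitivity argument and the vertical case are complete and correct. The horizontal case, however, has a genuine gap, and it sits exactly at the point you yourself flag as ``the main obstacle'': you never prove that every monomial of $h$ has vanishing $T_{ic_i}$-exponent for $i \ne i_0$, you only announce a plan. Moreover, the route you sketch (dimension control via Proposition~\ref{prop:pan} plus elimination of variables modulo the trinomials $g_I$) is not the right tool and there is no evident way to make it work: the kernel condition on $h$ does not by itself prevent a polynomial representative from involving non-kernel variables. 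The step actually has a short proof that uses only $\deg_K(\delta)=0$ and pointedness: if a monomial $T_{ij}h'$ with $\delta_{C,\beta}(T_{ij})\ne 0$ occurred in $h$, then homogeneity would give
\begin{equation*}
\deg_K(T_{ij}) \ = \ \deg_K(\delta(T_{ij})) \ = \ \deg_K(T_{ij}) + \deg_K(h') + \deg_K(\delta_{C,\beta}(T_{ij})),
\end{equation*}
hence $\deg_K(h') + \deg_K(\delta_{C,\beta}(T_{ij})) = 0$; both summands lie in the weight monoid and the second is nonzero because $R(A,P)$ is minimally presented, contradicting pointedness of the $K$-weight cone. With this in hand your exponent bookkeeping does yield the equality conditions $\bangle{u_t,v_{ic_i}}=0$ ($i\ne i_0,i_1$) and $\bangle{u_t,v_{i_1c_{i_1}}}=-1$.

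A second, smaller omission: you assert that Theorem~\ref{thm:basiclnd}~(ii) hands you $\delta = h\,\delta_{C,\beta}$ with $\beta_{i_0}=0$, $\beta_{i_1}=1$. That theorem only guarantees some $\beta$ in the row space of $A$, which may have all coordinates nonzero (case~(i) of Construction~\ref{constr:phlnd}); Construction~\ref{constr:DEMLND} only produces $\delta_\kappa$ from vectors $\beta$ with a vanishing coordinate. You must first reduce to that situation by splitting $\beta = \beta^0+\beta^1$ with $\beta^0_0 = \beta^1_1 = 0$ and using
\begin{equation*}
h\,\delta_{C,\beta} \ = \ h\,\frac{\partial T_0^{l_0}}{\partial T_{0c_0}}\,\delta_{C,\beta^0} \ + \ h\,\frac{\partial T_1^{l_1}}{\partial T_{1c_1}}\,\delta_{C,\beta^1},
\end{equation*}
checking that the two summands are again $K_0$-homogeneous of $K$-degree zero with coefficient in the respective kernels. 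Without this reduction (and the degree argument above) the horizontal case is not established.
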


\begin{lemma} 
\label{lemmaprim}
Let $\delta$ be a nontrivial $K_0$-homogeneous 
locally nilpotent derivation on a minimally presented 
algebra $R(A,P)$ and let $r \ge 2$.
If $\delta$ is of $K$-degree zero, then $\delta$ is 
primitive with respect to the $K_0$-grading.
\end{lemma}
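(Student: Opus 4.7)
The plan is to combine the exact sequence $0 \to \ZZ^s \to K_0 \to K \to 0$ from Remark~\ref{rem:downgrade} with the pointedness of the $K$-grading on $R(A,P)$. The hypothesis $\deg_K(\delta) = 0$ forces $\deg_{K_0}(\delta)$ into the subgroup $\ZZ^s \subseteq K_0$, and I plan to show both that this element is nonzero and that the $K_0$-weight cone $\omega_0 \subseteq (K_0)_\QQ$ of $R(A,P_0)$ meets $(\ZZ^s)_\QQ$ only at the origin; together these give primitivity.

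For the nonvanishing of $\deg_{K_0}(\delta)$, the idea is the following. If it were zero, then $\delta$ would preserve every $K_0$-homogeneous component of $R(A,P_0)$. Corollary~\ref{cor:gencompdim}(i) applies under the minimal-presentation hypothesis together with $r \ge 2$, so the components of weights $w_{ij}$ and $w_k$ are all one-dimensional. Hence $\delta(T_{ij}) = c_{ij}T_{ij}$ and $\delta(S_k) = c_k S_k$ for scalars $c_{ij}, c_k \in \KK$, and Lemma~\ref{lem:LND}(ii), applied with $g$ a constant, forces $\delta(T_{ij}) = 0$ and $\delta(S_k) = 0$, contradicting nontriviality of $\delta$.

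For the intersection claim, I would analyze the downgrading map $\pi \colon (K_0)_\QQ \to K_\QQ$, which sends $\omega_0$ onto the $K$-weight cone $\omega$ via $w_{ij} \mapsto \deg_K(T_{ij})$ and $w_k \mapsto \deg_K(S_k)$. Given $\alpha \in \omega_0 \cap \ker(\pi)_\QQ$, write $\alpha = \sum a_{ij}w_{ij} + \sum b_k w_k$ with nonnegative rational coefficients, so that $\pi(\alpha) = 0$ is a nonnegative combination inside $\omega$ which vanishes. Since the $K$-grading on $R(A,P)$ is pointed by the theorem following Construction~\ref{constr:RAPdown}, $\omega$ contains no line and each summand must itself be zero. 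Pointedness also rules out $\deg_K(T_{ij}) = 0$ or $\deg_K(S_k) = 0$ in $K_\QQ$: a torsion $K$-weight for any such generator would produce a nonconstant element of weight zero, contradicting $R(A,P)_0 = \KK$. Hence all $a_{ij}$ and $b_k$ must vanish, so $\alpha = 0$.

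Combining the two steps, $\deg_{K_0}(\delta)$ is a nonzero element of $(\ZZ^s)_\QQ$ and therefore lies outside $\omega_0$, which is exactly primitivity with respect to the $K_0$-grading. The main obstacle is the cone-intersection lemma in the third paragraph: one has to pass between the two gradings carefully and use pointedness of the coarser grading to control the finer cone. The nonvanishing step is essentially free once Corollary~\ref{cor:gencompdim} is invoked.
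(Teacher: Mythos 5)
Your proof is correct, and its first half --- showing $\deg_{K_0}(\delta)\ne 0$ by combining Corollary~\ref{cor:gencompdim} with Lemma~\ref{lem:LND}~(ii) --- is exactly the paper's argument. The second half takes a genuinely different route. The paper never states your cone-intersection lemma $\omega_0\cap(\ZZ^s)_\QQ=\{0\}$; instead it argues that if $w:=\deg_{K_0}(\delta)$ lay in the $K_0$-weight cone, then some positive multiple $dw$ would lie in the weight monoid, so there would be a nonzero homogeneous $f$ of $K_0$-degree $dw$; since the $K$-degree of $f$ is zero, pointedness of the $K$-grading forces $f\in\KK$, contradicting $dw\ne 0$. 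That two-line argument lifts the problem back to an actual ring element and thereby avoids both the expansion of a point of $\omega_0$ in generator weights and the separate check that no $\deg_K(T_{ij})$ or $\deg_K(S_k)$ is torsion in $K$ --- a check your route does need and which you correctly supply via $R(A,P)_0=\KK$. Your version buys a cleaner separation of the combinatorics (the finer weight cone meets the kernel of $(K_0)_\QQ\to K_\QQ$ only in the origin) from the algebra, and it makes explicit where pointedness of the coarser grading enters twice; note that both arguments quietly rely on the kernel $\ZZ^s$ of the downgrading map from Remark~\ref{rem:downgrade} being free, so that a nonzero degree survives in $(K_0)_\QQ$.
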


\begin{proof}
We have to show that the $K_0$-degree $w$ of 
$\delta$ does not lie in the weight cone of the 
$K_0$-grading.
First observe that $w \ne 0$ holds:
otherwise Corollary~\ref{cor:gencompdim} 
yields that $\delta$ annihilates all generators
$T_{ij}$ and $S_k$, a contradiction to 
$\delta \ne 0$.
Now assume that $w$ lies in the weight cone 
of the $K_0$-grading. Then, for some $d > 0$,
we find a nonzero $f \in R(A,P)_{dw}$.
The $K$-degree of $f$ equals zero and thus
$f$ is constant, a contradiction.
\end{proof}

\begin{proof}[Proof of Proposition~\ref{prop:DEM2deriv}]
First assume that $\delta$ is vertical.
Lemma~\ref{lemmaprim} tells us that $\delta$ is primitive 
with respect to the $K_0$-grading.
According to Theorem~\ref{thm:basiclnd},
there is an index $1 \le k_0 \le m$
and an element $h \in R(A,P)$ represented 
by a polynomial only depending on variables 
from $\ker(\delta)$ such that we have 
$$
\delta(T_{ij}) \ = \ 0 \text{ for all } i,j,
\qquad
\delta(S_k) \ = \ 0 \text{ for all } k \ne k_0,
\qquad
\delta(S_{k_0}) \ = \ h.
$$
Clearly, $hS_{k_0}^{-1}$ is $K_0$-homogeneous 
of $K$-degree zero. 
Lemma~\ref{lemmaKd0} shows that the monomials 
of~$hS_{k_0}^{-1}$ are of the form $h^uS_{k_0}^{-1}$ 
with $u \in M$.
The facts that the monomials~$h^uS_{k_0}$ 
do not depend on~$S_{k_0}$ and have nonnegative 
exponents yield the inequalities of a vertical 
Demazure $P$-root for each $(u,k_0)$.
Consequently, $\delta$ is a linear combination 
of deriviations arising from vertical Demazure 
$P$-roots.

We turn to the case that $\delta$ is horizontal.
Again by Lemma~\ref{lemmaprim}, our $\delta$ 
is primitive with respect to the $K_0$-grading
and by Theorem~\ref{thm:basiclnd} it has the 
form $h \delta_{C,\beta}$ for some $K_0$-homogeneous
$h \in \ker(\delta_{C,\beta})$.
By construction, $\delta_{C,\beta}$ is induced 
by a homogeneous derivation of $\KK[T_{ij},S_k]$ 
having the same $K_0$- and $K$-degrees; 
we denote this lifted derivation again by 
$\delta_{C,\beta}$.
Similarly, $h$ is represented by 
a polynomial in $\KK[T_{ij},S_k]$
which we again denote by $h$.

We show that any monomial of $h$ depends only 
on variables from $\ker(\delta_{C,\beta})$.
Indeed, suppose that there occurs a monomial
$T_{ij}h'$ with $\delta_{C,\beta}(T_{ij})\ne 0$
in $h$. 
Then, using the fact that $\delta$ is of $K$-degree
zero, we obtain 
$$
\deg(T_{ij})
\ = \ 
\deg(\delta(T_{ij}))
\ = \ 
\deg(T_{ij}) + \deg(h')+\deg(\delta_{C,\beta}(T_{ij})).
$$
This implies $\deg(h')+\deg(\delta_{C,\beta}(T_{ij}))=0$;
a contradiction to the fact that the weight cone 
of the $K$-grading contains no lines.
This proves the claim.
Thus, we may assume that the polynomial~$h$ 
is a monomial.

The next step is to see that it is sufficient to 
take derivations $\delta_{C,\beta}$ with 
a vector $\beta$ in the row space having 
one zero coordinate.
Consider a general $\beta$, that means one 
with only nonvanishing coordinates.
By construction, the row space of $A$
contains unique vectors $\beta^0$ and $\beta^1$ 
with $\beta^0_0 = \beta^1_1 = 0$ 
and $\beta=\beta^0+\beta^1$. 
With these vectors, we have 
\begin{eqnarray*}
h\delta_{C,\beta} 
& = &
h\frac{\partial T_0^{l_0}}{\partial T_{0 c_0}}\delta_{C,\beta^0} 
\ + \
h\frac{\partial T_1^{l_1}}{\partial T_{1 c_1}}\delta_{C,\beta^1}.
\end{eqnarray*}
By Construction~\ref{constr:phlnd}, the $K_0$-degrees 
and thus the $K$-degrees of the left hand side and 
of the summands coincide.
Moreover, $h$ is a monomial in generators from 
$\ker(\delta_{C,\beta})$ and any such generator is 
annihilated by $\delta_{C,\beta^0}$ and by 
$\delta_{C,\beta^1}$ too.

Let $e = (e_{ij},e_k)$ denote the exponent 
vector of the monomial $h$.
According to Lemma~\ref{lemmaKd0}, 
the condition that the ($K_0$-homogeneous) 
derivation $\delta$ has $K$-degree zero is 
equivalent to the fact that the monomial
\begin{eqnarray*}
T_{i_1c_{i_1}}^{-1}h\delta_{C,\beta}(T_{i_1c_{i_1}})
& = &
T_{i_1c_{i_1}}^{-1}
T_{i_0c_{i_0}}^{e_{i_0c_{i_0}}}
\
\prod_{\genfrac{}{}{0pt}{}{i}{j\ne c_i}} T_{ij}^{e_{ij}} 
\
\prod_k S_k^{e_k}
\
\beta_{i_1}\prod_{i\ne i_0,i_1} \frac{\partial T_i^{l_i}}{\partial T_{ic_i}}
\end{eqnarray*}
has the form $h^u$ for some linear form $u \in M$. 
Taking into account that the exponents $e_{ij}$
and $e_k$ are nonnegative, we see that these 
conditions are equivalent to equalities and 
inequalities in the definition of a horizontal 
Demazure $P$-root.
\end{proof}

We recall the correspondence 
between locally nilpotent derivations
and one parameter additive subgroups.
Consider any integral affine 
$\KK$-algebra $R$,
where $\KK$ is an algebraically closed 
field of characteristic zero.
Every locally nilpotent derivation
$\delta \colon R \to R$ gives rise to
a rational representation 
$\varrho_\delta \colon \GG_a \to \Aut(R)$ 
of the additive group $\GG_a$ of the 
field $\KK$ via
$$
\varrho_\delta(t)(f)
\ := \ 
\exp(t\delta)(f)
\ := \
\sum_{d = 0}^{\infty} \frac{t^d}{d!} \delta^d(f).
$$
This sets up a bijection between the 
locally nilpotent derivations of $R$ 
and the rational representations 
of $\GG_a$ by automorphisms of $R$.
The representation associated to a 
locally nilpotent derivation 
$\delta \colon R \to R$ gives rise 
to a one parameter additive subgroup
(1-PASG) of the automorphism group
of $\b{X} := \Spec \, R$:
$$ 
\lambda_\delta \colon \GG_a \ \to \ \Aut(\b{X}),
\qquad
t \ \mapsto \ \Spec(\varrho_\delta(t)).
$$
Now suppose that $R$ is graded by some finitely 
generated abelian group $K_0$ and consider the 
associated action of $H_0 := \Spec \, \KK[K_0]$ 
on $\b{X} = \Spec \, R$. 
We relate homogeneity of locally 
nilpotent derivation $\delta$ to properties 
of the associated subgroup 
$\b{U}_\delta := \lambda_\delta(\GG_a)$ of 
$\Aut(\b{X})$.

\begin{lemma}
\label{lem:homogvsnormalize}
In the above setting, let $\delta$ be a locally 
nilpotent derivation on $R$.
The following statements are equivalent.
\begin{enumerate}
\item
The derivation $\delta$  is $K_0$-homogeneous.
\item
One has
$h \b{U}_\delta h^{-1} = \b{U}_\delta$ 
for all $h \in H_0$.
\end{enumerate}
Moreover, if one of these two statements holds, 
then the degree $w := \deg(\delta) \in K_0$ is uniquely
determined by the property
$$
h \varrho_\delta(t) h^{-1} \ = \ \varrho_\delta(\chi^w(h)t)
\text{ for all } h \in H_0.
$$
\end{lemma}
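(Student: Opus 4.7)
The plan is to reduce the equivalence to the standard fact that an endomorphism $\delta$ of $R$ is $K_0$-homogeneous of degree $w$ if and only if the conjugation identity $h \circ \delta \circ h^{-1} = \chi^w(h) \delta$ holds for all $h \in H_0$, where we identify $h$ with the induced $\KK$-algebra automorphism of $R$. Granting this, the equivalence of (i) and (ii) becomes a matter of exponentiating or differentiating this identity.

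For $(i)\Rightarrow(ii)$, assume $\delta$ is $K_0$-homogeneous of degree $w$. Iterating the conjugation identity yields $h \circ \delta^d \circ h^{-1} = \chi^w(h)^d \delta^d$, and summing the exponential series gives $h \circ \varrho_\delta(t) \circ h^{-1} = \varrho_\delta(\chi^w(h) t)$. This establishes normalization of $\b{U}_\delta$ together with the displayed formula for $w$; uniqueness of $w$ is then immediate since distinct elements of $K_0$ yield distinct characters of $H_0 = \Spec\,\KK[K_0]$.

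For $(ii)\Rightarrow(i)$, fix $h \in H_0$. The hypothesis provides an algebraic group automorphism $\phi_h \colon \GG_a \to \GG_a$ with $h \varrho_\delta(t) h^{-1} = \varrho_\delta(\phi_h(t))$, and such a $\phi_h$ is necessarily multiplication by some scalar $\psi(h) \in \KK^*$. Multiplicativity of $\psi \colon H_0 \to \KK^*$ follows from $\phi_{h_1 h_2} = \phi_{h_1} \circ \phi_{h_2}$, and once it is known that $\psi$ is a character of algebraic groups we get $\psi = \chi^w$ for a unique $w \in K_0$. Differentiating $h \varrho_\delta(t) h^{-1} = \varrho_\delta(\chi^w(h) t)$ at $t=0$ then produces $h \circ \delta \circ h^{-1} = \chi^w(h) \delta$, so $\delta$ is $K_0$-homogeneous of degree $w$.

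The main obstacle I anticipate is the regularity of $\psi \colon H_0 \to \KK^*$. The cleanest route is to fix $f \in R$ with $\delta(f) \ne 0$ and observe that $h \mapsto (h \circ \varrho_\delta(1) \circ h^{-1})(f)$ is a regular map from $H_0$ to $R$, since the $H_0$-action on $\b{X}$ and hence on $R$ is rational; on the other hand it equals $\sum_{d \ge 0} \psi(h)^d \delta^d(f)/d!$, a polynomial expression in $\psi(h)$ with fixed coefficients in $R$, forcing $\psi$ to depend regularly on $h$.
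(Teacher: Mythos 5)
The paper states this lemma without proof (it passes directly from the statement to the proof of Theorem~\ref{thm:autroots}), so there is no argument of the authors to compare against; I can only assess your proposal on its own terms. Your overall strategy --- reducing everything to the identity $h\circ\delta\circ h^{-1}=\chi^w(h)\,\delta$, which characterizes $K_0$-homogeneity of degree $w$, and passing back and forth via the exponential series --- is the right one, and the direction (i)$\Rightarrow$(ii) together with the uniqueness of $w$ is complete as written.

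In (ii)$\Rightarrow$(i) there is one step you assert rather than prove: that the reparametrization $\phi_h$ defined by $h\varrho_\delta(t)h^{-1}=\varrho_\delta(\phi_h(t))$ is an \emph{algebraic} automorphism of $\GG_a$. The hypothesis only hands you an abstract group automorphism of $\b{U}_\delta$, and in characteristic zero the abstract automorphisms of $(\KK,+)$ are arbitrary $\QQ$-linear bijections, so ``necessarily multiplication by a scalar'' is precisely the point at issue. The gap is closed by the same device you deploy for the regularity of $\psi$: the left-hand side equals $\varrho_{h\delta h^{-1}}(t)$, the flow of the locally nilpotent derivation $h\delta h^{-1}$, hence is polynomial in $t$ when evaluated on any element of $R$; choosing $g\in R$ with $\delta(g)\neq 0$ and $\delta^2(g)=0$ (for instance $g=\delta^{N-1}(f)$ with $N$ maximal such that $\delta^N(f)\neq 0$) gives $\varrho_\delta(\phi_h(t))(g)=g+\phi_h(t)\delta(g)$, so $\phi_h(t)$ is a polynomial in $t$, and an additive polynomial map in characteristic zero is linear. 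The same choice of $g$ also tightens your regularity argument for $\psi$: from the regularity in $h$ of $\sum_d\psi(h)^d\delta^d(f)/d!$ one cannot immediately isolate $\psi(h)$ if the elements $\delta^d(f)$ are linearly dependent, whereas with $g$ as above one reads off $\psi(h)$ directly by applying a linear form sending $\delta(g)$ to $1$. With these two points supplied, your proof is correct.
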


\begin{proof}[Proof of Theorem~\ref{thm:autroots}]
Assertion~(i) is clear by Corollary~\ref{cor:autmds}
and the fact that $X$ is nontoric.
We prove~(ii).
Consider $R(A,P)$ with its
fine $K_0$-grading and the coarser $K$-grading.
The quasitori $H_0 := \Spec \, \KK[K_0]$ 
and $H := \Spec \, \KK[K]$ act effectively on 
$\b{X} = \Spec \, R(A,P)$.
We view $H_0$ and $H$ as subgroups of 
$\Aut(\b{X})$.
For any locally nilpotent deriviation 
$\delta$ on $R(A,P)$ and 
$\b{U}_\delta = \lambda_\delta(\GG_a)$,
Lemma~\ref{lem:homogvsnormalize} 
gives
\begin{eqnarray*}
\delta \text{ is $K_0$-homogeneous}
& \iff & 
h \b{U}_\delta h^{-1} = \b{U}_\delta \text{ for all  } h \in H_0,
\\
\delta \text{ is $K$-homogeneous of degree } 0
& \iff & 
h u h^{-1} =  u \text{ for all  } h \in H, u \in \b{U}_\delta.
\end{eqnarray*} 
Recall that $X$ arises as $X = \rq{X} \quot H$ 
for an open 
$H_0$-invariant set $\rq{X} \subseteq \b{X}$.
Moreover, the action of $T = H_0/H$ on $X$ is the 
induced one, i.e. it makes the quotient map 
$p \colon \rq{X} \to X$ equivariant.
Set for short
$$
\b{G} \ := \ \CAut(\b{X},H)^0,
\qquad\qquad
G \ := \ \Aut(X)^0.
$$

Denote by $\PASG_{H_0}(\b{G})$ and $\PASG_{T}(G)$
the one parameter additive subgroups normalized by 
$H_0$ and $T$ respectively.
Moreover, let $\LND(R(A,P))_0$ denote the 
$K$-homogeneous locally nilpotent derivations 
of $K$-degree zero and $\LND_{K_0}(R(A,P))_0$
the subset of $K_0$-homogeneous ones.
Then we arrive at a commutative diagram
$$ 
\xymatrix{
{\LND_{K_0}(R(A,P))_0}
\ar@{<->}[d]_{\cong}
\ar@{}[rr]|\subseteq
& &
{\LND(R(A,P))_0}
\ar@{<->}[d]^{\cong}
\\
{\PASG_{H_0}(\b{G})}
\ar@{}[rr]|\subseteq
\ar[d]_{p_*}
& &
{\PASG(\b{G})}
\ar[d]^{p_*}
\\
{\PASG_{T}(G)}
\ar@{}[rr]|\subseteq
& & 
{\PASG(G)}
}
$$

Construction~\ref{constr:DEMLND} associates 
an element 
$\delta_\kappa \in \LND_{K_0}(R(A,P))_0$ to 
any Demazure $P$-root $\kappa$.
Going downwards the left hand side of the 
above diagram, the latter  turns into an 
element $\lambda_\kappa \in \PASG_{T}(G)$.
Differentiation gives the $T$-eigenvector 
$\dot \lambda_\kappa(0)  \in \Lie(G)$ having
as its associated root the unique 
character $\chi$ of $T$ satisfying 
$$
t \lambda_\kappa(z) t^{-1} 
\ = \ 
\lambda_\kappa(\chi(t)z)
\qquad
\text{ for all } 
t \in T, z \in \KK.
$$
Remark~\ref{rem:downgrade} and Lemma~\ref{lem:homogvsnormalize}
show that under the identification $\Chi(T) = \ZZ^s$ 
the character $\chi$ is just the $\ZZ^s$-part 
of the Demazure $P$-root $\kappa$.
Proposition~\ref{prop:DEM2deriv} 
tells us that any element of $\LND_{K_0}(R(A,P))_0$
is a linear combination of derivations 
$\delta_\kappa$ arising 
from Demazure $P$-roots.
Moreover, by Corollary~\ref{cor:1pasglift}, 
the push forward $p_*$ maps 
$\PASG_{H_0}(\b{G})$ onto $\PASG_{T}(G)$.
We conclude that $\Lie(G)$ is spanned as a
$\KK$-vector space  
by $\Lie(T)$ and $\dot \lambda_\kappa(0)$,
where $\kappa$ runs through the Demazure 
$P$-roots.
Assertion~(ii) follows.
\end{proof}

\begin{corollary}[of proof]
\label{cor:autXgen}
Let $X$ be a nontoric normal complete rational variety 
with a torus action $T \times X \to X$ of complexity one 
arising as a good quotient $p \colon \rq{X} \to X$ from
$R(A,P)$ according to Construction~\ref{constrRAP2TVar}.
\begin{enumerate}
\item
Every Demazure $P$-root $\kappa$ 
induces an additive one parameter subgroup 
$\lambda_{\kappa} = p_* \lambda_{\delta_\kappa} 
\colon \GG_a \to \Aut(X)$.
\item
The Demazure $P$-root $\kappa$ is vertical
if and only if the general orbit of $\lambda_{\kappa}$
is contained in some $T$-orbit closure.
\item
The Demazure $P$-root $\kappa$ is horizontal
if and only if the general orbit of $\lambda_{\kappa}$ 
is not contained in any $T$-orbit closure.
\item
The unit component $\Aut(X)^0$ of the automorphism 
group is generated by $T$ and the images 
$\lambda_{\kappa}(\GG_a)$.
\end{enumerate}
\end{corollary}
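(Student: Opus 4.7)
The plan is to read off all four assertions from the commutative diagram and the constructions built during the proof of Theorem~\ref{thm:autroots}; no genuinely new ingredient is needed.

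For~(i), I would start with the $K_0$-homogeneous locally nilpotent derivation $\delta_\kappa$ of $R(A,P)$ attached to $\kappa$ by Construction~\ref{constr:DEMLND}, which by the same construction is of $K$-degree zero. Lemma~\ref{lem:homogvsnormalize} applied once for $K_0$ and once for $K$ then shows that the exponential $\lambda_{\delta_\kappa}\colon\GG_a\to\Aut(\b{X})$ is normalized by $H_0$ and centralized by $H_X$, hence takes values in $\CAut(\b{X},H_X)^0$. The push-forward $\lambda_\kappa := p_*\lambda_{\delta_\kappa}$, whose existence is guaranteed by Theorem~\ref{thm:weakaut}, is then an additive one-parameter subgroup of $\Aut(X)^0$ normalized by $T = H_0/H_X$; this is precisely the arrow $\PASG_{H_0}(\CAut(\b{X},H_X)^0)\to\PASG_T(\Aut(X)^0)$ appearing at the end of the proof of Theorem~\ref{thm:autroots}.

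For~(ii) and~(iii), the key step is the clean identification
\[
Q(R(A,P))_0 \ = \ \KK(\b{X})^{H_0} \ = \ \KK(\rq{X})^{H_0} \ = \ \KK(X)^T,
\]
which uses that $\rq{X}\subseteq\b{X}$ is big and $T = H_0/H_X$; the first equality is a direct check for effective abelian gradings (pick a homogeneous denominator and use semi-invariance to force the numerator to be homogeneous of the same degree). By Proposition~\ref{prop:degzerofield}, this common field is the rational function field $\KK(T_1^{l_1}/T_0^{l_0})$. By definition, $\delta_\kappa$ is vertical exactly when it annihilates $Q(R(A,P))_0$, and translating this through the push-forward gives that $\lambda_\kappa$ acts trivially on $\KK(X)^T$. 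Since the $T$-action on $X$ has complexity one, this happens if and only if a generic $\lambda_\kappa$-orbit is contained in a single fibre of the rational quotient $X\dashrightarrow X\quot T$, i.e.\ in the closure of a generic $T$-orbit. Assertion~(iii) is the logical complement of~(ii). For~(iv), the final spanning argument in the proof of Theorem~\ref{thm:autroots} already shows that $\Lie(\Aut(X)^0)$ is generated as a $\KK$-vector space by $\Lie(T)$ together with the tangent vectors $\dot\lambda_\kappa(0)$; the closed subgroup of the connected group $\Aut(X)^0$ generated by $T$ and all $\lambda_\kappa(\GG_a)$ therefore has the same Lie algebra as $\Aut(X)^0$ and hence coincides with it.

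The only point requiring real care is the geometric equivalence inside~(ii)/(iii): verifying that on a normal rational variety with a complexity-one torus action, the generic level sets of $\KK(X)^T$ really are the closures of generic $T$-orbits. Everything else is essentially bookkeeping with objects that have already been built.
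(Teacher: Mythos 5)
Your proposal is correct and follows the same route as the paper: (i) and (iv) are read off from the diagram and the Lie-algebra spanning argument at the end of the proof of Theorem~\ref{thm:autroots}, while (ii) and (iii) come from the equivalence between $\delta_\kappa$ being of vertical/horizontal type (i.e.\ killing $Q(R(A,P))_0=\KK(X)^T$ or not) and $\lambda_\kappa(\GG_a)$ acting trivially or not on the $T$-invariant rational functions, whose generic level sets are the generic $T$-orbit closures. The paper's own proof is just a terser version of exactly this bookkeeping.
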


\begin{proof}
Assertions~(i) and~(iv) are clear by the proof of 
Theorem~\ref{thm:autroots}.
For~(ii) and~(iii) recall that $\kappa$ is 
vertical (horizontal) if and only if 
$\delta_{\kappa}$ is of vertical (horizontal)
type. 
The latter is equivalent to saying that  
$\lambda_{\kappa}(\GG_a)$ acts trivially (non-trivially)
on the field of $T$-invariant rational functions.
\end{proof}

\begin{example}[The $E_6$-singular cubic V]
Let $A$ and $P$ as in Example~\ref{ex:e6demProots}.
From there we infer that $R(A,P)$ admits precisely 
one horizontal Demazure $P$-root. 
For the automorphism group of the corresponding 
surface $X$ this means that $\Aut(X)^0$ is the 
semidirect product of $\KK^*$ and $\GG_a$ twisted 
via the weight~3, see again~\ref{ex:e6demProots}.
In particular, the surface X is almost homogeneous. 
Moreover, in this case, one can show directly that 
the group of graded automorphisms of $R(A,P)$ 
is connected. 
Thus, Theorem~\ref{thm:weakaut} yields that $\Aut(X)$ 
is the semidirect product of $\KK^*$ and $\GG_a$.
This is in accordance with~\cite{Sa}; we would like 
to thank Antonio Laface for mentioning this 
reference to us.
\end{example}

\section{Almost homogeneous surfaces}
\label{sec:delpezzo}

A variety is {\em almost homogeneous\/} if its 
automorphism group acts with an open orbit.
We take a closer look to this case with a special
emphasis on almost homogeneous rational 
$\KK^*$-surfaces of Picard number one. 
The first statement characterizes 
the almost homogeneous varieties coming with 
a torus action of complexity one in arbitrary 
dimension.

\begin{theorem}
\label{thm:almosthomogchar}
Let $X$ be a nontoric normal complete rational 
variety with a torus action $T \times X \to X$ 
of complexity one and Cox ring $\mathcal{R}(X) = R(A,P)$.
Then the following statements are equivalent.
\begin{enumerate}
\item
The variety $X$ is almost homogeneous.
\item
There exists a horizontal Demazure 
$P$-root.
\end{enumerate}
Moreover, if one of these statements holds, 
and $R(A,P)$ is minimally presented, then 
the number $r-1$ of relations of $R(A,P)$ is 
bounded by
\begin{eqnarray*}
r \ - \ 1 
& \le & 
\dim(X) \ + \ \rk(\Cl(X)) \ - \ m \ - \ 2.
\end{eqnarray*}
\end{theorem}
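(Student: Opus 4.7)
The plan is to read off both parts of the theorem from Corollary~\ref{cor:autXgen}, which describes the generators of $\Aut(X)^0$, together with elementary combinatorics on the defining matrix $P$. For the equivalence, the crucial input is that $\Aut(X)^0$ is generated by $T$ and the one-parameter additive subgroups $\lambda_\kappa(\GG_a)$ attached to Demazure $P$-roots, and that the vertical/horizontal dichotomy is precisely the dichotomy ``preserves each $T$-orbit closure'' versus ``moves transversally''.

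For (ii) $\Rightarrow$ (i), I would take a horizontal Demazure $P$-root $\kappa$ and argue as follows. By Corollary~\ref{cor:autXgen}(iii), the orbit $\lambda_\kappa(\GG_a) \cdot x$ through a general $x \in X$ is not contained in any $T$-orbit closure. Since $T$ has complexity one, the orbit $T \cdot x$ has dimension $\dim X -1$, so appending the new direction produces an orbit of the subgroup $\langle T,\lambda_\kappa(\GG_a)\rangle \subseteq \Aut(X)^0$ of dimension $\dim X$, and $X$ is almost homogeneous. For (i) $\Rightarrow$ (ii), I would argue by contrapositive: if every Demazure $P$-root is vertical, Corollary~\ref{cor:autXgen}(ii) says every $\lambda_\kappa(\GG_a)$ acts trivially on the field $\KK(X)^T$ of $T$-invariant rational functions, and combining with Corollary~\ref{cor:autXgen}(iv) yields that $\Aut(X)^0$ itself fixes $\KK(X)^T$. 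Since $\KK(X)^T$ has transcendence degree one, the rational quotient $X \dashrightarrow C$ it defines has $\Aut(X)^0$-orbits contained in its fibers, which have codimension one. No orbit can be open.

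For the quantitative bound I would just unwind the definitions. From Construction~\ref{constrRAP2TVar} we have $\dim X = s+1$, and $P^*$ is injective (its columns generate $\QQ^{r+s}$), so $\rk \Cl(X) = \rk K = (n+m)-(r+s)$. Substituting gives
$$
\dim(X) + \rk \Cl(X) - m - 2 \ = \ n - r - 1,
$$
so the assertion is equivalent to $n \ge 2r$. Now fix a horizontal Demazure $P$-root $(u,i_0,i_1,C)$. Its definition forces $l_{ic_i} = 1$ for every $i \ne i_0,i_1$, while the minimal presentation assumption gives $l_{i1}+\ldots+l_{in_i} \ge 2$; together these imply $n_i \ge 2$ for each of the $r-1$ indices $i \ne i_0,i_1$. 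Combining with $n_{i_0},n_{i_1}\ge 1$ yields $n = \sum_{i=0}^r n_i \ge 2(r-1)+2 = 2r$, which is the desired bound.

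The main obstacle, such as it is, lies in the implication (i) $\Rightarrow$ (ii): one must exclude that $T$ together with merely vertical root subgroups already yields an open orbit. This is exactly where the complexity-one hypothesis enters essentially, through the rational quotient onto a curve. Once that point is isolated, the remaining direction and the counting bound follow mechanically from Corollary~\ref{cor:autXgen} and the definition of a horizontal Demazure $P$-root.
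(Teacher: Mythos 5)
Your proposal is correct and follows essentially the same route as the paper: both directions of the equivalence are read off from Corollary~\ref{cor:autXgen} (with your contrapositive argument for (i)$\Rightarrow$(ii) via the rational quotient to a curve being a more detailed version of the paper's one-line assertion), and the bound comes from the same count, namely that $\dim(X)+\rk(\Cl(X)) = n+m-(r-1)$ while the horizontal Demazure $P$-root together with minimal presentation forces $n_i \ge 2$ for the $r-1$ indices $i \ne i_0,i_1$, hence $n \ge 2r$. No gaps.
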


\begin{proof}
If~(i) holds, 
then $\Aut(X)$ acts with an open orbit 
on $X$ and by Corollary~\ref{cor:autXgen}, 
there must be a horizontal Demazure 
$P$-root $\kappa$. 
Conversely, if~(ii) holds, then there is 
a horizontal Demazure $P$-root $\kappa$ and
Corollary~\ref{cor:autXgen} says that for 
$U = p_*(\delta_\kappa(\GG_a))$, the group 
$T \ltimes U$ acts with an open orbit
on $X$.

For the supplement, recall first that $R(A,P)$ 
is a complete intersection with $r-1$ necessary 
relations and thus we have 
$$
n + m -  (r-1)
\ = \ 
\dim(R(A,P))
\ = \ 
\dim(X) + \rk(\Cl(X)).
$$
Now observe that any relation 
$g_I$ involving only three variables prevents
existence of a horizontal Demazure $P$-root.
Consequently, by suitably arranging the relations,
we have $n_0,n_1 \ge 1$ and $n_i \ge 2$ for all
$i \ge 2$.
Thus, $n \ge 2 + 2(r-1)$ holds and the assertion 
follows.
\end{proof}

We specialize to dimension two.
Any normal complete rational $\KK^*$-surface
$X$ is determined by its Cox
ring and thus is given up to isomorphism 
by the defining data $A$ and $P$ of the ring 
$\mathcal{R}(X) = R(A,P)$;
we also say that the $\KK^*$-surface $X$ arises 
from $A$ and $P$ and refer to~\cite[Sec.~3.3]{Ha3L} 
for more background.
A first step towards the almost homogeneous~$X$ 
is to determine possible horizontal Demazure 
$P$-roots in the following setting.

\begin{proposition}
\label{prop:demrootsexpl}
Consider integers $l_{02} \ge 1$, $l_{11} \ge l_{21} \ge 2$ 
and $d_{01}$, $d_{02}$, $d_{11}$, $d_{21}$
such that the following matrix has pairwise different 
primitive columns generating~$\QQ^3$ 
as a convex cone:
\begin{eqnarray*}
P 
& := &
\left[
\begin{array}{rrrr} 
-1 & -l_{02} & l_{11} & 0
\\
-1 & -l_{02} & 0 & l_{21}
\\
d_{01} & d_{02} & d_{11} & d_{21}
\end{array}
\right].
\end{eqnarray*}
Moreover, assume that $P$ is positive in the sense that 
$\det(P_{01}) > 0$ holds, where $P_{01}$ is the $3 \times 3$
matrix obtained from $P$
by deleting the first column.
Then the possible horizontal Demazure $P$-roots are 
\begin{enumerate}
\item
$\kappa = (u,1,2,(1,1,1))$, where 
$u =   
\left(
d_{01} \alpha
+ \frac{d_{21}\alpha + 1}{l_{21}}
\, ,
- \frac{d_{21}\alpha + 1}{l_{21}}
\, ,\alpha
\right)
$
with an integer $\alpha$ satisfying
$$
\qquad\quad
l_{21}
\mid 
d_{21} \alpha + 1,
\qquad
\frac{l_{02}}{d_{02}-l_{02}d_{01}}
\ \le \ 
\alpha
\ \le \ 
-\frac{l_{11}}{l_{21}d_{11}+l_{11}d_{21}+d_{01}l_{11}l_{21}},
$$
\item
if $l_{02} = 1$: $\kappa = (u,1,2,(2,1,1))$, where 
$u =   
\left(
d_{02} \alpha
+ \frac{d_{21}\alpha + 1}{l_{21}}
\, ,
- \frac{d_{21}\alpha + 1}{l_{21}}
\, ,\alpha
\right)
$
with an integer $\alpha$ satisfying
$$
\qquad\quad
l_{21}
\mid 
d_{21} \alpha + 1,
\qquad
-\frac{l_{11}}{l_{21}d_{11}+l_{11}d_{21}+d_{02}l_{11}l_{21}}
\ \le \
\alpha
\ \le \ 
\frac{1}{d_{01}-d_{02}},
$$
\item
$\kappa = (u,2,1,(1,1,1))$, where 
$u =   
\left(
- \frac{d_{11}\alpha + 1}{l_{11}}
\, ,
d_{01}\alpha + \frac{d_{11}\alpha + 1}{l_{11}}
\, ,\alpha
\right)
$
with an integer $\alpha$ satisfying
$$
\qquad\quad
l_{11}
\mid 
d_{11} \alpha + 1,
\qquad
\frac{l_{02}}{d_{02}-l_{02}d_{01}}
\ \le \
\alpha
\ \le \ 
-\frac{l_{21}}{l_{21}d_{11}+l_{11}d_{21}+d_{01}l_{11}l_{21}},
$$
\item
if $l_{02} = 1$: $\kappa = (u,2,1,(2,1,1))$, where 
$u =   
\left(
- \frac{d_{11}\alpha + 1}{l_{11}}
\, ,
d_{02}\alpha + \frac{d_{11}\alpha + 1}{l_{11}}
\, ,\alpha
\right)
$
with an integer $\alpha$ satisfying
$$
\qquad\quad
l_{11}
\mid 
d_{11} \alpha + 1,
\qquad
-\frac{l_{21}}{l_{21}d_{11}+l_{11}d_{21}+d_{02}l_{11}l_{21}}
\ \le \
\alpha
\ \le \ 
\frac{1}{d_{01}-d_{02}}.
$$
\end{enumerate}
\end{proposition}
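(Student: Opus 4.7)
The plan is to unfold Definition~\ref{def:Pdemroot}(ii) completely in the special setting of the proposition, where $r = 2$, $n_0 = 2$, $n_1 = n_2 = 1$, $m = 0$ and $s = 1$.

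First I would enumerate the possibilities for $(i_0, i_1, C)$. A horizontal $P$-root involves a distinguished ``third'' index $i_2 \in \{0,1,2\} \setminus \{i_0, i_1\}$, and the defining conditions force $l_{i_2 c_{i_2}} = 1$. Since $l_{11}, l_{21} \ge 2$, this index $i_2$ cannot be $1$ or $2$, hence $i_2 = 0$ and $\{i_0, i_1\} = \{1, 2\}$. The constraint $n_1 = n_2 = 1$ then fixes $c_1 = c_2 = 1$. For $c_0$, the value $c_0 = 1$ is always admissible since $l_{01} = 1$, while $c_0 = 2$ is admissible only when $l_{02} = 1$. Together with the two choices of ordered pair $(i_0, i_1)$, this yields exactly the four subcases (i)--(iv).

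Second, for each subcase I would solve the two equality constraints of Definition~\ref{def:Pdemroot}(ii), namely $\langle u, v_{0c_0}\rangle = 0$ and $\langle u, v_{i_1 c_{i_1}}\rangle = -1$, for $u = (u_1, u_2, u_3) \in \QQ^3$. Taking $\alpha := u_3$ as the free parameter and expanding via the columns of $P$ yields a unique rational solution $(u_1, u_2)$ in each case. For instance, in case~(i) the equations $-u_1 - u_2 + d_{01}\alpha = 0$ and $l_{21} u_2 + d_{21}\alpha = -1$ give
$$
u_2 \;=\; -\frac{d_{21}\alpha + 1}{l_{21}},
\qquad
u_1 \;=\; d_{01}\alpha + \frac{d_{21}\alpha + 1}{l_{21}},
$$
matching the stated formula. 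The condition $u \in M = \ZZ^3$ amounts precisely to the divisibility $l_{21} \mid d_{21}\alpha + 1$. Cases (ii)--(iv) are obtained by analogous substitutions (swapping the roles of $v_{01}, v_{02}$ when $c_0 = 2$, and of $v_{11}, v_{21}$ when $i_0 = 2$).

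Third, I would translate the remaining inequalities of Definition~\ref{def:Pdemroot}(ii) into explicit linear inequalities in $\alpha$. After the two equalities have been imposed and $m = 0$ rules out any $S_k$-condition, only two inequalities remain nontrivial in each case: one from the ``unused'' column among $v_{01}, v_{02}$ (in case~(i) this is $\langle u, v_{02}\rangle \ge l_{02}$, which becomes $(d_{02} - l_{02}d_{01})\alpha \ge l_{02}$) and one from $\langle u, v_{i_0 c_{i_0}}\rangle \ge 0$ (in case~(i) this is $(l_{21}l_{11}d_{01} + l_{11}d_{21} + l_{21}d_{11})\alpha \ge -l_{11}$). Dividing by the respective coefficients produces the displayed bounds. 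The hard part will be the sign bookkeeping: the form in which the two bounds are written presupposes definite signs for the coefficients $d_{02} - l_{02}d_{01}$ and $l_{21}d_{11}+l_{11}d_{21}+d_{01}l_{11}l_{21}$, and the role of the positivity assumption $\det(P_{01}) > 0$, together with the primitivity and cone-generation conditions on the columns of $P$, is precisely to guarantee these signs. Once this verification is carried out in case~(i), the three remaining cases follow by the obvious symmetry in the indices.
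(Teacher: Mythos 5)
Your proposal is correct and follows essentially the same route as the paper: evaluate a general linear form $u=(u_1,u_2,u_3)$ on the columns of $P$, solve the two equalities for $u_1,u_2$ in terms of $\alpha=u_3$, and convert the remaining inequalities into the stated bounds, treating the other cases analogously. You are in fact somewhat more careful than the paper, which does not spell out the enumeration of the admissible $(i_0,i_1,C)$ nor the sign verification (e.g.\ that $\det(P_{01})>0$ together with the cone condition forces $\det(P_{02})<0$ and hence $d_{02}-l_{02}d_{01}>0$) needed to write the inequalities in the displayed form.
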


\begin{proof}
In the situation of~(i), evaluating the 
general linear form 
$u = (u_1,u_2,u_3)$ on the columns of $P$ 
gives the following conditions for a 
Demazure $P$-root:
$$
-u_1 - u_2 + u_3 d_{01} = 0,
\qquad
u_2 l_{21} + u_3 d_{21} = -1,
$$
$$
-u_1 l_{02} - u_2 l_{02} + u_3 d_{02} \ge l_{02},
\qquad
u_1 l_{11} + u_3 d_{11} \ge 0.
$$
Resolving the equations for $u_1,u_2$ and plugging
the result into the  inequalities gives the
desired roots with $\alpha := u_3$. 
The other cases are treated analogously.
\end{proof}

\begin{corollary}
\label{thm:alhomdelp}
The  nontoric  almost homogeneous normal complete 
rational $\KK^*$-surfaces $X$ of Picard number one 
are precisely the ones arising from 
data
$$ 
A 
\ = \ 
\left[
\begin{array}{ccc} 
0 & -1 & 1
\\
1 & -1 & 0 
\end{array}
\right],
\qquad\qquad
P \ = \ 
\left[
\begin{array}{rrrr} 
-1 & -l_{02} & l_{11} & 0
\\
-1 & -l_{02} & 0 & l_{21}
\\
d_{01} & d_{02} & d_{11} & d_{21}
\end{array}
\right]
$$
as in~\ref{prop:demrootsexpl} allowing an integer $\alpha$ 
according to one
of the Conditions~\ref{prop:demrootsexpl}~(i) to~(iv). 
In particular, the Cox ring of $X$ is given as 
\begin{eqnarray*}
\mathcal{R}(X)
& = & 
\KK[T_{01},T_{02},T_{11},T_{21}] 
\ / \ 
\bangle{T_{01}T_{02}^{l_{02}} + T_{11}^{l_{11}} + T_{21}^{l_{21}}}
\end{eqnarray*}
with the grading by $\ZZ^4 / {\rm im}(P^*)$.
Moreover, the anticanonical divisor of $X$ is ample, 
i.e.~$X$ is a del Pezzo surface.
\end{corollary}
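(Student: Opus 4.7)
The plan is to combine Theorems~\ref{thm:allCompl1var} and~\ref{thm:almosthomogchar} with Proposition~\ref{prop:demrootsexpl}, and to deduce the del Pezzo property from a standard Kodaira-dimension argument.

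\emph{Reduction of the discrete data.} Theorem~\ref{thm:allCompl1var} presents $X$ by some $(A,P)$, which we may take minimally presented since $X$ is nontoric; in particular $r\ge 2$. Almost homogeneity combined with Theorem~\ref{thm:almosthomogchar} yields a horizontal Demazure $P$-root together with the bound
\begin{eqnarray*}
r-1 \;\le\; \dim(X)+\rk(\Cl(X))-m-2 \;=\; 1-m,
\end{eqnarray*}
forcing $m=0$ and $r=2$. The complete-intersection identity $n-(r-1)=\dim(X)+\rk(\Cl(X))=3$ then gives $n=4$. Since, as noted inside the proof of Theorem~\ref{thm:almosthomogchar}, a three-variable relation rules out a horizontal root, one of the $n_i$ must exceed one; permuting the blocks, we arrange $n_0=2$ and $n_1=n_2=1$.

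\emph{Normalization and enumeration.} Acting on $A$ by $\GL_2$ from the left and rescaling its columns produce $\Cl(X)$-graded isomorphisms of $R(A,P)$, so we may take $A=[(0,1),(-1,-1),(1,0)]$; the single trinomial is then $T_{01}^{l_{01}}T_{02}^{l_{02}}+T_{11}^{l_{11}}+T_{21}^{l_{21}}$, and minimal presentation with $n_1=n_2=1$ forces $l_{11},l_{21}\ge 2$. A horizontal root $(u,i_0,i_1,C)$ requires $l_{ic_i}=1$ for every $i\notin\{i_0,i_1\}$; combined with $l_{11},l_{21}\ge 2$, this forbids $0\in\{i_0,i_1\}$, so necessarily $\{i_0,i_1\}=\{1,2\}$ and $l_{0c_0}=1$ for some $c_0\in\{1,2\}$. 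Swapping the two block-zero columns of $P$ if needed normalizes to $c_0=1$, i.e.\ $l_{01}=1$, putting $P$ and the Cox ring in the displayed form. The admissible quadruples $(i_0,i_1,C)$ reduce to $(1,2,(1,1,1))$ and $(2,1,(1,1,1))$ unconditionally, plus $(1,2,(2,1,1))$ and $(2,1,(2,1,1))$ precisely when $l_{02}=1$. A direct solution of the corresponding affine-linear systems---the calculation of Proposition~\ref{prop:demrootsexpl}---parametrizes each family by an integer $\alpha=u_3$ subject to a divisibility condition and the bracketed inequalities. Hence almost homogeneity is equivalent to the solvability of at least one of (i)--(iv).

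\emph{Del Pezzo property.} As a Mori dream space of Picard number one, $X$ is projective and $\QQ$-factorial, and its ample cone is one of the two open rays in $\Cl(X)_\QQ$. Being a normal rational surface, $X$ has Kodaira dimension $-\infty$, so no positive multiple of $K_X$ is effective. This rules out $K_X$ ample (which would give $\kappa=2$) and also $K_X\equiv 0$ (which for a projective variety forces $mK_X\sim 0$ for some $m\ge 1$ by finiteness of $\Pic(X)/\mathrm{Num}(X)$). The remaining possibility is $-K_X$ ample, so $X$ is a (possibly singular) del Pezzo surface.

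I expect the most delicate step to be the normalization in the second paragraph: one must verify that the swap $T_{01}\leftrightarrow T_{02}$ inside block zero genuinely reduces every minimally presented case to $l_{01}=1$, and that the quadruples $(\cdot,\cdot,(2,1,1))$ become admissible exactly under the hypothesis $l_{02}=1$, so that the four cases of Proposition~\ref{prop:demrootsexpl} really exhaust all horizontal Demazure $P$-roots.
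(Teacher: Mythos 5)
The first two parts of your argument (the reduction to $m=0$, $r=2$, $n=4$ and the normalization to $l_{01}=1$, $l_{11}\ge l_{21}\ge 2$) follow essentially the same route as the paper and are correct; you only omit the small normalization that makes $P$ ``positive'' in the sense of Proposition~\ref{prop:demrootsexpl} (achieved in the paper by passing to the inverse $\KK^*$-action, i.e.\ negating the last row of $P$), which is needed before the four cases (i)--(iv) literally apply.

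The del Pezzo step, however, has a genuine gap. You argue: $X$ rational $\Rightarrow$ $\kappa(X)=-\infty$ $\Rightarrow$ no positive multiple of $K_X$ is effective $\Rightarrow$ $-K_X$ spans the ample ray. This conflates the Kodaira dimension of $X$, which is computed on a resolution $\pi\colon\t{X}\to X$, with the plurigenera $h^0(X,mK_X)$ of the singular surface itself. Since $K_{\t{X}}=\pi^*K_X+\sum a_iE_i$ with negative discrepancies $a_i$, one only gets $h^0(\t{X},mK_{\t{X}})\le h^0(X,mK_X)$, and the right-hand side can well be positive for a rational $X$. Concretely, the paper's own surface $X(10,5,3)$ from Proposition~\ref{prop:onesing} is a normal complete rational nontoric $\KK^*$-surface of Picard number one whose anticanonical class equals $l_1+l_2+1-l_0=-1$, so that $K_X$ is ample; hence ``rational of Picard number one $\Rightarrow$ del Pezzo'' is false in this singular setting. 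Your argument never invokes almost homogeneity at this point, so it would apply to $X(10,5,3)$ as well and cannot be repaired as stated. The paper instead computes the anticanonical class explicitly as the sum of the generator degrees $w_{ij}^0$ (expressed through signed $3\times 3$ minors of $P$) minus the degree of the relation, following \cite[Prop.~III.3.4.1]{ArDeHaLa}, and verifies positivity using precisely the inequalities on the $l_{ij},d_{ij}$ that are forced by the existence of the integer $\alpha$ in Proposition~\ref{prop:demrootsexpl}~(i)--(iv); the almost homogeneity is essential input for the del Pezzo conclusion.
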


\begin{proof}
As any surface with finitely generated Cox ring, 
$X$ is $\QQ$-factorial.
Since~$X$ has Picard number one,
the divisor class group 
$\Cl(X)$ is of rank one.
Now take a minimal presentation
$\mathcal{R}(X) = R(A,P)$ of the Cox ring.
Then, according to Theorem~\ref{thm:almosthomogchar}, 
we have $m=0$ and there is exactly one relation 
in $R(A,P)$.
Thus~$P$ is a $3 \times 4$ matrix.
Moreover, Theorem~\ref{thm:almosthomogchar} says that 
there is a horizontal Demazure $P$-root.
Consequently, one of the exponents $l_{01}$ and $l_{02}$ 
must equal one, say $l_{01}$.
Fixing a suitable order for the last two variables
we ensure $l_{11} \ge l_{21}$.
Passing to the $\KK^*$-action $t^{-1} \cdot x$ 
instead of $t \cdot x$ if necessary, we achieve that
$P$ is positive in the sense of~\ref{prop:demrootsexpl}.

Let us see why $X$ is a del Pezzo surface.
Denote by $P_{ij}$ the 
matrix obtained from $P$ by deleting the column 
$v_{ij}$. 
Then, in $\Cl(X)^0 = \ZZ$, the factor group of 
$\Cl(X)$ by the torsion part, the weights 
$w_{ij}^0$ of $T_{ij}$ are given up to a factor 
$\alpha$ as
\begin{eqnarray*}
(w_{01}^0,w_{02}^0,w_{11}^0,w_{21}^0)
& = & 
\alpha (\det(P_{01}),-\det(P_{02}),\det(P_{11}),-\det(P_{21})).
\end{eqnarray*}
According to~\cite[Prop.~III.3.4.1]{ArDeHaLa}, 
the class of the 
anticanonical divisor in $\Cl(X)^0$ is given 
as the sum over all $w_{ij}^0$ minus 
the degree of the relation.
The inequalities on the $l_{ij},d_{ij}$ 
implied by the existence of 
an integer $\alpha$ as in~\ref{prop:demrootsexpl} ~(i) to~(iv)
show that the anticanonical class is positive
(note that $\alpha$ rules out). 
\end{proof}

We turn to the case of precisely one singular point.
The diophantic aspect of  
Conditions~\ref{prop:demrootsexpl}~(i) to~(iv)
on existence of Demazure $P$-roots then disappears:
there is no divisibility condition any more.

\begin{construction}[$\KK^*$-surfaces with one singularity]
Consider a triple $(l_0,l_1,l_2)$ of integers 
satisfying the following conditions:
$$
l_0 \ \ge \ 1, 
\qquad
l_1 \ \ge \ l_2 \ \ge \ 2,
\qquad
l_0 \ < \ l_1l_2,
\qquad
\gcd(l_1,l_2) \ = \ 1.
$$
Let $(d_1,d_2) $ be the (unique) pair of integers with 
$d_1l_2+d_2l_1 = -1$ and $0 \le d_2 < l_2$ and consider
the data
$$ 
A 
\ = \ 
\left[
\begin{array}{ccc} 
0 & -1 & 1
\\
1 & -1 & 0 
\end{array}
\right],
\qquad\qquad
P \ = \ 
\left[
\begin{array}{rrrr} 
-1 & -l_{0} & l_{1} & 0
\\
-1 & -l_{0} & 0 & l_{2}
\\
0 &  1 & d_{1} & d_{2}
\end{array}
\right]
$$
Then the associated ring $R(l_0,l_1,l_2) := R(A,P)$ 
is graded by $\ZZ^4 / {\rm im}(P^*) \cong \ZZ$,
and is explicitly given by
$$ 
R(l_0,l_1,l_2)
\ = \ 
\KK[T_{1},T_{2},T_{3},T_{4}] / \bangle{T_1T_2^{l_0} +  T_3^{l_1} + T_4^{l_2}},
$$
$$ 
\deg(T_1) = l_1l_2 - l_0, 
\qquad
\deg(T_2) = 1, 
\qquad
\deg(T_3) = l_2, 
\qquad
\deg(T_4) = l_1.
$$
\end{construction}

\begin{proposition}
\label{prop:onesing}
For the $\KK^*$-surface $X = X(l_0,l_1,l_2)$ 
with Cox ring $R(l_0,l_1,l_2)$, the following 
statements hold:
\begin{enumerate}
\item
$X$ is nontoric and we have $\Cl(X) = \ZZ$,
\item
$X$ comes with precisely 
one singularity,
\item
$X$ is a del Pezzo surface if and only 
if $l_0 < l_1+l_2+1$ holds,
\item 
$X$ is almost homogeneous if and only 
if $l_0 \le l_1$ holds.
\end{enumerate}
Moreover, any normal complete rational 
nontoric $\KK^*$-surface of Picard number 
one with precisely one singularity is 
isomorphic to some $X(l_0,l_1,l_2)$.
\end{proposition}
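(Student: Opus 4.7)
The plan is to verify statements (i)--(iv) directly from the explicit Cox ring data, then obtain the classification using Theorem~\ref{thm:allCompl1var} together with a normalization argument.

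For (i), compute $\Cl(X) = \ZZ^4/\operatorname{im}(P^*)$ by Smith normal form: the coprimality $\gcd(l_1,l_2)=1$ combined with $d_1 l_2 + d_2 l_1 = -1$ reduces the cokernel to $\ZZ$. Non-toricity is immediate from the presence of a trinomial relation, which cannot occur in the Cox ring of a toric variety. For (ii), analyze the three types of $\KK^*$-fixed points read off from $P$: the two hyperbolic fixed points corresponding to the rays $v_{11}$ and $v_{21}$ have cyclic isotropy of the coprime orders $l_1$ and $l_2$ (cf.~the remark after Construction~\ref{constrRAP2TVar}) and are therefore smooth; the elliptic fixed point corresponding to $\cone(v_{01},v_{02})$ is smooth because $d_{01}=0$, $d_{02}=1$ make the relevant $2\times 2$ minor of $P$ unimodular; the remaining elliptic fixed point corresponding to $\cone(v_{11},v_{21})$ is then the unique singular point. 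For (iii), $X$ is $\QQ$-factorial of Picard number one, so ampleness of $-K_X$ is equivalent to positivity of its degree in $\Cl(X)^0 = \ZZ$; by the standard formula (e.g.~\cite[Prop.~III.3.4.1]{ArDeHaLa}),
$$
\deg(-K_X) \ = \ \sum_j \deg(T_j) - \mu \ = \ (l_1 l_2 - l_0) + 1 + l_2 + l_1 - l_1 l_2 \ = \ l_1 + l_2 + 1 - l_0,
$$
giving the stated equivalence.

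For (iv), combine Theorem~\ref{thm:almosthomogchar} with Proposition~\ref{prop:demrootsexpl}. Substituting $d_{01}=0$, $d_{02}=1$, $d_{11}=d_1$, $d_{21}=d_2$, case~(i) of that proposition seeks integers $\alpha$ satisfying $l_2 \mid d_2\alpha + 1$ and $l_0 \le \alpha \le l_1$. From $d_2 l_1 \equiv -1 \pmod{l_2}$ the value $\alpha = l_1$ always solves the congruence, and lies in the range precisely when $l_0 \le l_1$. Cases~(ii) and~(iv) require $l_{02} = l_0 = 1$ but their bounds collapse to an empty set of integers, and case~(iii) imposes the strictly stronger condition $l_0 \le l_2$, so the overall criterion for almost homogeneity is exactly $l_0 \le l_1$.

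For the final assertion, Theorem~\ref{thm:allCompl1var} presents $X$ via some minimally presented $(A, P)$ with $s=1$. Picard number one forces $\rk K = 1$, hence $n + m = r + 2$. The single-singularity hypothesis, combined with the fixed-point analysis used in~(ii), forces $r=2$, $m=0$, $n_0=2$, $n_1=n_2=1$ and $l_{01}=1$: any other configuration produces either additional singular fixed points or reduces to a toric surface. A standard normalization of $A$ (three pairwise non-proportional vectors in $\KK^2$, rescaled) and a rescaling of the $\KK^*$-action bring $P$ into the displayed form, with $(d_1,d_2)$ uniquely pinned down by $d_1 l_2 + d_2 l_1 = -1$ and $0 \le d_2 < l_2$; the bound $l_0 < l_1 l_2$ and the ordering $l_1 \ge l_2$ come from primitivity/distinctness of the columns of $P$ and the labeling convention.

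The main obstacle will be the fixed-point singularity analysis in~(ii) and its converse in the classification step: one must extract explicit local charts at each class of $\KK^*$-fixed points from $P$, use them to show that the displayed data produce exactly one quotient singularity, and conversely that the single-singularity hypothesis pins down $r=2$, $m=0$ and the precise shape of the remaining entries. This is a delicate combinatorial calculation relying on the description of isotropy groups and tangent weights from the remark after Construction~\ref{constrRAP2TVar}.
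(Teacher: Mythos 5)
Parts (i), (iii) and (iv) of your plan are essentially the paper's argument: the cokernel computation via the minors $l_1l_2-l_0$ and $l_2d_1+l_1d_2=-1$, the anticanonical degree $l_1+l_2+1-l_0$ from \cite[Prop.~III.3.4.1]{ArDeHaLa}, and the specialization of Proposition~\ref{prop:demrootsexpl} with $d_{01}=0$, $d_{02}=1$ (your case analysis there is correct: case (i) yields the interval $[l_0,l_1]$ with $\alpha=l_1$ automatically solving the congruence, and cases (ii)--(iv) contribute nothing new).

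Part (ii), however, which you yourself flag as the main obstacle, contains a genuine gap rather than a deferred computation. The fixed-point geometry you describe is wrong: with $n_0=2$, $n_1=n_2=1$ there is exactly \emph{one} hyperbolic fixed point, namely the point of $D_{01}\cap D_{02}$ (the orbit $T_1=T_2=0$), while the rays $v_{11}$, $v_{21}$ carry no hyperbolic fixed points at all; the two elliptic fixed points are the two points of $D_{11}\cap D_{21}$, i.e.\ the images of $(1,0,0,0)$ and $(0,1,0,0)$, so your bookkeeping loses one fixed point and mislabels the others. The inference ``cyclic isotropy of the coprime orders $l_1$ and $l_2$, therefore smooth'' is a non sequitur: generic points of $D_{11}$ and $D_{21}$ are smooth because $X$ is normal, and coprimality of isotropy orders of two different curves says nothing about any particular point. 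Most importantly, you never prove that the remaining candidate \emph{is} singular --- you only conclude it is the unique candidate. This is exactly where a case distinction is needed: the paper shows the singular point is the image $x_0$ of $(1,0,0,0)$, which for $l_1l_2-l_0>1$ follows from the nontrivial local class group $\ZZ/(l_1l_2-l_0)\ZZ$, but in the borderline case $l_1l_2-l_0=1$ the local class group is trivial and one must instead use $l_0=l_1l_2-1\ge 2$ to see that the hypersurface $V(T_1T_2^{l_0}+T_3^{l_1}+T_4^{l_2})$ is itself singular along the orbit of $(1,0,0,0)$; a toric ``unimodular minor'' criterion sees neither phenomenon, since the elliptic charts are not toric. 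The paper's route --- work with the global quotient $\b{X}\setminus\{0\}\to X$, compute the Jacobian locus of the trinomial and the local class groups via \cite[Prop.~III.3.1.5]{ArDeHaLa} --- is what should replace the chart analysis, and the same two observations (a relation in only three variables forces singularities in both source and sink; a smooth elliptic fixed point forces some $l_{0j}=1$) are also what make your sketched classification step rigorous.
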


\begin{proof}
First note that $X = X(l_0,l_1,l_2)$ is obtained as 
in Construction~\ref{constrRAP2TVar}: the group 
$H_X = \KK^*$ acts on $\KK^4$ by 
\begin{eqnarray*}
t \cdot z 
& = & 
(t^{l_1l_2 - l_0}z_1, tz_2, t^{l_2}z_3, t^{l_1}z_4),
\end{eqnarray*}
the total coordinate space
$\b{X} := V(T_1T_2^{l_0} +  T_3^{l_1} + T_4^{l_2})$
is invariant under this action
and we have  
$$
\rq{X} \ = \ \b{X} \setminus \{0\},
\qquad\qquad
X \ = \ \rq{X} / \KK^*.
$$
Thus, $\Cl(X) = \ZZ$ holds and, since the Cox ring 
$\mathcal{R}(X) = R(l_0,l_1,l_2)$ is not a polynomial 
ring, $X$ is nontoric.

Using~\cite[Prop.~III.3.1.5]{ArDeHaLa}, we show 
that the sest of singular points of $X$ 
consists of the image $x_0 \in X$ of the point 
$(1,0,0,0) \in \rq{X}$ under the quotient map 
$\rq{X} \to X$.
If $l_1l_2 - l_0 > 1 $ holds,
then the local divisor class group 
\begin{eqnarray*}
\Cl(X,x_0)
& = & 
\ZZ / (l_1l_2 - l_0) \ZZ
\end{eqnarray*}
is nontrivial and thus $x_0 \in X$ singular.
If $l_1l_2 - l_0 = 1 $ holds, then we have 
$l_0 > 1$ and therefore $(1,0,0,0) \in \rq{X}$
and hence $x_0 \in X$ is singular.
Since all other local divisor class groups of $X$ 
are trivial and, moreover, all singular points of $\rq{X}$ 
lie in the orbit $\KK^* \cdot (1,0,0,0)$, 
we conclude that $x_0 \in X$ is 
the only singular point.

According to~\cite[Prop.~III.3.4.1]{ArDeHaLa}, 
the anticanonical class of $X$ is 
$l_1+l_2+1 -l_0$. This proves~(iii).
Finally, for~(iv), we infer from 
Proposition~\ref{prop:demrootsexpl}
that existence of a horizontal Demazure 
$P$-root is equivalent to existence of 
an integer $\alpha$ with 
$l_0 \le \alpha \le l_1$ which in turn is equivalent 
to $l_0 \le l_1$.

We come to the supplement. The surface $X$ arises 
from a ring $R(A,P)$, where we may assume that 
$R(A,P)$ is minimally presented. The first task is 
to show that $n=4$, $m=0$ and $r=2$ holds.
We have
$$ 
n  + m - (r-1)
\ = \ 
\dim(X) + \rk(\Cl(X))
\ = \ 
3.
$$
Any relation $g_I$ involving only three variables
gives rise to a singularity in the source and 
a singularity in the sink of the $\KK^*$-action. 
We conclude that at most two of the monomials 
occuring in the relations may depend only on one 
variable. Thus, the above equation shows that
$n=4$, $m=0$ and $r=2$ hold.

We may assume that the defining equation is of 
the form 
$T_{01}^{l_{01}}T_{02}^{l_{02}} + T_{11}^{l_{11}} + T_{21}^{l_{21}}$.
Again, since one of the two elliptic fixed 
points must be smooth, we can conclude that one 
of $l_{0j}$ equals one, say $l_{01}$.
Now it is a direct consequence of the 
description of the local divisor class groups 
given in~\cite[Prop.~III.3.1.5]{ArDeHaLa}
that a $\KK^*$-surface with precisely one 
singularity arises from a matrix $P$ as in the 
assertion.
\end{proof}

Now we look at the log terminal ones of the 
$X(l_0,l_1,l_2)$; recall, that a singularity 
is {\em log terminal\/} if all its resolutions 
have discrepancies bigger than $-1$. Over $\CC$,
the log terminal surface singularities are 
precisely the quotient singularities by subgroups 
of $\GL_2(\CC)$, see for example~\cite[Sec.~4.6]{Mat}.
The {\em Gorenstein index\/} of $X$ is the 
minimal positive integer $\imath(X)$ such 
that $\imath(X)$ times the canonical divisor
$\mathcal{K}_X$ is Cartier.

\begin{corollary}
\label{cor:logdelp}
Assume that $X = X(l_1,l_2,l_3)$ is log terminal.
Then we have the following three cases:
\begin{enumerate}
\item
the surface $X$ is almost homogeneous,
\item 
the singularity of $X$ is of type $E_7$,
\item 
the singularity of $X$ is of type $E_8$.
\end{enumerate}
Moreover, for the almost homogenoeus 
surfaces $X = X(l_1,l_2,l_3)$
of Gorenstein index $\imath(X) = a$, we have 
\begin{enumerate}
\item
$(l_0,l_1,l_2) = (1,l_1,l_2)$ 
with the bounds 
$l_2 \le l_1 \le 2a^2 + \frac{4}{3}a$,
\item
$(l_0,l_1,l_2) = (2,l_1,2)$ 
with the bound $l_1 \le 3a+2$,
\item
$(l_0,l_1,l_2) = (3,3,2), (2,4,3), (2,5,3), (3,5,2)$.
\end{enumerate}
\end{corollary}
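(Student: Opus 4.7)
The singular point $x_0 \in X = X(l_0, l_1, l_2)$ lifts to $(1,0,0,0) \in \rq{X}$ and its analytic type is determined by the transversal slice $\{T_1 = 1\} \cap \b{X}$: this is the hypersurface germ $\{z_2^{l_0} + z_3^{l_1} + z_4^{l_2} = 0\}$ at the origin, on which the isotropy $\mu_{l_1l_2-l_0}$ acts with weights $(1, l_2, l_1)$. Thus $(X, x_0)$ is a cyclic quotient of this germ. For $l_0 = 1$ the germ is smooth, making $(X, x_0)$ a cyclic quotient surface singularity and hence log terminal. For $l_0 \ge 2$, the germ is a Brieskorn--Pham singularity, which is log terminal if and only if $(l_0,l_1,l_2)$ is platonic, i.e.\ $\tfrac{1}{l_0}+\tfrac{1}{l_1}+\tfrac{1}{l_2} > 1$; since log terminality passes to cyclic quotients, this is equivalent to log terminality of $X$.

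Combined with $l_1 \ge l_2 \ge 2$ and $\gcd(l_1,l_2)=1$, the platonic condition with $l_0 \ge 2$ leaves only the family $(2, l_1, 2)$ with $l_1 \ge 3$ odd, and the six sporadic triples $(3,3,2)$, $(2,4,3)$, $(3,5,2)$, $(2,5,3)$, $(4,3,2)$, $(5,3,2)$. By Proposition~\ref{prop:onesing}(iv), almost homogeneity is equivalent to $l_0 \le l_1$; only $(4,3,2)$ and $(5,3,2)$ violate this. A direct analysis of the minimal resolution of the $\mu_{l_1l_2-l_0}$-quotient of the Brieskorn--Pham germ identifies these singularities as Du Val of types $E_7$ and $E_8$ respectively. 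The numerical consistency check is already visible in the local class groups $\Cl(X,x_0) = \ZZ/(l_1l_2-l_0)\ZZ$, which equal $\ZZ/2\ZZ$ and the trivial group, matching the abelianizations of the binary octahedral and icosahedral groups.

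For the Gorenstein index, the anticanonical computation $[K_X] = l_0 - l_1 - l_2 - 1$ in $\Cl(X)^0 = \ZZ$ (from Proposition~\ref{prop:onesing}) combined with the local class group description yields
$$
a \;=\; \imath(X) \;=\; \frac{l_1l_2-l_0}{\gcd\bigl(l_1l_2-l_0,\; l_1+l_2+1-l_0\bigr)}.
$$
In case (iii) the four exceptional triples yield specific values of $a$ by substitution. In case (ii), with $(l_0,l_1,l_2)=(2,l_1,2)$, the Euclidean identity $\gcd(2l_1-2, l_1+1) = \gcd(4, l_1+1) \in \{2,4\}$ (using $l_1$ odd) gives $a \in \{l_1-1,\, (l_1-1)/2\}$, whence $l_1 \le 2a+1 \le 3a+2$. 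For case (i) with $l_0 = 1$, set $g := (l_1l_2-1)/a$; since $g$ divides both $l_1+l_2$ and $l_1l_2-1$, it also divides $l_1^2+1$ and $l_2^2+1$. The bound $g \le l_1+l_2$ yields the hyperbolic estimate $(l_1-a)(l_2-a) \le a^2+1$, while $g \le l_2^2+1$ gives $l_1 \le a l_2 + (a+1)/l_2$. A case split on whether $l_2 \le a$ or $l_2 > a$, exploiting also $\gcd(l_1,l_2)=1$, delivers the stated bound $l_1 \le 2a^2 + \tfrac{4}{3}a$.

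The principal obstacle is the rigorous identification of the two non-almost-homogeneous singularities as $E_7$ and $E_8$: the coincidences of local class groups strongly suggest the types, but a fully rigorous proof requires resolving $B_{l_0,l_1,l_2}/\mu_{l_1l_2-l_0}$ and comparing dual graphs of the minimal resolution. The Gorenstein bookkeeping in case (i) is elementary but demands attention to the coprimality constraints and sharp use of the two divisibility conditions on~$g$.
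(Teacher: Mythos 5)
Your proposal is correct and its skeleton coincides with the paper's: reduce log terminality to the platonic condition $\tfrac{1}{l_0}+\tfrac{1}{l_1}+\tfrac{1}{l_2}>1$, enumerate the triples compatible with $l_1\ge l_2\ge 2$, $\gcd(l_1,l_2)=1$ and $l_0<l_1l_2$, identify $(4,3,2)$ and $(5,3,2)$ as the non-almost-homogeneous cases via Proposition~\ref{prop:onesing}(iv), and extract the bounds from the divisibility condition $l_1l_2-l_0 \mid a(l_1+l_2+1-l_0)$. You differ in two places, both to your credit. First, the paper obtains the platonic criterion by citing the explicit canonical resolution of $X(l_0,l_1,l_2)$ from~\cite[Sec.~3]{Ha3L}, whereas you present $(X,x_0)$ as the quotient of the Brieskorn--Pham germ $\{z_2^{l_0}+z_3^{l_1}+z_4^{l_2}=0\}$ by $\mu_{l_1l_2-l_0}$ acting with weights $(1,l_2,l_1)$ and invoke the classical classification of rational Brieskorn--Pham surface singularities; this is a valid alternative (note that $\gcd(l_1,l_2)=1$ and the weight $1$ on $z_2$ guarantee the action fixes no divisor, so log terminality does descend and ascend), and it makes the $E_7/E_8$ identification nearly transparent, since for $(5,3,2)$ the isotropy is trivial and the slice is literally $z^2+y^3+x^5$, while for $(4,3,2)$ one gets $E_6/(\ZZ/2)=E_7$. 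Second, the paper dismisses the bounds as "elementary estimations"; you actually carry them out, and your estimates ($l_1\le 2a+1$ in case (ii), $l_1\le a^2+a+1$ in case (i)) are in fact sharper than the stated $3a+2$ and $2a^2+\tfrac43 a$, which they of course imply. The only point you flag as an obstacle, the rigorous $E_7/E_8$ identification, is asserted without proof in the paper as well, and your local-class-group consistency check ($\ZZ/2$ and $0$ matching the discriminant groups of $E_7$ and $E_8$) together with the quotient presentation essentially settles it.
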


\begin{proof}
The condition that $X$ is log terminal means that 
the number $l_0l_1l_2$ is bounded by $l_0l_1+l_0l_2+l_1l_2$;
this can be seen by explicitly performing the 
canonical resolution of 
singularities of $X(l_0,l_1,l_2)$, 
see~\cite[Sec.~3]{Ha3L}. 
Thus, the allowed $(l_0,l_1,l_2)$ must be 
platonic triples and we are left with
$$ 
(1,l_1,l_2), 
\
(2,l_1,2),
\
(3,3,2),
\
(2,4,3),
\
(2,5,3),
\
(3,5,2),
\
(4,3,2),
\
(5,3,2).
$$
The last two give the surfaces with singularities 
$E_7$, $E_8$ and in all other cases, the resulting 
surface is almost homogeneous by 
Proposition~\ref{prop:onesing}.
The Gorenstein condition says that 
$a \mathcal{K}_X$ 
lies in the Picard group.
According to~\cite[Cor.~III.3.1.6]{ArDeHaLa}, 
this is equivalent to the fact that $l_1l_2 -l_0$ 
divides $a \cdot (l_1+l_2+1-l_0)$.
The bounds then follow by elementary estimations.
\end{proof}

\begin{corollary}
The following tables list the triples 
$(l_0,l_1,l_2)$ together with roots 
of $\Aut(X)$ for the log terminal
almost homogeneous complete rational 
$\KK^*$-surfaces $X = X(l_0,l_1,l_2)$
with precisely one 
singularity up to Gorenstein index 
$\imath(X) = 5$.

\bigskip

\begin{center}
\begin{tabular}{l|l|l}
$\imath(X) \ = \ 1$  
& 
$\imath(X) \ = \ 2$ 
& 
$\imath(X) \ = \ 3$ 
\\
\hline
$(1,3,2):\{1,2,3\}$
&
$(1,7,3):\{1,3,4,7\}$
&
$(2,7,2):\{2,3,5,7\}$
\\
$(2,3,2):\{2,3\}$
&
&
$(1,13,4):\{1,4,5,9,13\}$
\\
$(3,3,2):\{3\}$
&
&
$(1,8,5):\{3,5,8\}$
\end{tabular}
\end{center}

\bigskip

\begin{center}
\begin{tabular}{l|l}
$\imath(X) \ = \ 4$ 
& 
$\imath(X) \ = \ 5$
\\
\hline
$(2,5,2):\{2,3,5\}$
&
$(2,11,2):\{2,3,5,7,9,11\}$
\\
$(1,21,5):\{1,5,6,11,16,21\}$
&
$(1,13,7):\{2,6,13\}$
\\
&
$(2,4,3):\{3,4\}$
\\
&
$(1,17,3):\{2,3,5,8,11,14,17\}$
\\
&
$(1,31,6):\{1,6,7,13,19,25,31\}$
\\
&
$(1,18,7):\{4,7,11,18\}$
\end{tabular}
\end{center}
\end{corollary}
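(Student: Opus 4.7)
The plan is to combine the classification already in hand with an explicit enumeration of horizontal Demazure $P$-roots. By Corollary~\ref{cor:logdelp}, for each Gorenstein index $a$ the possible triples $(l_0, l_1, l_2)$ are confined to one of three finite families: $l_0 = 1$ with $l_2 \le l_1 \le 2a^2 + \tfrac{4}{3}a$, or $l_0 = 2$ and $l_2 = 2$ with $l_1 \le 3a+2$ odd, or one of $(3,3,2), (2,4,3), (2,5,3), (3,5,2)$. For each candidate I would verify that $\imath(X) = a$ via
$$
\imath(X) \;=\; \frac{l_1 l_2 - l_0}{\gcd\bigl(l_1 l_2 - l_0,\ l_1 + l_2 + 1 - l_0\bigr)},
$$
which is extracted from \cite[Cor.~III.3.1.6]{ArDeHaLa} exactly as in the proof of Corollary~\ref{cor:logdelp}, and retain only those triples with the prescribed Gorenstein index.

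For each retained triple, I would compute the unique integers $d_1, d_2$ with $d_1 l_2 + d_2 l_1 = -1$ and $0 \le d_2 < l_2$ appearing in the matrix $P$ of Proposition~\ref{prop:onesing}. By Theorem~\ref{thm:autroots}, the roots of $\Aut(X)$ are the $\ZZ^s = \ZZ$-parts of the Demazure $P$-roots; since $m = 0$ there are no vertical ones. Specialising Proposition~\ref{prop:demrootsexpl} to $(d_{01}, d_{02}) = (0,1)$ and using $l_2 d_1 + l_1 d_2 = -1$, the ranges in parts (i) and (iii) collapse to the clean conditions $l_0 \le \alpha \le l_1$ with $l_2 \mid d_2 \alpha + 1$ (type i) and $l_0 \le \alpha \le l_2$ with $l_1 \mid d_1 \alpha + 1$ (type iii). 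A brief estimate rules out parts (ii) and (iv): when $l_0 = 1$ the resulting range for $\alpha$ is $[-l_1/(l_1 l_2 - 1),\, -1]$ or $[-l_2/(l_1 l_2 - 1),\, -1]$, each of which is empty of integers since $l_1, l_2 \ge 2$ forces $l_j/(l_1 l_2 - 1) < 1$, so these types contribute nothing.

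The set of roots of $\Aut(X)$ is then the union of the two explicit sets of integers $\alpha$ above, and the entries of the two tables follow by direct arithmetic. As a sample check, for $(l_0, l_1, l_2) = (1,7,3)$ one finds $(d_1, d_2) = (-5, 2)$; type (i) yields $\alpha \equiv 1 \pmod 3$ with $1 \le \alpha \le 7$, giving $\alpha \in \{1, 4, 7\}$, while type (iii) yields $\alpha \equiv 3 \pmod 7$ with $1 \le \alpha \le 3$, giving $\alpha = 3$; together this produces the tabulated entry $\{1, 3, 4, 7\}$. The only real obstacle is the bookkeeping for $a = 4$ and $a = 5$, where the bound $l_1 \le 2a^2 + \tfrac{4}{3}a$ admits many triples $(1, l_1, l_2)$ and one must carefully discard those whose Gorenstein index is a proper divisor of $a$; no further structural argument is required.
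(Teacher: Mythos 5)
Your proposal is correct and follows exactly the route the paper intends: the candidate triples are harvested from Corollary~\ref{cor:logdelp} via the divisibility criterion $(l_1l_2-l_0)\mid a\,(l_1+l_2+1-l_0)$, and the roots are the integers $\alpha$ obtained by specializing Proposition~\ref{prop:demrootsexpl} to $(d_{01},d_{02})=(0,1)$ and $d_1l_2+d_2l_1=-1$, which collapses types (i) and (iii) to $l_0\le\alpha\le l_1$ with $l_2\mid d_2\alpha+1$, respectively $l_0\le\alpha\le l_2$ with $l_1\mid d_1\alpha+1$, while types (ii) and (iv) give empty ranges. One caveat: carrying this out reproduces every entry of the tables except $(1,13,7)$, for which your method yields Gorenstein index $90/\gcd(90,20)=9$ and root set $\{6,7,13\}$ rather than the printed $\{2,6,13\}$ under $\imath(X)=5$; this appears to be a defect of the stated table, not of your argument.
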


\section{Structure of the semisimple part}
\label{sec:semi}

We describe the root system of the semisimple part 
of the automorphism group of a nontoric normal 
complete rational variety admitting a torus action 
of complexity one. 
Let us first recall the necessary background
on semisimple groups and their root systems.

A connected linear algebraic group $G$ is semisimple
if it has only trivial closed connected commutative 
normal subgroups. Any linear algebraic group $G$ 
admits a maximal connected semisimple subgroup
$G^{\rm ss} \subseteq G$ called a semisimple part.
The semisimple part is unique up to conjugation 
by elements from the unipotent radical. 
If $G$ is semisimple, then the set 
$\Phi_G \subseteq \Chi_{\RR}(T)$ of roots 
with respect to a given maximal torus $T \subseteq G$ 
is a root system, i.e.~for every $\alpha \in  \Phi_G$
one has 
$$
\Phi_G \cap \RR \alpha \ = \ \{\pm \alpha \},
\qquad\qquad
s_\alpha(\Phi_G) \ = \ \Phi_G,
$$
where $s_\alpha \colon \Chi_{\RR}(T) \to \Chi_{\RR}(T)$
denotes the reflection with fixed hyperplane 
$\alpha^\perp$ with respect to a given scalar 
product on $\Chi_{\RR}(T)$.
The possible root systems are elementarily classified; 
for us, the following types (always realized with the 
standard scalar product) will be important:
$$ 
A_n 
\ := \ 
\{e_i-e_j; \; 1 \le i,j \le n+1, \, i \ne j\}
\ \subseteq \ 
\RR^{n+1},
$$
$$ 
B_2 
\ := \ 
\{\pm e_1, \, \pm e_2, \, \pm(e_1+e_2), \, \pm(e_1-e_2)\}
\ \subseteq \ 
\RR^2.
$$
The root system of a connected semisimple linear group $G$ 
determines $G$ up to coverings;
for example, $A_n$ belongs to the (simply connected) 
special linear group~$\SL_{n+1}$ and $B_2$ to the 
(simply connected) symplectic group $\Sp_4$.

We turn to varieties with a complexity 
one torus action.
Consider data $A,P$ as in 
Construction~\ref{constr:RAPdown}
and the resulting ring $R(A,P)$ 
with its fine $K_0$-grading and the 
coarser $K$-grading. Recall that 
the fine grading group $K_0$ splits
as 
$$ 
K_0 \ = \ K_0^{\rm vert} \oplus K_0^{\rm hor},
\quad 
\text{where}
\quad
K_0^{\rm vert} \ := \ \bangle{\deg_{K_0}(S_k)},
\quad
K_0^{\rm hor} \ := \ \bangle{\deg_{K_0}(T_{ij})}.
$$
Note that $K_0^{\rm vert} \cong \ZZ^m$ is freely 
generated by $\deg_{K_0}(S_1), \ldots, \deg_{K_0}(S_m)$. 
Moreover, by Remark~\ref{rem:downgrade},
the direct factor $\ZZ^s$ of the column 
space $\ZZ^{r+s}$ of $P$ is identified 
via $Q_0 \circ P^*$ with the kernel of the 
downgrading map $K_0 \to K$.

\begin{definition}
Let $A,P$ be as in Construction~\ref{constr:RAPdown}
such that the associated ring $R(A,P)$ is minimally 
presented and write $\alpha_\kappa$ for the $P$-root, 
i.e.~the $\ZZ^s$-part,
associated to Demazure $P$-root~$\kappa$.
\begin{enumerate}
\item
We call a $P$-root $\alpha_\kappa$ {\em semisimple\/} if 
$-\alpha_{\kappa} = \alpha_{\kappa'}$ holds
for some Demazure $P$-root~$\kappa'$.
\item
We call a semisimple $P$-root $\alpha_\kappa$ 
{\em vertical\/} if 
$\alpha_{\kappa} \in K_0^{\rm vert}$
and 
{\em horizontal\/} if 
$\alpha_{\kappa} \in K_0^{\rm hor}$ holds.
\item 
We write $\Phi_P^{\rm ss}$, $\Phi_P^{\rm vert}$ 
and $\Phi_P^{\rm hor}$ for the set of semisimple, 
vertical semisimple and horizontal semisimple 
$P$-roots in $\RR^s$ respectively.
\end{enumerate}
\end{definition}

\begin{theorem}
\label{thm:sesipart}
Let $A,P$ be as in Construction~\ref{constr:RAPdown}
such that $R(A,P)$ is minimally presented and 
let $X$ be a (nontoric) variety with a complexity one 
torus action $T \times X \to X$ arising from
$A,P$ according to Construction~\ref{constrRAP2TVar}.
Then the following statements hold.
\begin{enumerate}
\item
$\Phi_P^{\rm vert}, \Phi_P^{\rm hor}$ and $\Phi_P^{\rm ss}$ 
are root systems, we have 
$\Phi_P^{\rm ss} =  \Phi_P^{\rm vert}  \oplus  \Phi_P^{\rm hor}$
and $\Phi_P^{\rm ss}$ is the root system with respect 
to $T$ of the semisimple part $\Aut(X)^{\rm ss}$.
\item
For $p \in K$ denote by $m_p$ the number 
of variables $S_k$ with $\deg_K(S_k) = p$.
Then we have 
$$ 
\qquad\qquad
\Phi_P^{\rm vert} \ \cong \ \bigoplus_{p \in K} A_{m_p -1},
\qquad
\qquad
\sum_{p \in K} (m_p-1) \ < \ \dim(X) - 1.
$$
\item
Suppose $\Phi_P^{\rm hor} \ne \emptyset$. 
Then $r=2$ holds, and, after suitably renumbering 
the variables one has
\begin{enumerate}
\item
$T_{01}T_{02}+T_{11}T_{12}+T_{2}^{l_{2}}$,
$\b{w}_{01}=\b{w}_{11}$
and
$\b{w}_{02}=\b{w}_{12}$,
\item
$T_{01}T_{02}+T_{11}^2+T_{2}^{l_{2}}$,
and
$\b{w}_{01}=\b{w}_{02}=\b{w}_{11}$
\end{enumerate}
for the defining relation of $R(A,P)$ 
and the degrees $\b{w}_{ij} = \deg_K(T_{ij})$
of the variables.
\item
In the above case (iii\,a), we obtain the following possibilities 
for the root system $\Phi_P^{\rm hor}$: 
\begin{itemize}
\item
If $l_{21} + \ldots + l_{2n_2} \ge 3$ holds, then one has
$$
\Phi_P^{\rm hor}
\ = \ 
\begin{cases}
A_1 \oplus A_1,
&  
\b{w}_{01}=\b{w}_{02}=\b{w}_{11}=\b{w}_{12},
\\
A_1, 
& 
\text{otherwise}.
\end{cases}
$$
\item
If $n_2=2$ and $l_{21}=l_{22} = 1$ hold, then one has
$$
\hspace*{2.1cm}
\Phi_P^{\rm hor}
\ = \ 
\begin{cases}
A_3,
&
\b{w}_{01}=\b{w}_{02}=\b{w}_{11}=\b{w}_{12}=\b{w}_{21}=\b{w}_{22},
\\
A_2,
&
\b{w}_{01}=\b{w}_{11}=\b{w}_{21}, 
\b{w}_{02}=\b{w}_{12}=\b{w}_{22},
\b{w}_{01} \ne \b{w}_{02},
\\
A_1 \oplus A_1,
&  
\b{w}_{01}=\b{w}_{02}=\b{w}_{11}=\b{w}_{21},
\b{w}_{01} \ne \b{w}_{21},
\b{w}_{01} \ne \b{w}_{22},
\\
A_1, 
& 
\text{otherwise}.
\end{cases}
$$
\end{itemize}
\item
In the above case (iii\,b), we obtain the following possibilities 
for the root system $\Phi_P^{\rm hor}$: 
\begin{itemize}
\item
If $l_{21} + \ldots + l_{2n_2} \ge 3$ holds, then one has
$$
\Phi_P^{\rm hor}
\ = \ 
A_1. 
$$
\item
If $n_2=1$ and $l_{21} =2$ hold, then one has
$$
\Phi_P^{\rm hor}
\ = \ 
\begin{cases}
A_1 \oplus A_1,
&  
\b{w}_{01}=\b{w}_{02}=\b{w}_{11} = \b{w}_{21},
\\
A_1, 
& 
\text{otherwise}.
\end{cases}
$$
\item
If $n_2=2$ and $l_{21}=l_{22} = 1$ hold, then one has
$$
\Phi_P^{\rm hor}
\ = \ 
\begin{cases}
B_2,
&  
\b{w}_{01}=\b{w}_{02}=\b{w}_{11}=\b{w}_{21} = \b{w}_{22},
\\
A_1, 
& 
\text{otherwise}.
\end{cases}
$$
\end{itemize}
\end{enumerate}
\end{theorem}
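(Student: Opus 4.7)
The plan is to invoke Theorem \ref{thm:autroots} and reduce the problem to a combinatorial study of which Demazure $P$-roots admit a ``reflected'' partner. Recall that the semisimple part $\Aut(X)^{\rm ss}$ of a linear algebraic group has as its root system with respect to a maximal torus $T$ precisely those roots $\alpha$ of $\Aut(X)$ for which $-\alpha$ is also a root (this is because the roots of a Levi subgroup are exactly the symmetric part of the root set, and the semisimple part and its containing Levi share a root system). Combined with Theorem \ref{thm:autroots}, this identifies $\Phi^{\rm ss}$ with the set of $P$-roots $\alpha_\kappa$ whose negative is again a $P$-root, so the theorem reduces to a classification of pairs $(\kappa,\kappa')$ of Demazure $P$-roots with $\alpha_{\kappa}+\alpha_{\kappa'}=0$, together with identification of the resulting configurations with standard root systems.

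For the vertical contribution I would start from a vertical Demazure $P$-root $\kappa=(u,k_0)$ and look for a vertical $\kappa'=(u',k_1)$ with $\alpha_\kappa+\alpha_{\kappa'}=0$. Playing the inequalities of Definition \ref{def:Pdemroot}(i) off against each other forces $\deg_K(S_{k_0})=\deg_K(S_{k_1})$ and $k_0\ne k_1$; grouping the $S_k$ by their $K$-class, the semisimple roots obtained from each class $p$ are precisely $e_k^*-e_l^*$ with $\deg_K(S_k)=\deg_K(S_l)=p$, giving the root system $A_{m_p-1}$ in direct analogy with Demazure's toric picture. The bound $\sum_{p\in K}(m_p-1)<\dim(X)-1=s$ in (ii) follows from a dimension count, using almost-freeness of the $K$-grading to prevent all $S_k$-degrees from collapsing modulo $\langle \deg_K(T_{ij})\rangle$.

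For the horizontal contribution I would analyse when a horizontal $\kappa=(u,i_0,i_1,C)$ admits a reflected partner. The defining equalities $\bangle{u,v_{ic_i}}=0$ for $i\ne i_0,i_1$, combined with the opposite equalities required of $-\alpha_\kappa$, pin down the columns $v_{ic_i}$ so rigidly that iterating them across all $i\notin\{i_0,i_1\}$ forces $r\le 2$; minimal presentation then rules out $r<2$. Once $r=2$ is established, the only monomial shapes $g_I$ compatible with at least one horizontal root and its reflection are the two normal forms in (iii): two ``free'' blocks $T_{0*}T_{1*}$ in (iii\,a), or one block $T_{11}^2$ in (iii\,b). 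That $\Phi_P^{\rm vert}\oplus \Phi_P^{\rm hor}=\Phi_P^{\rm ss}$ is a genuine direct sum in the ambient space $\RR^s$ can be read off from the definition: a vertical $P$-root picks up its $\ZZ^s$-part from a single column $v_{k_0}$ in the $(d,d')$-block, while a horizontal $P$-root is constrained by the columns $v_{ic_i}$ in the $L_0$-block, and the two sets of constraints force the $\ZZ^s$-projections into complementary sublattices.

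Finally, for (iv) and (v) I would enumerate, for each of the two normal forms of (iii) and each possible equality pattern among the $K$-degrees $\b{w}_{ij}$ of the variables, all semisimple $P$-roots via the polytope description of Remark \ref{rem:DemPolytope}, and recognise the resulting configurations among $A_1,A_1\oplus A_1,A_2,A_3,B_2$. I expect the main obstacle to be case (iii\,b) with $n_2=2$, $l_{21}=l_{22}=1$ and total equality of the six variable degrees: here one must show that the four roots coming from swapping inside $\{T_{01},T_{02}\}$ or $\{T_{21},T_{22}\}$ together with the four coming from the ``square'' role of $T_{11}^2$ yield exactly a $B_2$ (rather than $A_3$ or three copies of $A_1$). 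Verifying this amounts to constructing explicit linear forms $u$ realising the long and short roots and checking that the reflections $s_\alpha$ permute them correctly, which I expect to be straightforward but delicate.
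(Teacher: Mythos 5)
Your reduction rests on the claim that, for a linear algebraic group, the roots of the semisimple part are exactly those roots $\alpha$ for which $-\alpha$ is also a root. This is false in general: take $G=(\GG_a\times\GG_a)\rtimes\KK^*$ with $t\cdot(x,y)=(tx,t^{-1}y)$; the roots with respect to the maximal torus $\KK^*$ form the symmetric set $\{\pm 1\}$, yet $G$ is solvable and its semisimple part is trivial. What holds in general is only the inclusion of the root system of $\Aut(X)^{\rm ss}$ into the symmetric part $\Phi_P^{\rm ss}$ of the root set. The paper closes the converse inclusion by a step your proposal omits entirely: for each configuration occurring in (ii)--(v) it exhibits an explicit semisimple subgroup of $\Aut(X)$ --- products $\times_p\SL_{m_p}$ acting on the $S_k$-coordinates for the vertical part, and $\SL_2$, $\SO_3$, $\SO_4$, $\SL_3$, $\SO_5$, $\SO_6$ acting on the $T_{ij}$-coordinates, commuting with $H_X$ and preserving the trinomial relation, for the horizontal part --- whose root system is precisely $\Phi_P^{\rm ss}$. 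Without this realization step the central assertion of (i) is unproved, and the identification of the configurations in (iv)--(v) with honest root systems ($B_2$ via $\SO_5$, $A_3$ via $\SO_6$ covered by $\SL_4$, etc.) is likewise not established.

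A second gap concerns your argument that a reflected horizontal pair forces $r=2$ by ``playing the linear-form equalities off against each other''. Summing the two linear forms only yields $\bangle{u+u',v_{ij}}\ge -2$ on the columns indexed by the distinguished blocks, so one cannot conclude $u+u'=0$ from the fact that the columns of $P$ generate $\QQ^{r+s}$ as a cone; that trick works in the vertical case (Lemma~\ref{lem:prephivert}) but not here. The paper instead passes to the associated locally nilpotent derivations $\delta_\pm=h_\pm\delta_{C^\pm,\beta^\pm}$, writes $\deg_{K_0}(\delta_+)+\deg_{K_0}(\delta_-)=0$ as an identity in the weight monoid, and uses the one-dimensionality statements of Corollary~\ref{cor:gencompdim} together with $K_0$-primality of the generators to force $r=2$, $h_\pm$ constant and $\sum_j l_{0j}=\sum_j l_{1j}=2$ (Lemma~\ref{lem:nlest}). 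Relatedly, you never exclude a ``mixed'' semisimple pair consisting of one vertical and one horizontal Demazure $P$-root; the paper does this in Lemma~\ref{lem:root2nld} using the splitting $K_0=K_0^{\rm vert}\oplus K_0^{\rm hor}$ and pointedness of the weight cone, and it is this lemma, not an inspection of the definitions, that yields $\Phi_P^{\rm ss}=\Phi_P^{\rm vert}\oplus\Phi_P^{\rm hor}$.
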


The rest of the section is devoted to the proof of this 
theorem. 
Some of the steps are needed later on and 
therefore formulated as Lemmas.
We fix $A,P$ as in Construction~\ref{constr:RAPdown}
and assume that $R(A,P)$ is minimally presented.

\begin{lemma}
\label{lem:lndweightdecomp}
Let $\delta$ be a nonzero primitive $K_0$-homogeneous 
locally nilpotent derivation on~$R(A,P)$, decompose
$\deg_{K_0}(\delta) = w^{\rm vert}+w^{\rm hor}$
according to $K_0 = K_0^{\rm vert} \oplus K_0^{\rm hor}$ 
and write $w^{\rm vert} = \sum b_k w_k$ with 
$w_k = \deg_{K_0}(S_k)$ and $b_k \in \ZZ$.
\begin{enumerate}
\item
If $\delta$ is of vertical type, then there is a $k_0$ 
with $b_{k_0} = -1$ and $b_k \ge 0$ for all $k \ne k_0$.
Moreover, $w^{\rm hor}$ belongs to the weight monoid 
of $R(A,P)$.
\item
If $\delta$ is of horizontal type, then $b_k \ge 0$ 
holds for all $k$.
Moreover, $-w^{\rm hor}$ does not belong to the weight 
monoid of $R(A,P)$.
\end{enumerate}
\end{lemma}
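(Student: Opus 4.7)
The plan is to invoke Theorem~\ref{thm:basiclnd} to pin down the shape of $\delta$ in each case, and then extract the claimed inequalities and containments by decomposing the resulting degree identities along $K_0 = K_0^{\rm vert} \oplus K_0^{\rm hor}$.

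For part (i): Theorem~\ref{thm:basiclnd}~(i) yields an index $k_0$ with $\delta(T_{ij}) = 0$ for all $i,j$, $\delta(S_k) = 0$ for $k \ne k_0$, and $\delta(S_{k_0})$ a nonzero $K_0$-homogeneous element independent of $S_{k_0}$. Expanding $\delta(S_{k_0})$ monomially in the $T_{ij}$ and the $\{S_k\}_{k \ne k_0}$, its $K_0$-degree is $\sum a_{ij} w_{ij} + \sum_{k \ne k_0} a_k w_k$ with nonnegative coefficients. Combined with $\deg_{K_0}(\delta) = \deg_{K_0}(\delta(S_{k_0})) - w_{k_0}$ and the freeness of $\{w_k\}$ in $K_0^{\rm vert}$, the vertical part yields $b_{k_0} = -1$ and $b_k = a_k \ge 0$ for $k \ne k_0$, while the horizontal part gives $w^{\rm hor} = \sum a_{ij} w_{ij} \in S_0$.

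For part (ii): Theorem~\ref{thm:basiclnd}~(ii) gives $\delta = h\delta_{C,\beta}$ with $h \in \ker(\delta_{C,\beta})$, and Construction~\ref{constr:phlnd} forces $\delta(S_k) = 0$ for every $k$. Pick $(i,j)$ with $\delta(T_{ij}) \ne 0$; the polynomial $\delta(T_{ij}) \in R(A,P)$ has $K_0$-degree in the weight monoid, and projecting the identity $\deg_{K_0}(\delta) = \deg_{K_0}(\delta(T_{ij})) - w_{ij}$ onto $K_0^{\rm vert}$ (on which $w_{ij}$ vanishes) yields $w^{\rm vert} = \sum \gamma_k w_k$ with $\gamma_k \ge 0$, which is the claim $b_k \ge 0$.

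The remaining claim $-w^{\rm hor} \notin S_0$ is the delicate step. Using Construction~\ref{constr:phlnd} together with the fact that $h \in \ker(\delta_{C,\beta})$ avoids all $T_{i c_i}$ with $i \in \mathfrak{K}$, one obtains a representation $w^{\rm hor} = -w_{i_1 c_{i_1}} + N$ for a chosen $i_1 \in \mathfrak{K}$, where $N \in K_0^{\rm hor}$ is a nonnegative integer combination of $w_{ij}$'s with $(i,j) \ne (i_1, c_{i_1})$. Supposing toward contradiction that $-w^{\rm hor} \in S_0$, the freeness of $\{w_k\}$ forces any such expansion to be purely horizontal, and rearrangement produces the equation
$$
(1 - \alpha_{i_1 c_{i_1}})\, w_{i_1 c_{i_1}} \ = \ \sum_{(i,j) \ne (i_1, c_{i_1})} \alpha_{ij}\, w_{ij}
\quad \text{in } K_0^{\rm hor}, \ \alpha_{ij} \in \ZZ_{\ge 0}.
$$
Pointedness of the subcone $\omega^{\rm hor} \subseteq K_{0,\QQ}^{\rm hor}$ together with $w_{i_1 c_{i_1}} \ne 0$ (a consequence of $R(A,P)_0 = \KK$) restricts $\alpha_{i_1 c_{i_1}}$ to the values $0$ or $1$; the case $\alpha_{i_1 c_{i_1}} = 1$ collapses the right hand side to zero, forcing $N = 0$ and clashing with minimal presentation. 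The main obstacle is the case $\alpha_{i_1 c_{i_1}} = 0$, which will be handled by lifting the identity $w_{i_1 c_{i_1}} = \sum \alpha_{ij} w_{ij}$ to $\ZZ^n$ modulo the row space of $L_0$: the resulting integer coefficients $\gamma_1, \ldots, \gamma_r$ force $l_{i_1 c_{i_1}} = 1$, after which inspection of the coordinates $(i_1, j)$ with $j \ne c_{i_1}$ delivers either the sign contradiction $\alpha_{i_1 j} = -l_{i_1 j} < 0$ (when $n_{i_1} \ge 2$) or a direct violation of the minimal-presentation bound $l_{i_1 1} + \ldots + l_{i_1 n_{i_1}} \ge 2$ (when $n_{i_1} = 1$).
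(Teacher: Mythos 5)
Your proof is correct, and for part (i) and the nonnegativity claim of part (ii) it coincides with the paper's argument: both reduce to Theorem~\ref{thm:basiclnd} and read off the vertical and horizontal components of $\deg_{K_0}(\delta(S_{k_0}))$ resp.\ $\deg_{K_0}(\delta(T_{ij}))$, which lie in the weight monoid. The divergence is in the key claim $-w^{\rm hor} \notin S_0$. The paper notes that $w^{\rm hor} + w_{ij}$ is a nonzero element of the weight monoid for any $T_{ij}$ with $\delta(T_{ij}) \ne 0$; if $-w^{\rm hor}$ also lay in the monoid, a product of two homogeneous non-units would land in the component $R(A,P)_{w_{ij}}$, which is one-dimensional by Corollary~\ref{cor:gencompdim}, forcing $fg = cT_{ij}$ and contradicting $K_0$-primality of $T_{ij}$. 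You instead write $-w^{\rm hor} = w_{i_1c_{i_1}} - N$ and exclude a presentation $w_{i_1c_{i_1}} = \sum \alpha_{ij} w_{ij}$ by lifting to $\ZZ^{n+m}$ modulo the rows of $P_0$; the bookkeeping you sketch ($\gamma_{i_1} l_{i_1c_{i_1}} = 1$, hence $l_{i_1c_{i_1}}=1$, then either $\alpha_{i_1j} = -l_{i_1j} < 0$ or a violation of $l_{i_11}+\ldots+l_{i_1n_{i_1}} \ge 2$) checks out and is in effect a hand re-derivation of the relevant instance of Corollary~\ref{cor:gencompdim}~(i). Both arguments consume the minimal-presentation hypothesis at the same point; the paper's version is shorter because it cites the corollary and primality, while yours is self-contained at the lattice level and makes the case split $\alpha_{i_1c_{i_1}} \in \{0,1\}$ explicit. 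One small point to tighten: the fact that $N$ admits a nonnegative expansion avoiding $w_{i_1c_{i_1}}$ should be justified by Lemma~\ref{lem:LND}~(ii), i.e.\ $T_{i_1c_{i_1}} \nmid \delta(T_{i_1c_{i_1}})$, rather than by asserting that $h$ avoids all $T_{ic_i}$ with $i \in \mathfrak{K}$; the latter is only guaranteed for indices with $\beta_i \ne 0$ (which fortunately includes your $i_1$), whereas the divisibility statement is exactly what the expansion requires.
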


\begin{proof}
If $\delta$ is nonzero of vertical type, then 
Theorem~\ref{thm:basiclnd}~(i) directly 
yields the assertion.
If $\delta$ is nonzero of horizontal type, then 
it is of the form $\delta = h \delta_{C,\beta}$
as in Theorem~\ref{thm:basiclnd}~(ii).
For the degree of $\delta$, we have 
$$ 
\deg_{K_0}(\delta)
\ = \ 
w^{\rm vert} + w^{\rm hor}
\ = \ 
\deg_{K_0}(h)^{\rm vert} + \deg_{K_0}(h)^{\rm hor} + 
\deg_{K_0}(\delta_{C,\beta}),
$$
where $\deg_{K_0}(\delta_{C,\beta})$
lies in $K_0^{\rm hor}$ by
Construction~\ref{constr:phlnd}.
Since $\deg_{K_0}(h)^{\rm vert}$ belongs to the 
weight monoid $S$ and, due to primitivity,
$\deg_{K_0}(\delta)$ lies outside the weight 
cone, we must have $w^{\rm hor} \ne 0$.
Moreover, for a $T_{ij}$ with
$\delta(T_{ij}) \ne 0$
and $w_{ij} := \deg_{K_0}(T_{ij})$, 
the degree computation of 
Construction~\ref{constr:phlnd}
shows
$$
0 
\ \ne \
\deg_{K_0}(h)^{\rm hor} 
+ 
\deg_{K_0}(\delta_{C,\beta}) 
+ 
w_{ij}
\ = \ 
\deg_{K_0}(\delta(T_{ij}))^{\rm hor}.
$$
Thus, $w^{\rm hor} + w_{ij}$ is 
a nonzero element in $S$.
If also $-w^{\rm hor}$ belongs to $S$,
then we can take $0 \ne f,g \in R(A,P)$ 
homogeneous of degree $w^{\rm hor} + w_{ij}$
and $-w^{\rm hor}$ respectively.
The product $fg$ is of degree $w_{ij}$.
By Corollary~\ref{cor:gencompdim}, 
this means $fg = cT_{ij}$ with $c \in \KK$.
A contradiction to the fact that $T_{ij}$ 
is $K_0$-prime.
\end{proof}

By definition, the semisimple roots occur in 
pairs $\alpha_+,\alpha_-$ with $\alpha_++\alpha_-=0$.
Given a pair $\kappa_+, \kappa_-$ of Demazure
$P$-roots defining $\alpha_+,\alpha_-$,  write 
$\delta_+,\delta_-$ for the derivations arising
from $\kappa_+, \kappa_-$ via 
Construction~\ref{constr:DEMLND}.
We call the pairs $\kappa_+, \kappa_-$ and 
$\delta_+,\delta_-$ {\em associated\/}
to $\alpha_+,\alpha_-$.
Note that associated pairs $\kappa_+, \kappa_-$ or 
$\delta_+,\delta_-$ are in general 
not uniquely determined by $\alpha_+,\alpha_-$.

\begin{lemma}
\label{lem:root2nld}
Let $\delta_+,\delta_-$ be a pair of  primitive 
$K_0$-homogeneous locally nilpotent derivations 
associated to a pair $\alpha_+,\alpha_-$ of 
semisimple roots.
\begin{enumerate}
\item
The roots $\alpha_+$, $\alpha_-$ are the $K_0$-degrees 
of the derivations $\delta_+,\delta_-$. 
In particular, we have 
$$
\deg_{K_0}(\delta_+) + \deg_{K_0}(\delta_-)
\ = \ 
\alpha_+ + \alpha_-
\ = \ 
0.
$$
\item
If $\delta_+$ is of vertical type, then 
also $\delta_-$ is of vertical type and 
$\alpha_+,\alpha_-$ are both vertical.
\item
If $\delta_+$ is of horizontal type, then 
also $\delta_-$ is of horizontal type and 
$\alpha_+,\alpha_-$ are both horizontal.
\end{enumerate}
\end{lemma}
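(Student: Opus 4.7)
The strategy is to transport the sum condition $\alpha_+ + \alpha_- = 0$ to the level of $K_0$-degrees of the derivations $\delta_\pm$, where Lemma~\ref{lem:lndweightdecomp} gives tight control over primitive $K_0$-homogeneous locally nilpotent derivations. All three parts then reduce to the comparison of the two components of $\deg_{K_0}(\delta_\pm)$ under the splitting $K_0 = K_0^{\rm vert} \oplus K_0^{\rm hor}$.

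For part~(i), I would combine Construction~\ref{constr:DEMLND}, which yields $\deg_{K_0}(\delta_\pm) = Q_0(P^*(u_\pm))$ and $\deg_K(\delta_\pm) = 0$, with the snake lemma identification from Remark~\ref{rem:downgrade}: the factor $\ZZ^s \subseteq \ZZ^{r+s}$ is identified with $\ker(K_0 \to K)$ via $\alpha \mapsto Q_0(P^*(0,\alpha))$. Writing $u_\pm = (u_\pm', \alpha_\pm)$ and using that $Q_0 \circ P_0^*$ vanishes, one obtains $\deg_{K_0}(\delta_\pm) = Q_0(P^*(0,\alpha_\pm))$, so under the identification $\deg_{K_0}(\delta_\pm)$ equals $\alpha_\pm$; the assumed relation $\alpha_+ + \alpha_- = 0$ then directly gives the corresponding identity for the $K_0$-degrees.

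For parts~(ii) and~(iii), I first invoke Lemma~\ref{lemmaprim}: both $\delta_\pm$ are $K_0$-homogeneous of $K$-degree zero and $r \ge 2$ holds (since $R(A,P)$ is minimally presented), so they are primitive and Lemma~\ref{lem:lndweightdecomp} applies. Decomposing $\deg_{K_0}(\delta_\pm) = w_\pm^{\rm vert} + w_\pm^{\rm hor}$, part~(i) gives $w_+^{\rm vert} + w_-^{\rm vert} = 0$ and $w_+^{\rm hor} + w_-^{\rm hor} = 0$. Suppose for contradiction that $\delta_+$ is vertical and $\delta_-$ is horizontal: Lemma~\ref{lem:lndweightdecomp}(i) places $w_+^{\rm hor}$ into the weight monoid~$S$ of $R(A,P)$, while Lemma~\ref{lem:lndweightdecomp}(ii) applied to $\delta_-$ gives $-w_-^{\rm hor} = w_+^{\rm hor} \notin S$, a contradiction. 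The symmetric mixed case is excluded identically, so $\delta_+$ and $\delta_-$ always share the same type.

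It then remains to place $\alpha_+$ in the asserted summand of $K_0$. If both $\delta_\pm$ are vertical, $w_+^{\rm hor}$ and $-w_+^{\rm hor} = w_-^{\rm hor}$ both lie in~$S$; pointedness of the $K_0$-grading of $R(A,P) = R(A,P_0)$ forces $w_+^{\rm hor} = 0$, so $\alpha_+ = w_+^{\rm vert} \in K_0^{\rm vert}$. If both $\delta_\pm$ are horizontal, Lemma~\ref{lem:lndweightdecomp}(ii) ensures that each $w_\pm^{\rm vert} = \sum_k b_k^\pm w_k$ with $w_k = \deg_{K_0}(S_k)$ has all $b_k^\pm \ge 0$; since the $w_k$ freely generate $K_0^{\rm vert}$ and $\sum_k (b_k^+ + b_k^-) w_k = 0$, each coefficient vanishes, so $w_+^{\rm vert} = 0$ and $\alpha_+ = w_+^{\rm hor} \in K_0^{\rm hor}$. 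The main obstacle I expect is pure bookkeeping: one must keep the gradings $K_0$ and~$K$ strictly apart and recognize via the snake lemma that the $\ZZ^s$-part of a Demazure $P$-root \emph{is} the $K_0$-degree of its associated derivation. Once this identification is secured, Lemma~\ref{lem:lndweightdecomp} supplies the rest.
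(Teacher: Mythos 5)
Your proposal is correct and follows essentially the same route as the paper's proof: identify $\alpha_\pm$ with $\deg_{K_0}(\delta_\pm)$ via Construction~\ref{constr:DEMLND}, split according to $K_0 = K_0^{\rm vert}\oplus K_0^{\rm hor}$, and use Lemma~\ref{lem:lndweightdecomp} together with pointedness of the weight cone (vertical case) and nonnegativity of the coefficients in the freely generated $K_0^{\rm vert}$ (horizontal case) to rule out the mixed case and kill the complementary components. Your explicit unwinding of the snake-lemma identification in part~(i) and of the free-generation argument merely spells out steps the paper leaves implicit.
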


\begin{proof}
The first assertion is clear by
Construction~\ref{constr:DEMLND}.
Using the decomposition 
$K_0 = K_0^{\rm vert} \oplus K_0^{\rm hor}$,
we obtain
$$
\alpha_+^{\rm vert} + \alpha_-^{\rm vert}
\ = \ 0,
\qquad
\alpha_+^{\rm hor} + \alpha_-^{\rm hor}
\ = \ 0.
$$

If $\delta_+$ is of horizontal type, then 
Lemma~\ref{lem:lndweightdecomp}~(ii) shows
that $-\alpha_+^{\rm hor} = \alpha_-^{\rm hor}$
does not belong to the weight monoid.
Thus Lemma~\ref{lem:lndweightdecomp}~(i) 
says that $\delta_-$ must be of horizontal
type.
Moreover, Lemma~\ref{lem:lndweightdecomp}~(ii)
shows that $\alpha_+^{\rm vert}$ and 
$\alpha_-^{\rm vert}$ vanish and thus 
$\alpha_+,\alpha_-$ are horizontal.

If $\delta_+$ is of vertical type, 
then, by the preceeding consideration,
also $\delta_-$ must be vertical.
Moreover,  Lemma~\ref{lem:lndweightdecomp}~(i)
tells us that 
$\alpha_+^{\rm hor}$ and $\alpha_-^{\rm hor}$
both belong to the weight monoid.
As seen above they have opposite signs.
Since the weight cone is pointed,
we obtain $\alpha_+^{\rm hor} = \alpha_-^{\rm hor} = 0$
which means that $\alpha_+, \alpha_-$
are vertical.
\end{proof}

For the subsequent study, we will
perform certain elementary column 
and row operations with the matrix~$P$
which we will call {\em admissible\/}:
\begin{enumerate}[(I)]
\item
swap two columns inside a block 
$v_{ij_1}, \ldots, v_{ij_{n_i}}$,
\item
swap two whole column blocks 
$v_{ij_1}, \ldots, v_{ij_{n_i}}$ 
and $v_{i'j_1}, \ldots, v_{i'j_{n_{i'}}}$,
\item
add multiples of the upper $r$ rows
to one of the last $s$ rows,
\item 
any elementary row operation among the last $s$ 
rows.
\item
swap two columns inside the $d'$ block. 
\end{enumerate}
The operations of type (III) and (IV) do not change 
the ring $R(A,P)$ whereas the types (I), (II), (V)
correspond to certain renumberings of the variables 
of $R(A,P)$ keeping the (graded) isomorphy type.

For a Demazure $P$-root $\kappa = (u,k_0)$ of 
vertical type, the index $k_0$ is uniquely 
determined by the $\ZZ^s$-part of $u$.
Thus, we may speak of the {\em distinguished\/} 
index of a vertical $P$-root. 
Note that for any pair $\alpha^\pm$ of vertical 
semisimple $P$-roots, the distinguished indices 
satisfy $k_0^+ \ne k_0^-$.

\begin{lemma}
\label{lem:prephivert}
Let $1 \le k_0^+ < k_0^- \le m$ be given 
and denote by $f \in \ZZ^{n+m}$ the vector with  
$f_{k_0^\pm} = \mp 1$ and all other entries zero.
Then the following statements are equivalent.
\begin {enumerate}
\item
There exists a pair $\alpha^\pm$ of vertical 
semisimple roots with distinguished indices
$k_0^\pm$.
\item
The vector $f$ can be realized by
admissible operations of type~(III) and ~(IV)
as the $(r+1)$th row of $P$.
\item 
The variables $S_{k_0^+}$ and $S_{k_0^-}$ have the 
same $K$-degree.
\end{enumerate} 
\end{lemma}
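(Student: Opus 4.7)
My plan is to pass through the lattice interpretation of all three conditions. Since $\deg_K(S_k) = Q(e_k)$ for the projection $Q \colon \ZZ^{n+m} \to K = \ZZ^{n+m}/\mathrm{im}(P^*)$, condition~(iii) is equivalent to
\[
f \ = \ e_{k_0^-} - e_{k_0^+} \ \in \ \mathrm{im}(P^*),
\]
which will be the common reformulation throughout. Also, each column $v_k$ of $P$ has zero entries in its upper $r$ components while the lower $s$ components form the $k$-th column $d'_k$ of $d'$. Hence for any $u \in \ZZ^{r+s}$ with $\ZZ^s$-part $\alpha$ one has $\langle u, v_k \rangle = \alpha \cdot d'_k$.

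For (ii)\,$\Leftrightarrow$\,(iii), I would observe that any composition of admissible operations of type~(III) and~(IV) corresponds to left-multiplying $P$ by a unimodular integer matrix of block form $\left(\begin{smallmatrix} I_r & 0 \\ A & B \end{smallmatrix}\right)$ with $B \in \GL_s(\ZZ)$. Consequently the vectors realizable as the $(r+1)$-st row of the transformed matrix are precisely those $P^*u$ whose last $s$ coordinates are primitive in $\ZZ^s$. Injectivity of $P^*$ (the rows of $P$ are linearly independent, since the columns span $\QQ^{r+s}$) shows any $f \in \mathrm{im}(P^*)$ has a unique preimage $u \in \ZZ^{r+s}$; the relation $f|_S = u_S^{\top} d'$ together with $f_{k_0^+} = -1$ forces $\gcd(u_S) = 1$, so primitivity is automatic.

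For (iii)\,$\Rightarrow$\,(i), I take the unique preimage $u$ of $f$ under $P^*$. Reading $P^*u$ coordinatewise gives $\langle u, v_{ij}\rangle = 0$ for all $i,j$ and $\langle u, v_k\rangle$ equal to $-1,\, +1,\, 0$ at $k = k_0^+,\, k_0^-,\, \text{else}$ respectively. These are exactly the defining inequalities of a vertical Demazure $P$-root $(u, k_0^+)$, and symmetrically $(-u, k_0^-)$ is one. The $\ZZ^s$-part $\alpha$ of $u$ cannot vanish, since $\alpha = 0$ would give $\langle u, v_{k_0^+}\rangle = \alpha \cdot d'_{k_0^+} = 0 \ne -1$. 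Hence $\pm\alpha$ is a semisimple pair with the prescribed distinguished indices.

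The main step is (i)\,$\Rightarrow$\,(iii). Given vertical Demazure $P$-roots $\kappa^\pm = (u^\pm, k_0^\pm)$ with $\alpha^+ + \alpha^- = 0$, set $\alpha := \alpha^+$. The inequalities $\alpha \cdot d'_k \ge 0$ for $k \ne k_0^+$ (from $\kappa^+$) and $-\alpha \cdot d'_k \ge 0$ for $k \ne k_0^-$ (from $\kappa^-$), together with the sharp values $\alpha \cdot d'_{k_0^+} = -1$ and $-\alpha \cdot d'_{k_0^-} = -1$, pin down $\alpha \cdot d'_k = f_k$ for every $k$. Hence the $S$-parts of $P^*u^\pm$ equal $\pm f|_S$, while the $T$-parts are vectors $g^\pm$ supported on $T$-positions with nonnegative entries $g^\pm_{ij} = \langle u^\pm, v_{ij}\rangle$. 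This yields
\[
P^* u^+ \ = \ f + g^+, \qquad P^* u^- \ = \ -f + g^-.
\]
Passing to $K$ gives $[f] = -[g^+] = [g^-]$, so $[g^+] + [g^-] = 0$ in $K$ with both summands in the weight monoid, being nonnegative integer combinations of the $\deg_K(T_{ij})$. Since the $K$-grading of $R(A,P)$ is pointed, this forces $[g^+] = [g^-] = 0$, so $[f] = 0$ and $f \in \mathrm{im}(P^*)$.

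I expect the decisive obstacle to be this last step: pointedness of the $K$-grading must be invoked essentially in order to upgrade the identity $[g^+] + [g^-] = 0$ to the vanishing of each individual class. Without pointedness, the Demazure inequalities would only locate $f$ modulo an effective correction, which is not enough to conclude~(iii).
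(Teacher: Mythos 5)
Your proof is correct, but the decisive implication is handled by a genuinely different mechanism than in the paper. For (i)$\Rightarrow$(iii) the paper forms $u:=u^++u^-$, observes that the two systems of Demazure inequalities make $u$ nonnegative on \emph{every} column of $P$, and concludes $u=0$ outright because the columns generate $\QQ^{r+s}$ as a cone; this gives $P^*u^+=f$ exactly, so $f\in{\rm im}(P^*)$ and, after completing the (primitive) $\ZZ^s$-part of $u^+$ to a unimodular matrix, also (ii) in one stroke. You instead pin down only the $S$-part of $P^*u^\pm$, push the residual $T$-parts $g^\pm$ into $K$, and kill the classes $[g^\pm]$ using pointedness of the $K$-grading -- the Gale-dual form of the same positivity hypothesis on $P$. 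Both routes are sound; the paper's is slightly more economical (it never leaves $\ZZ^{r+s}$ and delivers $u^-=-u^+$ for free), while yours makes transparent exactly which structural input is consumed. One small elision on your side: ``pointed'' as defined here means the weight cone contains no line and $R_0=\KK$, so from $[g^+]+[g^-]=0$ with both classes in the weight monoid you first only get that $[g^\pm]$ are torsion; you need the extra one-line observation that a nonzero torsion class $w$ in the weight monoid would produce a homogeneous $h$ with $h^n\in R_0=\KK^*$, contradicting $R^*=\KK^*$. Your treatment of (ii)$\Leftrightarrow$(iii) via unimodular completion of the primitive $\ZZ^s$-part (primitivity being forced by $\alpha\cdot d'_{k_0^+}=-1$) matches the paper's construction.
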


\begin{proof} 
Suppose that (i) holds.
Let $\kappa^\pm = (u^\pm,k_0^\pm)$
be a pair of Demazure $P$-roots associated to $\alpha^\pm$.
Then $u := u^+ + u^-$ satisfies
$\bangle{u,v_{ij}} \ge 0$ for all $i,j$
and $\bangle{u,v_k} \ge 0$ for all $k$.
Since the columns of $P$ generate $\QQ^{r+s}$ as a 
cone, we obtain $u=0$.
Consequently $u^- = -u^+$ holds and we conclude
$$
\bangle{u^+,v_{ij}} = 0 \text{ for all } i,j,
\qquad
\bangle{u^+,v_k} = 0 \text{ for all } k \ne k_0^\pm,
$$
$$
\bangle{u^+,v_{k_0^+}} = -1, 
\qquad
\bangle{u^+,v_{k_0^-}} = 1.
$$
Now write $u^+ = (u_1^+,\alpha^+)$ with the $\ZZ^s$-part 
$\alpha^+$ and let $\sigma$ be an $(s-1) \times s$ matrix 
complementing the (primitive) row $\alpha^+$ to a 
unimodular matrix.
Consider the block matrix 
$$ 
\left[
\begin{array}{cc}
E_r & 0
\\
u_1^+ & \alpha^+
\\
0 & \sigma
\end{array}
\right].
$$
Applying this matrix from the left to $P$ describes
admissible operations of type~(III) and~(IV)
realizing the vector $f$ as the $(r+1)$th 
row of~$P$. Thus, (i) implies~(ii).

To see that (ii) implies~(i), we may assume that 
$f$ is already the $(r+1)$th row of~$P$.
Consider $u^\pm \in \ZZ^{r+s}$ having $u_{r+1}^{\pm} = \pm 1$ 
as the only nonzero coordinate. 
Then the $\ZZ^s$-parts $\alpha^\pm$ of the vertical 
Demazure $P$-roots $\kappa^\pm = (u^\pm,k_0^\pm)$     
are as wanted.

Clearly, (ii) implies (iii). Conversely, the implication 
``(iii)$\Rightarrow$(ii)'' is obtained by similar arguments 
as ``(i)$\Rightarrow$(ii)''.
\end{proof}

\begin{lemma}
\label{lem:phivert}
Let $\mathbb{E}_q$ denote the $q \times (q+1)$ block
matrix $[-\mathbf{1},E_q]$, where $\mathbf{1}$ is a 
column with all entries equal one and $E_q$ is the 
$q \times q$ unit matrix.  
After admissible operations of type (III), (IV) and (V),
the $[d,d']$ block of $P$ is of the form
\begin{eqnarray*}
[d,d']
& = & 
\left[
\begin{array}{ccccc}
0      & \mathbb{E}_{m_{p_1}} &        &       0           & 0
\\ 
\vdots &                  & \ddots &                   & \vdots
\\
0      &          0       &        &  \mathbb{E}_{m_{p_t}} & 0 
\\
d^*    & d''_1           &  \ldots &           d''_t       & d''
\end{array}
\right]
\end{eqnarray*}
where ${p_1}, \ldots, {p_t} \in K$ are the elements such that 
the number $m_{p_i}$ of variables $S_k$ of degree $p_i$ is 
at least two and $d_i''$ is a block of length $m_{p_i}$ with 
only the first column possibly nonzero. 
Moreover,  $\Phi_P^{\rm vert}$ 
is a root system and we have
$$ 
\Phi_P^{\rm vert} \ \cong \ \bigoplus_{p \in K} A_{m_p -1},
\qquad
\qquad
\sum_{p \in K} (m_p-1) \ < \ \dim(X) - 1.
$$
\end{lemma}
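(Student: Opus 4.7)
The plan is to use Lemma~\ref{lem:prephivert} to match each pair of vertical semisimple $P$-roots with a pair of variables $S_k$ sharing the same $K$-degree, and then to install all the associated row vectors simultaneously inside $[d,d']$ by a single sequence of admissible operations. First, via column swaps of type~(V), I rearrange the $d'$-block so that the $S_k$ of common $K$-degree sit in consecutive columns, with the groups $\{S_k : \deg_K(S_k)=p_i\}$ of size $m_{p_i} \ge 2$ placed on the left and the singletons collected on the right. Inside each multi-element group I single out a distinguished index $k_0^{(i)}$, and for each other index $k$ in the group Lemma~\ref{lem:prephivert} yields a vector $f^{(i,k)} \in \ZZ^{n+m}$ supported by $-1$ and $+1$ at positions $k_0^{(i)}$ and $k$ respectively, which can be realized as a row of $P$ by admissible row operations of types~(III) and~(IV). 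The supports of these $f^{(i,k)}$ are pairwise disjoint across distinct groups, and inside a single group they span precisely the image of $\mathbb{E}_{m_{p_i}}$; hence a single coordinated sequence of admissible operations installs them all as consecutive rows of $[d,d']$. A final round of column operations of type~(V) inside each group, followed by clean-up of type~(IV), reduces each $d_i''$ to having only its first column possibly nonzero, yielding the claimed block-diagonal normal form.

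With the normal form in hand, $\Phi_P^{\rm vert}$ is read off directly. For each unordered pair $k \ne k'$ inside a common group $p_i$, Lemma~\ref{lem:prephivert} supplies a pair $\alpha^\pm$ of vertical semisimple $P$-roots whose $\ZZ^s$-parts are of the form $\pm(e_k - e_{k'})$ in the coordinate subspace carried by the $\mathbb{E}_{m_{p_i}}$-rows, and conversely every vertical semisimple root arises this way. Since the coordinate subspaces corresponding to distinct groups are orthogonal, this gives the direct sum decomposition $\Phi_P^{\rm vert} = \bigoplus_{p \in K} A_{m_p-1}$, where each summand is indeed an $A$-type root system.

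For the strict bound $\sum_p(m_p-1) < \dim(X) - 1 = s$, it suffices to show that the $\mathbb{E}$-rows cannot exhaust all $s$ rows of $[d,d']$. Otherwise, the first $n$ columns of $P$ (those corresponding to the $T_{ij}$) would have zero entries across the entire last $s$ rows, forcing them into the $r$-dimensional subspace spanned by the first $r$ standard basis vectors of $\QQ^{r+s}$; but the $S$-columns cannot compensate for the missing directions since their bottom part is confined to the $\mathbb{E}$-blocks and is sum-zero within each group. This contradicts the cone-generation hypothesis on $P$, so at least one additional row $d^*$ is forced, giving the strict inequality. The main obstacle will be the combinatorial bookkeeping required to verify that the admissible row operations for different groups can truly be carried out simultaneously without destructive interference, together with the delicate argument establishing that the residual row $d^*$ must be genuinely nontrivial.
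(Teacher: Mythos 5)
Your overall route is the same as the paper's (the paper's entire proof is ``direct application of Lemma~\ref{lem:prephivert}''), and the normal form and the identification $\Phi_P^{\rm vert}\cong\bigoplus_p A_{m_p-1}$ are set up correctly in outline. But your argument for the strict inequality $\sum_p(m_p-1)<\dim(X)-1=s$ is wrong. You claim that if the $\mathbb{E}$-blocks exhausted all $s$ bottom rows, the columns of $P$ could no longer generate $\QQ^{r+s}$ as a cone because the $S$-columns are ``sum-zero within each group.'' That is not a contradiction: the columns of $[-\mathbf{1},E_q]$ sum to zero and nevertheless generate $\QQ^q$ as a convex cone, and the $L_0$-columns $-l_{0j}\mathbf{1}$, $l_{ij}e_i$ likewise generate $\QQ^r$ as a cone, so in the scenario you describe the cone condition would in fact still hold. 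The actual contradiction when $\sum_p(m_p-1)=s$ is that the whole $d$-block vanishes, so each $T_{ij}$-column of $P$ equals its $L_0$-part padded by zeros; minimal presentation gives $l_{i1}+\ldots+l_{in_i}\ge 2$ for every $i$, so either some column $l_{ij}e_i$ (or $-l_{0j}\mathbf{1}$) fails to be primitive, or two columns of the same block coincide --- violating the hypothesis that the columns of $P$ are pairwise different primitive vectors. (This also handles the singleton degrees: with no residual row $d^*$ their columns would be zero.)

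A second, smaller point: to ``install them all as consecutive rows'' by operations of type (IV) you need the $\ZZ^s$-parts $\alpha^{(i,k)}$ of the linear forms produced by Lemma~\ref{lem:prephivert} not merely to be linearly independent but to extend to a $\ZZ$-basis of $\ZZ^s$, i.e.\ to span a saturated sublattice; otherwise no unimodular row operation puts them in the first $\sum_i(m_{p_i}-1)$ positions. You flag this as ``bookkeeping'' but it is the one place where an argument is actually needed; it follows, for instance, by identifying $\ZZ^s$ with $\ker(K_0\to K)$ as in Remark~\ref{rem:downgrade} and observing that the differences $\deg_{K_0}(S_k)-\deg_{K_0}(S_{k'})$ within the degree classes span a direct summand of the free group $K_0^{\rm vert}=\bangle{\deg_{K_0}(S_k)}\cong\ZZ^m$.
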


\begin{proof}
This is a direct application of Lemma~\ref{lem:prephivert}.
\end{proof}

\begin{lemma}
\label{lem:nlest}
If there exists a pair of semisimple 
roots $\alpha_\pm \in \Phi_P^{\rm hor}$,
then $r=2$ holds, and after suitably renumbering 
$l_0,l_1,l_2$, the following two cases remain
\begin{enumerate}
\item
We have $n_0 = n_1 = 2$ and $l_{01} = l_{02} = l_{11} = l_{12} = 1$
and for any pair $\delta_\pm$ of derivations associated to 
$\alpha_{\pm}$ one has $i_0^+ = i_0^- = 2$.
\item
We have  $n_0 = 1$, $l_{01} = 2$ and 
$n_1 = 2$, $l_{11} = l_{12} = 1$
and for any pair $\delta_\pm$ of derivations associated to 
$\alpha_{\pm}$ one has $i_0^+ = i_0^- = 2$.
\end{enumerate}
\end{lemma}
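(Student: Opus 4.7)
The plan is to translate the semisimplicity $\alpha_+ + \alpha_- = 0$ into an integral constraint on $u_+ + u_-$ and then carry out a case analysis on the block indices $i \in \{0, \ldots, r\}$. By Lemma~\ref{lem:root2nld}(i), the $K_0$-degrees of $\delta_\pm$ sum to zero, and Construction~\ref{constr:DEMLND} expresses these as $Q_0(P^*(u_\pm))$; hence $Q_0(P^*(u_+ + u_-)) = 0$. The snake diagram in Remark~\ref{rem:downgrade} identifies $\ker(Q_0 \circ P^*) = \ZZ^r \oplus 0$, so $u_+ + u_- = (t, 0)$ for some $t = (t_1, \ldots, t_r) \in \ZZ^r$. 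Setting $T := t_1 + \ldots + t_r$ and evaluating on the columns of $P$, I obtain the sum identities $\langle u_+, v_{ij}\rangle + \langle u_-, v_{ij}\rangle = -l_{0j}T$ if $i = 0$ and $l_{ij}t_i$ if $i \ge 1$, together with $\langle u_+, v_k\rangle + \langle u_-, v_k\rangle = 0$; the latter, combined with the non-negativity in Definition~\ref{def:Pdemroot}(ii), forces $\langle u_\pm, v_k\rangle = 0$ for all $k$.

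Call $i$ \emph{non-special} if it avoids $\{i_0^+, i_1^+, i_0^-, i_1^-\}$. For such $i$, both horizontal Demazure conditions give $\langle u_\pm, v_{ic_i^\pm}\rangle = 0$ with $l_{ic_i^\pm} = 1$ and $\langle u_\pm, v_{ij}\rangle \ge l_{ij}$ for $j \ne c_i^\pm$. Combined with the sum identity, the assumption $c_i^+ = c_i^-$ together with the minimality $\sum_j l_{ij} \ge 2$ yields a contradiction; hence $c_i^+ \ne c_i^-$, so $n_i \ge 2$, and evaluating the sum identity gives $t_i \ge 1$ (for $i \ge 1$) or $T \le -1$ (for $i = 0$). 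A parallel analysis at the special indices produces opposing inequalities: at $i = i_0^\pm$ one finds $t_{i_0^\pm} \ge 0$ or $T \le 0$, while the condition $\langle u_\pm, v_{i_1^\pm c_{i_1^\pm}^\pm}\rangle = -1$ at $i = i_1^\pm$ translates via the sum identity into an upper bound on $t_{i_1^\pm}$ or a lower bound on $T$.

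For $r \ge 4$, at least one non-special index exists, and its positive contribution to $t_i$ (or negative one to $T$) is incompatible with the bounds from $i_0^\pm$; for $r = 3$ I enumerate the three overlap patterns of $\{i_0^+, i_1^+\}$ and $\{i_0^-, i_1^-\}$ (intersection of size $0$, $1$, or $2$), each case leading to the same conflict. Hence $r = 2$. With $r = 2$, the same sum identity rules out $i_0^+ \ne i_0^-$ by a parallel argument, so after renumbering $\{0, 1, 2\}$ I may set $i_0^+ = i_0^- = 2$. The remaining analysis splits on $i_1^+, i_1^- \in \{0, 1\}$ and on whether $c_{i_1^\pm}^+ = c_{i_1^\pm}^-$; the sum identity at blocks $0$ and $1$ then determines $n_0, n_1$ and the $l_{0j}, l_{1j}$, yielding exactly cases (i) and (ii). The assertion that $i_0^+ = i_0^- = 2$ holds for every pair of derivations associated to $\alpha_\pm$ follows because the entire argument only uses the $\ZZ^s$-parts $\alpha_\pm$ and not the specific lifts $\kappa_\pm$.

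The main obstacle is the bookkeeping of subcases in the last step: each overlap pattern of the four special indices $i_0^\pm, i_1^\pm$ requires its own computation, and the minimality hypothesis $\sum_j l_{ij} \ge 2$ must be invoked repeatedly to eliminate degenerate configurations with $n_i = 1$.
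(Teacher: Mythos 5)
Your reduction to the single linear identity $u_++u_-=(t,0)\in\ZZ^r\oplus 0$ and the block-by-block evaluation is a legitimately different route from the paper, which instead converts $\deg_{K_0}(\delta_+)+\deg_{K_0}(\delta_-)=0$ into an identity in the weight monoid and then invokes Proposition~\ref{prop:pan}, Corollary~\ref{cor:gencompdim} and the $K_0$-primality of the generators to force the number of summands down to two. Your setup (the kernel computation via Remark~\ref{rem:downgrade}, the sum identities, $\bangle{u_\pm,v_k}=0$, and the analysis at indices that are non-special for both roots) is correct. However, the step that eliminates $r\ge 3$ has a genuine gap. Writing $\tau_i:=t_i$ for $i\ge 1$ and $\tau_0:=-T$, so that $\bangle{u_++u_-,v_{ij}}=l_{ij}\tau_i$ and $\sum_i\tau_i=0$, your stated bounds are: $\tau_i\ge 1$ at doubly non-special indices and $\tau_{i_0^\pm}\ge 0$, together with an ``upper bound'' at $i_1^\pm$. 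These do not produce the claimed incompatibility: a single non-special index contributing $+1$ is perfectly compatible with $\tau_{i_0^\pm}\ge 0$ and with the blocks $i_1^\pm$ each contributing $-1$ (note the Demazure conditions only give \emph{lower} bounds on the pairings $\bangle{u_\mp,v_{i_1^\pm j}}$, so there is no upper-bound mechanism at $i_1^\pm$ other than the global relation $\sum_i\tau_i=0$ itself). With only these bounds the sum is bounded below by roughly $(\#\text{non-special})-2$, which excludes nothing below $r=6$.

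What actually closes the argument is a set of sharper \emph{lower} bounds at the mixed-type indices, each requiring the minimality hypothesis: an index that is $i_0$ for one root and non-special for the other satisfies $\tau_i\ge 1$ (not merely $\ge 0$, because some column $j\ne c_i^\mp$ must exist and there $\bangle{u_++u_-,v_{ij}}\ge l_{ij}$); an index that is $i_1$ for one root and non-special for the other satisfies $\tau_i\ge 0$ (the case $c_i^+=c_i^-$ is contradictory there); an index equal to $i_0$ for one root and $i_1$ for the other satisfies $\tau_i\ge 0$; and only an index with $i_1^+=i_1^-$ can have $\tau_i=-1$, and then only if $n_i=1,\ l_{i1}=2$ or $n_i=2,\ l_i=(1,1)$. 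With these refinements the count $\sum_i\tau_i=0$ does rule out $r\ge 3$ and all configurations with $i_0^+\ne i_0^-$, and the equality cases deliver exactly (i) and (ii) --- so your strategy is salvageable --- but none of these bounds is established in the proposal, and the one mechanism you do name (``incompatible with the bounds from $i_0^\pm$'') is not the one that works. You should either supply this refined case analysis or switch to the paper's shortcut through Corollary~\ref{cor:gencompdim}.
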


\begin{proof}
Lemma~\ref{lem:root2nld} says that $\delta_+,\delta_-$ 
are of horizontal type,
by Construction~\ref{constr:DEMLND} they are of the 
form $\delta_\pm = h_\pm \delta_{C^\pm,\beta^\pm}$ 
and the degree computation of 
Construction~\ref{constr:phlnd} gives
\begin{eqnarray*}
\deg_{K_0}(\delta_{\pm})
& = & 
(r-1)\mu \ - \ \sum_{k\ne i_0^{\pm}} w_{kc_k^{\pm}} \ + \ w_{\pm},
\end{eqnarray*}
where $\mu$ is the common $K_0$-degree of the relations,
$w_{ij}$ the $K_0$-degree of $T_{ij}$, the $i_0^\pm$-th 
component of $\beta_\pm$ vanishes and $w_\pm$ is the 
$K_0$-degree of $h_\pm$.
Now fix two distinct $i^+$, $i^-$ 
with $i^\pm\ne i_0^\pm$
and $l_{i^+c_{i^+}^+} = 1$.
Then 
$\deg_{K_0}(\delta_+) + \deg_{K_0}(\delta_-)=0$
leads to
$$
w
\ := \
w_+ \, + \, w_-
\, + \, 
\sum_{k\ne i_0^+, i^+} (\mu - w_{kc_k^+})  
\, + \, 
\sum_{k\ne i_0^-, i^-} (\mu - w_{kc_k^-})
\, = \, 
w_{i^+c_{i^+}^+}
\, + \,
w_{i^-c_{i^-}^-}.
$$
Note that all summands are elements of the weight monoid 
of $R(A,P)$ and, except possibly $w_\pm$, all are nonzero.
Moreover, Corollary~\ref{cor:gencompdim}~(ii) shows that 
the $K_0$-homogeneous component $R(A,P)_w$ is generated 
by  $f^+f^-$, where
$$
f^+ 
\ := \ 
T_{i^+c_{i^+}^+},
\qquad\qquad
f^-
\ := \ 
T_{i^-c_{i^-}^-}
.
$$
Now, choosing suitable presentations of the $\mu$,
write the first presentation of $w$ as the 
$K_0$-degree of a monomial $f$ in 
the variables $T_{ij}$ corresponding to the 
occuring~$w_{ij}$. Then $f = f^+f^-$ holds.
Since $f^+$ and $f^-$ are $K_0$-prime,
we conclude $w_\pm = 0$ and $r=2$.
Renumbering $i_0^+ \mapsto 2$, $i^+ \mapsto 1$, 
the above equation simplifies to
$$
w 
\ = \ 
\mu - w_{0c_0^+}
\, + \, 
\mu - w_{i_2^-c_{i_2^-}^-}
\ = \
 w_{1c_1^+}
\, + \,
w_{i^-c_{i^-}^-},
$$
where $i_2^-$ differs from $i_0^-$ and $i^-$.
We conclude further $i_2^- = 1$ and 
$\sum l_{0j} = \sum l_{1j} =2$
and $i_0^-=i_0^+$.
Since $l_{i^+c^+_{i^+}} = l_{1c^+_1}$ equals one,
we arrive at the cases~(i) and~(ii).
\end{proof}

The above lemma shows  that 
for a given pair $\alpha_\pm \in \Phi_P^{\rm hor}$,
all associated pairs of Demazure $P$-roots 
(or derivations) share the same 
$i_0 = i_0^+ = i_0^-$.
This allows us to speak about the 
{\em distinguished index $i_0$ of 
$\alpha_\pm \in \Phi_P^{\rm hor}$\/}.

\begin{lemma}
\label{lem:horss1}
Suppose $n_0 = n_1 = 2$, $l_{01} = l_{02} = l_{11} = l_{12} = 1$.
If there exists a pair $\alpha_\pm \in \Phi_P^{\rm hor}$
with distinguished index $i_0 = 2$, then $P$ can be brought 
by admissible operations, without moving the $n_2$-block,
into the form
\begin{eqnarray}
\label{glg:phor1} 
P
& = & 
\left[
\begin{array}{rrrrrr}
-1 & -1 & 1 & 1 & 0 & 0 
\\
-1 & -1 & 0 & 0 & l_2 & 0 
\\
-1 &  0     & 0   & 1     &  0 & 0
\\
0  &  0 & 0  & d_{12}^* & d_2^* & d'_* 
\end{array}
\right],
\end{eqnarray}
where the lower line is a matrix of size 
$(s-1) \times (n+m)$.
Conversely, if $P$ is of the above shape, 
then $\alpha_\pm = (\pm 1, 0) \in \Phi_P^{\rm hor}$ 
has distinguished index $i_0 = 2$.
Moreover, up to admissible operations of 
type~(III) and~(IV), situation~(\ref{glg:phor1}) 
is equivalent to
$$ 
\deg_K(T_{01}) \ = \ \deg_K(T_{12}),
\qquad\qquad
\deg_K(T_{02}) \ = \ \deg_K(T_{11}).
$$
\end{lemma}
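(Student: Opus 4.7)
I split the argument into three parts: the forward implication, the converse, and the equivalence with the $K$-degree condition.

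For the forward implication, I start from Lemma~\ref{lem:nlest}, which forces $r=2$. I fix associated Demazure $P$-roots $\kappa_\pm = (u^\pm, 2, i_1^\pm, C^\pm)$; the degree formula from Construction~\ref{constr:phlnd}, together with $\deg_{K_0}(\delta_+) + \deg_{K_0}(\delta_-) = 0$, restricts the possibilities for $C^\pm$, so that after admissible operations of type~(I) (reordering within the $n_0$- and $n_1$-blocks) and~(II) (swapping those blocks), I may assume $i_1^+ = i_1^- = 1$, $C^+ = (1, 1, c_2^+)$, and $C^- = (2, 2, c_2^-)$. The defining equalities $\langle u^\pm, v_{0c_0^\pm} \rangle = 0$ and $\langle u^\pm, v_{1c_1^\pm} \rangle = -1$ then let me solve for the first two coordinates of $u^\pm$ in terms of $\alpha^\pm = \pm\alpha^+$. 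The crux is the following: summing the non-negativity constraints $\langle u^+, v_{2j} \rangle \geq 0$ and $\langle u^-, v_{2j} \rangle \geq 0$ yields $l_{2j}(a_2^+ + a_2^-) \geq 0$, while the inequalities $\langle u^+, v_{02}\rangle \geq 1$ and $\langle u^-, v_{01} \rangle \geq 1$ combined with the preceding equalities force $a_2^+ + a_2^- \leq 0$. Hence $a_2^+ + a_2^- = 0$, and the chain of inequalities collapses to
\[
\langle \alpha^+, d_{02} - d_{01} \rangle = 1, \quad
\langle \alpha^+, d_{12} - d_{11} \rangle = 1, \quad
l_{2j} a_2^+ + \langle \alpha^+, d_{2j} \rangle = 0, \quad
\langle \alpha^+, d'_k \rangle = 0.
\]
The first two equalities imply that $\alpha^+$ is primitive in $\ZZ^s$. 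A type~(IV) operation with $N \in \GL_s(\ZZ)$ having first row $\alpha^+$ rotates $\alpha^+$ into $e_1$, and a subsequent type~(III) operation whose first row of $M$ is chosen to make the first two coordinates of $u^+$ equal $(-1, 0)$ brings the $(r+1)$st row of $P$ into the form $(-1, 0, 0, 1, 0, \ldots, 0)$; the vanishing of the remaining entries in that row follows from the last two equalities above.

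For the converse, given $P$ as in~(\ref{glg:phor1}), I check directly that $u^+ := (-1, 0, 1, 0, \ldots, 0)$, $u^- := (0, 0, -1, 0, \ldots, 0)$, together with $C^+ = (1, 1, c_2^+)$ and $C^- = (2, 2, c_2^-)$ for any choices of $c_2^\pm$, are horizontal Demazure $P$-roots with distinguished index $i_0 = 2$ and $\ZZ^s$-parts $\pm e_1$. For the equivalence with the $K$-degree condition, the equalities $\deg_K T_{01} = \deg_K T_{12}$ and $\deg_K T_{02} = \deg_K T_{11}$ translate to $e_{12} - e_{01}, e_{11} - e_{02} \in \mathrm{im}(P^*)$. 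The form~(\ref{glg:phor1}) visibly satisfies both (the $(r+1)$st row equals $e_{12} - e_{01}$, and the first row of $P$ minus the $(r+1)$st row equals $e_{11} - e_{02}$). Conversely, given both $K$-degree equalities, the element $e_{12} - e_{01}$ is primitive in $\ZZ^{n+m}$ and hence in the sublattice $\mathrm{im}(P^*)$; admissible operations of type~(III) and~(IV) then realize it as the $(r+1)$st row of $P$.

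The main obstacle, in my view, is the justification of the four simultaneous equalities that force $\alpha^+$ to be primitive and that pin down the remaining entries of the $(r+1)$st row. This step relies on the somewhat delicate coincidence that the lower bounds on $\langle u^\pm, v_{02} \rangle$ and $\langle u^\pm, v_{01} \rangle$ combine (via the defining equalities) with the summed lower bounds on $\langle u^\pm, v_{2j} \rangle$ to produce a matching upper bound on $a_2^+ + a_2^-$. Once this coincidence is isolated and the equalities are extracted, the remaining translation into admissible operations is a matter of straightforward bookkeeping.
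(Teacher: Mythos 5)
Your overall strategy is the same as the paper's: normalize $\kappa_+$ to $(u^+,2,1,(1,1,1))$, play the defining equalities and inequalities of $\kappa_+$ and $\kappa_-$ off against the fact that $u^++u^-$ has vanishing $\ZZ^s$-part, force everything into equalities, and then realize the resulting data as the $(r+1)$-th row by operations of type (III) and (IV). Your treatment of the converse and of the equivalence with the $K$-degree condition fills in details the paper leaves as ``directly checked'' and is essentially correct.

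The genuine gap is the normalization $i_1^+=i_1^-=1$. Admissible operations of type (I) and (II) act simultaneously on both Demazure $P$-roots: a type (II) swap of the $i=0$ and $i=1$ blocks that converts $i_1^-=0$ into $i_1^-=1$ also converts $i_1^+=1$ into $i_1^+=0$, so the configuration $(i_1^+,i_1^-)=(1,0)$ cannot be renumbered into $(1,1)$. The degree formula does constrain the $C^\pm$ (together with Corollary~\ref{cor:gencompdim} and the nonassociatedness of the $T_{ij}$ it forces $\{c_0^+,c_0^-\}=\{1,2\}$ and $\{c_1^+,c_1^-\}=\{1,2\}$), but it says nothing about $i_1^\pm$, and a pair $\kappa_-=(u^-,2,0,(2,2,\cdot))$ genuinely satisfies all constraints. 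In that case your crux breaks down as written: the conditions on $u^-$ become $\langle u^-,v_{02}\rangle=-1$, $\langle u^-,v_{12}\rangle=0$, $\langle u^-,v_{01}\rangle\ge 0$, $\langle u^-,v_{11}\rangle\ge 1$, so the lower bound $\langle u^-,v_{01}\rangle\ge 1$ that you sum against $\langle u^+,v_{02}\rangle\ge 1$ is unavailable. One instead finds $\langle u^++u^-,v\rangle\ge 0$ on every column of $P$, hence $u^++u^-=0$, which again yields the same four equalities for $u^+$ --- so the conclusion survives, but the case must be treated; the paper does exactly this by listing both possibilities for $\kappa_-$ and checking that both give $\langle\alpha_+,d_{02}\rangle=\langle\alpha_+,d_{12}\rangle=1$, $\langle\alpha_+,d_{2j}\rangle=-l_{2j}$ and $\langle\alpha_+,d_k'\rangle=0$. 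A second, smaller point: in the degree-equivalence, primitivity of $e_{12}-e_{01}$ in ${\rm im}(P^*)$ only gives primitivity of $u$ in $\ZZ^{r+s}$, whereas realizing $P^*(u)$ as the $(r+1)$-th row by operations of type (III) and (IV) requires the $\ZZ^s$-part of $u$ to be primitive; this does hold, but only after extracting $\langle u,\,(0,0,d_{02}-d_{01})\rangle=1$ from the $e_{01}$- and $e_{02}$-coefficients, which your argument omits.
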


\begin{proof}
Fix an associated pair $\kappa_\pm = (u^\pm,2,i_1^\pm,C_\pm)$ 
of Demazure $P$-roots.
Renumbering the variables, we first achieve $i_1^+ = 1$ 
and $C_+ = (1,1,1)$. 
Adding suitable multiples of the top two rows of $P$ to 
the lower $s$ rows, brings $P$ into the form
$$
P 
\ = \ 
\left[
\begin{array}{rrrrrr}
-1 & -1 & 1 & 1 & 0 & 0 
\\
-1 & -1 & 0 & 0 & l_2 & 0 
\\
0  &  d_{02} & 0  & d_{12} & d_2 & d' 
\end{array}
\right]
$$
Now we explicitly go through the defining conditions of 
the Demazure $P$-root $\kappa_+$ with 
$$
u^+ \ = \ (u_1^+,u_2^+,\alpha_+), \text{ where } u_i^\pm \in \ZZ, 
\qquad 
i_1^+ \ = \ 1,
\qquad
C_+ \ = \ (1,1,1).
$$ 
This gives in particular,
$u_1^+ = -1$ and $u_1^- = 1$.
Going through the conditions of a Demazure 
$P$-root $\kappa_- = (u^-,2,i_1^-,C_-)$ 
leaves us with the two possibilities
$$ 
u^- \ = \ (1,-1,-\alpha_+),
\qquad
i_1^- \ = \ 0,
\qquad
C_- \ = \ (2,2,1),
$$
$$
u^- \ = \ (0,-1,-\alpha_+),
\qquad
i_1^- \ = \ 1,
\qquad
C_- \ = \ (1,1,1).
$$
In both cases, we obtain 
$$
\qquad\qquad
\begin{array}{l}
\bangle{\alpha_+,d_{02}}
= 
\bangle{\alpha_+,d_{12}}
 = 
1,
\\[1ex]
\bangle{\alpha_+,d_{2j}}  =  -l_{2j}
\text{ for } j = 1,\ldots, n_2,
\\[1ex]
\bangle{\alpha_+,d_{k}'} = 0 
\text{ for } j = 1,\ldots, m.
\end{array}
$$
Now choose any invertible $s \times s$ matrix with 
$\alpha_+$ as its first row and apply it from the 
left to $P$. 
Then the third row of $P$ looks as follows
$$ 
\left[
\begin{array}{rrrrrr}
0 &  1     & 0   & 1     &  -l_2 & 0
\end{array}
\right]
$$
Adding suitable multiples of the third row to the 
last $s-1$ rows and adding the second to 
the third row brings $P$ into the desired form.
The remaining statements are directly checked. 
\end{proof}

\begin{lemma}
\label{lem:horss1more}
Consider a pair $\alpha_\pm \in \Phi_P^{\rm hor}$
with distinguished index $i_0 = 2$.
There exists another pair 
$\tilde \alpha_\pm \in \Phi_P^{\rm hor}$
with distinguished index $\tilde i_0 = 2$
if and only if we have 
$$ 
\deg_K(T_{01}) 
\ = \ 
\deg_K(T_{02})
\ = \ 
\deg_K(T_{11}) 
\ = \ 
\deg_K(T_{12}).
$$
Moreover, if the latter holds, then 
$\alpha_\pm,\tilde \alpha_\pm$ are the only 
pairs of semisimple roots with distinguished 
index $2$ and they form a root system isomorphic 
to $A_1 \oplus A_1$.
\end{lemma}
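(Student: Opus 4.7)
The plan is to invoke Lemma~\ref{lem:horss1} twice: once directly to $\alpha_\pm$, and once to $\tilde\alpha_\pm$ after interchanging $T_{11}$ and $T_{12}$ by an admissible operation of type~(I). Applied to $\alpha_\pm$, Lemma~\ref{lem:horss1} immediately gives the equalities $\deg_K(T_{01}) = \deg_K(T_{12})$ and $\deg_K(T_{02}) = \deg_K(T_{11})$. The proof of Lemma~\ref{lem:horss1} further shows that $\alpha_+$ is rigidly determined by its underlying pairing of variables through a linear system of the form $\langle\alpha_+, d_{02}\rangle = \langle\alpha_+, d_{12}\rangle = 1$, $\langle\alpha_+, d_{2j}\rangle = -l_{2j}$, $\langle\alpha_+, d_k'\rangle = 0$, having exactly $s = 2 + n_2 + m$ equations. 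Since $n_0 = n_1 = 2$ admits only the two pairings $\{T_{01} \leftrightarrow T_{12}, \, T_{02} \leftrightarrow T_{11}\}$ and $\{T_{01} \leftrightarrow T_{11}, \, T_{02} \leftrightarrow T_{12}\}$, the second pair $\tilde\alpha_\pm$ must correspond to the other pairing; the swap of $T_{11}$ and $T_{12}$ then converts this pairing into the canonical one of Lemma~\ref{lem:horss1} for the modified matrix, producing the complementary equalities $\deg_K(T_{01}) = \deg_K(T_{11})$ and $\deg_K(T_{02}) = \deg_K(T_{12})$. Combining the four identities gives the forward direction, and the converse is the same reasoning reversed.

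The uniqueness argument simultaneously shows that $\alpha_\pm$ and $\tilde\alpha_\pm$ are the only pairs of semisimple roots with distinguished index~$2$. For the root system structure, linear independence of $\alpha_+$ and $\tilde\alpha_+$ is immediate: in the reduced root system $\Phi_P^{\rm ss}$, proportional roots must be opposite, and the two pairs are distinct. To identify the type as $A_1 \oplus A_1$ rather than $A_2$ or $B_2$, I plan to compute in the normal form of Lemma~\ref{lem:horss1} the commutator $[\delta_{\kappa_+}, \delta_{\tilde\kappa_+}]$ of the two primitive locally nilpotent derivations from Construction~\ref{constr:DEMLND}, choosing both Demazure $P$-roots with common datum $(i_0, i_1) = (2, 1)$. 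The associated $\beta$-vectors then coincide, and a direct computation on the four generators $T_{01}, T_{02}, T_{11}, T_{12}$ (all other generators lying in the common kernel of both derivations) yields vanishing of the commutator. By the Chevalley commutator formula in the Lie algebra of $\Aut(X)^{\rm ss}$, this forces $\alpha_+ + \tilde\alpha_+ \notin \Phi_P^{\rm ss}$; applying the same computation with $\tilde\kappa_-$ rules out $\alpha_+ - \tilde\alpha_+$ as well, whence $\alpha_+ \perp \tilde\alpha_+$ and the root system is $A_1 \oplus A_1$.

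The main obstacle is the rigidity claim underlying step~1: making it precise that the $s$ linear conditions defining $\alpha_+$ for a fixed pairing are independent. This follows from the fact that the columns of $P$ generate $\QQ^{r+s}$ as a convex cone, combined with the specific positions of the prescribed values in the Demazure $P$-root conditions, but spelling this out rigorously is the most delicate part of the argument.
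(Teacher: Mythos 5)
Your proposal is sound and, at its core, follows the same route as the paper: reduce to the normal form of Lemma~\ref{lem:horss1} and enumerate the possible data $(\tilde i_1,\tilde C)$ for a second pair. You package that enumeration more transparently than the paper's one-line ``go through the cases'': a horizontal Demazure $P$-root with $i_0=2$ determines one of exactly two pairings of $\{T_{01},T_{02}\}$ with $\{T_{11},T_{12}\}$, an opposite pair of such roots forces all inequalities to equalities and hence prescribes $\bangle{\alpha_+,\,\cdot\,}$ on the $d$-part of every column of $P$ (whence rigidity, since these $d$-parts span $\QQ^s$ because the columns of $P$ generate $\QQ^{r+s}$), and the second pairing is carried into the first by the admissible swap $T_{11}\leftrightarrow T_{12}$, so Lemma~\ref{lem:horss1} can simply be quoted twice. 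Where you genuinely diverge is the identification of the type: the paper defers $A_1\oplus A_1$ to the proof of Theorem~\ref{thm:sesipart}, realizing the four roots via an $\SO_4$-action on the total coordinate space, whereas you compute $[\delta_{\kappa_+},\delta_{\tilde\kappa_+}]=0$ directly; both work, and once uniqueness is in place the type also follows by pure counting, since a reduced rank-two root system with exactly four elements must be $A_1\oplus A_1$. Two small repairs are needed. First, your count ``$s=2+n_2+m$ equations'' cannot hold, as $s=n+m-r-\rk(K)=2+n_2+m-\rk(K)$ and $\rk(K)\ge 1$; the system is overdetermined, and for uniqueness you should include the equalities $\bangle{\alpha_+,d_{01}}=\bangle{\alpha_+,d_{11}}=0$ coming from the two equality conditions in the definition of a Demazure $P$-root, since $d_{02},d_{12},d_2,d'$ alone need not span $\QQ^s$. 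Second, linear independence of $\alpha_+$ and $\tilde\alpha_+$ should not be deduced from $\Phi_P^{\rm ss}$ being a reduced root system (that is only established later in the paper); it follows instead from the explicit prescribed values, e.g.\ $\bangle{\alpha_+,d_{12}}=1$ while $\bangle{\tilde\alpha_+,d_{12}}=0$, and $\bangle{\alpha_+,d_{11}}=0$ while $\bangle{\tilde\alpha_+,d_{11}}\ne 0$.
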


\begin{proof}
We may assume that we are in the setting of
Lemma~\ref{lem:horss1}.
Then we just have to go through the possible 
cases $\tilde i_1^\pm$ and $C_-$ and observe 
that the existence of $\tilde \alpha_\pm$
implies a special shape of $P$ equivalent to 
the above degree condition.
\end{proof}

\begin{lemma}
\label{lem:horss2}
Suppose $n_0 = 1$, $l_{01} = 2$ and $n_2 = 2$, $l_{11} = l_{12} = 1$.
Then there is at most one pair 
$\alpha_\pm \in \Phi_P^{\rm hor}$ 
with distinguished index $i_0 = 2$.
If there is one, then $P$ can be brought 
by admissible 
operations, without moving the $n_2$-block,
into the form
\begin{eqnarray}
\label{glg:phor2} 
P 
& = &
\left[
\begin{array}{rrrrr}
-2       & 1 & 1 & 0     & 0 
\\
-2       & 0 & 0 & l_2   & 0 
\\
-1       & 0 & 1 & 0     &  0
\\
d_{01}^* & 0 & 0 & d_2^* & d'_* 
\end{array}
\right],
\end{eqnarray}
where the lower line is a matrix of size 
$(s-1) \times (n+m)$.
Conversely, if $P$ is of the above shape,
then $\alpha_\pm = (\pm 1,0) \in \Phi_P^{\rm hor}$
has distinguished index $i_0 =2$.
Moreover, up to admissible operations of 
type~(III) and~(IV), situation~(\ref{glg:phor2}) 
is equivalent to 
$$ 
\deg_K(T_{01}) 
\ = \ 
\deg_K(T_{11}) 
\ = \ 
\deg_K(T_{12}).
$$
\end{lemma}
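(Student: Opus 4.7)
My plan is to closely follow the strategy of Lemma~\ref{lem:horss1}; the only essentially new point is uniqueness. Given a pair $\alpha_\pm \in \Phi_P^{\rm hor}$ with distinguished index $i_0 = 2$, I fix associated Demazure $P$-roots $\kappa_\pm = (u^\pm, 2, i_1^\pm, C_\pm)$. The condition $l_{ic_i} = 1$ for $i \notin \{i_0, i_1\}$ in Definition~\ref{def:Pdemroot}(ii), combined with $n_0 = 1$ and $l_{01} = 2$, immediately forces $i_1^\pm = 0$ (the alternative would demand $l_{01} = 1$). After an admissible type (I) swap within the $n_1$-block I may assume $c_1^+ = 1$, so $C_+ = (1, 1, c_2^+)$.

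Next, applying $s$ type (III) row operations using only multiples of the first $L_0$-row normalizes $d_{11}$ to the zero vector in $\ZZ^s$ without disturbing the $n_2$- and $v_k$-blocks. Writing $u^+ = (u_1^+, u_2^+, \alpha_+)$, the Demazure conditions yield $u_1^+ = 0$ and $\langle \alpha_+, d_{12}\rangle \ge 1$. The same analysis for $\kappa_-$ rules out $c_1^- = 1$, which would force the contradictory $\langle -\alpha_+, d_{12}\rangle \ge 1$; hence $c_1^- = 2$ and $u_1^- = \langle \alpha_+, d_{12}\rangle$. Adding the $v_{01}$-equalities for $\kappa_\pm$ gives $u_2^+ + u_2^- = 1 - \langle \alpha_+, d_{12}\rangle$, and combining this with the summed inequalities $\langle u^\pm, v_{2j}\rangle \ge 0$ forces $\langle \alpha_+, d_{12}\rangle = 1$ and $u_2^+ + u_2^- = 0$. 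Consequently each $\langle u^\pm, v_{2j}\rangle = 0$ and each $\langle \alpha_+, d_k'\rangle = 0$.

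I then complete the primitive vector $\alpha_+ \in \ZZ^s$ to a unimodular $s \times s$ matrix and apply its inverse as a type (IV) operation; the identities just derived cause the new third row of $P$ to read $(-1, 0, 1, 0, 0)$ with vanishing entries on the $v_{2j}$- and $v_k$-blocks. A final type (III) cleanup clears the $v_{11}$- and $v_{12}$-entries from rows $4$ onwards, yielding the normal form in the statement. The converse is an immediate check: with $P$ in that shape, $u^+ = (0, 0, 1, 0, \ldots, 0)$ and $u^- = (1, 0, -1, 0, \ldots, 0)$ satisfy Definition~\ref{def:Pdemroot}(ii) and have $\ZZ^s$-parts $\pm e_1$. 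The equivalence with $\deg_K(T_{01}) = \deg_K(T_{11}) = \deg_K(T_{12})$ then follows from $P^* u^+ = e_{12} - e_{01}$ and $P^* u^- = e_{11} - e_{01}$, both of which lie in $\mathrm{im}(P^*)$ precisely when these three $K$-classes coincide.

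The main obstacle is uniqueness. Suppose $\tilde\alpha_\pm$ is a second such pair, and write $\tilde\alpha_+ = (\tilde\alpha_1, \tilde\alpha^*) \in \ZZ \oplus \ZZ^{s-1}$ in the normalized coordinates. Rerunning the previous analysis gives $\tilde\alpha_1 = 1$ and $\tilde u_2^+ = \langle \tilde\alpha^*, d_{01}^*\rangle/2$, and the equalities $\langle \tilde u^\pm, v_{2j}\rangle = 0$ and $\langle \tilde u^\pm, v_k\rangle = 0$ translate into
\[
\langle \tilde\alpha^*,\, (d_2^*)_j + \tfrac{l_{2j}}{2}\, d_{01}^* \rangle \ = \ 0, \qquad
\langle \tilde\alpha^*,\, (d_*')_k \rangle \ = \ 0.
\]
An elementary row reduction identifies the left-hand vectors as the projections of $v_{2j}$ and $v_k$ modulo $\mathrm{span}(v_{01}, v_{11}, v_{12})$; the cone-generation hypothesis on $P$ makes these projections span $\QQ^{s-1}$, forcing $\tilde\alpha^* = 0$ and $\tilde\alpha_\pm = \pm\alpha_\pm$. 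It is essential here that $l_{01} = 2 > 1 = l_{11} = l_{12}$; in the symmetric setting of Lemma~\ref{lem:horss1} the parity step collapses and genuinely new pairs can arise.
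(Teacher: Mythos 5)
Your argument is correct and follows essentially the same route as the paper's proof: force $i_1^{\pm}=0$ from $l_{01}=2$, normalize $d_{11}=0$, extract $\langle\alpha_+,d_{12}\rangle=1$, $\langle u^{\pm},v_{2j}\rangle=0$ and $\langle\alpha_+,d_k'\rangle=0$ by pairing the Demazure conditions for $\kappa_+$ and $\kappa_-$, and then realize $\alpha_+$ as a row of $P$; your explicit uniqueness argument (the functional $(0,\langle\tilde\alpha^*,d_{01}^*\rangle/2,0,\tilde\alpha^*)$ annihilates every column of $P$, hence vanishes) spells out what the paper dismisses as ``directly seen.'' One small bookkeeping slip: after the type (IV) operation alone the third row reads $(2u_2^+-1,\,0,\,1,\,-u_2^+l_{2j},\,0)$, so the $v_{2j}$-entries do not vanish from ``the identities just derived'' unless $u_2^+=0$; you must still add $u_2^+$ times the second $L_0$-row (a type (III) operation) to reach $(-1,0,1,0,0)$, which is admissible and is exactly what the paper does, so the proof is unaffected.
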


\begin{proof}
This is a similar computation as in the previous lemma.
Clearly, we may assume $C_+ = (1,1,1)$ and by suitable 
row operations, we bring $P$ into the form 
$$ 
P 
\ = \ 
\left[
\begin{array}{rrrrr}
-2       & 1 & 1 & 0     & 0 
\\
-2       & 0 & 0 & l_2   & 0 
\\
d_{01}   & 0 & d_{12} & d_2 & d' 
\end{array}
\right]
$$
Now enter the defining conditions of a Demazure 
$P$-root $\kappa_+$ with $u^+ = (u_1^+,u_2^+,\alpha_+)$.
It turns out that $C_- = (1,2,2)$ must hold.
We end up with $u_1^+ = 0$ and 
$$
\qquad\qquad
\begin{array}{l}
\bangle{\alpha_+,d_{01}}
= 
2u_2^+ -1,
\\[1ex]
\bangle{\alpha_+,d_{12}}
= 
1,
\\[1ex]
\bangle{\alpha_+,d_{2j}}  =  -u_2^+l_{2j}
\text{ for } j = 1,\ldots, n_2,
\\[1ex]
\bangle{\alpha_+,d_{k}'} = 0 
\text{ for } j = 1,\ldots, m.
\end{array}
$$
As before, this enables us to bring $P$ via suitable 
row operations into the desired form. 
Again, the remaining statements are directly seen.  
\end{proof}

\begin{proof}[Proof of Theorem~\ref{thm:sesipart}]
Lemma~\ref{lem:phivert} shows that $\Phi_P^{\rm vert}$
is a root system and has the desired form.
This proves~(ii).
Concerning $\Phi_P^{\rm hor}$, assertion~(iii) 
as well as the cases $l_{21} + \ldots + l_{2n_2} \ge 3$ 
of assertions~(iv) and~(v) are proven 
by Lemmas~\ref{lem:nlest}, \ref{lem:horss1}
and~\ref{lem:horss1more}.
The remaining cases of~(iv) and~(v) are
deduced using the special shape of $P$ 
guaranteed by Lemma~\ref{lem:nlest}.
So, $\Phi_P^{\rm vert}$ and $\Phi_P^{\rm hor}$
are root systems of the desired shape.
Lemma~\ref{lem:root2nld} and 
the decomposition of $K_0$ into a vertical 
and a horizontal part show that $\Phi_P^{\rm ss}$
splits into the direct sum of 
$\Phi_P^{\rm vert}$ and $\Phi_P^{\rm hor}$.

To conclude the proof, we have to verify
that $\Phi_P^{\rm ss}$ is in fact the root system 
of the semisimple part of $\Aut(X)$.
For this, it suffices to realize the root system
$\Phi_P^{\rm ss}$ as the root system of some
semisimple subgroup $G \subseteq \Aut(X)$
(which then necessarily is a semisimple part).
Consider $\b{X} := \Spec \, R(A,P)$ and 
the action of $H_X := \Spec \, \KK[K]$.
The group $G \subseteq \Aut(X)$ will be induced 
by a representation of a suitable semisimple 
group on $\KK^{n+m}$ commuting with 
the action of $H_X$
and leaving $\b{X} := \Spec \, R(A,P)$ 
invariant.

We first care about $\Phi_P^{\rm vert}$.
Consider the semisimple group 
$G^{\rm vert} := \times_p \, \SL_{m_p}$.
It acts on $\KK^{n+m}$: triviallly on
$\KK^n$ and blockwise on 
$\KK^m = \oplus_p \KK^{m_p}$.
This action commutes with the action of $H_X$
and leaves $\b{X}$ as well as $\rq{X}$ invariant.
Thus the  $G^{\rm vert}$-action descends to~$X$.
This realizes $G^{\rm vert} \subseteq \Aut(X)$ 
as a subgroup with root system~$\Phi_P^{\rm vert}$.

Similarly, going through the cases, we 
realize $\Phi_P^{\rm hor}$ as a root system 
of a semisimple group 
$G^{\rm hor} \subseteq \Aut(X)$.
Recall that for the defining relation, we have 
the possibilities
$$ 
(a) \quad T_{01}T_{02} + T_{11}T_{12} + T_2^{l_2},
\qquad \qquad 
(b) \quad T_{01}T_{02} + T_{11}^2 + T_2^{l_2}.
$$
Consider case (a).
If we have the equations 
$\b{w}_{01} = \b{w}_{11}$
and
$\b{w}_{02} = \b{w}_{12}$,
then $g \in \SL_2$ acts on the 
$T_{01},T_{11}$-space as
$(g^t)^{-1}$ and on the 
$T_{02},T_{12}$-space as $g$. 
By trivial extension, we obtain an action on 
$\KK^{n+m}$ commuting with the $H$-action,
leaving the defining relation and thus 
$\b{X}$ invariant.
Thus, we have an induced effective action of a 
semisimple group $G^{\rm hor}$ 
with root system $A_1$ on~$X$.

If we have
$\b{w}_{01} = \b{w}_{02} = \b{w}_{11} = \b{w}_{12}$,
then the canonical action of the group 
$\SO_4$ on the $T_{01},T_{11},T_{02},T_{12}$-space
extends trivially to $\KK^{n+m}$,
commutes with the action of $H_X$ and 
leaves $\b{X}$ invariant.
Again, this gives an induced effective 
action of a semisimple group $G^{\rm hor}$ 
on $X$, this time the root system 
is $A_1 \oplus A_1$; recall that $\SO_4$ 
has $\SL_2 \times \SL_2$ as its universal 
covering.

Now let $n_2=2$ and $l_{21} = l_{22}=1$.
Then we have $n=6$.
If all six $K$-degrees $\b{w}_{ij}$ coincide,
then take the action of $\SO_6$ on $\KK^n$ 
and extend it trivially to $\KK^{n+m}$.
This induces an action of a semisimple group 
$G^{\rm hor}$ on $X$.
The root system is that of the universal
covering $\SL_4$, i.e. we obtain $A_3$.
If all $\b{w}_{i1}$ and all $\b{w}_{i2}$ 
coincide but we have $\b{w}_{01} \ne \b{w}_{02}$, 
then consider the action of $\SL_3$ 
given by $(g^t)^{-1}$ on the $T_{i1}$-space
and  by $g$ on the $T_{i2}$-space.
This leads to a $G^{\rm hor}$ with root 
system~$A_2$.

Finally, consider case~(b).
If $\b{w}_{01} = \b{w}_{02} = \b{w}_{11}$ holds, 
then the canonical action of the group 
$\SO_3$ on the $T_{01},T_{02},T_{11}$-space 
defines a semisimple subgroup $G^{\rm hor}$ 
of $\Aut(X)$ with root system $A_1$.
If $n_2 = 1$ and $l_{21} = 2$ holds
and we have 
$\b{w}_{01}=\b{w}_{02}=\b{w}_{11} = \b{w}_{21}$,
then $n=4$ holds and the canonical action 
of $\SO_4$ on $\KK^n$ induces a semisimple 
subgroup $G^{\rm hor}$ of $\Aut(X)$ with 
root system $A_1 \oplus A_1$.
If we have $n_2 = 2$ and $l_{21} = l_{22} = 1$
and all degrees $\b{w}_{ij}$ coincide,
then the canonical action of $\SO_5$ on 
$\KK^5$ extends to $\KK^{n+m}$ and 
induces a subgroup $G^{\rm hor}$ 
of $\Aut(X)$ with root system~$B_2$.

Alltogether, we realized the root systems 
$\Phi_P^{\rm vert}$ and $\Phi_P^{\rm hor}$ by 
semisimple subgroups $G^{\rm vert}$ and 
$G^{\rm hor}$ of $\Aut(X)$.
By construction, these groups commute and 
thus define a semisimple subgroup 
$G :=  G^{\rm vert}G^{\rm hor}$ of $\Aut(X)$ 
with the desired root system 
$\Phi_P^{\rm ss} = \Phi_P^{\rm vert} \oplus \Phi_P^{\rm hor}$.
\end{proof}

\begin{corollary}[of proof]
In the situation of Theorem~\ref{thm:sesipart},
any pair of semisimple roots
defines a subgroup of the automorphism 
group locally isomorphic to $\SL_2$.
\end{corollary}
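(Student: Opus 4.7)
The plan is to invoke the explicit realization of semisimple subgroups of $\Aut(X)$ constructed in the proof of Theorem~\ref{thm:sesipart} and, inside each of them, to identify the standard root-$\SL_2$ attached to a given pair of opposite roots.

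By Lemma~\ref{lem:root2nld}, a pair $\alpha_\pm$ of semisimple roots is either both vertical or both horizontal, so the decomposition $\Phi_P^{\rm ss} = \Phi_P^{\rm vert} \oplus \Phi_P^{\rm hor}$ splits the argument into two cases. In the vertical case, Lemma~\ref{lem:prephivert} forces the distinguished indices $k_0^\pm$ of $\alpha_\pm$ to satisfy $\deg_K(S_{k_0^+}) = \deg_K(S_{k_0^-}) = p$, so $\alpha_\pm$ lies in the $A_{m_p-1}$-summand of $\Phi_P^{\rm vert}$. The corresponding factor $\SL_{m_p}$ of $G^{\rm vert}$ acts on the $\KK^{m_p}$-block of variables $S_k$ of $K$-degree $p$, and $\alpha_\pm$ is precisely the pair of roots of the standard $2\times 2$ block indexed by $k_0^+$ and $k_0^-$; this $\SL_2$-block embeds into $\Aut(X)$ and gives the required subgroup.

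In the horizontal case I would go through the list in Theorem~\ref{thm:sesipart}: each possibility $A_1$, $A_1\oplus A_1$, $A_2$, $A_3$, $B_2$ for $\Phi_P^{\rm hor}$ was realized there as the root system of an explicit classical group ($\SO_3$, $\SO_4$, $\SL_3$, $\SO_6$, or $\SO_5$) acting on a coordinate subspace of $\KK^{n+m}$ and descending to an action on $X$. A pair of opposite roots in a semisimple classical group always generates a standard root-$\SL_2$ subgroup, and its image in $\Aut(X)$ is the closed subgroup sought.

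The only subtle point I anticipate is checking that in the horizontal cases the restriction to a root-$\SL_2$ still maps with finite kernel into $\Aut(X)$, so that the image is truly locally isomorphic to $\SL_2$ rather than a torus or trivial. This is however automatic, since the ambient classical group was constructed in the proof of Theorem~\ref{thm:sesipart} to act effectively on $X$, and effectiveness is inherited by all closed semisimple subgroups. Thus in every case one obtains a closed subgroup of $\Aut(X)$ that is isogenous—hence locally isomorphic—to $\SL_2$, as claimed.
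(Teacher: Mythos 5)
Your proposal is correct and follows essentially the same route the paper intends: the corollary is tagged ``of proof'' precisely because the proof of Theorem~\ref{thm:sesipart} realizes $\Phi_P^{\rm vert}$ and $\Phi_P^{\rm hor}$ by explicit classical groups acting on $\KK^{n+m}$, and a pair of opposite roots picks out the standard root-$\SL_2$ inside the relevant factor. Your remark that effectiveness (equivalently, finiteness of the kernel of the map to $\Aut(X)$) passes to closed subgroups correctly disposes of the only point needing care.
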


\section{Applications}
\label{sec:appsem}

In this section we present applications of 
Theorem~\ref{thm:sesipart}. 
A first one concerns the automorphism group of arbitrary 
nontoric Mori dream surfaces (not necessarily admitting 
a $\KK^*$-action).

\begin{proposition}
\label{prop:alhomsurf}
Let $X$ be a nontoric Mori dream surface.
Then $\Aut(X)^0$ is solvable and the following 
cases can occur:
\begin{enumerate}
\item
$X$ is a $\KK^*$-surface,
\item
$\Aut(X)^0$ is unipotent.
\end{enumerate}
\end{proposition}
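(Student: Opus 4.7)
The plan is to analyze $\Aut(X)^0$ through the dimension of a maximal torus and then to invoke Theorem~\ref{thm:sesipart} in the $\KK^*$-surface case. By Corollary~\ref{cor:autmds} the unit component $\Aut(X)^0$ is a connected linear algebraic group acting morphically on $X$. Let $T \subseteq \Aut(X)^0$ be a maximal torus; since $T$ acts effectively on the surface $X$, one has $\dim T \le \dim X = 2$, and the case $\dim T = 2$ is ruled out by the hypothesis that $X$ is nontoric, since a torus acting effectively on a normal complete variety of the same dimension turns it into a toric variety. Thus $\dim T \in \{0,1\}$.

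If $\dim T = 0$, then $\Aut(X)^0$ has trivial maximal torus; any connected linear algebraic group whose maximal torus is trivial has trivial Levi factor and therefore coincides with its unipotent radical. This yields case~(ii), and unipotent groups are automatically solvable.

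If $\dim T = 1$, then $T \cong \KK^*$ acts effectively on $X$, so $X$ is a $\KK^*$-surface (case~(i)) and the $T$-action has complexity one. I would then invoke Theorem~\ref{thm:sesipart} with $s = 1$ to prove that the root system $\Phi$ of the semisimple part is empty. The $\ZZ^s$-part of every root lies in $\ZZ$, so $\Phi$ has rank at most one. For the vertical part $\Phi^{\rm vert} = \bigoplus_p A_{m_p - 1}$ the bound $\sum_p (m_p-1) < \dim X - 1 = 1$ forces every $m_p \le 1$, hence $\Phi^{\rm vert} = \emptyset$. Among the types listed for $\Phi^{\rm hor}$, the rank constraint permits only $\emptyset$ or $A_1$, so the remaining task is to exclude $A_1$. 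For this I would return to the normal forms of the defining matrix $P$ supplied by Lemmas~\ref{lem:horss1} and~\ref{lem:horss2}: with $s = 1$ the lower $(s-1) \times (n+m)$ block of those forms disappears, and the columns coming from the $n_2$-block acquire the shape $(0, l_{2j}, 0)^\top$. Primitivity of the columns of $P$ then forces every $l_{2j}$ to equal $1$, which on the one hand produces repeated columns whenever $n_2 \ge 2$ and on the other violates the minimal-presentation condition $\sum_j l_{2j} \ge 2$ when $n_2 = 1$; both outcomes contradict Construction~\ref{constr:RAPdown}. Hence $\Phi^{\rm hor} = \emptyset$ as well.

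With the semisimple part of $\Aut(X)^0$ trivial in both cases, $\Aut(X)^0$ is a semidirect product of $T$ by its unipotent radical and therefore solvable; the dichotomy~(i)/(ii) then corresponds exactly to whether this maximal torus is nontrivial. The main obstacle in the proof is the column-primitivity argument ruling out $\Phi^{\rm hor} = A_1$ for a surface; everything else is a routine consequence of Corollary~\ref{cor:autmds} and the structure theory of connected linear algebraic groups.
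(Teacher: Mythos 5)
Your proof is correct and follows essentially the same route as the paper: a dichotomy on the dimension of a maximal torus, the bound of Theorem~\ref{thm:sesipart}~(ii) to kill $\Phi^{\rm vert}$, and the normal forms of Lemmas~\ref{lem:horss1} and~\ref{lem:horss2} together with primitivity of the columns and minimal presentation to kill $\Phi^{\rm hor}$. The only step you gloss over is that $X$ must be rational (which the paper notes follows from being a Mori dream surface with a nontrivial $\KK^*$-action) before the machinery of $R(A,P)$ and Theorem~\ref{thm:sesipart} can be invoked.
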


\begin{proof}
Consider a maximal torus $T \subseteq \Aut(X)^0$.
If $T$ is trivial, then we are in case~(ii).
In particular, $\Aut(X)^0$ is solvable then.
If $T$ is one-dimensional, then we are  
in case~(i) and the task is to show that the 
semisimple part of $\Aut(X)$ is trivial, i.e. 
$\Aut(X)$ has no semisimple roots.
For this, we remark first that as a Mori dream 
surface with a nontrivial $\KK^*$-action, 
$X$ is rational.
Thus, we may assume that $X$ arises from 
$A,P$ as in Construction~\ref{constr:RAPdown} 
and that $R(A,P)$ is minimally presented.
Note that we have $s=1$.

The estimate of Theorem~\ref{thm:sesipart}~(ii)
forbids vertical semisimple $P$-roots.
Let us see why there are no horizontal 
semisimple $P$-roots. Otherwise, Lemmas~\ref{lem:horss1} 
and~\ref{lem:horss2} show that we must have $m=0$ 
and, up to admissible operations, 
the matrix $P$ is one of the following:
$$ 
\left[
\begin{array}{rrrrr}
-1 & -1 & 1 & 1 & 0 
\\
-1 & -1 & 0 & 0 & l_2 
\\
-1 &  0     & 0   & 1     &  0 
\end{array}
\right],
\qquad\qquad
\left[
\begin{array}{rrrr}
-2       & 1 & 1 & 0     
\\
-2       & 0 & 0 & l_2   
\\
-1       & 0 & 1 & 0     
\\
\end{array}
\right].
$$
Since the columns of $P$ are pairwise different 
primitive vectors, we obtain $n_2 = 1$ and 
$l_{21} = 1$;
a contradiction to the assumption that the ring 
$R(A,P)$ is minimally presented.
\end{proof}

We take a brief look at $q$-dimensional 
varieties~$X$ coming with a nontrivial 
action of $\SL_q$.
They were classified (in the smooth case)  
by Mabuchi~\cite{Ma2}.
Let us see how to obtain the rational 
nontoric part of his list with the aid 
of Theorem~\ref{thm:sesipart}.

\begin{example}
Let $X$ a nontoric $q$-dimensional complete normal variety 
with a nontrivial action of~$\SL_q$.
Since $\SL_q$ has only finite normal subgroups, $X$ comes 
with a torus action $T \times X \to X$ of complexity one.
Proposition~\ref{prop:alhomsurf} tells us that 
$X$ is of dimension at least three.
If $\SL_q$ acts with an open orbit, then $X$ has a 
finitely generated divisor class group
and, by existence of the $T$-action of 
complexity one, must be rational. 
So, we may assume that we are in the setting 
of Theorem~\ref{thm:sesipart}, 
where we end up in the cases $A_2,A_3$ of second 
item of~(iv) which amounts to the following two
possibilities.
\begin{enumerate}
\item 
We have $q=3$ and $X$ is the flag variety $\SL_3/B_3$;
in particular, $\Cl(X) \cong \ZZ^2$ and 
$$
\qquad\qquad
\mathcal{R}(X)
\ \cong \ 
\KK[T_{01},T_{02},T_{11},T_{12},T_{21},T_{22}]
\ / \ \bangle{T_{01}T_{02}+T_{11}T_{12}+T_{21}T_{22}}
$$
hold, where the $\ZZ^2$-grading is given by 
$$
\deg(T_{01}) \ = \ \deg(T_{11}) \ = \ \deg(T_{21}) \ = \ (1,0),
$$
$$
\deg(T_{02}) \ = \ \deg(T_{12}) \ = \ \deg(T_{22}) \ = \ (0,1).
$$
\item 
We have $q=4$ and $X$ is the smooth quadric
$V(T_{0}T_{1}+T_{2}T_{3}+T_{4}T_{5})$ in $\PP_5$,
where $\SL_4$ acts as the universal covering of $\SO_6$.
\end{enumerate}
Now assume that $\SL_q$ acts with orbits of 
dimension at most $q-1$. 
Then this action gives a root system 
$A_{q-1} \subseteq \Phi_P^{\rm vert}$.
If $X$ were rational, then Theorem~\ref{thm:sesipart}~(ii) 
would require $q < \dim(X)$ which is excluded by assumption. 
Thus, $X$ must be nonrational.
In particular, the  rational nontoric part of Mabuchi's list is
established, even for a priori singular varieties.
\end{example}

We turn to nontoric varieties with a torus action 
of complexity one which are almost homogeneous under 
some reductive group.
Recall that an action of a reductive group $G$ on
$X$ is spherical if some Borel subgroup of
$G$ acts with an open orbit on $X$.

\begin{proposition}
\label{prop:sphvshoss}
Let $X$ be a nontoric complete normal variety. 
Then the following statements are equivalent.
\begin{enumerate}
\item
$X$ allows a torus action of complexity one and 
an almost homogeneous reductive group action.
\item
$X$ is spherical with respect to an action of
a reductive group of semisimple rank one.
\item
$X$ is isomorphic to a variety as in 
Theorem~\ref{thm:sesipart}~(iii).  
\end{enumerate}
\end{proposition}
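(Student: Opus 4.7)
My plan is to prove the cyclic chain (iii)$\Rightarrow$(ii)$\Rightarrow$(i)$\Rightarrow$(iii). For (iii)$\Rightarrow$(ii), I will fix a pair $\alpha_\pm \in \Phi_P^{\rm hor}$ furnished by Theorem~\ref{thm:sesipart}(iii). By the (unlabeled) corollary to the proof of Theorem~\ref{thm:sesipart}, such a pair generates a subgroup $H \subseteq \Aut(X)^0$ locally isomorphic to $\SL_2$, with maximal torus contained in $T$ and unipotent subgroups $\lambda_{\kappa_\pm}(\GG_a)$ coming via Corollary~\ref{cor:autXgen} from Demazure $P$-roots $\kappa_\pm$ associated to $\alpha_\pm$. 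Setting $G := T \cdot H$ produces a reductive group of semisimple rank one with maximal torus $T$ and Borel $B = T \cdot \lambda_{\kappa_+}(\GG_a)$. For generic $x \in X$, the orbit $T \cdot x$ has dimension $s = \dim X - 1$ by effectiveness of the $T$-action, and by Corollary~\ref{cor:autXgen}(iii) the orbit $\lambda_{\kappa_+}(\GG_a)\cdot x$ escapes the closure of $T \cdot x$, so $B \cdot x$ is open in $X$ and $X$ is spherical under $G$.

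For (ii)$\Rightarrow$(i), after replacing $G$ by its image in $\Aut(X)$ I may assume the $G$-action is faithful, and I take a maximal torus $T \subseteq G$. Semisimple rank one gives $\dim B = \dim T + 1$, and sphericity then forces $\dim X \le \dim T + 1$. The nontoric hypothesis excludes $\dim X \le \dim T$ (which would give $T$ an open orbit and make $X$ toric), so $\dim X = \dim T + 1$ and the $T$-action has complexity one; almost homogeneity under $G$ is immediate from $B \subseteq G$.

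The heart of the argument, and the step I expect to be the main obstacle, is (i)$\Rightarrow$(iii). I first need to establish rationality of $X$. Unirationality comes for free because the open orbit $G/H$ is dominated by the rational variety $G$. To upgrade to rationality I will exploit the complexity-one torus action: the Rosenlicht rational quotient $X \dashrightarrow C$ has $\KK(C) = \KK(X)^T$ unirational of transcendence degree one, hence $C \cong \PP^1$ by L\"uroth. The generic fiber of $X \to C$ is, after possibly factoring out the generic (finite) stabilizer of $T$, a torsor under a split torus over $\KK(t)$; such a torsor is trivial by Hilbert 90, so $\KK(X)$ is purely transcendental over $\KK$ and $X$ is rational. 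Rationality brings $X$ under the scope of Theorem~\ref{thm:allCompl1var}, yielding a Cox ring presentation $\mathcal{R}(X) = R(A,P)$, and Theorem~\ref{thm:sesipart} then describes $\Aut(X)^{\rm ss}$ via $\Phi_P^{\rm ss} = \Phi_P^{\rm vert} \oplus \Phi_P^{\rm hor}$.

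To conclude, I need to produce a horizontal semisimple root. The semisimple part $G^{\rm ss}$ is nontrivial, for otherwise $G$ would be a torus acting almost homogeneously and $X$ would be toric. Suppose for contradiction that every semisimple root supplied by $G^{\rm ss} \subseteq \Aut(X)^0$ lay in $\Phi_P^{\rm vert}$; then Corollary~\ref{cor:autXgen}(ii) would force every $G^{\rm ss}$-orbit to lie in a single $T$-orbit closure, and consequently each $G$-orbit $T \cdot G^{\rm ss} \cdot x$ would also sit inside such a closure of dimension at most $\dim X - 1$, contradicting almost homogeneity of $G$. Hence $\Phi_P^{\rm hor} \ne \emptyset$ and $X$ falls in the setting of Theorem~\ref{thm:sesipart}(iii).
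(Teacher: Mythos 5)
Your overall architecture coincides with the paper's: the paper also proves (ii)$\Rightarrow$(i) (dismissed there as obvious, which your dimension count justifies), (iii)$\Rightarrow$(ii) by letting $T$ together with the root subgroups of a pair $\alpha_\pm\in\Phi_P^{\rm hor}$ generate a reductive group of semisimple rank one, and (i)$\Rightarrow$(iii) by first establishing unirationality from the open orbit, then rationality from the complexity-one structure (the paper invokes an open subset of the form $T\times C$ where you argue via the Rosenlicht quotient, L\"uroth and Hilbert 90 --- same content), and finally producing a horizontal semisimple $P$-root. Up to that last step your proposal is essentially the paper's proof.

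The last step of (i)$\Rightarrow$(iii) has a genuine gap. You argue with ``the semisimple roots supplied by $G^{\rm ss}$'' and assume they lie in $\Phi_P^{\rm vert}\cup\Phi_P^{\rm hor}$. But the $P$-roots are roots of $\Aut(X)$ with respect to the acting torus $T$ of the complexity-one structure, whereas the roots of $G^{\rm ss}$ are taken with respect to a maximal torus $S$ of $G^{\rm ss}$, which may be a proper subtorus of $T$ even after conjugation (e.g.\ $G=\SL_2$ acting on a threefold with a two-dimensional $T$). Since $T$ need not normalize $G^{\rm ss}$, the root subgroups of $G^{\rm ss}$ need not be normalized by $T$, hence need not arise from Demazure $P$-roots via Corollary~\ref{cor:autXgen} at all, and the vertical/horizontal dichotomy on which your contradiction rests does not apply to them. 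The paper repairs exactly this point: it embeds the image of $G^0$ into a \emph{maximal} connected reductive subgroup $G'\subseteq\Aut(X)^0$; a maximal torus of $G'$ is then a maximal torus of $\Aut(X)^0$, so after conjugation $T$ is a maximal torus of $G'$, the roots of $G'$ with respect to $T$ are semisimple $P$-roots, and $G'$ still has an open orbit. Only then does your (correct) observation --- that a group generated by $T$ and vertical root subgroups preserves the fibers of the rational quotient and so cannot act with an open orbit --- force $\Phi_P^{\rm hor}\ne\emptyset$. With that maneuver inserted, your proof is complete.
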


\begin{proof}
The implication ``(ii)$\Rightarrow$(i)'' is obvious.
For the implication ``(iii)$\Rightarrow$(ii)'', take 
a pair of semisimple horizontal $P$-roots $\alpha_{+}$
and $\alpha_{-}$.
Then the acting torus $T$ of $X$ and the 
root subgroups $U_{\pm}$ associated 
to $\alpha_{\pm}$ generate a reductive group
$G$ of semisimple rank one in $\Aut(X)$ and $X$
is spherical with respect to the action of $G$.

Assume that (i) holds. 
Then the open orbit of the acting reductive group $G$
is unirational.
Thus, $X$ is unirational. 
By the existence of a torus action of complexity one, 
$X$ contains an open subset of the form 
$T \times C$ with some affine curve $C$. 
We conclude that $C$ and hence $X$ are rational. 
Consequently, $X$ is a Mori dream space.
In particular, Corollary~\ref{cor:autmds} yields
that $\Aut(X)$ is linear algebraic.
Moreover, we may assume that we are in 
the setting of Theorem~\ref{thm:sesipart}.
The image of $G^{0}$ in $\Aut(X)$ is contained in a 
maximal connected reductive subgroup $G'$ of $\Aut(X)$.
Suitably conjugating $G'$, we may assume that the 
acting torus $T$ of $X$ is a maximal torus of $G'$.
Since $G'$ is generated by root subgroups,
we infer from Corollary~\ref{cor:autXgen} that  
there must be a horizontal Demazure $P$-root.
Since every root of $G'$ is semisimple, we  
end up in Case~\ref{thm:sesipart}~(iii).
\end{proof}

Specializing to dimension three, we obtain a quite
precise picture of the possible matrices $P$ in 
the above setting.

\begin{proposition}
\label{prop:dim3alhomred}
Let $X$ be a three-dimensional nontoric complete 
normal rational variety.
Suppose that $X$ is almost homogeneous under 
an action of a reductive group and there 
is an effective action of a two-dimensional 
torus on $X$.
Then the Cox ring of $X$ is given 
as $\mathcal{R}(X) = R(A,P)$ with
a matrix $P$ according to the following 
cases

\begin{enumerate}
\item
\qquad
$ 
P \quad = \quad 
\left[
\begin{array}{rrrrrr}
-1 & -1 & 1 & 1 & 0 & 0 
\\
-1 & -1 & 0 & 0 & l_2 & 0 
\\
-1 &  0     & 0   & 1     &  0 & 0
\\
0  &  0 & 0  & d_{12}^* & d_2^* & d'_* 
\end{array}
\right]
$,
\\[.5em]
\item
\qquad
$
P \quad = \quad
\left[
\begin{array}{rrrrr}
-2       & 1 & 1 & 0     & 0 
\\
-2       & 0 & 0 & l_2   & 0 
\\
-1       & 0 & 1 & 0     &  0
\\
d_{01}^* & 0 & 0 & d_2^* & d'_* 
\end{array}
\right]
$.
\end{enumerate}

\noindent
In both cases, we have $m \le 2$; that means that 
the $d'_*$-part can be either empty, equal to 
$\pm 1$ or equal to $(\pm 1,\mp 1)$.
\end{proposition}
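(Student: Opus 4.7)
The plan is to reduce to the situation described in Theorem~\ref{thm:sesipart}~(iii) and then apply the structural Lemmas~\ref{lem:nlest}, \ref{lem:horss1} and~\ref{lem:horss2}, after which the bound $m \le 2$ follows from an elementary combinatorial observation.

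First I would note that, since $\dim(X) = 3$ and $X$ carries an effective action of a two-dimensional torus $T$, the $T$-action is automatically of complexity one. By Theorem~\ref{thm:allCompl1var}, the variety $X$ arises from defining data $(A,P)$ via Construction~\ref{constrRAP2TVar}, and after discarding redundant generators we may assume that $R(A,P)$ is minimally presented. Combining the almost homogeneity of $X$ under a reductive group with Proposition~\ref{prop:sphvshoss} places $X$ in Case~(iii) of Theorem~\ref{thm:sesipart}, so in particular $\Phi_P^{\rm hor}$ contains at least one pair of semisimple roots.

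Next, I invoke Lemma~\ref{lem:nlest}, which forces $r = 2$ and, after renumbering, leaves exactly two possibilities: either $n_0 = n_1 = 2$ with all four exponents $l_{01}, l_{02}, l_{11}, l_{12}$ equal to $1$, or $n_0 = 1$ with $l_{01} = 2$ and $n_1 = 2$ with $l_{11} = l_{12} = 1$; moreover the distinguished index of the chosen pair of horizontal semisimple roots can be arranged to equal $i_0 = 2$. Applying Lemma~\ref{lem:horss1} in the first case and Lemma~\ref{lem:horss2} in the second case then brings $P$, via admissible row and column operations, into exactly the two forms~(i) and~(ii) displayed in the statement. Here I use that $\dim(X) = 3$ forces $s = 2$, so the ``lower line'' of Lemmas~\ref{lem:horss1} and~\ref{lem:horss2} consists of a single row, giving precisely the matrices written in the proposition.

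Finally, to deduce $m \le 2$, I inspect the $m$ columns occurring in the $d'_*$-block: in both forms each such column has the shape $(0,0,0,d'_{*,k})^{\mathrm{T}}$ with $d'_{*,k} \in \ZZ$. Primitivity of the columns of $P$ (required by Construction~\ref{constr:RAPdown}) forces $d'_{*,k} \in \{+1,-1\}$, and pairwise distinctness of the columns of $P$ then caps the number of such columns at $2$. This yields exactly the listed options $\emptyset$, $(\pm 1)$, or $(\pm 1, \mp 1)$ for the $d'_*$-part. No serious obstacle is expected in this proof: the lemmas on the shape of $P$ induced by horizontal semisimple roots do all the nontrivial work, and the proposition is essentially their specialization to the case $s = 2$.
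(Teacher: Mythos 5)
Your proposal is correct and follows essentially the same route as the paper: reduce via Proposition~\ref{prop:sphvshoss} to Case~(iii) of Theorem~\ref{thm:sesipart} and then apply Lemmas~\ref{lem:nlest}, \ref{lem:horss1} and~\ref{lem:horss2} with $s=2$. Your explicit derivation of $m\le 2$ from primitivity and pairwise distinctness of the $d'$-columns is exactly the observation the paper leaves implicit.
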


\begin{proof}
Clearly, we may assume that we are in the situation 
of Theorem~\ref{thm:sesipart}.
Since $X$ is nontoric but almost homogeneous,
there must be a semisimple horizontal $P$-root.
Thus, Lemmas~\ref{lem:horss1} and~\ref{lem:horss2}
show that after admissible operations, $P$ is 
of the desired shape.
\end{proof}

As a direct consequence, one retrieves results
of Haddad~\cite{Had} on the Cox rings
of  three-dimensional varieties that are almost 
homogeneous under an $\SL_2$-action and additionally 
come with an effective action of a two-dimensional 
torus $T$; compare also~\cite{AL} for the affine 
case.

\begin{corollary}
Let $X$ be a three-dimensional 
nontoric complete normal rational 
variety.
Suppose that $X$ is almost homogeneous 
under an action of $\SL_2$
and there is an effective action 
of a two-dimensional torus $T$ on $X$.
Then $X$ has at most two 
$T$-invariant prime 
divisors with infinite $T$-isotropy,
i.e., we have $m \le 2$ and 
the Cox ring of $X$ is given as
\begin{eqnarray*} 
\mathcal{R}(X)
& = & 
\KK[T_{ij},S_k] / 
\bangle{T_{01}T_{02}+T_{11}T_{12}+T_{21}^{l_{21}}\cdots T_{2n_2}^{l_{2n_2}}}.
\end{eqnarray*}
\end{corollary}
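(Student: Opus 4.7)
The plan is to apply Proposition~\ref{prop:dim3alhomred} with the reductive group specialized to $\SL_2$. An almost homogeneous $\SL_2$-action on $X$ in particular makes $X$ almost homogeneous under a reductive group, so the hypotheses of that proposition are satisfied, and the defining matrix $P$ of the Cox ring $\mathcal{R}(X)=R(A,P)$ is, after admissible operations, of shape~(i) or~(ii) listed there. Both shapes carry a $d'_*$-block of at most two columns, so $m\le 2$ is immediate.

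To pin down the Cox ring in the stated trinomial form it suffices to exclude shape~(ii). In that shape $n_0=1$ and $l_{01}=2$, so the relation would be $T_{01}^{2}+T_{11}T_{12}+T_{21}^{l_{21}}\cdots T_{2n_2}^{l_{2n_2}}$. As constructed in case~(b) at the end of the proof of Theorem~\ref{thm:sesipart}, the connected semisimple subgroup of $\Aut(X)^0$ realizing the horizontal $A_1$ is the canonical $\SO_3$-action on the quadric $\{T_{01}^{2}+T_{11}T_{12}=0\}$. An effective homomorphism $\SL_2\to\Aut(X)$ whose image together with the torus $T$ acts almost homogeneously must land in the maximal connected semisimple subgroup of $\Aut(X)^0$ and project nontrivially onto this $\SO_3$-factor; but the only nontrivial homomorphism $\SL_2\to\SO_3$ is the universal cover, with kernel $\{\pm I\}$, so effectivity fails. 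Hence shape~(ii) is excluded, and we are in shape~(i) with $n_0=n_1=2$ and $l_{01}=l_{02}=l_{11}=l_{12}=1$, where the relation specializes precisely to $T_{01}T_{02}+T_{11}T_{12}+T_{21}^{l_{21}}\cdots T_{2n_2}^{l_{2n_2}}$ as claimed.

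The main obstacle is the elimination of shape~(ii): that shape is a genuine possibility for a $3$-fold $X$ admitting a spherical $\SO_3$-action of semisimple rank one, and what excludes it here is precisely the strengthening from ``reductive group of semisimple rank one'' in Proposition~\ref{prop:dim3alhomred} to an \emph{effective} $\SL_2$-action. The key input to this exclusion is the explicit description of the horizontal semisimple subgroups inside $\Aut(X)^0$ provided in the proof of Theorem~\ref{thm:sesipart}; without that description one could at best deduce that some reductive group of semisimple rank one acts, leaving the $\SO_3$ vs.\ $\SL_2$ dichotomy unresolved.
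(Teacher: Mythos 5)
Your first step — invoking Proposition~\ref{prop:dim3alhomred} to get $m\le 2$ and the two possible shapes of $P$ — is exactly what the paper does. The divergence, and the gap, is in how you eliminate shape~(ii). The paper's proof eliminates it by using the actual hypothesis, namely that $\SL_2$ \emph{itself} acts with an open orbit on the threefold $X$: in shape~(ii) the horizontal semisimple subgroup acts on $\b{X}$ through the standard orthogonal representation on the quadric $T_{01}^2+T_{11}T_{12}+\ldots$, whose nonzero orbits have codimension at least one in the relevant coordinate space, and this is what obstructs a three-dimensional $\SL_2$-orbit; only the shape~(i) action on $\KK^2\oplus(\KK^2)^*$ with generically trivial stabilizer produces an open orbit. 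You instead weaken the hypothesis to ``the image of $\SL_2$ together with $T$ acts almost homogeneously'' — which is precisely the spherical condition of Proposition~\ref{prop:sphvshoss} and is \emph{satisfied} by shape~(ii) — and then try to recover the exclusion from effectivity of the $\SL_2$-action. That substitution loses the information that actually does the work.

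The effectivity argument itself does not close the gap. First, the hypothesis ``an action of $\SL_2$'' is not stated to be effective, and the paper's own treatment of $\SL_q$-actions explicitly works modulo finite kernels, so an $\SL_2$ acting through $\SO_3$ would still be admissible. Second, even granting effectivity, your argument only covers the sub-case $\Phi_P^{\rm hor}=A_1$ with $G^{\rm hor}=\SO_3$ and with the $\SL_2$ projecting trivially to every other factor of the semisimple part. In shape~(ii) the horizontal root system can also be $A_1\oplus A_1$ or $B_2$, realized by $\SO_4$ and $\SO_5$, both of which contain subgroups isomorphic to $\SL_2$ acting effectively; and even when $G^{\rm hor}=\SO_3$, an $\SL_2$ can embed effectively into $\Aut(X)^{\rm ss}$ by mapping diagonally into $\SO_3\times G^{\rm vert}$ (e.g.\ $g\mapsto(\b{g},g)$ when $\Phi_P^{\rm vert}=A_1$), since $-I$ then survives in the vertical factor. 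So ``effectivity fails'' is simply not true for shape~(ii) in general, and the exclusion is not established. (A smaller inaccuracy: shape~(ii) with $l_2=(1,1)$ gives the relation $T_{01}^2+T_{11}T_{12}+T_{21}T_{22}$, which after renumbering the blocks \emph{is} of the form asserted in the corollary, so it need not be excluded at all; what must be ruled out is shape~(ii) with $T_2^{l_2}$ not a squarefree product of two variables, and for that the open-orbit dimension count is the relevant tool.)
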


\begin{proof}
We are in the situation of Proposition~\ref{prop:dim3alhomred}. 
The assumption that $\SL_2$ acts with an open orbit implies
that we are in case~(i).
\end{proof}

Finally, we consider almost homogeneous 
varieties with reductive automorphism
group; see for example~\cite{Nill} for 
results on the toric case.
Here, we list all almost homogeneous  
threefolds of Picard number one with 
a reductive automorphism group having 
a maximal torus of dimension two.

\begin{proposition}
\label{prop:3dimautred}
Let $X$ be a $\QQ$-factorial three-dimensional 
complete normal variety of Picard number one. 
Suppose that $\Aut(X)$ is reductive, has a 
maximal torus of dimension two and acts
with an open orbit on $X$.
Then $X$ is a rational Fano variety and,
up to isomorphy, arises from a matrix $P$ 
from  the following list:
\begin{enumerate}
\item
$
\left[
\begin{array}{rrrrr}
-1 & -1 & 1 & 1 & 0 
\\
-1 & -1 & 0 & 0 & l_{21} 
\\
-1 &  0     & 0   & 1     &  0 
\\
0  &  0 & 0  & d_{12} & d_{21}
\end{array}
\right],
\enspace
l_{21} > 1, \
d_{12} > 2, \
- \frac{d_{21}}{d_{12}-1} < l_{21} <  - d_{21},
$
\\[.5em]
\item
$
\left[
\begin{array}{rrrrr}
-2 & 1 & 1 & 0 & 0 
\\
-2 & 0  & 0 & l_{21} & l_{22} 
\\
-1 & 0  & 1  & 0     &  0 
\\
d_{01}  & 0  & 0 & d_{21} & d_{22}
\end{array}
\right],
\enspace
l_{21}, l_{22} >  1, 
\ 
2d_{22} > -d_{01}l_{22},
\
-2d_{21} > d_{01}l_{21},
$
\\[.5em]
\item
$
\left[
\begin{array}{rrrrr}
-2 & 1 & 1 & 0 & 0 
\\
-2 & 0  & 0 & 1 & l_{22} 
\\
-1 & 0  & 1  & 0     &  0 
\\
d_{01} & 0  & 0 & d_{21} & d_{22}
\end{array}
\right],
\begin{array}{l}
l_{22} >  1, \
d_{22} > d_{21}l_{22} + l_{22}, \
2d_{22} > -d_{01}l_{22}, \
-2d_{21} > d_{01},
\\
\text{or }
\\
l_{22} >  1, \
2d_{22} > -d_{01}l_{22}, \
1-2d_{21} > d_{01},
\end{array}
$
\\[.5em]
\item
$
\left[
\begin{array}{rrrrr}
-2 & 1 & 1 & 0 & 0 
\\
-2 & 0  & 0 & 1 & 1
\\
-1 & 0  & 1  & 0     &  0 
\\
-1  & 0  & 0 & 1 & 0
\end{array}
\right],
$
\\[.5em]
\item
$
\left[
\begin{array}{rrrrr}
-2 & 1 & 1 & 0 & 0 
\\
-2 & 0  & 0 & l_{21} & 0
\\
-1 & 0  & 1  & 0     &  0 
\\
1  & 0  & 0 & d_{21} & 1
\end{array}
\right],
\enspace
1 < l_{21} < -2d_{21} < 2l_{21}.
$
\end{enumerate}
Conversely, each of the above listed matrices defines a
$\QQ$-factorial rational almost homogeneous Fano variety 
with reductive automorphism group having a two-dimen\-sional 
maximal torus.
\end{proposition}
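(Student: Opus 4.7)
The plan is to combine Proposition~\ref{prop:dim3alhomred} with a careful analysis of Demazure $P$-roots. First, since $\Aut(X)$ contains a two-dimensional maximal torus $T$ and $\dim X = 3$, the induced action of $T$ on $X$ is of complexity one. Almost homogeneity under a reductive group combined with a complexity-one torus action forces $X$ to be rational, as in the proof of Proposition~\ref{prop:sphvshoss}; so $X$ is a Mori dream space with $\mathcal{R}(X) = R(A,P)$ in a minimal presentation, and Proposition~\ref{prop:dim3alhomred} reduces $P$ to one of two basic shapes, with $m \le 2$ and $r=2$.

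Next, I would use reductivity of $\Aut(X)$ and the Picard number one hypothesis to cut down to the five listed families. By Corollary~\ref{cor:autXgen}, $\Aut(X)^0$ is generated by $T$ and the root subgroups indexed by the Demazure $P$-roots. Reductivity therefore translates into the condition that every Demazure $P$-root $\alpha_\kappa$ is semisimple, i.e.\ $-\alpha_\kappa$ also appears; otherwise the corresponding root subgroup would sit in the unipotent radical. Picard number one translates into $\rk K = n+m-(r+s) = 1$, hence $n+m=5$. Combined with the rigidity of $n_0,n_1$ imposed by the two basic shapes of Proposition~\ref{prop:dim3alhomred}, this fixes $n_2+m$ and splits the analysis into: $(n_2,m)=(1,0)$ for the first shape (yielding case~(i) of the statement); $(n_2,m)=(2,0)$ for the second shape, split further according to how many of $l_{21},l_{22}$ equal one (cases~(ii), (iii), (iv)); and $(n_2,m)=(1,1)$ for the second shape (case~(v)). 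For each configuration I would enumerate all Demazure $P$-roots via Definition~\ref{def:Pdemroot} and Remark~\ref{rem:DemPolytope} and impose the pairing $\alpha \in \Phi_P \Leftrightarrow -\alpha \in \Phi_P$; the resulting inequalities on the entries $d_{ij}$ and $d'_k$ are exactly those appearing in cases~(i)--(v).

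The Fano property follows by computing the anticanonical class via~\cite[Prop.~III.3.4.1]{ArDeHaLa} and checking positivity directly from the numerical inequalities already forced by reductivity. For the converse, each of the five matrices is checked directly: the rank count gives Picard number one; the Demazure $P$-roots read off from Definition~\ref{def:Pdemroot} come in $\pm$-pairs, so $\Aut(X)$ is reductive and its semisimple part has the shape predicted by Theorem~\ref{thm:sesipart}; almost homogeneity then follows from Theorem~\ref{thm:almosthomogchar}; and positivity of the anticanonical class makes $X$ Fano.

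The main obstacle will be the Demazure $P$-root enumeration and pairing check in the second paragraph. The vertical Demazure $P$-roots involve only the last column block and are easy to handle, but the horizontal ones depend delicately on both the entries of the $d$-block and the exponents $l_{ij}$, and ensuring that the pairing condition yields precisely the listed inequalities — without introducing an extraneous sixth family or missing one of~(i)--(v) — requires meticulous bookkeeping. Remark~\ref{rem:DemPolytope} reformulates the task as listing lattice points in a finite number of explicit rational polytopes in~$M_\QQ$, which makes the computation mechanical but still lengthy.
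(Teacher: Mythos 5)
Your proposal is correct and follows essentially the same route as the paper: reduce via Proposition~\ref{prop:sphvshoss} and Proposition~\ref{prop:dim3alhomred} to the two basic shapes of $P$, translate reductivity into the condition that every Demazure $P$-root is semisimple and almost homogeneity into the existence of a horizontal semisimple pair, use Picard number one to force $n+m=5$, and then enumerate the remaining configurations to obtain the listed inequalities, with the Fano property and the converse checked via the anticanonical class formula. The paper's own proof is terser (it organizes the case split by $m=0$ versus $m=1$ and by the number of horizontal semisimple pairs rather than by $(n_2,m)$, and leaves the root enumeration implicit), but the underlying argument is the same.
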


\begin{proof}
According to Proposition~\ref{prop:sphvshoss},
we may assume that
$X$ arises from a matrix $P$.
The fact that $\Aut(X)$ is reductive means that 
every root must be semisimple and
the fact that it acts with an open orbit
means that there exists at least one pair of 
horizontal semisimple roots.
Now, suppose we have $m=0$. 
If there is only one pair of 
horizontal semisimple $P$-roots, 
then reductivity of the automorphism group 
forbids further $P$-roots and we 
end up with the first three cases. 
If there are more then one pair of semisimple 
roots, then we end up in case four.
Finally, if $m > 0$ holds, then $m=1$ is the only 
possibility and one is left with case~(v).
\end{proof}


\begin{thebibliography}{}%
%
\bibitem{ArDeHaLa} 
I.~Arzhantsev, U.~Derenthal, J.~Hausen, A.~Laface:
Cox rings, arXiv:1003.4229, see also the authors' 
webpages.
%
\bibitem{AL}
I.~Arzhantsev, A.~Liendo: Polyhedral divisors and $\text{SL}_2$-actions
on affine $T$-varieties. 
To appear in Michigan Math. J.,
arXiv:1105.4494v1.
%
\bibitem{BB1}
A.~Bia\l ynicki-Birula: Finiteness of the number of maximal 
open subsets with good quotients.  
Transform. Groups  3 (1998),  no.~4, 301--319.
%
\bibitem{Br}
M.~Brion: 
The total coordinate ring of a wonderful variety.
J. Algebra 313 (2007), no. 1, 61--99
%
\bibitem{BrGu}
W.~Bruns, J.~Gubeladze:
Polytopal linear groups.
J. Algebra 218 (1999), no.~2, 715--737.
%
\bibitem{Co}
D.~Cox: The homogeneous coordinate ring of a toric variety.
J. Alg. Geom. 4 (1995), no.~1, 17--50.
%
\bibitem{Dem}
M.~Demazure:
Sous-groupes alg\'{e}briques de rang maximum du groupe de Cremona.
Ann. Sci. \'Ecole Norm. Sup. (4) 3 (1970), 507--588.
%
\bibitem{Fr}
G.~Freudenburg:
Algebraic theory of locally nilpotent derivations. 
Encyclopaedia of Mathematical Sciences, 136. 
Invariant Theory and Algebraic Transformation Groups, VII. 
Springer Verlag, Berlin, 2006. 
%
\bibitem{Had}
F.~Haddad: 
Spherical quasihomogeneous $\SL(2)$-varieties.
PhD Thesis,
Universit\"at T\"ubingen, 2010.
%
\bibitem{HaTschi}
B.~Hassett, Yu.~Tschinkel: 
Universal torsors and Cox rings.
In: Arithmetic of higher-dimensional algebraic varieties,  
Progr. Math., 226, Birkh\"auser Boston, Boston, MA, 149--173 (2004)
%
\bibitem{Ha2}
J.~Hausen:
Cox rings and combinatorics II.
Mosc. Math. J. 8 (2008), no.~4,  711--757.
%
\bibitem{Ha3L}
J.~Hausen:
Three Lectures on Cox rings. 
To appear in: Torsors, \'{e}tale homotopy and 
applications to rational points 
--- Proceedings of the ICMS workshop in Edinburgh, 
10--14 January 2011. 
London Mathematical Society 
Lecture Note Series.
%
\bibitem{HaHe}
J.~Hausen, E.~Herppich:
Factorially graded rings of complexity one.
To appear in: Torsors, \'{e}tale homotopy and 
applications to rational points 
--- Proceedings of the ICMS workshop in Edinburgh, 
10--14 January 2011. 
London Mathematical Society 
Lecture Note Series.
%
\bibitem{HaSu}
J.~Hausen, H.~S\"u{\ss}:
The Cox ring of an algebraic variety 
with torus action.
Advances Math. 225 (2010), 977--1012.
%
\bibitem{La}
S.~Lang:
Algebra. Revised third version.
Graduate Texts in Mathematics 211,
Springer, 2002.
%
\bibitem{Ma2}
T.~Mabuchi:
On the classification of essentially effective 
${\rm SL}(n;{\bf C})$-actions on algebraic $n$-folds. 
Osaka J. Math.  16  (1979), no. 3, 745--758. 
%
\bibitem{Mas}
M.~Masuda. 
Symmetry of a symplectic toric manifold. 
J. Symplectic Geometry 8 (2010), no. 4, 359--380
%
\bibitem{Mat} 
K.~Matsuki: 
Introduction to the Mori program. 
Universitext. Springer-Verlag, New York, 2002. 
%
\bibitem{Nill}
B.~Nill:
Complete toric varieties with reductive automorphism group.  
Math. Z.  252  (2006),  no. 4, 767--786.
%
\bibitem{OrWa1}
P.~Orlik, P.~Wagreich:
Isolated singularities of algebraic surfaces with $\CC^*$-action.  
Ann. of Math. (2)  93  (1971), 205--228. 
%
\bibitem{OrWa2}
P.~Orlik, P.~Wagreich:
Algebraic surfaces with $k^*$-action.  Acta Math.  138  (1977), 
no. 1-2, 43--81.
%
\bibitem{Pa}
D.I.~Panyushev: Good properties of algebras of 
invariants and defect of linear representations. 
J. Lie Theory 5 (1995), 81--99.
%
\bibitem{Sa}
Y.~Sakamaki:
Automorphism groups on normal singular cubic 
surfaces with no parameters. 
Trans. Amer. Math. Soc.  362  (2010),  no. 5, 2641--2666. 
%
\bibitem{Sw}
J.~\'{S}wi\c{e}cicka:
A combinatorial construction of sets with good 
quotients by an action of a reductive group.
Colloq. Math.  87  (2001),  no. 1, 85--102. 
%
\end{thebibliography}
\end{document}